\documentclass[oneside,english]{amsart}
\usepackage{cmap}  % should be before 'fontenc'
\usepackage[T2A]{fontenc}
\usepackage[utf8]{inputenc}
\usepackage[english]{babel}
\usepackage[usenames]{color}
\usepackage{amsmath,amssymb,amsthm}
\usepackage{arcs}
\usepackage{epsfig}
\usepackage{filecontents}
\usepackage{graphicx}
\usepackage[pdftex,colorlinks=true,linkcolor=blue,urlcolor=red,unicode=true,hyperfootnotes=false,bookmarksnumbered]{hyperref}
\usepackage{ifthen}
\usepackage{import}
\usepackage{indentfirst}
\usepackage{mathtools}
\usepackage{cancel}
\usepackage{xcolor}
\usepackage{thmtools}
\usepackage{thm-restate}
\usepackage{enumitem}

\makeatletter
\numberwithin{equation}{section}
\numberwithin{figure}{section}
\theoremstyle{plain}
\newtheorem{thm}{\protect\theoremname}
\theoremstyle{plain}
\newtheorem{lem}[thm]{\protect\lemmaname}

\newtheorem{conj}[thm]{Conjecture}
\newtheorem{claim}[thm]{Claim}

\newtheorem{cor}[thm]{Corollary}
\newtheorem{obs}[thm]{Observation}
\newtheorem{definition}[thm]{Definition}
\newtheorem*{theorem*}{Theorem}

\makeatother

\newcommand{\ff}{\mathcal{F}}
\newcommand{\s}{\mathcal{S}}

\newcommand{\aaa}{\mathcal{A}}
\newcommand{\bb}{\mathcal{B}}

\newcommand{\m}{\mathcal}

\newcommand{\mb }{\mathbb}

\usepackage{babel}
\providecommand{\lemmaname}{Lemma}
\providecommand{\theoremname}{Theorem}
\providecommand{\theoremname}{Claim}
\providecommand{\theoremname}{Proposition}

\title{A Complete Intersection Theorem for Large Permutation Groups}

\author{Nathan Keller}
\thanks{Department of Mathematics, Bar-Ilan University. \texttt{Nathan.Keller@biu.ac.il}. Supported by the Israel Science Foundation (grants no.~2669/21 and 2456/25) and by the Binational US-Israel Science Foundation (grant no.~2024120).}
\author{Andrey Kupavskii}
\thanks{Moscow Institute of Physics and Technology, Saint-Petersburg State University, Innopolis University. \texttt{kupavskii@ya.ru}.} 
\author{Noam Lifshitz}
\thanks{Einstein Institute of Mathematics, Hebrew University. \texttt{noamlifshitz@gmail.com}. Supported by the European Research Council (StG no.~101163794), by the Israel Science Foundation (grant no.~1980/22), and by the Binational US-Israel Science Foundation (grant no.~2024120).} \author{Ohad Sheinfeld}
\thanks{Einstein Institute of Mathematics, Hebrew University. \texttt{oshenfeld@gmail.com}}
\begin{document}

\maketitle

\begin{abstract}
    A family of permutations is called $t$-intersecting if any two permutations in the family agree on at least $t$ elements. We prove that there exists $n_0 \in \mathbb{N}$ such that for any $n>n_0$ and any $1 \leq t \leq n$, the maximum size of a $t$-intersecting family in $S_n$ is obtained by one of the families $\mathcal{F}_{n,t,r}=\{\sigma \in S_n: |\mathrm{Fixed}(\sigma) \cap \{1,2,\ldots,t+2r\}|\geq t+r\}$, where $\mathrm{Fixed}(\sigma)$ is the set of fixed points of $\sigma$. This proves an analogue of the classical Complete Intersection Theorem for large permutation groups, thus providing an essentially complete solution of the Deza-Frankl intersection problem for permutations (1977).
%    Our proof relies on a refinement of the spread approximation method, introduced by Kupavskii and Zakharov (2022). 
\end{abstract}

\section{Introduction}

\subsection{Background} 
A family $F$ of subsets of $[n]=\{1,2,\ldots,n\}$ is \emph{$t$-intersecting} if for any $A,B \in F$, we have $|A \cap B| \geq t$. For $t=1$, such families are simply called `intersecting'. In 1961, Erd\H{o}s, Ko, and Rado~\cite{EKR61} 
proved that for $n \geq n_0(k,t)$, the maximum size of a $t$-intersecting family of $k$-element subsets of $[n]$ is ${n-t}\choose{k-t}$, and asked, what is the minimal number $n_0(k,t)$ for which this upper bound holds. For $t=1$, they provided a complete solution, proving that 
the maximum size is ${n-1}\choose{k-1}$ for all $k<\frac{n}{2}$.  
This result was highly influential, and by now grew into a subfield of extremal combinatorics, studying collections of objects with forbidden intersections (see the survey~\cite{FT16}). 

Naturally, one of the central problems in this field is determining the maximum size of a $t$-intersecting family $F \subset \mathcal{U}$, for various `universes' $\mathcal{U}$. This problem was studied, e.g., for vector spaces~\cite{FranklG85},  graphs~\cite{EFF12}, set partitions~\cite{Kupavskii26EJC,MM05}, simplicial complexes~\cite{Borg09,Kupavskii23hereditary},
linear maps~\cite{Linear-maps}, etc. Arguably, the two most thoroughly studied `universes' are the original setting where $\mathcal{U}$ consists of all $k$-subsets of $[n]$, and the setting of  \emph{$t$-intersecting families of permutations}, i.e., families $F \subset S_n$ such that for any $\sigma,\tau \in F$, there exist $i_1,\ldots,i_t$ with $\sigma(i_j)=\tau(i_j)$ for $j=1,\ldots,t$.
 
For $k$-subsets of $[n]$, Frankl~\cite{F78} determined the minimal $n_0(k,t)$ for which the maximum size is ${n-t}\choose{k-t}$ for all $t \geq 15$, and then Wilson~\cite{W84} determined it for all $t$: they showed that  $n_0(k,t)=(k-t+1)(t+1)$. Furthermore, for all $n>n_0(k,t)$, the maximum size is obtained only for \emph{$t$-umvirates}, i.e., families of the form $\{S \subset \binom{[n]}{k}: T \subset S\}$, for some $|T|=t$. 
For the general question of determining the maximum possible size of a $t$-intersecting family for any triple $(n,k,t)$, Frankl~\cite{F78} introduced the families $\mathcal{F}_{n,k,t,r}=\{S \subset \binom{[n]}{k}:|S\cap [t+2r]|\geq t+r\}$ and conjectured that the maximum is always obtained by one of them. This was shown for a wide range of parameters by Frankl and F\"{u}redi~\cite{FranklF91} and then for all $(n,k,t)$ by Ahlswede and Khachatrian~\cite{AK97} in the so-called \emph{Complete Intersection Theorem}. This theorem has become one of the best-known results in extremal combinatorics and played an important role in major applications to computer science, in particular in the seminal result of Dinur and Safra~\cite{DS05} on hardness of approximation.

For permutations, Deza and Frankl~\cite{DF77} proved in 1977 an analogue of the Erd\H{o}s-Ko-Rado theorem (for $t=1$) for permutations: they showed that the maximum size of an intersecting family $F \subset S_n$ is $(n-1)!$, which is obtained for the \emph{dictatorship} families $(S_n)_{i \to j} = \{\sigma \in S_n: \sigma(i)=j\}$. They also showed that for $t=2,3$ and $n$ of a certain form, the maximum size is $(n-2)!$ and $(n-3)!$, respectively.\footnote{The reason behind the latter results is the existence of certain multiply transitive permutation subgroups for such values of $n$, over which it is possible to do an averaging argument. This is one of the reasons why the algebraic combinatorics community got interested in the problem.}
At the other end of the spectrum, they showed that for $t \geq 3$ and $n>n_0(t)$, the maximum size of an $(n-t)$-intersecting $F \subset S_n$ is obtained by the family 
$\{\sigma \in S_n: |\mathrm{Fixed}(\sigma)| \geq n-\frac{t}{2}\}$ for even $t$ and by the family $\{\sigma \in S_n: |\mathrm{Fixed}(\sigma) \cap [n-1]|\geq (n-1)-\frac{t-1}{2}\}$ for odd $t$, where $\mathrm{Fixed}(\sigma)$ is the set of fixed points of $\sigma$. Deza and Frankl conjectured that an analogue of the Erd\H{o}s-Ko-Rado theorem holds for permutations -- namely, that for all $n>n_0(t)$, the maximum size is $(n-t)!$. Cameron~\cite{Cam88} conjectured in 1986 that the $t$-umvirate families 
\[
(S_n)_{i_1 \to j_1,\ldots,i_t \to j_t} = \{\sigma \in S_n: \forall 1 \leq \ell \leq t, \sigma(i_\ell)=j_\ell\}
\]
are the only maximum-sized families.

Despite a large volume of research, the progress on the $t$-intersection problem for permutations was significantly slower than on the corresponding problem for $k$-subsets of $[n]$. 
The first advance was obtained in 2003 by Cameron and Ku~\cite{CK03} and (independently) by Larose and Malvenuto~\cite{LM04}, who showed that for $t=1$, the dictatorships $(S_n)_{i \to j} = \{\sigma \in S_n: \sigma(i)=j\}$ are the unique maximizers. A major breakthrough was obtained by Ellis, Friedgut, and Pilpel~\cite{EFP11} in a paper published at the Journal of the AMS in 2011. They used representation-theoretic techniques to prove that the Deza-Frankl conjecture indeed holds for all $n \geq n_0(t)$.  
Roughly at the same time, Ellis~\cite{Ell11} proved that Cameron's conjecture holds for all $n>n_0(t)$, and that furthermore, any $t$-intersecting family of size $|F|>(1-1/e+o(1))(n-t)!$ must be contained in a $t$-umvirate. Such \emph{stability} results for families of $k$-subsets of $[n]$ were obtained in~\cite{AhlswedeK96,EKL19,EKL16,Fra87,Fri08,HM67,KZ22}. 
Based upon their breakthrough, Ellis et al.~\cite{EFP11}~raised a daring conjecture:
\begin{conj}[Ellis, Friedgut and Pilpel]
\label{conj:EFP}
For any $n \in \mathbb{N}$ and any $1 \leq t \leq n$, the maximum size of a $t$-intersecting family in $S_n$ is obtained by one of the families 
\[
\mathcal{F}_{n,t,r}=\{\sigma \in S_n: |\mathrm{Fixed}(\sigma) \cap \{1,2,\ldots,t+2r\}|\geq t+r\},
\]
where $\mathrm{Fixed}(\sigma)$ is the set of fixed points of $\sigma$. In particular, for all $t<n/2$, the maximum is obtained by the $t$-umvirate $\m F_{n,t,0}=\{\sigma \in S_n: \mathrm{Fixed}(\sigma) \supset [t]\}$.

Furthermore, all maximum-sized $t$-intersecting families are the double translates of the families $\mathcal{F}_{n,t,r}$, i.e., have the form $\tau_1 \m F \tau_2$, where $\m F= \m F_{n,t,r}$ for some $r$ and $\tau_1,\tau_2 \in S_n$. 
\end{conj}
In recent years, numerous papers used various techniques to prove Conjecture~\ref{conj:EFP} in special cases. In particular, 
Ellis and Lifshitz~\cite{DN22} showed that the conjecture holds for all $t=O(\frac{\log n}{\log\log n})$ using the discrete Fourier-analytic \emph{junta method}~\cite{NN21}, along with a representation-theoretic argument. Kupavskii and Zakharov~\cite{KZ22} proved that the conjecture holds for all $t=O(\frac{n}{(\log(n))^2})$ using their \emph{spread approximations method}. 
Meagher and Razafimahatratra~\cite{MeagherR21} proved the conjecture for $t=2$ and all $n$ using spectral methods, and Chase, Dafni, Filmus and Lindzey~\cite{CDFL26} proved the uniqueness of the extremal families in the same setting using the theory of \emph{complexity measures of Boolean functions}~\cite{BuhrmanW02,DafniFLLV21}.
Keller, Lifshitz, Minzer and Sheinfeld~\cite{keller2024t} 
proved that the conjecture holds for all $t\leq cn$ for a universal constant $c$, using the analytic method of \emph{hypercontractivity for global functions}~\cite{KLLM21}. All the above results apply only in the range $t<\frac{n}{2}$ where the maximum size is obtained by the $t$-umvirates. 
In an earlier version of this paper, posted on arXiv in 2024~\cite{kupavskii2024almost}, the second author proved that the conjecture holds for all $t \leq n-O(\frac{n \log\log n}{\log n})$ using some of the methods of this paper, thus obtaining the first result for the general problem, where all families $\mathcal{F}_{n,t,r}$ are candidates for being extremal, since the 1977 work of Deza and Frankl  that covered the case of $t$ being very close to $n$. Very recently, Saengrungkongka~\cite{saengrungkongka2026extremal} used enhancements of some of these methods to push the bound further to $t \leq n-n^{5/7+\epsilon}$.

%For the general problem, where all families $\mathcal{F}_{n,t,r}$ are candidates for being extremal, no results were obtained since the 1977 work of Deza and Frankl that covered the case of $t$ being very close to $n$. %\nathan{We have to mention the new result which solved the problem for $n<n-n^{5/7}$ and say that it's based on Andrey's draft from 2024.}
%For comparison, in the $t$-intersection problem for sets, the range where the $t$-umvirate $\{F \subset \binom{[n]}{k}:T \subset S\}$ was resolved already by Wilson~\cite{Wilson84}  in 1984, while the general problem was solved by Ahlswede and Khachatrian~\cite{AK97} only 13 years later, in a much more complex way.
  
%Unlike all previous works in this direction (e.g.,~\cite{CK03,EFP11,DN22,DF77,LM04}), the work of Kupavskii and Zakharov does not use any specific properties of $S_n$ (and in particular, its representation theory). Instead, they prove that the  conjecture holds for the general setting of $t$-intersecting families in $\mathcal{U}$, where the `universe' $\mathcal{U}$ is a `pseudorandom' sub-family of ${N}\choose{M}$, and show that $S_n$ can be viewed as a `pseudorandom' sub-family of ${n^2}\choose{n}$. The technique of Kupavskii and Zakharov is called the \emph{spread approximations method}. Similarly to \cite{DN22}, Kupavskii and Zakharov reduce their analysis to the study of global sets. Their method relies on the recent breakthrough results of Alweiss, Lovett, Wu, and Zhang~\cite{ALWZ21} on the sunflower problem in place of the Fourier analytic approach of \cite{DN22}. 

\subsection{Our results} In this paper we obtain an analogue of the Complete Intersection Theorem for all permutation groups $S_n$, $n > n_0$, thus proving Conjecture~\ref{conj:EFP} except for finitely many cases (i.e., permutation groups on at most $n_0$ elements) and essentially solving the Deza-Frankl problem. Specifically, we prove the following theorem, which also generalizes the `stability' result of Ellis~\cite{Ell11}. 
\begin{thm}\label{Thm:Main}
    There exists $n_0 \in \mathbb{N}$ such that for all $n >n_0$ and all $1 \leq t \leq n$, the maximum size of a $t$-intersecting family $\m F  \subset S_n$ is obtained by one of the families 
    \[
    \mathcal{F}_{n,t,r}=\{\sigma \in S_n: |\mathrm{Fixed}(\sigma) \cap \{1,2,\ldots,t+2r\}|\geq t+r\},
    \]
    where $\mathrm{Fixed}(\sigma)$ is the set of fixed points of $\sigma$.
%Furthermore, all maximum-sized $t$-intersecting families are the double translates of the families $\mathcal{F}_{n,t,r}$, i.e., have the form $\tau_1 F \tau_2$, where $F \in \mathcal{F}_{n,t,r}$ and $\tau_1,\tau_2 \in S_n$. 

Furthermore, for any $\eta>0$, there exists $n_1 \in \mathbb{N}$ such that for all $n>n_1$ and all $1 \leq t \leq n$, if $\m G \subset S_n$ is a $t$-intersecting family and $|\m G| \geq (1-\frac{1}{e}+\eta)\max_r |\m F_{n,t,r}|$, then $\m G \subset \tau_1 \mathcal{G'} \tau_2$, where $\m G'$ is a family of the form $\mathcal{F}_{n,t,r}$ for some $0 \leq r \leq \lfloor \frac{n-t}{2} \rfloor$ and $\tau_1,\tau_2 \in S_n$.  
\end{thm}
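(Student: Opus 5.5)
We would follow the spread approximation strategy of Kupavskii and Zakharov, refined to track the full Ahlswede--Khachatrian structure rather than only the $t$-umvirate. The plan is to view $S_n$ as a subfamily of $\binom{[n]^2}{n}$, identifying $\sigma$ with its graph $\{(i,\sigma(i))\}$. Then $t$-intersection of permutations is exactly $t$-intersection of these $n$-sets, and the uniform measure on $S_n$ is ``spread'': for a partial matching $T$ of size $s$, the fraction of permutations containing $T$ is $(n-s)!/n!\approx n^{-s}$.

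\textbf{Step 1 (spread approximation).} Given a $t$-intersecting $\m F\subset S_n$, we iteratively extract maximal partial matchings $T$ for which $\m F_T=\{\sigma\in\m F:T\subset\sigma\}$ is large relative to $(n-|T|)!$. By the spread lemma (the Alweiss--Lovett--Wu--Zhang sunflower bounds adapted to $S_n$), $\m F_T$ is then spread. This yields a family $\m S$ of partial matchings of size at most $q$, with the following properties:
\begin{itemize}[leftmargin=*]
\item $\m S$ is itself $t$-intersecting, since two spread families that could be made to disagree would contradict $t$-intersection;
\item $\m F\subset \langle\m S\rangle\cup\m R$, where $\langle \m S\rangle$ is the up-closure in $S_n$ and the remainder $\m R$ is tiny, say $|\m R|\le n^{-c}\max_r|\m F_{n,t,r}|$.
\end{itemize}
When $n-t$ is small, the sizes involved are not geometric. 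In that range we would instead pass to the complement and work with derangement-type structures on the roughly $n-t$ non-agreeing coordinates, following the Deza--Frankl argument.

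\textbf{Step 2 (extremal problem for the approximation).} Next we show that $|\langle\m S\rangle|\le\max_r|\m F_{n,t,r}|$, with near-equality only if $\m S$ is, up to double translation, the generating family $\{T:\ T\text{ consists of }t+r\text{ fixed points in }[t+2r]\}$. We plan to reduce this to the Complete Intersection Theorem. Weighting each matching $T$ by $(n-|T|)!/n!$ approximates the product measure $\mu_p$ with $p\approx 1/n$ on a $t$-intersecting family of sets of ``edges''. After shifting, which we must justify for matchings by compressing along row and column permutations, the family reduces to a $t$-intersecting family in a set system on which Ahlswede--Khachatrian (in its $\mu_p$ version due to Ahlswede--Khachatrian and Dinur--Safra) applies. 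That theorem identifies the optimum as $\{A:|A\cap[t+2r]|\ge t+r\}$, and in the permutation setting this corresponds to $\m F_{n,t,r}$. The dependence in the measure, namely $(n-s)!$ versus $n^{-s}$, must be handled to relative precision $1+o(1)$, and error terms must be uniform over all $r$.

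\textbf{Step 3 (stability and exact result).} If $|\m F|\ge(1-1/e+\eta)\max_r|\m F_{n,t,r}|$, the stability version of Step 2 forces $\m S$ to be contained, after translation, in the generator of some $\m F_{n,t,r}$. To handle an element $\sigma\in\m F\setminus\tau_1\m F_{n,t,r}\tau_2$, we count the permutations in $\m F_{n,t,r}$ that fail to $t$-intersect $\sigma$. This count is a derangement-type quantity, giving the $1/e$ fraction in Ellis's argument. It implies that any such $\sigma$ kills more than an $(1/e-\eta)$-fraction of the main family, contradicting the size assumption. Hence $\m F\subset\tau_1\m F_{n,t,r}\tau_2$, which gives both parts of the theorem.

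\textbf{Main obstacle.} We expect the main obstacle to be Step 2 in the regime where $t$ is a constant fraction of $n$, or close to $n$. There $r$ can be of order $n$, the generating matchings are not of bounded size, and the spread lemma's quantitative losses (polylogarithmic factors in $q$) threaten the $1+o(1)$ precision needed to separate $\m F_{n,t,r}$ from $\m F_{n,t,r\pm1}$. Our first attempt would be to run the spread approximation only relative to the fixed-point structure, conditioning on the agreement set. This reduces the problem to a $t$-intersection problem on the $n$-cube of fixed-point indicators, with weights close to $\mu_{1/n}$-like but non-product. We would then apply the Ahlswede--Khachatrian comparison directly to those weights through a generating-function/Poisson approximation of fixed-point counts.
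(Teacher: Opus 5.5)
\textbf{The gap is in Step~2.} An inequality $|\langle\mathcal S\rangle|\le\max_r|\mathcal F_{n,t,r}|$ for an arbitrary $t$-intersecting family $\mathcal S$ of partial matchings is essentially the full problem, and the reduction you propose fails for two reasons.

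First, you give no compression of permutation families that preserves both $t$-intersection and the matching structure, and none is available. The absence of shifting is the standard obstruction for this problem, so ``compressing along row and column permutations'' is the crux, not a routine step.

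Second, the weight is not a perturbation of a product measure. A matching of size $s$ has up-closure of relative size $(n-s)!/n!$, so the effective edge probability at level $s$ is $1/(n-s)$, not $1/n$. Under $\mu_{1/n}$ the Ahlswede--Khachatrian optimum is $r=0$ for every $t\le n-1$, since $1/n\le 1/(t+1)$. The true optimum has $r$ of order $n-t$ when $t$ is close to $n$. For example, for $(n-4)$-intersection the family $\{\sigma:|\mathrm{Moving}(\sigma)|\le 2\}$, of size $1+\binom n2$, beats the umvirate, of size $24$. Under $\mu_{1/n}$ the corresponding set family is roughly half the umvirate's measure. The nontrivial $r$ is produced precisely by the drift from $1/n$ to $1/(n-s)$. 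It cannot be absorbed into $1+o(1)$ errors; you would need a complete intersection theorem for this non-product weight, which is the original question.

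Your fallbacks do not fill the gap either. The cube of fixed-point indicators discards agreements on moving points, which general families do use. Deza--Frankl covers only bounded $n-t$. It does not reach $n-t$ up to about $\sqrt n$, which is exactly where spread approximation inside $[n]\times[n]$ breaks down, because the ambient spreadness at level $s\approx t$ is only $n-s$.

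\textbf{What the paper does instead.} It never proves an extremal inequality for the approximating family.
\begin{enumerate}
\item It writes the intersection parameter as $n-t$, normalises $\mathrm{id}\in\mathcal G$, and encodes $\sigma$ as $F_\sigma=\mathrm{Moving}(\sigma)\sqcup\{(i,\sigma(i)):i\in\mathrm{Moving}(\sigma)\}$. The problem becomes cross $(i+j-t)$-intersection of families of sets of size $2i\le 2t$. For large $t$ this is preceded by one spread approximation in $[n]\times[n]$ and a partial-permutation version of the encoding.
\item Iterative spread approximation and peeling shrink the sets to size $2i-t$ plus a small excess.
\item The decisive input is the maximality of $\mathcal G$ itself. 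Every restriction $\mathcal A_i[X]$ with $|X|=2i-t$ corresponds to an $(n-t)$-intersecting family, so $|\mathcal G|\ge a^{(i)}_{2i-t}$. Together with the decay estimates for $a^{(i)}_\ell$ and the peeling counts, this forces most of $\mathcal G$ to lie above a single set $F$, or a dense part of $\binom{A\cup B}{2m-t+k}$.
\item An $F$-correction gives approximate containment in a double translate of some $\mathcal F_{n,n-t,r}$, and a derangement count gives exact containment.
\end{enumerate}

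\textbf{Step~3 also needs a fix.} It resembles the paper's last step, but as stated it is not quite right. If the family that $\mathcal G$ nearly lies in is not the maximiser, an outside permutation may kill less than a $1/e$ fraction of it. To recover the $1-\frac1e$ threshold you must compare with the neighbouring family, one step in $r$, as the paper does.
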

As was shown by Ellis~\cite{Ell11} for $t=1$, the constant $1-1/e$ in the stability statement is optimal. Indeed, let $t<n/2$ (the range in which $\max_r |\m F_{n,t,r}|=|\m F_{n,t,0}|=(n-t)!$), and let $\sigma \in S_n$ be the transposition that interchanges $1$ and $n$ and leaves all other points fixed. The family $\m G = \{\tau \in (S_n)_{1 \to 1,\ldots,t \to t}:|\tau \cap \sigma|\geq t\} \cup \{\sigma\}$ is $t$-intersecting, satisfies $|\m G| = (1-1/e-o(1))(n-t)!=(1-1/e-o(1))\max_r |\m F_{n,t,r}|$, and is not contained in a double translate of a family of the form $\m F_{n,t,r}$. 

\subsection{Our techniques} 
Unlike some of the papers that obtained advances on the Deza-Frankl problem using representation-theoretic and Fourier-theoretic techniques (e.g.,~\cite{Ell11,EFP11,DN22,keller2024t,MeagherR21}), our proof is purely combinatorial. Our main technical tool is the \emph{spread approximation method} developed by Kupavskii and Zakharov~\cite{KZ22} around the breakthrough work of Alweiss, Lovett, Wu and Zhang~\cite{ALWZ21} on the Erd\H{o}s-Rado sunflower conjecture. Importantly, unlike the representation-theoretic methods used before, this method does not use the special structure of permutations, and instead it works for a large class of settings that can be viewed as `quasirandom' subsets of a simple-structured `universe', such as $S_n$ viewed as a subset of $[n]^n$ or $\binom{[n]\times [n]}{n}$. In this work
we develop the spread approximation method into a general framework for solving $t$-intersection problems, which will hopefully enable obtaining complete $t$-intersection theorems over various universes.  

%Our main technical tool is a new \emph{iterative spread approximation lemma} that will hopefully be useful for other problems in extremal combinatorics. 
%\nathan{Assuming Theorem 4 is indeed a technical novelty, I propose to write here the definition of spreadness, an informal statement of Theorem 4, and a short mention of the relation to previous works on spread approximations and to the work on the sunflower conjecture. If Theorem 4 cannot be considered a technical novelty, probably it will be better to skip this subsection altogether.}

%\nathan{We will have to think whether we want to elaborate more on previous techniques, and whether we want to mention the fact that we probably know a proof of the theorem via hypercontractivity, but we prefer the current proof since it is simpler and shorter. Currently, I think the answer to both questions is `no'.}\andrey{I don't know. we may or may not elaborate on the `history', if we want to explain the replacement of my original submission on arXiv. A complication is that it's now used in a couple of places, including the MIT person paper.}

\subsection{Organization of the paper}
In Section~\ref{sec:overview} we introduce notations that will be used throughout the paper and present an overview of the proof of Theorem~\ref{Thm:Main}. For the sake of convenience, the structure of the following sections is described at the end of Section~\ref{sec:overview}, at the point where the high-level structure of the proof will be clear.

\section{Overview of the Proof}
\label{sec:overview}

\subsection{Notations and basic notions}

%\nathan{Here we want to include all notations that appear throughout the paper. Section-specific notations will be introduced in the relevant sections.}

Throughout the paper, families of sets or permutations are denoted by calligraphic letters (e.g., $\m F, \m G$), sets are denoted by standard letters, and permutations are denoted by Greek letters. $\m G$ denotes a family of permutations (unless explicitly stated otherwise), and $\m F$ always denotes a family of sets. We identify each permutation $\sigma \in S_n$ with the set of pairs $\{(i,\sigma(i))\}_{i \in [n]}$ and say that each pair $(i,\sigma(i))$ is contained in $\sigma$. A double translate of a family $\m G \subset S_n$ is a family of the form $\tau_1 \m G \tau_2=\{\tau_1 \sigma \tau_2: \sigma \in \m G\}$, for some  $\tau_1,\tau_2 \in S_n$. 

For a permutation $\sigma \in S_n$, the set of fixed points is denoted by $\mathrm{Fixed}(\sigma)$ and the set of moving points\footnote{The set of moving points is usually called the \emph{support} of the permutation. We prefer to use a different term, in order to avoid confusion with the support of \emph{partial permutations}, see below.} is denoted by $\mathrm{Moving}(\sigma) = [n] \setminus \mathrm{Fixed}(\sigma)$. The power set of a set $S$ (i.e., the family of all subsets of $S$) is denoted by $\m P(S)$. The family of $k$-element subsets (resp., $(\le k)$-element subsets) of a set $S$ is denoted by $\binom{S}{k}$ (resp., $\binom{S}{\le k}$). Disjoint union is denoted by $\sqcup$. The notation $x:=y$ means that we define $x$ to be equal to $y$.  

\medskip \noindent \emph{Restrictions.} A central notion used multiple times in the paper is restriction of a family, which considers all sets in the family that contain a certain set or one of several possible sets. For set families $\ff, \m S$ and a set $X$, we use the notations
\begin{equation*}
\ff(X):=\{F\setminus X:X\subset F, F\in \ff\}, \qquad
\ff[X]:= \{F: X\subset F, F\in \ff\}, 
\end{equation*}
%\ff(X, Y)&:= \{F\setminus X: F\cap Y = X\},\\
%\ff[X, Y]&:= \{F: F\cap Y = X\},\\
\begin{equation*}
    \ff(\s):=  \bigcup_{A\in \s}\ff(A), \qquad
\ff[\s]:=  \bigcup_{A\in \s}\ff[A].
\end{equation*}

\medskip \noindent \emph{Spreadness.} 
The main technique used in the paper is \emph{spread approximations}. For a real number $r \geq 1$, we say that a family $\ff$ of sets is {\it $r$-spread} if for each non-empty set $X$ we have $|\ff(X)|< r^{-|X|}|\ff|$. For $s \in \mathbb{N}$, we say that $\m F$ is {\it $(r,s)$-spread} if for any disjoint sets $S,T$ with $|S|=s$ and $T \neq \emptyset$, we have $|\ff(S \cup T)|< r^{-|T|}|\ff(S)|$. %\ohad{Generally I think it is more convinient to have spreadness with $\le$ instead of $<$, this way we dont have to say nonempty set every time. so here for example we have to say nonempty $T$}. \andrey{Maybe, indeed. Some inequalities work out nicer with this, though.} \nathan{I left the strict inequality so that formulas will look nicer, and added the assumption $T \neq \emptyset}$.
We say that $\m F$ is {\it weakly $(r,s)$-spread} if the above holds for $S_0$ such that $|\ff(S_0)|=\max_{S:|S|=s}|\ff(S)|$ and any $T$. In other words, denoting $a_m=\max_{S:|S|=m}|\ff(S)|$ for every $m \in \mathbb{N}$, $\m F$ is {\it weakly $(r,s)$-spread} if $a_{s+t}< r^{-t}a_s$ for all $t>0$. 
Intuitively, spreadness is a pseudo-randomness property saying that the family is not concentrated on sets that contain certain elements.

Essentially, the theorem underlying the spread approximation technique asserts that if a family of sets in some ambient space is `locally quasirandom' (i.e., sufficiently spread), then it is `globally quasirandom' -- which means that we expect to see sets from the family inside a typical subset of the ambient space (see Theorem~\ref{thmtao} below). An important yet simple observation is 
that any sufficiently large family contains a spread subfamily (see Observations~\ref{obs13} and~\ref{obs34} below). In the spread approximation technique, the set family in study is `approximated' by a well-structured spread family.

\subsection{A different point of view on the problem}

A central ingredient of our proof is approaching the $t$-intersection problem for permutations from a different point of view. All previous works on the problem, except for the original work of Deza and Frankl~\cite{DF77},
%, worked in the range $t<\frac{n}{2}$ in which the extremal example is a double translate of the family $\m F_{n,t,0}=\{\sigma \in S_n:\mathrm{Fixed}(\sigma) \supset [t]\}$, and 
focused on fixed points of the permutations in the family. On the contrary, we focus on the set of the \emph{moving} points, %$D_{\sigma}=[n] \setminus \mathrm{Fixed}(\sigma)$, 
$\mathrm{Moving}(\sigma)=[n] \setminus \mathrm{Fixed}(\sigma)$, like Deza and Frankl did in the analysis of $(n-t)$-intersecting families for $n \geq n_0(t)$. 

As a result, we replace $t$ with $n-t$ and assume throughout the paper that $\m G \subset S_n$ is an $(n-t)$-intersecting family. The smaller the value of $t$, the easier it will be for us to prove the assertion of the theorem. 

\subsection{Three simplification procedures}

In the course of the proof, we repeatedly apply to the families we study the following three simplification procedures.

\medskip \noindent \emph{Non-standard representation of permutations by sets.} We introduce a new representation of permutations by sets. For $\sigma \in S_n$, we define $D_{\sigma}=\mathrm{Moving}(\sigma)$, 
%to be the set of points moved by $\sigma$ as above, 
and define $E_{\sigma}$ to be the set of pairs $\{(i,\sigma(i)):i \in D_\sigma\}$. We use the transformation 
\begin{equation}\label{Eq:Perms_to_Sets}
    \sigma \mapsto F_\sigma := D_\sigma \sqcup E_\sigma 
\end{equation}
to replace a family of permutations by a family of subsets of $\mb X = [n]\sqcup\{(i,j):i,j \in [n], i\neq j\}$. That is, we represent each permutation by the set of its moving points and the information on where they move. It turns out that this transformation conveys in a good way the intersection properties of families of permutations. Specifically, if $\m G \subset S_n$, $\m F$ is a family of subsets of $\mb X$ obtained from $\m G$ by the transformation~\eqref{Eq:Perms_to_Sets} and $\m F_i =\{A \in \m F: |A|=2i\}$, then $\m G$ is $(n-t)$-intersecting if and only if for any $i,j$, the families $\m F_i, \m F_j$ are cross $(i+j-t)$ intersecting (meaning that for any $A \in \m F_i, B \in \m F_j$, we have $|A \cap B| \geq i+j-t$). 
%Conversely, if $\m F \subset \m X$ satisfies the cross-intersection properties then it is the image under~\eqref{Eq:Perms_to_Sets} of an $(n-t)$-intersecting family $\m G \subset S_n$.   

One advantage of this transformation is that it allows us to operate almost entirely with sets rather than with permutations, which turns out to be much more convenient. Another advantage is that since we may assume w.l.o.g.~that the original $(n-t)$-intersecting family $\m G$ contains the identity permutation and hence any $\sigma \in \m G$ has at most $t$ moving points, all sets in the set family $\m F$ we obtain are `small' -- i.e., have at most $2t$ elements. On the other hand, the transformation makes spreadness calculations more complicated, as restrictions $\m F[X]$ usually lose the natural correspondence between the $D$ part (representing the moving points) and the $E$ part (representing the information on where they move). The presentation of the transformation, along with comparisons between sizes of various families of the type $\m F[X]$, which are extensively used throughout the proof, span Section~\ref{sec:PermutationstoSets} of the paper. 
%\nathan{If this representation is novel (is it?), it will be good to mention this.}\andrey{It is novel, as far as I know.}

\medskip \noindent \emph{Iterative spread approximation.} This simplification procedure allows (under certain conditions, of course) approximating a $t$-intersecting family 
%\ohad{of subsets of $X$?}\nathan{Yes, thanks!} 
$\m F \subset \m P(\mb X)$ by a $t$-intersecting family $\m S$ of sets of size only slightly larger than $t$, such that almost every set in $\m F$ contains some set in $\m S$ and for each $A \in \m S$, there exists $\m F_A \subset \m F$ for which $\m F_A(A)$ is `spread'. That is, we approximate $\m F$ by a union of spread parts based on `small-sized' restrictions, while maintaining the $t$-intersection property. The relatively small size of the elements of $\m S$ and the spreadness of the parts of $\m F$ that contain them turn out to be very helpful in our proof. On the other hand, in the elements of $\m S$, the natural correspondence between the $D$ and the $E$ parts is lost, and its recovery requires some technical effort. 
%are very helpful in finding a sub-structure that is common to most elements of $\m F$, which will be a main step in our proof.   

The approximation is obtained by a complex iterative procedure which spans Section~\ref{sec:IterativeSpreadApproximation} and is probably the `heaviest' part of the paper. A similar procedure in the special case of families of $k$-subsets of $[n]$ was introduced by Frankl and Kupavskii~\cite{FranklK25} who used it to study the Hajnal-Rothschild problem; we develop the method in a general setting.  

\medskip \noindent \emph{Peeling simplification.} In this iterative process, we start with a family $\m S_0=\m S$, and at the $\ell$'th step, we find a set $Y_\ell$ such that some part of $\m S_\ell(Y_\ell)$ is spread and contains more than one element, and replace $\m S_{\ell}$ with $\m S_{\ell+1}=\m S_\ell \setminus \m S_{\ell}[Y_\ell] \cup \{Y_{\ell}\}$. That is, we replace all sets in $\m S_{\ell}$ that contain $Y_\ell$ with the single set $Y_{\ell}$. The process ends when no more $Y_{\ell}$ can be found -- i.e., when the resulting family does not contain spread parts. We show that the process preserves the intersection properties of the original family and that each element in the original family contains an element of the resulting family, and obtain a bound on the numbers of sets of each size the resulting family contains (the bound stems from the fact that it does not contain spread parts). Thus, the process allows us to replace a set family with a `kernel' whose size we can bound efficiently. On the other hand, as in the case of the iterative spread approximation, this simplification procedure loses the natural correspondence between the $D$ and the $E$ parts of the sets in the family, and its recovery requires some technical effort. This process and its properties are described in Section~\ref{sec:peeling_off} of the paper. A similar procedure was introduced by Kupavskii and Zakharov~\cite{KZ22} and refined by Kupavskii~\cite{Kupavskii26EJC}.
%\nathan{If a similar process was already used in previous papers (I think it was), we should write where.}\andrey{I left a comment in Section 5.}

\subsection{Proof overview}

Most of the proof is performed within the realm of sets. Given $n,t$, we consider a maximum-size $(n-t)$-intersecting family $\m G \subset S_n$, assume w.l.o.g.~that it contains the identity permutation, and apply to it the transformation~\eqref{Eq:Perms_to_Sets} to obtain a family $\m F$ of subsets of size $\leq 2t$ of $\mb X = [n]\cup\{(i,j):i,j \in [n], i\neq j\}$. Then, the proof proceeds in two steps:
\begin{enumerate}
    \item We find a common sub-structure in most of the elements of $\m F$. Informally, we  show that there exist an integer $m\le t$ and a set $F = D \cup M$, where $|D \cup M|=2m-t$, $D \subset [n]$ and $M=\{(i_1,\sigma_0(i_1)),\ldots,(i_s,\sigma_0(i_s))\}$ for $i_1,\ldots,i_s \in D$ and some $\sigma_0 \in S_n$, such that for any $m' \leq 2t$, a large portion of the sets in $\ff_{m'}$ intersect $F$ in at least $m+m'-t$ elements.

    \item We show that there exists a `correction' $\sigma_1 \m G$ of the original family $\m G$ such that for $\m F'$ constructed from $\sigma_1 \m G$ by~\eqref{Eq:Perms_to_Sets}, a similar intersection property holds with respect to a set $F' = D' \cup M'$, where $D'=D \setminus \{i_1,\ldots,i_s\}$ and $M'=\emptyset$. Specifically, we show that for a large portion of $\tau \in \sigma_1 \m G$, we have $|\mathrm{Moving}(\tau) \setminus D'|\leq (t-|D'|)/2$. As the family $\{\sigma \in S_n: |\mathrm{Moving}(\sigma) \setminus D'|\leq (t-|D'|)/2\}$ is a double translate of the family $\m F_{n,n-t,(t-|D'|)/2}$, this implies that a large portion of $\m G$ is contained in a double translate of $\m F_{n,n-t,(t-|D'|)/2}$. Then, an argument using the $(n-t)$-intersection property of $\m G$ and its maximality among the $(n-t)$-intersecting families allows deducing that the entire $\m G$ is included in a double translate of $\m F_{n,n-t,(t-|D'|)/2}$. Moreover, a similar argument holds whenever $\m G$ satisfies $|\m G|>(1-\frac{1}{e}-o(1))\max_r |\m F_{n,n-t,r}|$. 
    %is larger than the size of the second-largest among the families $\m F_{n,n-t,r}$. \nathan{Is the stability statement true?} \andrey{At the very least, in the stability statement we should expect Hilton--Milner-type examples as well. (Unless the parameters are `very favourable' for us) I think that this is possible to prove, but may require some extra effort. Probably, proving that most of the family should be contained in one of these examples is easier.}
\end{enumerate}
The hard step of the proof is Step~(1), which is performed differently for three ranges of $t$ -- Small $t$: $t < n^{\epsilon}$, Medium $t$: $n^{\epsilon}\leq t<n^{\frac{1}{2}+\epsilon}$, and Large $t$: $n^{\frac{1}{2}+\epsilon} \leq t<n-n^{1-\frac{\epsilon}{8}}$. There is no need to cover the range $t>n-n^{1-\frac{\epsilon}{8}}$ (i.e., the range where the intersection size is less than $n^{1-\frac{\epsilon}{8}}$), as the assertion of Theorem~\ref{Thm:Main} in this range was already proved in~\cite{keller2024t}. We do not specify the value of $\epsilon$ and do not try to optimize it; one may take $\epsilon=0.01$ throughout the proof.
\begin{enumerate}[label=(\alph*)]
\item \emph{Small $t$: $t<n^{\epsilon}$.} We consider the decomposition of $\m F$ into the families $\{\m F_{i}\}_{i \leq t}$, which are $(2i-t)$-intersecting as was written above. By the pigeonhole principle, there exists $m \leq t$ such that $|\m F_m|\geq n^{-\epsilon}|\m F|$. We apply the \emph{peeling simplification} to $\m F_m$ to obtain a $(2m-t)$-intersecting family $\m S_m$ such that each $F \in \m F_m$ contains some $S \in \m S_m$ and we can bound the number of sets of each size in $\m S_m$. Using these bounds, we deduce that most of the sets in $\m F_m$ contain a set in $\m S'=\{S \in \m S_m:|S|=2m-t \mbox{ or } 2m-t+1\}$. 

Then, we use the $(2m-t)$-intersection property of $\m S_m$ to derive that there is a single element in $\m S'$ such that most of the sets in $\m F_m$ contain it. Moreover, this element is of the form $F=D \cup M$ where $D \subset [n]$, $M=\{(i_1,\sigma_0(i_1)),(i_2,\sigma_0(i_2)),\ldots\}$ for distinct elements $i_j \in D$, and $|M|=|D|$ if $t$ is even or $|M|=|D|-1$ if $t$ is odd. While this statement is not exactly of the form described in Step~(1) above, it allows completing the proof by a variant of Step~(2) above. Namely, we deduce that there exists a correction $\sigma_1 \m G$ such that an $n^{-\epsilon}$-fraction of it is included in a double translate of either $\m F_{n,n-t,\frac{t}{2}}$ (if $t$ is even) or $\m F_{n,n-t,\frac{t-1}{2}}$ (if $t$ is odd), and then we derive that the entire family $\m G$ is included in such a double translate.      

\item \emph{Medium $t$: $n^{\epsilon}\leq t<n^{\frac{1}{2}+\epsilon}$.} The large size of the sets in $\m F$ does not allow applying the strategy of the `small $t$' case, and hence, we first apply to each subfamily $\m F_i$ ($0 \leq i \leq t$) the \emph{iterative spread approximation} simplification procedure, in order to reduce the sizes of the sets. We obtain families $\m S_i$ of sets of size at most $(2i-t)+k_i$, where $k_i \leq c\log n$ if $t\leq n^{1/2}$ and $k_i \leq n^{\epsilon}$ if $t>n^{1/2}$, such that $|\m F_i \setminus \m F_i[\m S_i]|=o(|\m F_i|)$, each $\m S_i, \m S_j$ are cross $(i+j-t)$ intersecting, and for each $S \in \m S_i$, there exists a subfamily $\m F_S \subset \m F_i$ such that $\m F_S[S]$ is spread. Using the cross-intersection property, we deduce that almost all elements of $\m F$ are included in one of the families $\m F_{p+j}[\m S_{p+j}]$, for some $\lfloor \frac{t}{2} \rfloor \leq p \leq t$ and $0 \leq j \leq k_p$.   

At this stage, we apply the \emph{peeling simplification} to each $\m S_i$ ($p \leq i \leq p+k_p)$ to obtain a $(2i-t)$-intersecting family $\m W_i$ such that most sets $F \in \m F$ contain some $W \in \m W_i$ (for some $i$) and we can bound the number of sets of each size in $\m W_i$. However, these bounds are not sufficient in this case, and hence a more complex argument is needed for obtaining improved bounds. 

By the pigeonhole principle, there exist $p \leq m \leq p+k_p$ and $0 \leq k \leq k_p$ such that
$|\m F_{m}[\m W_m^{(2m-t+k)}]| \geq (2k_p)^{-2}|\m F|$, where $W_m^{(\ell)}$ denotes the subfamily $\{W \in \m W_m:|W|=\ell\}$. If this happens for $k=0$, then $\m W_m^{(2m-t+k)}$ consists of a single set $F$, and hence, $|\m F_{m}[F]| \geq (2k_p)^{-2}|\m F|$, which makes this case easy to handle. Otherwise, we take $A,B \in \m W_m^{(2m-t+k)}$ such that $|A \cap B|=2m-t$ and set $\m Y=\m W_m^{(2m-t+k)} \cap \binom{A \cup B}{2m-t+k}$. We show that most of $\m F_m[\m W_m^{(2m-t+k)}]$ is contained in $\m F_m[\m Y]$, by a complex counting argument that exploits the fact that $\m W_m$ does not contain spread parts and intersection properties. This means that an $\Omega(k_p^{-2})$-portion of the sets in $\m F$ have intersection of at least $2m-t+k$ with $A \cup B$. Then, we use the maximality of $\m G$ among the $(n-t)$-intersecting families to deduce that $|\m Y| \geq ck_p^{-2}\binom{2m-t+2k}{2m-t+k}$, meaning that $\m Y$ is `dense' within $\binom{A \cup B}{2m-t+k}$. Finally, we define $F=A \cup B$ and use intersection arguments and the large density of $\m Y$ to show that $\m F$ satisfies the properties described in Step~(1) above with respect to the sub-structure $F$.  

\item \emph{Large $t$: $n^{\frac{1}{2}+\epsilon} \leq t<n-n^{1-\frac{\epsilon}{8}}$.} In this case, the sets in $\m F$ are so large that even the more complex strategy of the `medium $t$' case cannot be applied. Instead, we begin with a preparation size-reduction step. We consider the original family $\m G \subset S_n$, view its elements as $n$-element subsets of $[n] \times [n]$ -- i.e., sets of $n$ pairs of the form $(i,\sigma(i))$, and apply to it the \emph{iterative spread approximation} simplification procedure. We obtain an $(n-t)$-intersecting family $\m Q$ of subsets of $[n] \times [n]$ of size $\leq n-t+k$, where $k \leq n^{\frac{1}{2}-\frac{\epsilon}{8}}$. Then, we go back to the permutation setting and view the elements of $\m Q$ as \emph{partial permutations} -- i.e., permutations defined on part of the domain. 

 In order to handle such families, we develop a generalization of the \emph{transformation from permutations to sets}~\eqref{Eq:Perms_to_Sets} to the setting of partial permutations. To this end, we assume w.l.o.g.~that $\m Q$ contains a partial identity permutation on $I \subset [n]$, where $|I|$ is the maximum domain size among the partial permutations in $\m Q$. For each $\sigma \in \m Q$, we define $D_{\sigma}=\mathrm{Moving}(\sigma) \cap I$ and $M_{\sigma}=\{(j,\sigma(j)):j \in D_{\sigma} \cup ([n] \setminus I)\}$ (this is needed to account for the fact that our knowledge is limited to what happens inside $I$), and transform $\sigma$ to $D_\sigma \cup M_\sigma \subset \mb X':=[n] \cup ([n] \times [n])$. Applying this transformation to all elements of $\m Q$, we obtain a family $\m F$ of subsets of $\mb X'$ partitioned into the subfamilies $\m F_i=\{D_\sigma \cup M_{\sigma} \in \m F: |D_{\sigma}|=i\}$, $0 \leq i \leq t$, such that for any $i,j$, the families $\m F_i, \m F_j$ are cross $(i+j-t')$-intersecting, for $t'=|I|-(n-t)$.

%\andrey{again, a small comment on this peeling-simplification --- what I called peeling-simplification in other papers is what you call the `iterative peeling off spread parts'. The reason for the name was that we 1. Simplify by replacing spread subfamilies by their core 2. Peel off top layers. 3. Repeat.} \nathan{Thanks for the explanation! So do you prefer that I switch to the name `peeling-simplificaiton', which you used in other works? I am fine with this, if so you prefer.} 
Then, we apply to each of the families $\m F_i$ an iterative weighted variant of the \emph{peeling simplification} procedure, where the weight of each $D_\sigma \cup M_{\sigma} \in \m F$ is the number of extensions of $\sigma$ to a full permutation. We obtain approximating families $\m S_i$ consisting of sets of size at most $2i-t'+\ell$, such that $\ell\leq \frac{200}{\epsilon}$ and each $\m S_i, \m S_j$ are cross $(i+j-t')$ intersecting. 

The rest of the proof is similar to the `medium $t$' case. We show that most elements of $\m F$ are included in $\cup_{p \leq i \leq p+\ell} \m F_i[\m S_i]$, for some $p \leq t'$. Then we apply again the
\emph{peeling simplification} procedure to each $\m S_i$ ($p \leq i \leq p+\ell)$ to obtain $(2i-t')$-intersecting families $\m W_i$ such that most sets $F \in \m F$ contain some $W \in \m W_i$ (for some $i$) and we can bound the number of sets of each size in $\m W_i$. We deduce by the pigeonhole principle that there exist $p \leq m \leq p+\ell$ and $0 \leq k \leq \ell$ such that
$|\m F_{m}[\m W_m^{(2m-t'+k)}]| \geq (2\ell)^{-2}|\m F|$. Then, we use the bounds on the sizes of the $\m W_i$'s to deduce that this must occur for $k=0$, i.e., that $|\m F_{m}[\m W_m^{(2m-t')}]| \geq (2\ell)^{-2}|\m F|$. As $\m W_m^{(2m-t')}$, being an $(2m-t')$-intersecting family of $(2m-t')$-element sets, must consist of a single element $F$, this allows us to show that the properties described in Step~(1) above hold with respect to $F$.
\end{enumerate}
The combination of Step~(1), in each of the three ranges of $t$, with Step~(2), completes the proof.

\subsection{Organization of the rest of the paper.} The following sections are organized as follows.  In Section~\ref{sec:PermutationstoSets} we present the non-standard representation of permutations as sets and prove some relations between the sizes of restrictions of set families obtained as a result of this permutations-to-sets transformation. In Section~\ref{sec:IterativeSpreadApproximation} we present our most complex technical tool, the iterative spread approximation lemma and its proof. In Section~\ref{sec:peeling_off} we present the `peeling simplification' procedure and prove its properties. In Sections~\ref{sec:Small_t},~\ref{sec:Medium_t} and~\ref{sec:Large_t} we accomplish Step~(1) of the proof -- namely, we find a common simple sub-structure within the sets of $\m F$, for small $t$ (i.e., $t< n^{\epsilon}$), medium $t$ (i.e., $n^{\epsilon} \leq t< n^{\frac{1}{2}+\epsilon}$), and large $t$ (i.e., $n^{\frac{1}{2}+\epsilon} \leq t < n- n^{1-\frac{\epsilon}{8}}$), respectively. Finally, in Section~\ref{sec:Completing_the_Proof} we accomplish Step~(2) in all three ranges of $t$, thus completing the proof, and prove the stability version of the theorem. 
%\nathan{Do we prove the stability version?}

\section{Non-standard Representation of Permutations by Sets}
\label{sec:PermutationstoSets}

In this section, we present our first simplification, a new non-standard representation of permutations by sets that will allow us to look at the $t$-intersection problem for permutations `from the other side'. First, we present the representation of permutations by sets and study the sizes of restrictions of set families obtained from it. Then, we generalize the representation to \emph{partial permutations} (i.e., permutations $\sigma \in S_n$ in which some of the pairs $(i,\sigma(i))$ were removed).

\subsection{Our representation of permutations by sets}

There are various natural ways to represent permutations by sets. An example, which we will also use frequently in the sequel, is representing $\sigma \in S_n$ by the set of pairs $\{(i,\sigma(i)):i \in [n]\}$, which is an $n$-element subset of $[n] \times [n]$. Here, we use another representation that focuses on the \emph{moving points} of the permutation. 
For $\sigma \in S_n$, we define 
\[
D_{\sigma}:=\mathrm{Moving}(\sigma)=[n] \setminus \mathrm{Fixed}(\sigma), \qquad  
E_{\sigma}:= \{(i,\sigma(i)):i \in D_\sigma\}, \quad \mbox{and}
\]
\[
F_\sigma := D_\sigma \sqcup E_\sigma. 
\]
That is, we represent each permutation by the set of its moving points and the information on where they move. It is clear that $\sigma$ can be recovered from $F_\sigma$. We view $F_\sigma$ as a subset of size $2|\mathrm{Moving}(\sigma)|$ of the $n^2$-element set
\[ 
\mb X = [n]\sqcup\{(i,j):i,j \in [n], i\neq j\}.
\]
Each $X \subset \mb X$ can be written in the form $X =D \sqcup E$, where $D=X \cap [n]$ and $E= X \cap \{(i,j):i,j \in [n], i\neq j\}$. Slightly abusing notation, we denote $(D,E):= D \sqcup E$. In the same vein, for any $D'\subset [n]$ and $E'=\{(i,\sigma(i)): i\in D'\}$, we denote $\sigma(D'):=\{\sigma(i):i \in D'\}$ and sometimes write $(D',\sigma(D'))$ instead of $E'$. In particular, for $\sigma \in S_n$, we write $F_\sigma = \{D_\sigma,E_\sigma\}=\big\{D_\sigma,(D_\sigma,\sigma(D_\sigma))\big\}.$

In the sequel, we will apply the representation of $\sigma$ by $F_{\sigma}$ to transform (intersecting) families of permutations $\m G \subset S_n$ to families $\m F\subset \m P(\mb X)$. Let $\m G \subset S_n$ be $(n-t)$-intersecting. We may assume w.l.o.g.~that $\m G$ contains the identity permutation, as otherwise, we can multiply all elements of $\m G$ by $\tau^{-1}$ for some $\tau \in \m G$, without affecting the $(n-t)$-intersection assumption on $\m G$. This implies that for any $\sigma \in \m G$, we have $|\mathrm{Moving}(\sigma)|\leq t$. Decompose  $\m G = \cup_{i=0}^t\m G_i$, where
$$\m G_i:= \{\sigma\in \m G: |\mathrm{Moving}(\sigma)|=i\}.$$
We transform $\m G$ into the family $\m F:= \sqcup_{i=0}^t\m F_i \subset \m P(\mb X)$, where
\begin{equation}
\label{eqfi1}
\m F_i = \{F_\sigma: \sigma\in \m G_i\}.
\end{equation}
Throughout the paper, we shall use the notations
\[
\aaa_i := \{F \subset \mb X: F=F_{\sigma} \mbox{ for some } \sigma \in S_n, |\mathrm{Moving}(\sigma)|=i\}, \qquad \aaa: =\sqcup_i \aaa_i.
\]
In these notations, we have $\aaa_i=\{(D,E) \in \aaa: |D|=i\}$ and $\m F_i \subset \aaa_i$, for all $i$.

\medskip It turns out that, in some sense, our transformation preserves intersection properties.
\begin{claim}
\label{clatranslation}
    A family $\m G \subset S_n$ is $(n-t)$-intersecting if and only if for any $i,j\in \{0,\ldots, t\}$, the corresponding families $\m F_i\subset {\mb X \choose 2i}$ and $\m F_j\subset {\mb X \choose 2j}$ are cross $(i+j-t)$-intersecting.
\end{claim}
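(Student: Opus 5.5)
The plan is to prove an exact identity: for any $\sigma,\tau\in S_n$ with $|\mathrm{Moving}(\sigma)|=i$ and $|\mathrm{Moving}(\tau)|=j$,
\[
|F_\sigma\cap F_\tau| \;=\; i+j-n+|\{x\in[n]:\sigma(x)=\tau(x)\}|.
\]
Once this holds, the claim is immediate. The condition that $\sigma$ and $\tau$ agree on at least $n-t$ points is then equivalent to $|F_\sigma\cap F_\tau|\ge i+j-t$. Quantifying over all pairs, including $\sigma=\tau$, gives both directions. The pair $\sigma=\tau$ is harmless: there $|F_\sigma\cap F_\sigma|=2i\ge 2i-t$, and agreement on all $n$ points trivially exceeds $n-t$. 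Note also that $F_\sigma\in\binom{\mb X}{2i}$ because $|D_\sigma|=|E_\sigma|=i$. The hypothesis $i,j\le t$ is only the standing normalization (identity in $\m G$); the identity itself holds for all $i,j$.

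To prove the identity, split $F_\sigma\cap F_\tau$ into its two disjoint parts, $D_\sigma\cap D_\tau$ and $E_\sigma\cap E_\tau$. By definition, $E_\sigma\cap E_\tau$ is the set of pairs $(x,y)$ with $x$ moved by both permutations and $\sigma(x)=\tau(x)=y$. So $|E_\sigma\cap E_\tau|$ counts the points $x\in D_\sigma\cap D_\tau$ on which $\sigma$ and $\tau$ agree.

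Next, partition $[n]$ into four classes according to whether $x$ is fixed or moved by $\sigma$ and by $\tau$, and count agreements in each class.
\begin{itemize}
\item If $x$ is fixed by both, then $\sigma$ and $\tau$ agree at $x$.
\item If $x$ is fixed by exactly one of them, say $\sigma(x)=x\ne\tau(x)$, they disagree.
\item If $x$ is moved by both, they agree exactly when $(x,\sigma(x))\in E_\sigma\cap E_\tau$.
\end{itemize}
The number of points fixed by both is $n-|D_\sigma\cup D_\tau| = n-i-j+|D_\sigma\cap D_\tau|$. Adding the agreements among points moved by both gives
\[
|\{x:\sigma(x)=\tau(x)\}| = n-i-j+|D_\sigma\cap D_\tau|+|E_\sigma\cap E_\tau| = n-i-j+|F_\sigma\cap F_\tau|,
\]
which is the identity.

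There is no real obstacle. The only point needing care is the middle class of the case analysis: a point moved by exactly one permutation can never be an agreement. This is what makes the count exact rather than just an inequality, and it is what yields the equivalence in both directions.
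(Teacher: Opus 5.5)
Your proposal is correct and is essentially the paper's argument. The paper proves the general partial-permutation version (Claim~\ref{clatranslation2}) via the identity $|\sigma\cap\pi| = |I|-i-j+|F_\sigma\cap F_\pi|$, which for $I=[n]$ is exactly your identity. It obtains this by the same split into common fixed points, counted by inclusion--exclusion on $D_\sigma\cup D_\pi$, and agreements among common moving points, counted by $|E_\sigma\cap E_\pi|$.
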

We prove the claim later in a greater generality, for partial permutations, below (see proof of Claim~\ref{clatranslation2}).

\medskip To exemplify our representation of permutations by sets, let us apply it to the $(n-t)$-intersecting families 
\[
\m G = \{\sigma \in S_n:|\mathrm{Fixed}(\sigma) \cap [n-t+2r]| \geq n-t+r\},
\]
denoted by $\m F_{n,n-t,r}$ in the statement of Theorem~\ref{Thm:Main}.
Any permutation $\sigma \in \m G$ has at most $t-r$ moving points, and hence the family $\m F =\cup_i \m F_i \subset \m P(\mb X)$ that corresponds to $\m G \subset S_n$ is contained in $\cup_{j=0}^{t-r} \m A_j$. Moreover, $\m F_{t-r}=\m F\cap \m A_{t-r}$ corresponds to the family of all  permutations whose set of moving points contains the entire set $\{n-t+2r+1,\ldots,n\}$ and exactly $r$ points in $[n-t+2r]$. 
%Thus, $\m T_i\cap \m A_{t-i}$ is a star with center of size $t-2i$: \nathan{The terms `star' and `center' weren't defined and are not used elsewhere, so maybe it's better to not use them.}
Thus, we have
$$\m F_{t-r} = \{(D,E) \in \m A_{t-r}: \{n-t+2r+1,\ldots,n\}\subset D\}.$$ 
More generally, for each $0 \leq j \leq t-r$, we have
$$\m F_{t-r-j} = \{(D,E) \in \aaa_{t-r-j}: |\{n-t+2r+1,\ldots,n\}\cap D|\ge t-2r-j\}.$$
It is easy to see that for any  $F_1 \in \m F_{t-r-j_1}$ and $F_2\in \m F_{t-r-j_2}$ we have $|F_1\cap F_2| \geq |F_1 \cap F_2 \cap \{n-t+2r+1,\ldots,n\}| \ge t-2r-j_1-j_2,$ as is guaranteed by Claim~\ref{clatranslation}.

\subsection{Restrictions of the families $\aaa_i$ and their sizes}

A restriction $\m F[X]$ of a family $\m F \subset \m P(\mb X)$ is obtained by fixing some elements and taking all sets $F \in \m F$  
%\ohad{$F \in \m F$?}\nathan{Yes, thanks!}
that contain them. Such elements can be singletons (i.e., $j \in [n]$) and/or pair-elements (i.e., $(i,j):i,j \in [n], i \neq j$).   

Recall that $\aaa_i := \{F \subset \mb X: F=F_{\sigma} \mbox{ for some } \sigma \in S_n, |\mathrm{Moving}(\sigma)|=i\}$ and $\aaa:= \sqcup_i \aaa_i$. In the sequel, we will make significant use of restrictions of $\m A_i$'s, and in particular, of restrictions whose size is maximal among all restrictions that fix the same number of elements. Specifically, for all $\ell,i \leq n$, we define 
\[
a^{(i)}_\ell:=\max_{X: |X|=\ell} |\aaa_i[X]|.
\]
Note that for $X=(D,E)$ where $E=(M,\sigma(M))$, $\m A_i[X]$ is the subfamily of $\m P(\mb X)$ that corresponds to the family of all permutations with exactly $i$ moving points, whose set of moving points contains $D$, and which contain the pairs $(i,\sigma(i))$, for all $i \in M$. 
While the maximum $a^{(i)}_\ell$ is not necessarily obtained for a set of the form $X=(D,(D,\sigma(D)))$, a weaker structural property does hold.
\begin{obs}\label{obs:max-structure}
    Let $\aaa_i, a^{(i)}_\ell$ be as defined above. For any $i,\ell \leq n$, the value $a^{(i)}_\ell$ is attained (also) by $\aaa_i[X]$ for some $X=(D,E)$ such that $E=(M,\sigma(M))$, where $M \subset D$ and $\sigma(M) \subset D$.
\end{obs}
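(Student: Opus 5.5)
The plan is a local exchange argument. I start from an arbitrary maximizer $X$ of $|\aaa_i[X]|$ over $|X|=\ell$ and modify it one element at a time. Each modification keeps $|X|=\ell$ and can only enlarge $\aaa_i[X]$, so the result is still a maximizer. The process ends at a set of the required form.

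First, the degenerate case. If $a^{(i)}_\ell=0$, take $X=D$ with $D\subset[n]$, $|D|=\ell$ and $E=\emptyset$. This has the required form with $M=\emptyset$, and it attains the value $0$. So assume $a^{(i)}_\ell>0$, and fix a maximizer $X=(D,E)$ with $\aaa_i[X]\neq\emptyset$. Then $X\subset F_\tau$ for some $\tau\in S_n$ with $|\mathrm{Moving}(\tau)|=i$. In particular, $E$ consists of pairs $(a,\tau(a))$ with distinct first coordinates and distinct second coordinates, i.e., the graph of a partial injection.

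The key step is the following exchange. Suppose some pair $(a,b)\in E$ has $a\notin D$. Put $X':=(X\setminus\{(a,b)\})\cup\{a\}$; then $|X'|=\ell$ because $a\notin X$. Every $F_\sigma\in\aaa_i[X]$ contains $(a,b)$, hence $\sigma(a)=b\neq a$, so $a\in D_\sigma\subset F_\sigma$. Thus $\aaa_i[X]\subset\aaa_i[X']$.

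Similarly, suppose $(a,b)\in E$ has $b\notin D$. Put $X':=(X\setminus\{(a,b)\})\cup\{b\}$. If $\sigma(a)=b\neq a$, then $\sigma(b)\neq b$ by injectivity, since $\sigma(a)=b$ and $a\neq b$. Hence $b\in D_\sigma$, and again $\aaa_i[X]\subset\aaa_i[X']$.

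By maximality, equality of sizes holds in both cases, so $X'$ is again a nonempty maximizer. Each exchange strictly decreases the number of pairs in $E$, so iterating terminates. At the end, every remaining pair $(a,b)\in E$ has $a,b\in D$.

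Finally, let $M$ be the set of first coordinates of the final $E$. The final $\aaa_i[X]$ is nonempty, so $E$ is still the graph of a partial injection $M\to D$. Extend this partial injection to a permutation $\sigma\in S_n$. Then $E=(M,\sigma(M))$ with $M\subset D$ and $\sigma(M)\subset D$, as required.

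The only point needing care is the second exchange, which relies on the fact that images of moving points are moving. This holds by injectivity of $\sigma$, so I do not expect a real obstacle.
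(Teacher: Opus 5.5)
Your proposal is correct and is essentially the paper's proof: both replace a pair $(u,\sigma(u))$ whose first or second coordinate lies outside $D$ by the corresponding singleton, which can only enlarge the restriction, and iterate until no such pair remains. Your extra steps — the case $a^{(i)}_\ell=0$, and the check that a nonempty restriction forces $E$ to be a partial injection and hence of the form $(M,\sigma(M))$ — just make explicit what the paper takes for granted.
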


\begin{proof}
Consider the restriction $\m A_i[X]$, where $X=(D,E)=(D,(M,\sigma(M)))$. If $M \not \subset D$, then for any $u \in M \setminus D$, we have $\m A_i[X \cup \{u\} \setminus \{(u,\sigma(u))\}] \supset \m A_i[X]$, as any permutation that contains the pair $(u,\sigma(u))$ must have $u$ among its moving points. Hence, one can replace each such pair $(u,\sigma(u)) \in X$ with the singleton $u$ without decreasing the size of the restricted family. This can be repeated as long as $M \not \subset D$, and thus, for any $i,\ell$, the maximum restriction size $a^{(i)}_\ell$ is obtained (also) for a restriction $\m A_i[X']$ such that $X'=(D,(M,\sigma(M)))$ where $M \subset D$.

Similarly, if $\sigma(M) \not \subset D$, then for any $\sigma(u) \in \sigma(M) \setminus D$, we have $\m A_i[X \cup \{\sigma(u)\} \setminus \{(u,\sigma(u))\}] \supset \m A_i[X]$, as any permutation that contains the pair $(u,\sigma(u))$ must have $\sigma(u)$ among its moving points. The assertion follows.
\end{proof}

The following technical lemma proves some properties of the values $a^{(i)}_\ell$ that we will use in our proofs. 
%For convenience, when $\ell\le 0$ then $a^{(i)}_\ell$ is just $a^{(i)}_0$. 
%We prove the lemma later on. \andrey{All of it is relatively straightforward calculations. I have an earlier variant in the last section of the paper, but this needs to be cleaned (and new parts need to be added)}

\begin{lem}
\label{lemadecay}
Let $n,i,\ell \in \mathbb{N}$ and let $\aaa_i, a^{(i)}_\ell$ be as defined above.
%, $1 \leq i \leq n$. 
%For $\pi\in S_n$ let $D_\pi,E_\pi,F_\pi$ be as above, and let
%\[
%\mathcal A_i:=\{F_\pi:\ |D_\pi|=i\}, \qquad
%a_\ell^{(i)}:=\max\{|\mathcal A_i[Y]|:\ Y\subset %\mb X,\ |Y|=\ell\}.
%\]
Then the following properties hold.
\begin{enumerate}
\item[(i)] For every $0\le \ell\le i-1$, we have
%\[
%\frac{a_\ell^{(i)}}%{a_{\ell+1}^{(i)}}
%\;\ge\;
%\min\Bigl\{\frac{n}{i},\,\frac{i}{2e}\Bigr\}.
%\]
$a_\ell^{(i)}/{a_{\ell+1}^{(i)}}
\;\ge\;
\min\Bigl\{\frac{n}{i},\,\frac{i}{2e}\Bigr\}$.
%\]

\item[(ii)] For every even value of $\ell$, $4\le \ell\le i$, we have
%\[
%\frac{a_\ell^{(i)}}{a_{\ell+1}^{(i)}}
%\;\ge\;
%\frac{1}{3}\cdot \frac{n}{i}.
%\]
$
a_\ell^{(i)}/{a_{\ell+1}^{(i)}}
\;\ge\;
\frac{n}{3i}$.

\item[(iii)] For every $3 \leq i \leq \sqrt{n/2}$, every $3 \leq \ell_0 \leq i$, and every $0 \leq \ell \leq 2i-\ell_0$, we have
$
    a^{(i)}_{\ell_0+\ell}
    \le
    3\left(\frac{2i}{n}\right)^{\lfloor \ell/2\rfloor}
    a^{(i)}_{\ell_0}.
$

%\item[(iii)] For every $3\le \ell\le i-2$, we have
%\nathan{The chat wrote $3\le \ell\le 2i-3$, I think this is wrong.}
%\[
%\frac{a_\ell^{(i)}}{a_{\ell+2}^{(i)}}
%\;\ge\;
%\min\Bigl\{\frac13\Bigl(\frac{n}{i}\Bigr)^2,\ \frac{n}{ei}\Bigr\}.
%\]
%$
%a_\ell^{(i)}/a_{\ell+2}^{(i)}
%\;\ge\;
%\min\Bigl\{\frac13\Bigl(\frac{n}{i}\Bigr)^2,\ \frac{n}{6e}\Bigr\}$.
%\nathan{The chat wrote in the right term $\frac{n}{ei}$, I think this is wrong.}

\item[(iv)] For every $0\le \ell\le 2i-1$, we have
%\[
%\frac{a_\ell^{(i)}}{a_{\ell+1}^{(i)}}\le n.
%\]
$
a_\ell^{(i)}/a_{\ell+1}^{(i)}\le n$. In addition,
%\[
%a_{2i}^{(i)}=1
%\qquad\text{and}\qquad
%a_\ell^{(i)}=0 \ \text{ for %all }\ \ell>2i.
%\]
$a_{2i}^{(i)}=1$ and $a_\ell^{(i)}=0$ for all  $\ell>2i$.

\item[(v)] For every $1 \leq t \leq n$ and every $i \geq \lceil t/2\rceil$, there exists an
$(n-t)$-intersecting family in $S_n$ of size $a_{2i-t}^{(i)}$. %\ohad{I think we dont use $i \le t$ in the proof, but we need $i>t$ in lemma~\ref{lemdensity2}}\nathan{Good, I changed this.}

\item[(vi)] For every $4 \leq t \leq n$, we have
%if $m:=\lfloor t/2\rfloor\ge2$ 
%\nathan{The chat also had the condition: and $t\ll n^{1-\varepsilon}$. I think it is not needed}, then
\[
\sum_{j=0}^{\lceil t/2 \rceil-1} a_0^{(j)} \;\le\; \tfrac{6}{n} \cdot\,a_0^{(\lceil t/2 \rceil)}. 
%\qquad \mbox{and} \qquad \sum_{j=0}^{\lceil t/2 \rceil-1} a_1^{(j)} \;\le\; \tfrac{6}{n} \cdot\,a_1^{(\lceil t/2 \rceil)}.
\] 
%for all sufficiently large $n$.

\item[(vii)] %Let $t\le n^\varepsilon$, let $i\le t$, and let $F=(D,E)\subset X$ be
%consistent, with
%\[
%|F|=\ell,\qquad 3\le %\ell,\qquad |D|\ge |E|+2.
%\]
For every $1 \leq i \leq n$, every $3 \leq \ell \leq 2i-3$ and every $F=(D,E)\subset \mb X$ such that $|F|=\ell$ and $|D|\geq |E|+2$, we have    
%\ohad{looks like $\ell \le 2i-3$ is needed}\nathan{Right, fixed.}
%$
%|F|=\ell, 3\le \ell,$ and $|D|\ge |E|+2$.
%Then
%\[
%|\mathcal A_i[F]|
%\;\le\;
%2e\,n^{-1+2\varepsilon}\,a^{(i)}_\ell.
%\]
%$
%|\mathcal A_i[F]|
%\;\le\;
%2e \cdot n^{-1+2\varepsilon}\,a^{(i)}_\ell$.
$|\mathcal A_i[F]|\le \frac{3i^2}{n} \cdot a^{(i)}_\ell$.

\item[(viii)] For every $1 \leq j \leq n-1$, we have $a_1^{(j+1)} \geq (1-\frac{j}{n}) \cdot j \cdot a_0^{(j)}$.
\end{enumerate}
\end{lem}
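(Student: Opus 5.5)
Every item reduces to counting permutations with a prescribed set of moving points, so the plan is to fix notation for that count and get explicit formulas for $|\aaa_i[X]|$ first. Let $d_k$ be the number of derangements of a $k$-set; recall $d_k\approx k!/e$ and $d_k\ge k!/3$ for $k\ge 2$, $d_1=0$. By Observation~\ref{obs:max-structure} we may restrict to $X=(D,E)$ with $E=(M,\sigma(M))$ and $M,\sigma(M)\subset D$. Write $|D|=d$ and $|E|=e$, so that $\ell=d+e$.

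\textbf{Explicit counts.} A permutation in $\aaa_i[X]$ is determined by two choices. First, a moving set $D'\supset D$ with $|D'|=i$; there are $\binom{n-d}{i-d}$ of these. Second, a derangement of $D'$ extending the partial injection $M\to\sigma(M)$. Contracting the chains of this partial map shows that the second number depends only on $i-e$ and on the number of "closed" pieces. When $M\to\sigma(M)$ has no cycles it is essentially $d_{i-e}$-like, more precisely $\sim (i-e)!\,e^{-1}$ up to a factor of $3$. Hence
\[
|\aaa_i[X]|\approx \binom{n-d}{i-d}\,(i-e)!/e .
\]
To maximise this for fixed $\ell$, note that trading one singleton in $D$ for one pair changes the binomial factor by roughly $n/i$ and the factorial factor by roughly $1/i$. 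So the maximisers are $D$-heavy as long as $d\le i$; once $d=i$, further elements must be pairs. This gives
\[
a^{(i)}_\ell\approx \binom{n-\ell}{i-\ell}(i)!/e \quad (\ell\le i), \qquad a^{(i)}_\ell\approx (2i-\ell)!/e \quad (\ell>i).
\]
The degenerate cases $d_1=0$ and the parity effects of pairing are exactly what produce the "even $\ell$" condition in (ii) and the constant $3$ in (iii).

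\textbf{Items (i)--(iv), (vii), (viii).} These follow by comparing consecutive $\ell$ in the formulas above.
\begin{itemize}
\item For (i), removing one element from a maximiser gives ratio either $(n-\ell)/(i-\ell)\ge n/i$ (singleton step) or about $i-e\ge i/2e$ (pair step, including the derangement-correction constants). This needs care uniformly over all $X$, not only the asymptotic maximisers. I plan to argue directly: for a maximiser $X$ of size $\ell+1$, delete a suitable element and exhibit an injection-with-multiplicity from $\aaa_i[X]$ into $\aaa_i[X\setminus\{x\}]$.
\item Item (ii) is the refined singleton step. Item (iii) follows by iterating (ii) over pairs of steps, together with (i) for $i\le\sqrt{n/2}$ (so that $i/2e$ vs.\ $n/i$ favours $2i/n$ per two steps).
\item For (iv): adding one element loses at most a factor $n$, since each element has at most $n$ possible completions; also $a^{(i)}_{2i}=1$ because $F_\sigma$ itself determines $\sigma$.
\item For (vii), the condition $|D|\ge|E|+2$ means that a singleton could be swapped for a pair. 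I will compare $|\aaa_i[F]|$ with a set $F'$ that has two more pairs and two fewer loose singletons, then apply the binomial/factorial ratio to get $i^2/n$.
\item For (viii), $a^{(j+1)}_1\ge|\aaa_{j+1}[\{x\}]|=\binom{n-1}{j}d_{j+1}$. Compare this with $a^{(j)}_0=\binom nj d_j$ using $d_{j+1}\ge j\,d_j$ and $\binom{n-1}{j}/\binom nj=1-j/n$.
\end{itemize}

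\textbf{Items (v) and (vi).} For (v), take $X$ attaining $a^{(i)}_{2i-t}$. Any two permutations with $i$ moving points whose $F_\sigma$ sets both contain $X$ share $\ge 2i-t$ elements of $\mb X$. By Claim~\ref{clatranslation} with $i=j$, they therefore form an $(n-t)$-intersecting family. Item (vi) is a sum of $\binom nj d_j$ for $j<\lceil t/2\rceil$; it is dominated geometrically by its top term, since the ratio of consecutive terms is $\approx (n-j)/j\cdot j\ge n/2$. Comparing the top term with $\binom{n}{\lceil t/2\rceil}d_{\lceil t/2\rceil}$ then gives the factor $6/n$, with $t\ge4$ handling small-$j$ derangement constants.

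\textbf{Main obstacle.} The hardest step is making (i) and (ii) rigorous for all $X$, rather than just for heuristic maximisers. The difficulty is that partial maps $M\to\sigma(M)$ containing cycles, or chains of awkward parity, change the derangement counts by non-uniform constants. My plan is to prove a clean two-sided bound $\tfrac13(i-e)!\le(\text{extension count})\le(i-e)!$ for acyclic $M$; cyclic $M$ only decreases counts and never achieves the max. All ratio claims then follow with the stated constants.
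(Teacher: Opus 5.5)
\textbf{Main gap: item (iii).} You plan to obtain (iii) by iterating (i) and (ii), but that cannot work.

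First, (i) and (ii) only control $a^{(i)}_s/a^{(i)}_{s+1}$ for $s\le i-1$ (resp.\ even $4\le s\le i$), so the iteration stops at size $i+1$. But (iii) is claimed for all sizes $\ell_0+\ell\le 2i$. It is used in that full range in the small-$t$ section, on sets of size $2m-t+i$, up to $2m$. In that upper range no chain of one-step bounds can succeed, because consecutive values may coincide, e.g.\ $a^{(i)}_{2i-1}=a^{(i)}_{2i}=1$.

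Second, even below size $i$ the constants fail: the two-step gain $\frac{i}{2e}\cdot\frac{n}{3i}=\frac{n}{6e}$ beats the required $\frac{n}{2i}$ only when $i\ge 3e$.

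What is needed is a one-shot comparison, as in the paper.
\begin{itemize}
\item Take a maximiser $X=(D,(M,\sigma(M)))$ of $a^{(i)}_{\ell_0+\ell}$ with $M,\sigma(M)\subset D$, $|D|=x$, $|M|=y$, and bound it crudely by $\binom{n-x}{i-x}(i-y)!$.
\item Bound $a^{(i)}_{\ell_0}$ from below using a structured set with $x_0=\lceil \ell_0/2\rceil$ singletons and $y_0=\lfloor \ell_0/2\rfloor$ pairs, which gives $\frac13\binom{n-x_0}{i-x_0}(i-y_0)!$.
\item Since $y\le x$, you get $x-x_0\ge\lfloor \ell/2\rfloor$. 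The binomial ratio is at most $(2i/n)^{x-x_0}$.
\item The factorial ratio is at most $1$ if $y\ge y_0$, and at most $i^{(x-x_0)-\ell}$ otherwise; the latter is absorbed using $2i^2\le n$.
\end{itemize}
This gives the single constant $3$ uniformly in $\ell$.

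\textbf{Root cause, which also breaks your plan for (ii).} Your maximiser heuristic is backwards when $i$ is well below $\sqrt n$, which is exactly where (ii) and (iii) are applied. There, trading a singleton for a pair multiplies the count by roughly $n/i^2>1$, so maximisers are balanced rather than $D$-heavy. Moreover, for odd size $2y+1$, a configuration with $|D|=y+1$ and $M\cup\sigma(M)=D$ has extension count $d_{i-y}+d_{i-y-1}$ per moving set. This beats a configuration with a loose singleton, whose count is $d_{i-y}$.

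In such a maximiser every singleton is already forced to move by the pairs, so deleting it leaves $\mathcal{A}_i[X]$ unchanged. Deleting a pair gains only a factor of order $i$, far below $n/(3i)$. So there is no ``suitable element'' to delete.

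The fix for (ii) is to compare the crude bound $\binom{n-x}{i-x}(i-y)!$ for a maximiser of $a^{(i)}_{\ell+1}$ with a freshly chosen structured set having $x-1$ singletons and the same $y$ pairs. Such a set exists: odd size forces $x\ge y+1$, and $\ell\ge4$ gives $x-1\ge 2$. This yields $\frac{n-x+1}{i-x+1}\cdot\frac{d_{i-y}}{(i-y)!}\ge\frac{n}{3i}$.

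\textbf{A smaller boundary issue in (vii).} Write $x=|D|$, $y=|E|$. A structured comparison set with two more pairs and two fewer singletons needs $y+2\le x-2$, which fails for $x\in\{y+2,y+3\}$. A single swap ($x-1$ singletons, $y+1$ pairs) always exists and gives $\frac{i-x+1}{n-x+1}\cdot\frac{(i-y)!}{d_{i-y-1}}\le\frac{3i^2}{n}$.

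\textbf{What is fine.} Your (i) and (v) match the paper. Your (viii), via $|\mathcal{A}_{j+1}[\{u\}]|=\binom{n-1}{j}d_{j+1}$ and $d_{j+1}\ge j\,d_j$, is correct and shorter than the paper's injection argument. Finally, the lower bound $d_{i-|E|}$ holds for every structured $X$, cyclic or not. So drop the claim that cyclic configurations never maximise; at $\ell=2i$ they are the only ones.
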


\begin{proof}
We begin with a few preparation steps. Throughout the proof (except for the proof of part $(vi)$ below), we fix $i$ and write $a_\ell:=a_\ell^{(i)}$. We consider sets $X=(D,E)\subset \mb X$, where 
$E=(M,\sigma(M)):=\{(u,\sigma(u)):u\in M\}$, for some $\sigma \in S_n$. 

%Note that for each $\ell$, the maximum $a_\ell=\max_{|Y|=\ell}\m |A_i[Y]|$ is obtained for $Y=(D,(M,\sigma(M)))$ such that $M \subset D$. Indeed, if for some $Y$ we have $M \not \subset D$, then for any $u \in M \setminus D$, we have $|\m A_i[Y \cup \{u\} \setminus \{(u,\sigma(u))\}]| \geq |\m A_i[Y]|$. Hence, we assume throughout the proof that the sets $Y$ we fix satisfy $M \subset D$. 

%where $\sigma:M\to[n]$ is injective. As in the proof of part~(i) in the draft, by replacing
%a pair $(u,\sigma(u))$ with the point-element $u$ whenever $u\notin D$, we may assume
%throughout that
%\[
%M\subset D.
%\]

Let $d_r$ denote the number of derangements of $[r]$ (i.e., the number of permutations over $[r]$ that do not have any fixed point). We use the standard bounds
\begin{equation}\label{eq:der-short}
\frac{d_r}{r!}\ge \frac13,
\qquad
\frac{d_r}{(r-1)!}\ge \frac{r-1}{e}
\qquad (r\ge2).
\end{equation}
We shall also use the following bound. 
%If $W\subset[n]$ and
%\[
%Q=(N,\tau(N))=\{(u,\tau(u)):u\in N\}
%\]
%is a consistent family of pair-elements with $N\subset W$ and $\tau(N)\subset W$, then
%\begin{equation}\label{eq:canon-lower}
%|\mathcal A_i[(W,Q)]|
%\ge
%\binom{n-|W|}{i-|W|}\,d_{i-|Q|}.
%\end{equation}
%\ohad{It is equal if $N=\tau(N)$ }
For any $X=(D,E)=(D,(M,\sigma(M)))$ such that $M \subset D$ and $\sigma(M) \subset D$, we have
\begin{equation}\label{eq:canon-lower}
|\mathcal A_i[(D,E)]|
\ge
\binom{n-|D|}{i-|D|}\,d_{i-|E|},
\end{equation}
with equality if $\sigma(M)=M$.
%\ohad{It is equal if $N=\tau(N)$ }
Indeed, once a moving-points-set of size $i$ containing $D$ is fixed, the remaining part of the permutation is a bijection between two sets of size $i-|E|$, with fixed points forbidden only in their intersection. %and the bipartite-derangement injection gives at least
%$d_{i-|Q|}$ completions.
The number of such bijections is clearly at least 
$d_{i-|E|}$.

\smallskip \noindent\emph{Proof of (i).}
Let $0\le \ell\le i-1$. Let $X=(D,E)$ be a restriction for which  $a_{\ell+1}$ is attained, such that $E=(M,\sigma(M))$ where $M\subset D$ and $\sigma(M) \subset D$. (Such an $X$ exists, by Observation~\ref{obs:max-structure}). Put
\[
x:=|D|,
\qquad
y:=|E|=|M|.
\]
Then $x+y=\ell+1$ and $y\le x$, so
$
y\le \tfrac{\ell+1}{2}\le \tfrac{i}{2}$.

If $y=0$, then $X$ consists of $x=\ell+1$ singleton-elements. Removing any singleton from $D$ (i.e., setting $X'=X \setminus \{u\}$ for some $u \in D$) gives, using the equality case of~\eqref{eq:canon-lower},
\begin{equation}\label{Eq:Aux-Restrictions1}
\frac{a_\ell}{a_{\ell+1}} \ge \frac{|\m A_i[X']|}{|\m A_i[X]|}
\ge 
\frac{\binom{n-x+1}{i-x+1}d_i}{\binom{n-x}{i-x}d_i}
=
\frac{n-x+1}{i-x+1}
\ge
\frac{n}{i}.
\end{equation}
Assume now that $y\ge1$. Choose $u\in M$, and let
\[
X':=\left(D,\ E\setminus\{(u,\sigma(u))\}\right).
%\qquad
%U:=D\cup \sigma(M).
\]
For every moving-points-set $S\subset[n]$ of size $i$ containing $D$, the number of permutations
$\pi$ with $D_\pi=S$ and $X\subset F_\pi$ is at most $(i-y)!$, whereas the number with
$D_\pi=S$ and $X'\subset F_\pi$ is at least $d_{i-y+1}$. For every moving-points-set $S\subset[n]$ of size $i$ that does not contain $D$, the number of permutations
$\pi$ with $D_\pi=S$ and $X\subset F_\pi$ is $0$. 
Hence, we have
\[
\frac{a_\ell}{a_{\ell+1}}
\ge
\frac{|\mathcal A_i[X']|}{|\mathcal A_i[X]|}
\ge
\frac{d_{i-y+1}}{(i-y)!}
\ge
\frac{i-y}{e}
\ge
\frac{i}{2e},
\]
where the penultimate inequality uses~\eqref{eq:der-short} and the last inequality holds since $y \leq \frac{i}{2}$. Combining the cases $y=0$ and $y \geq 1$, we get
$
\frac{a_\ell}{a_{\ell+1}}
\ge
\min\left\{\frac{n}{i},\,\frac{i}{2e}\right\}.
$

\smallskip
\noindent\emph{Proof of (ii).}
Let $\ell$, $4 \leq \ell \leq i$, be even. 
Let $X=(D,E)$ be a restriction for which  $a_{\ell+1}$ is attained, such that $E=(M,\sigma(M))$ where $M\subset D$ and $\sigma(M) \subset D$. 
%Assume that $a_{\ell+1}$ is obtained for $|\m A_i[Y]|$ for $Y=(D,E)$, where $E=(M,\sigma(M))$ with $M\subset D$. 
As above, put
$
x:=|D|,
y:=|E|$.
Then $x+y=\ell+1$ is odd and $y\le x$, so in fact $x\ge y+1$.

%Choose $W\subset[n]$ with $|W|=x-1$, and choose a consistent family \ohad{consistent just means $(i,j)$ pairs are as we expect from permutation, $i\neq j$, all $i$'s different.} $Q$ of $y$ pair-elements
%supported on $W$ \ohad{Meaning $M,\sigma(M) \subseteq W$. You choose a totally unrelated $Q$ to the one in $a_{\ell+1}$}; this is possible since $x-1\ge y$, and because $\ell\ge4$ we are not in the
%exceptional case $(|W|,|Q|)=(1,1)$. By \eqref{eq:canon-lower},
Choose $X'=(D',(M',\sigma'(M'))$, such that $|D'|=x-1$, $|M'|=y$, $M' \subset D'$, and $\sigma'(M') \subset D'$. Such a choice is possible since $x-1 \geq y$ and $\ell \geq 4$. By \eqref{eq:canon-lower}, we have 
\[
a_\ell \ge |\m A_i[X']| \geq \binom{n-x+1}{i-x+1}d_{i-y}.
\]
On the other hand, every permutation counted by $\mathcal A_i[X]$ must move all points of $D$,
and once the $y$ prescribed images from $E$ are fixed there are at most $(i-y)!$ completions.
Thus,
\[
a_{\ell+1}\le \binom{n-x}{i-x}(i-y)!.
\]
Therefore,
\[
\frac{a_\ell}{a_{\ell+1}}
\ge
\frac{n-x+1}{i-x+1}\cdot \frac{d_{i-y}}{(i-y)!} \geq \frac{n}{i}\cdot \frac{d_{i-y}}{(i-y)!}.
\]
Since $2y\le \ell\le i$, we have $i-y\ge2$, and hence,~\eqref{eq:der-short} gives
$
\frac{a_\ell}{a_{\ell+1}}
\ge
\frac{n}{i}\cdot \frac13.
$

\smallskip
\noindent\emph{Proof of (iii).}
% assertion follows from multiplying the assertions of parts $(i),(ii)$.
Let $X=(D,E)$ be a restriction for which $a_{\ell_0+\ell}$ is attained, such that $E=(M,\sigma(M))$ where $M\subset D$ and $\sigma(M) \subset D$. As above, put
$
x:=|D|,
y:=|E|$.
Let 
$x_0:=\left\lceil \ell_0/2\right\rceil, y_0:=\left\lfloor \ell_0/2\right\rfloor$.
Choose $X'=(D',(M',\sigma'(M'))$, such that $|D'|=x_0$, $|M'|=y_0$, $M' \subset D'$, and $\sigma'(M') \subset D'$. (This is possible, as by assumption, $\ell_0 \geq 3$). Since $a^{(i)}_{\ell_0} \geq |\m A_i[X']|$, by~\eqref{eq:der-short} and~\eqref{eq:canon-lower}, we have 
\[
    a^{(i)}_{\ell_0} \geq |\m A_i[X']|
    \ge
    \binom{n-x_0}{i-x_0}d_{i-y_0}
    \ge
    \frac13\binom{n-x_0}{i-x_0}(i-y_0)!,
\]
while 
\[
    a^{(i)}_{\ell_0+\ell}
    \le
    \binom{n-x}{i-x}(i-y)!.
\]
Therefore,
\begin{equation}\label{Eq:Aux-Restrictions2}
\frac{a^{(i)}_{\ell_0+\ell}}{a^{(i)}_{\ell_0}}
\le
3\frac{\binom{n-x}{i-x}}{\binom{n-x_0}{i-x_0}}
  \frac{(i-y)!}{(i-y_0)!}.
\end{equation}
Put \(z:=x-x_0\) and \(w:=y-y_0\). Since \(x+y=\ell_0+\ell\) and
\(x_0+y_0=\ell_0\), we have \(z+w=\ell\). Moreover, \(y\le x\), and hence,
\(z\ge\lfloor \ell/2\rfloor\). 

The binomial ratio in~\eqref{Eq:Aux-Restrictions2} is at most \((2i/n)^z\). If \(w\ge0\), the factorial
ratio is at most \(1\), and hence,~\eqref{Eq:Aux-Restrictions2} yields the desired bound. If \(w<0\),
then \(z>\ell\), and the factorial ratio is at most \(i^{z-\ell}\), and thus,~\eqref{Eq:Aux-Restrictions2} yields
\[
\frac{a^{(i)}_{\ell_0+\ell}}{a^{(i)}_{\ell_0}}
\le
3\frac{\binom{n-x}{i-x}}{\binom{n-x_0}{i-x_0}}
  \frac{(i-y)!}{(i-y_0)!} \leq 3\left(\frac{2i}{n}\right)^z i^{z-\ell} \leq 3\left(\frac{2i}{n}\right)^{\lfloor \ell/2 \rfloor},
\]
where the last inequality holds since
$2i^2 \leq n$. Hence, the assertion holds in both cases. 

\smallskip
\noindent\emph{Proof of (iv).}
Every set in $\mathcal A_i$ has size $2i$, so clearly,
$
a_{2i}=1$ and $a_\ell=0$ for all $\ell>2i$.
Fix $0\le \ell\le 2i-1$. 
Let $X=(D,E)$ be a restriction for which  $a_{\ell}$ is attained, such that $E=(M,\sigma(M))$ where $M\subset D$ and $\sigma(M) \subset D$. 
Put
$
\mathcal S:=\mathcal A_i[X]$.

If $\ell=0$, then by the equality case of~\eqref{eq:canon-lower},
\[
a_0=|\mathcal A_i|=\binom{n}{i}d_i,
\qquad
a_1 = \binom{n-1}{i-1}d_i,
\]
so $a_0/a_1 = n/i\le n$. Assume from now on that $\ell\ge1$.
%, so $\mathcal S\neq\emptyset$.

%Every $\pi\in \mathcal S$ moves all points of $D\cup \sigma(M)$.
%If $\sigma(M)\setminus D\neq\emptyset$, pick $w\in \sigma(M)\setminus D$. Then $w\notin Y$,
%but $w\in F_\pi$ for every $\pi\in \mathcal S$, and hence,
%\[
%a_{\ell+1} \geq |\mathcal A_i[Y\cup\{w\}]|=|\mathcal S|=a_\ell.
%\]
%Therefore, $a_\ell/a_{\ell+1} = 1\le n$. Assume now that $\sigma(M)\subset D$.

If there exists $u\in D\setminus M$, then as $\pi$ ranges over $\mathcal S$, the value
$\pi(u)$ takes at most $n$ possibilities. Hence, for some $v\in[n]$, at least $|\mathcal S|/n$
permutations $\pi \in \m S$ satisfy $\pi(u)=v$. Since $u\notin M$, the pair $(u,v)$ does not already belong
to $E$, and therefore,
\[
a_{\ell+1}\ge |\mathcal A_i[X\cup\{(u,v)\}]|\ge \tfrac1n\,|\mathcal S|=\tfrac1n\,a_\ell.
\]

It remains to consider the case $M=D$. Since $\sigma(M)\subset D$ and
$|M|=|\sigma(M)|=|D|$, we have $\sigma(M)=D$, so the restriction to $D$ is completely
determined by $E$.

%So we get $i>|D|$, then e
Each $\pi\in \mathcal S$ moves exactly $i-|D|\ge1$ points outside $D$.
So there exists $w \not \in D$ that is moved by at least $1/n$ of the elements $\pi\in \mathcal S$.
Therefore,
\[
a_{\ell+1}\ge |\mathcal A_i[X\cup\{w\}]|\ge \tfrac1n\,|\mathcal S|=\tfrac1n\,a_\ell.
\]
Thus, $a_\ell/a_{\ell+1}\le n$ in all cases.

\smallskip
\noindent\emph{Proof of (v).} Choose $X=(D,E)\subset \mb X$ with $|X|=2i-t$, such that $|\mathcal A_i[X]|=a_{2i-t}$. Define
\[
\mathcal G:=\{\pi\in S_n:\ |D_\pi|=i,\ X\subset F_\pi\}.
\]
Then $|\mathcal G|=a_{2i-t}$. Denoting by $\m F$ the family of subsets of $\mb X$ corresponding to $\m G$, we have $\m F_i=\m A_i[X]$ and $\m F_j=\emptyset$ for all $j \neq i$. As $\m F_i=\m A_i[X]$ is $(2i-t)$-intersecting, $\m G$ is $(n-t)$-intersecting by Claim~\ref{clatranslation}. 
%If $\pi,\rho\in\mathcal G$, then
%\[
%|F_\pi\cap F_\rho|\ge |Y|=2i-t.
%\]
%Since $|D_\pi|=|D_\rho|=i$,
%\[
%|\pi\cap \rho|
%=
%n-|D_\pi\cup D_\rho|+|E_\pi\cap E_\rho|
%=
%n-2i+|F_\pi\cap F_\rho|,
%\]
%and so $|\pi\cap \rho|\ge n-t$. Thus $\mathcal G$ is $(n-t)$-intersecting. \ohad{This is immediate from lemma~\ref{clatranslation}}

\smallskip
\noindent\emph{Proof of (vi).}
%First, we prove the claim about the values $a_0^{(j)}$. 
Put $m=\lceil t/2 \rceil$. For each $j$, we clearly have $a_0^{(j)}=|\mathcal A_j|=\binom{n}{j}d_j$. Note that $\sum_{j=0}^{m-1} a_0^{(j)}$ is the number of permutations with at most $m-1$ moving points, and so, 
\[
\sum_{j=0}^{m-1} a_0^{(j)} \le {n \choose m-1} \cdot (m-1)!.
\]
Indeed, there are \(\binom{n}{m-1}(m-1)!\) ways to choose a set \(S\subset[n]\) of size \(m-1\)
and a permutation on $S$; extending it by the identity outside \(S\)
produces a permutation of \([n]\) with a moving-points-set contained in \(S\). Every permutation with at most \(m-1\) moving points arises in this way.
%(take any \(S\) containing its support).

Hence, we have 
\[
\frac{\sum_{j=0}^{m-1} a_0^{(j)}}{a_0^{(m)}} \leq \frac{{n \choose m-1} \cdot (m-1)!}{{n \choose m} \cdot d_m} \le \frac{3}{n-m+1} \le \frac{6}{n},
\]
where the penultimate inequality holds by~\eqref{eq:der-short} and the last inequality holds since $m-1=\lceil t/2 \rceil -1 < \frac{n}{2}$.

%\smallskip To prove the claim about the values $a_1^{(j)}$, note that $a_1^{(j)}$ is the number of permutations whose set of moving points is of size $j$ and includes the singleton $\{1\}$, and that $\sum_{j=0}^{m-1} a_1^{(j)}$ is the number of permutations whose set of moving points is of size at most $m-1$ and includes the singleton $\{1\}$. Hence, we have $a_1^{(j)}=\binom{n}{j-1}d_j$ and $\sum_{j=0}^{m-1} a_1^{(j)} \leq \binom{n}{m-2}(m-1)!$. The rest of the proof is a calculation similar to the calculation in the proof of the assertion for the values $a_0^j$.

\smallskip
\noindent\emph{Proof of (vii).}
Let $\ell \geq 3$ and let $F=(D,E)\subset \mb X$ be such that $|F|=\ell$ and $|D|\geq |E|+2$.
Write $x:=|D|$, $y:=|E|$. If $\mathcal A_i[F]=\emptyset$, the assertion holds trivially, and hence by the assertion of part $(iv)$, we may assume $\ell \le 2i$. We clearly have
\[
|\mathcal A_i[F]|
\le
\binom{n-x}{i-x}(i-y)!,
\]
since one first chooses the remaining moving points, and then completes the pair-elements prescribed by $E$.

Choose $F'=(D',(M',\sigma'(M'))$, such that $|D'|=x-1$, $|M'|=y+1$, $M' \subset D'$, and $\sigma'(M') \subset D'$. Such a choice is possible since $x \geq y+2$ and $\ell \geq 3$. By \eqref{eq:canon-lower}, we have %\ohad{What if $i-y-1=1$}\nathan{Fixed now.}
\[
a^{(i)}_\ell \ge |\m A_i[F']| \geq \binom{n-x+1}{i-x+1}d_{i-y-1},
\]
%Now choose a set $D'\subset[n]$ with $|D'|=x-1$, and choose
%\[
%E'=(M',\tau(M'))
%\]
%with $|M'|=y+1$, supported on $D'$, where $\tau$ is a derangement on $M'$; this is possible
%because $x\ge y+2$ (and $\ell \ge 3$). The resulting set $F'=(D',E')$ has size $\ell$ so
%\[
%a^{(i)}_\ell \ge \binom{n-x+1}{i-x+1}D_{i-y-1}.
%\]
%Also $\ell=x+y\le 2i$ and $x\ge y+2$ imply $y\le i-3$, so $m:=i-y-1\ge0$.
where $i-y-1 >1 $ since $\ell \leq 2i-3$. Hence,
\[
\frac{|\mathcal A_i[F]|}{a^{(i)}_\ell}
\le
\frac{i-x+1}{n-x+1}\cdot \frac{(i-y)!}{d_{i-y-1}}
\leq \frac{i}{n} \cdot 3(i-y) \leq \frac{3i^2}{n},
\]
where the penultimate inequality holds by~\eqref{eq:der-short}.
%\[
%\frac{|\mathcal A_i[F]|}{a^{(i)}_\ell}
%\le
%\frac{i-x+1}{n-x+1}\cdot \frac{(i-y)!}{D_{i-y-1}}
%=
%\frac{i-x+1}{n-x+1}\cdot \frac{(m+1)!}{D_m}.
%\]
%Using \eqref{eq:der-short},
%\[
%\frac{(m+1)!}{D_m}\le 3(m+1)\le 3i,
%\]
%Therefore
%\[
%\frac{|\mathcal A_i[F]|}{a^{(i)}_\ell}\le \frac{i}{n}\cdot 3\,i \le 3\,\frac{i^2}{n} \le3\,n^{-1+2\varepsilon} .
%\]

\smallskip \noindent\emph{Proof of (viii).} Let $\m A_j'=\{F_{\sigma}=(D_{\sigma},E_{\sigma}) \in \m A_j:1 \in D_{\sigma}\}$ and $\m A_j''=\m A_j \setminus \m A_j'$. Clearly, $|\m A_j'|=\frac{j}{n}|\m A_j|=\frac{j}{n}a_0^{(j)}$, and thus, $|\m A_j''|=(1-\frac{j}{n})a_0^{(j)}$. 

Let $\m U=\{F_{\sigma'}=(D_{\sigma'},E_{\sigma'}) \in \m A_{j+1}: 1 \in D_{\sigma'}\}$. We have $|\m U|=a_{1}^{(j+1)}$. We will show that $|\m U|\geq j |\m A_j''|$, which clearly implies the assertion. To show this, we define $j$ injections $f_1,\ldots,f_j:\m A_{j}'' \to \m U$ and show that the sets $\{f_i(\m A_j'')\}_{i=1,\ldots,j}$ are pairwise disjoint. 

The injection $f_i$ is defined as follows. Let $F_{\sigma}=(D_{\sigma},E_{\sigma}) \in \m A_{j}''$ and denote $D_{\sigma}=\{\ell_1,\ell_2,\ldots,\ell_j\}$, where $\ell_1 <\ldots<\ell_j$. Set $\sigma'=(1,\ell_i) \circ \sigma$ (where $(1,\ell_i)$ is the transposition that exchanges $1$ and $\ell_i$) and set $f_i(F_{\sigma}):=F_{\sigma'}$. We have $D_{\sigma'}=\{1,\ell_1,\ldots,\ell_j\}$, and thus, $f_i(F_\sigma) \in \m U$. It is clear that $f_i$ is indeed an injection. Finally, the sets $\{f_{i}(\m A_{j}'')\}$ 
%\ohad{$ U_{j}''= \m A_{j}''$?}
%\nathan{Yes!}
are pairwise disjoint, since if $F_{\sigma'}=(D_{\sigma'},E_{\sigma'}) \in f_{i}(\m A_{j}'')$
%\ohad{$ U_{j}''= \m A_{j}''$?}
, then $\sigma'(1)$ is the $(i+1)$'th smallest element in $D_{\sigma'}$. This completes the proof.
\end{proof}

\iffalse
Take a family $\ff = \cup_{i=0}^t\ff_i$ defined from $\m G$ as above. Since $\m G$ was the largest, by Lemma~\ref{lemadecay}~(v), we must have $|\m F|\ge a^{(\lfloor t/2\rfloor)}_0$, and thus by Lemma~\ref{lemadecay}~(vi)
\begin{equation}\label{eqlowlayers}|\cup_{i=0}^{\lfloor t/2\rfloor-1}\ff_i| = o(|\ff|).\end{equation} It means that we can safely ignore the contributions of small $i$ for a while. Thus, in the regimes (i) and (ii) we concentrate on the families $\ff_i$ for $\lfloor t/2\rfloor\le i\le t$. 

We discuss regime~(iii) in Section~\ref{sec8} and first deal with regimes~(i) and~(ii).
\fi

\subsection{Our representation of partial permutations by sets}
\label{subsec:PartialPerms_to_Sets}

A partial permutation is an injective %\ohad{injective?}\nathan{Yes!} 
function from $S \subset [n]$ to $[n]$. We denote the family of all partial permutations on $[n]$ by $\Sigma_n$, and the family of all partial permutations on $[n]$ with domain 
%\andrey{domain or support?} \nathan{The term `support' here can be misleading as it can refer both to the set of moving points and to the set of points on which the partial permutation is defined. To avoid this ambivalence, I didn't use the word `support' at all, and used `moving' for the former and `domain' for the latter. Here I indeed meant `domain'. Is this fine?} 
size $i$ by $\Sigma_n^{(i)}$. So, $\Sigma_n =\sqcup_{i=0}^n \Sigma_n^{(i)}$. For $\sigma \in \Sigma_n$, we denote the domain of $\sigma$ by $I_\sigma$ and identify $\sigma$ with the set of pairs $\{(j,\sigma(j)):j \in I_{\sigma}\}$. As in the case of `full' permutations, we use the notation $(I_{\sigma},\sigma(I_{\sigma})):=\{(j,\sigma(j)):j \in I_{\sigma}\}$.

In this subsection, we generalize the representation of permutations by sets presented above, in a way that will allow us to transform families of partial permutations to families of sets. In the proof of our main theorem, we shall apply this transformation to $(n-t)$-intersecting families $\m Q \subset \Sigma_{n}$ in which the size of the domain of all elements is at most $n-t+k$, for some $k \ll n-t$. (Specifically, in all applications we will have $n-t \geq n^{1-\epsilon}$ and $k \leq n^{1/2-\epsilon/8}$). Unlike the case of full permutations considered above, for partial permutations the transformation depends on the choice of some parameters. We pick one such choice (presented below) and stick to it consistently. %\nathan{We have to verify that this choice does not cause problems in Section 8.2.}

\medskip \noindent \emph{Preparation steps.} Fix $n,t,k$, and let $\m Q \subset \Sigma_{n}$ be an $(n-t)$-intersecting family such that for each $\sigma \in \m Q$, $|I_{\sigma}| \leq n-t+k$. 
%By the intersection property of $\m Q$, we have $|I_{\sigma}| \geq n-t$ for each $\sigma \in \m Q$. 
Without loss of generality, we may assume that $\m Q$ contains a \emph{partial identity permutation} on some set $I$ of size $n-t \leq |I| \leq n-t+k$, i.e., the permutation $id_I$ defined by $I_{id_I}=I$ and $id_I(j)=j$ for all $j \in I$. (Otherwise, we can take some $\pi \in \m Q$, extend it arbitrarily to a full permutation $\pi' \in S_n$, and replace $\m Q$ by the family $\pi'^{-1}\m Q = \{\pi'^{-1}\tau:\tau \in \m Q\}$ which preserves the size and the intersection property of $\m Q$ and contains the partial identity permutation $id_{I_{\pi}}$). As $\m Q$ is $(n-t)$-intersecting, for any $\sigma \in \m Q$ we have $|I_\sigma\cap I|\ge n-t$.

We slightly modify $\m Q$, in order to make all partial permutations in it defined on all elements of $I$. 
%w.r.t. $I$. 
%(Formally, we fix the modification described below for every valid pair $\m Q$, $I$ as described below, but otherwise arbitrarily. \nathan{This sentence is not clear to me. It seems that we take a single $\pi$ arbitrarily, this defines $I=I_\pi$ and this is $I$ we work with.} \andrey{I meant that the modification that follows is defined for every valid choice of $\m Q, I$. If it's confusing, then we may just remove it}.) 
For each $\sigma=(I_\sigma,\sigma(I_\sigma))\in \m Q$ and for each element $i\in I\setminus I_\sigma$, we replace $\sigma\in \m Q$ with all partial permutations on $I_{\sigma} \cup \{i\}$ that extend it. We repeat this process until for each resultant partial permutation $\sigma'$, we have $I_{\sigma'} \supset I$. We obtain an $(n-t)$-intersecting family $\m Q'$ of partial permutations such that for each $\sigma' \in \m Q'$, $n-t \leq |I_{\sigma'}| \leq n-t+2k$. Importantly, we have 
$S_n[\m Q] = S_n[\m Q']$. (That is, each full permutation that contains some element of $\m Q$ must contain some element of $\m Q'$). 

\medskip \noindent \emph{The transformation from partial permutations to sets.} For each $\sigma \in \m Q'$, we let 
\[
D_\sigma := \mathrm{Moving}(\sigma)\cap I
, \qquad M_\sigma:=(I_\sigma\setminus I)\cup D_\sigma,
\]
\[
E_{\sigma} := \{(i,\sigma(i)): i\in M_\sigma\}, \qquad \mbox{and} \qquad F_{\sigma}:= D_\sigma \sqcup E_{\sigma}.
\]
%\andrey{$F_\sigma$ above?} \nathan{To Andrey: I don't understand this comment, can you please explain?} \andrey{It's just that in the second displayed line it's written $\m F_{\sigma}:= D_\sigma \sqcup E_{\sigma}$, but I guess you meant $F_{\sigma}:= D_\sigma \sqcup E_{\sigma}$}\nathan{Right, of course. Fixed.}
We represent $\sigma$ by $F_{\sigma}$, i.e., by the set of its moving points \emph{inside $I$} and the information on where these points move and where \emph{all points outside $I$} move. Note that for full permutations, this definition reduces to the definition of $F_{\sigma}$ presented above. 
%The set $M_\sigma$ is the set of points on which $\sigma$ does not agree with $id_I$ (i.e., the partial identity on $I$ from which we started our construction). 
As above, we use the notation $E_\sigma := (M_\sigma, \sigma(M_\sigma))$.

It is clear that $\sigma$ can be recovered from $F_{\sigma}$. We view $F_{\sigma}$ as a subset of size $2|\mathrm{Moving}(\sigma) \cap I|+|I_{\sigma} \setminus I|$ of the $(n^2+n)$-element set 
\[
\mb X' = [n] \sqcup ([n] \times [n]).
\]
Note that unlike the case of full permutations, here pairs of the form $(j,j)$ can appear in $F_{\sigma}$ if $j \not \in I$. 

In the sequel we will use the following simple observation on the sizes of $D_\sigma, M_{\sigma}$ and $F_{\sigma}$ for $\sigma \in \m Q'$. 
\begin{obs}\label{obssize}
    For any $\sigma\in \m Q'$, we have 
    \[
    |D_\sigma|\le |I|-(n-t) \leq k, \qquad |M_\sigma|\le |D_{\sigma}|+k \leq 2k, \qquad  |F_\sigma|\le 2|D_{\sigma}|+k \leq 3k.
    \]
\end{obs}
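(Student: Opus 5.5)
The plan is to compare each $\sigma\in\m Q'$ with the partial identity $id_I$, using the $(n-t)$-intersection property of $\m Q'$, and to track how the extension step that builds $\m Q'$ from $\m Q$ changes domains.

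First, $id_I$ itself lies in $\m Q'$. Its domain already contains $I$, so it is never replaced during the extension process. Now let $\sigma\in\m Q'$. Since $I_\sigma\supset I$, the pairs that $\sigma$ shares with $id_I$ are exactly $\{(j,j): j\in I\cap \mathrm{Fixed}(\sigma)\}$, and there are $|I|-|D_\sigma|$ of them. The $(n-t)$-intersection property of $\m Q'$ then gives $|I|-|D_\sigma|\ge n-t$. Hence $|D_\sigma|\le |I|-(n-t)\le k$, where the last step uses $|I|\le n-t+k$.

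Second, to bound $|M_\sigma|$, I would not use the crude bound $|I_\sigma|\le n-t+2k$, which only yields $2k$. Instead I would trace $\sigma$ back to an original $\sigma_0\in\m Q$ of which it is an extension. The extension steps add only points of $I$, so $I_\sigma\setminus I=I_{\sigma_0}\setminus I$. Since $\sigma_0$ and $id_I$ both lie in the $(n-t)$-intersecting family $\m Q$, we have $|I_{\sigma_0}\cap I|\ge n-t$. Together with $|I_{\sigma_0}|\le n-t+k$ this gives
\[
|I_\sigma\setminus I| = |I_{\sigma_0}|-|I_{\sigma_0}\cap I| \le (n-t+k)-(n-t)=k .
\]
Therefore $|M_\sigma|\le |I_\sigma\setminus I|+|D_\sigma|\le |D_\sigma|+k\le 2k$.

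Finally, $F_\sigma=D_\sigma\sqcup E_\sigma$ with $|E_\sigma|=|M_\sigma|$, so
\[
|F_\sigma| = |D_\sigma|+|M_\sigma| \le 2|D_\sigma|+k \le 3k .
\]
The only delicate point is the second step: the bound $k$ on $|I_\sigma\setminus I|$ must come from the original partial permutation in $\m Q$, not from the domain bound $n-t+2k$ on $\m Q'$.
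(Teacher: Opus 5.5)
Your proposal is correct and follows essentially the same route as the paper: it bounds $|D_\sigma|$ via intersection with $id_I$, and bounds $|I_\sigma\setminus I|\le k$ by going back to the original element of $\m Q$ (using that the extension step adds only points of $I$), then sums. You simply spell out the justification $|I_{\sigma_0}\cap I|\ge n-t$, which the paper records just before the observation.
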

\begin{proof}
By the $(n-t)$-intersection property of $\m Q'$, each $\sigma \in \m Q'$ agrees with $id_I$ on at least $n-t$ elements of $I$. Hence, $|D_\sigma|=|\mathrm{Moving}(\sigma) \cap I|\le |I|-(n-t)\le k$. 
As for each $\sigma \in \m Q$ we have $|I_{\sigma} \setminus I| \leq k$ and only elements from $I$ are added to $I_{\sigma}$ in the transition from $\m Q$ to $\m Q'$, for each $\sigma' \in \m Q'$ we have $|M_{\sigma'}| = |D_{\sigma'}|+|I_{\sigma'} \setminus I| \leq 2k$. Thus, $|F_\sigma|= |D_\sigma|+|M_\sigma|\le 3k,$ as asserted. 
\end{proof}
\noindent Decompose  $\m Q' = \cup_{i=0}^k \m Q'_i$, where
$$\m Q'_i:= \{\sigma\in \m Q': |D_{\sigma}|=i\}.$$
We transform $\m Q'$ into the family $\m F:= \sqcup_{i=0}^k \m F_i \subset \m P(\mb X')$, where
\begin{equation}
\label{eqfi2}
\m F_i = \{F_\sigma: \sigma\in \m Q'_i\}.
\end{equation}
For each $0 \leq i \leq k$, we denote 
\[
\m B_i := \{F \subset \mb X': F=F_{\sigma},\sigma \in \Sigma_n^{(j)}, j \in \{n-t,\ldots,n-t+2k\}, |D_{\sigma}|=i, I_{\sigma} \supset I\}.
\]
Clearly, for each $i$ we have $\m F_i \subset \m B_i$.

\medskip \noindent \emph{Preserving intersection.} The following claim shows that, in some sense, our transformation preserves intersection properties.
This claim generalizes Claim~\ref{clatranslation} above, which follows from it by substituting $I=[n]$.
\begin{claim}\label{clatranslation2}
    The family $\m Q'$ is $(n-t)$-intersecting if and only if for any $i,j$, the corresponding families $\m F_i$  and $\m F_j$ are cross $(i+j-|I|+n-t)$-intersecting.
\end{claim}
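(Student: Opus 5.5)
The plan is to compute $|\sigma\cap\tau|$ exactly in terms of $|F_\sigma\cap F_\tau|$ for any two $\sigma,\tau\in\m Q'$. Once we have an identity of the form $|\sigma\cap\tau| = (n-t) + |F_\sigma\cap F_\tau| - (i+j-|I|+n-t)$ for $\sigma\in\m Q'_i$, $\tau\in\m Q'_j$, both directions of the claim follow at once. Here $\sigma\cap\tau$ denotes the set of common pairs, i.e.\ points of $I_\sigma\cap I_\tau$ on which the two agree. The cross-intersection condition for $\m F_i,\m F_j$ with parameter $i+j-|I|+n-t$ is then exactly the statement that every $\sigma\in\m Q'_i$ and $\tau\in\m Q'_j$ agree on at least $n-t$ points. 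Ranging over all $i,j$ covers all pairs in $\m Q'$, including $i=j$ and $\sigma=\tau$.

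To derive the identity, split the common domain $I_\sigma\cap I_\tau$ into the part inside $I$ and the part outside $I$. This uses the preparation step: $I\subset I_\sigma$ and $I\subset I_\tau$, so $I_\sigma\cap I_\tau = I\sqcup\big((I_\sigma\cap I_\tau)\setminus I\big)$.

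Points outside $I$ are easy. For $u\notin I$ with $u\in I_\sigma\cap I_\tau$, the permutations agree at $u$ exactly when the pair $(u,\sigma(u))=(u,\tau(u))$ lies in $E_\sigma\cap E_\tau$. Every point of $I_\sigma\setminus I$ lies in $M_\sigma$, so its pair is recorded. Hence the agreements outside $I$ are exactly the pairs in $E_\sigma\cap E_\tau$ whose first coordinate is outside $I$.

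Points inside $I$ need more care. A point $u\in I$ is an agreement in one of two ways:
\begin{itemize}
\item $u\notin D_\sigma\cup D_\tau$, so both permutations fix $u$;
\item $u\in D_\sigma\cap D_\tau$ and $\sigma(u)=\tau(u)$, i.e.\ $(u,\sigma(u))\in E_\sigma\cap E_\tau$.
\end{itemize}
Points in exactly one of $D_\sigma, D_\tau$ never agree, since one permutation moves $u$ and the other fixes it. Pairs in $E_\sigma$ with first coordinate in $I$ have first coordinate in $D_\sigma$, so pairs of $E_\sigma\cap E_\tau$ with first coordinate in $I$ automatically have $u\in D_\sigma\cap D_\tau$. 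Writing $|I\setminus(D_\sigma\cup D_\tau)| = |I|-i-j+|D_\sigma\cap D_\tau|$, the agreements inside $I$ number
\[
|I|-i-j+|D_\sigma\cap D_\tau| + \#\{\text{pairs in } E_\sigma\cap E_\tau \text{ with first coordinate in } I\}.
\]

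Adding the two parts, and using that $[n]$ and the pair elements are disjoint pieces of $\mb X'$, gives
\[
|\sigma\cap\tau| = |I|-i-j+|D_\sigma\cap D_\tau|+|E_\sigma\cap E_\tau| = |I|-i-j+|F_\sigma\cap F_\tau|.
\]
Therefore $|\sigma\cap\tau|\ge n-t$ if and only if $|F_\sigma\cap F_\tau|\ge i+j-|I|+n-t$, which proves the equivalence.

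The only delicate point is the bookkeeping inside $I$. One must check that a fixed point of $\sigma$ in $I$ never contributes a pair to $E_\sigma$, so nothing is double counted or missed; this holds because $M_\sigma\cap I=D_\sigma$. One must also check that pairs $(u,u)$ with $u\notin I$ are correctly counted as agreements, which they are. Substituting $I=[n]$ recovers Claim~\ref{clatranslation}.
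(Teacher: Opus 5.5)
Your proposal is correct and follows essentially the same route as the paper. Both arguments establish the identity $|\sigma\cap\tau| = |I|-i-j+|F_\sigma\cap F_\tau|$ by counting the common fixed points in $I$ (namely $|I|-i-j+|D_\sigma\cap D_\tau|$) and adding the agreements recorded in $E_\sigma\cap E_\tau$. The only difference is bookkeeping: you split the common domain into points inside and outside $I$, whereas the paper splits the common pairs into common fixed points on $I$ and agreements on $M_\sigma\cap M_\tau$.
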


\begin{proof}
    For a partial permutation $\sigma$, let $X_\sigma:=I \setminus D_\sigma$ be the set of fixed points of $\sigma$ on $I$. Take any two partial permutations $\sigma\in \m Q'_i,\pi\in \m Q'_j$.  Their intersection consists of two parts. One part is the set $X_\sigma\cap X_\pi$ of common fixed points on $I$. The second part is the set of points in $M_\sigma\cap M_\pi$ on which the two partial permutations agree, including points outside of $I$ that are fixed by both $\sigma$ and $\pi$. The size of this set is exactly $|E_{\sigma} \cap E_{\pi}|$.
    We have $X_\sigma\cap X_\pi = I\setminus (D_\sigma\cup D_\pi)$, and thus,
    \begin{align*}
        |X_\sigma\cap X_\pi|&= |I|- |D_\sigma\cup D_\pi| = |I| -|D_\sigma|-|D_\pi|+ |D_\sigma\cap D_\pi| \\
        &=|I|-i-j+|D_\sigma\cap D_\pi|.
    \end{align*}
    Hence,
    \begin{align*}|\sigma\cap \pi| &= |X_\sigma\cap X_\pi|+|E_\sigma\cap E_\pi|\\
    &= |I|-i-j+|D_\sigma\cap D_\pi|+|E_\sigma\cap E_\pi|\\
    &=|I|-i-j+|F_\sigma\cap F_\pi|.\end{align*}
    This implies that $|F_\sigma\cap F_\pi|\ge i+j-|I|+n-t$ holds if and only if $|\sigma\cap \pi|\ge n-t.$ The assertion follows by applying this equivalence statement to all pairs of partial permutations in $\m Q'$.
\end{proof}

\noindent \emph{A weighted setting.}
A main difference between our treatment of partial permutations and the treatment of full permutations presented above is that for partial permutations, we introduce a \emph{weighted} setting. For each $\sigma \in \Sigma_n^{(i)}$, we set the weight $\mu(\sigma):= (n-i)!$, which is the number of extensions of $\sigma$ to a full permutation on $[n]$. For a family $\m Z \subset \Sigma_n$ we set $\mu(\m Z)$ to be the number of full permutations that extend at least one permutation from $\m Z.$ 
%\nathan{This means that $\mu(\m Z) \neq \sum_{\sigma \in \m Z} \mu(\sigma)$, which looks quite odd. In particular, this does not correspond to the iterative spread approximation lemma where we wrote that in the weighted version, the weight of a set is the sum of weights of its elements. On the other hand, this is indeed the definition we use in the proof below. What do I miss?}
In particular, this applies to the families $\m Q', \m Q_i'$ defined above. Note that since for each $\sigma \in \m Q'$ we have $I_{\sigma} \supset I$, all full permutations that extend $\sigma \in \m Q_i'$ have exactly $i$ moving points in $I$.

For a set $F_\sigma$, where $\sigma \in \Sigma_n$, we define $\mu(F_{\sigma}):=\mu(\sigma)$. For a family $\m B_i \subset \m P(\mb X')$, we define $\mu(\m B_i)$ to be equal to the weight of the corresponding family of partial permutations. 

The definition of spreadness naturally extends to the weighted setting. %\andrey{In this paragraph, shouldn't we use square brackets, so that we count the correct number of extensions?}\nathan{Yes, of course! Thanks!} 
For $r \geq 1$, we say that a family $\ff \subset \m P(\mb X')$ is {\it $(r,\mu)$-spread} if for each non-empty set $T$ we have $\mu(\ff[T])< r^{-|T|} \mu(\ff)$. For $s \in \mathbb{N}$, we say that $\m F$ is {\it $(r,\mu,s)$-spread} if for any disjoint sets $S,T$ with $|S|=s$ and $T \neq \emptyset$, we have $\mu(\ff[S \cup T])< r^{-|T|}\mu(\ff[S])$. We say that $\m F$ is {\it weakly $(r,\mu,s)$-spread} if the above holds for $S_0$ such that $\mu(\ff[S_0])=\max_{S:|S|=s}\mu(\ff[S])$ and any $T \neq \emptyset$. In other words, denoting $b_m=\max_{S:|S|=m} \mu(\ff[S])$ for every $m \in \mathbb{N}$, $\m F$ is {\it weakly $(r,\mu,s)$-spread} if $b_{s+t}< r^{-t}b_s$ for all $t>0$. 

\medskip \noindent \emph{Restrictions of the families $\m B_i$ and their weighted sizes.}
Given $F=(D,E)$ with $D\subset I$, $\mu(\bb_i[F])$ is defined as the number of full permutations that extend some partial permutation that corresponds to an element of $\m B_i[F]$. (Note that the restriction $F$ does not necessarily correspond to a partial permutation). 
Put $$b^{(i)}_\ell:=\max_{F:|F|=\ell}\mu(\bb_i[F]).$$ 
We obtain a comparison between weighted sizes of families of the form $\m B_i[F]$, in the spirit of Lemma~\ref{lemadecay}(i).
\begin{claim}
\label{claimdecay}
   Let $n,t,k,i$ be such that $i \leq k \leq t/5$. For any $F=(D,(M, \sigma(M)))$, where $|D|\le i$, $|M|\le 2k$, $(M \cap I) \subset D$ and $(\sigma(M) \cap I) \subset D$, and any $F'$ obtained from $F$ by adding either a singleton element or a pair element, we have 
     \begin{equation}\label{eqdecayb} \mu(\bb_i[F])/\mu(\bb_i[F'])\ge \tfrac12 \cdot \min \left\{\frac{n-t-k}{k}, t- 3k \right\}.
     \end{equation}
   Consequently, for all $0 \leq i \leq k$ and all $\ell$, we have 
%\begin{align}\label{Eq:Compb}
$b^{(i)}_\ell/ b^{(i)}_{\ell+1} \geq \tfrac12 \min \left\{\frac{n-t-k}{k}, t- 3k \right\}$, 
%\end{align}
which means that $\m B_i$ is weakly $(r,\mu,t')$-spread for  $r=\tfrac{1}2 \cdot \min \left\{\frac{n-t-k}{k}, t- 3k \right\}$ and any $t' \in \mathbb{N}$.
\end{claim}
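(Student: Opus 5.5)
The plan is to reduce everything to counting full permutations and then use switching (double-counting) arguments. First I will unwind the definition of $\mu$. Since every partial permutation in $\m B_i$ has domain containing $I$, and $F=(D,(M,\sigma(M)))$ with $(M\cap I)\cup(\sigma(M)\cap I)\subset D$, I expect
\[
\mu(\bb_i[F])=\bigl|\{\pi\in S_n:\ |\mathrm{Moving}(\pi)\cap I|=i,\ D\subset \mathrm{Moving}(\pi),\ \pi(u)=\sigma(u)\ \forall u\in M\}\bigr|.
\]
The reason is that any such $\pi$ extends its restriction to $I\cup (M\setminus I)$, and that domain has admissible size because $|M|\le 2k$. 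This bookkeeping (checking the domain-size window $n-t\le |I_\sigma|\le n-t+2k$) is the only place where partial permutations enter; after it, both sides of~\eqref{eqdecayb} are cardinalities of explicit sets of full permutations. I then split according to the type of the added element.

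\emph{Case 1: $F'=F\cup\{u\}$ with $u$ a singleton.} If $u\notin I$, then $\bb_i[F']=\emptyset$ (the $D$-part always lies in $I$), so there is nothing to prove. If $u\in I\setminus D$, I use a switching map. Take $\pi\in\bb_i[F']$, and take $v\in I$ that is fixed by $\pi$ and does not lie in $D\cup M\cup\sigma(M)$. Conjugating by the transposition $(u\,v)$ gives $\pi'=(u\,v)\pi(u\,v)$. This $\pi'$ fixes $u$, moves $v$, keeps exactly $i$ moving points in $I$, and still satisfies all constraints of $F$, so $\pi'\in\bb_i[F]\setminus\bb_i[F']$. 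The number of choices of $v$ is at least $|I|-i-|D|-2|M|\ge n-t-6k$. Conversely, a given $\pi'$ arises from at most $i\le k$ pairs $(\pi,v)$, since $v$ must be a moving point of $\pi'$ in $I$ and $\pi$ is then determined. Hence $\mu(\bb_i[F])\ge \frac{n-t-6k}{k}\mu(\bb_i[F'])$. Using $k\le t/5$, this is at least $\frac12\cdot\frac{n-t-k}{k}$ whenever the right-hand side of~\eqref{eqdecayb} exceeds $1$; otherwise the claim is trivial.

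\emph{Case 2: $F'=F\cup\{(u,w)\}$ with $(u,w)$ a pair.} If $u\in I\setminus D$ or $w\in I\setminus D$, the pair forces that point to be moving. In that case $\bb_i[F']=\bb_i[F'\cup\{u\}]$ (or with $w$), and I reduce to Case~1 combined with the bound below. In the main subcase, $u$ is outside $I$ or in $D$, and I swap images. For $\pi\in\bb_i[F']$, choose $x\notin I\cup M\cup\{u\}$ with $\pi(x)\notin\{u,w\}\cup\sigma(M)$, and set $\pi'(u)=\pi(x)$, $\pi'(x)=w$, leaving $\pi'$ equal to $\pi$ elsewhere. Since $x\notin I$, the moving set inside $I$ is unchanged. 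The condition $\pi(x)\ne u$ keeps $u$ moving when $u\in D$, and the constraints of $F$ are preserved, so $\pi'\in\bb_i[F]\setminus\bb_i[F']$. The map $(\pi,x)\mapsto\pi'$ is injective, because $x=\pi'^{-1}(w)$ and then $\pi$ is recovered. The number of admissible $x$ is at least $(n-|I|)-2|M|-O(1)\ge t-k-4k-O(1)$, which should give the bound $\ge\frac12(t-3k)$ after a bit more care in the count (for example, not double-excluding $M\setminus I$ and $\sigma(M)$).

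For the consequence about $b^{(i)}_\ell$, I take $F'$ attaining $b^{(i)}_{\ell+1}$. By the analogue of Observation~\ref{obs:max-structure}, I can normalise $F'$ so that pair elements touching $I$ have both coordinates in its $D$-part; this replaces such pairs by singletons without decreasing $\mu$. Removing a suitable element then yields $F$ satisfying the hypotheses, and $b^{(i)}_\ell\ge\mu(\bb_i[F])\ge r\,\mu(\bb_i[F'])=r\,b^{(i)}_{\ell+1}$. Weak $(r,\mu,t')$-spreadness follows by iterating.

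I expect the main obstacle to be the normalisation and the boundary cases rather than the switchings themselves. This includes ensuring that removing an element from the maximiser leaves an $F$ satisfying $(M\cap I)\cup(\sigma(M)\cap I)\subset D$, handling pairs $(j,j)$ with $j\notin I$, and making the constants land exactly at $\frac12\min\{\frac{n-t-k}{k},t-3k\}$.
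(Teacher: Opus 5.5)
Your switching argument is a genuinely different route from the paper's, and it can be made to work, but the Case~1 count as written does not give the stated bound. The paper computes $\mu(\mathcal B_i[F])$ almost explicitly. It equals $\binom{|I|-|D|}{i-|D|}$ times the number $d(E,D_\sigma)$ of completions, and a union bound shows $\tfrac12 N!\le d(E,D_\sigma)\le N!$ with $N=n-|I|+i-|M|\ge t-3k$; this union bound is where $k\le t/5$ is used. The two cases then reduce to comparing a binomial ratio and a factorial ratio. Your transposition switchings avoid estimating $d(E,D_\sigma)$ altogether, give slightly better constants, and never use $k\le t/5$. The paper's route has the advantage of a near-closed formula for $\mu(\mathcal B_i[F])$, up to a factor $2$. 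The deduction of the ``Consequently'' part from a normalized maximizer is the same in both.

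The flaw is in Case~1. From $\mu(\mathcal B_i[F])\ge\frac{n-t-6k}{k}\,\mu(\mathcal B_i[F'])$ you cannot reach $\frac12\cdot\frac{n-t-k}{k}$ ``using $k\le t/5$'', because that hypothesis relates $k$ to $t$, not to $n-t$. You would need $n-t\ge 11k$. For $3k<n-t<11k$ the right-hand side of \eqref{eqdecayb} exceeds $1$, so your trivial fallback does not apply, while $n-t-6k$ can even be negative.

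The repair is that no exclusions are needed. Since $(M\cup\sigma(M))\cap I\subset D\subset\mathrm{Moving}(\pi)$, every one of the $|I|-i\ge n-t-k$ fixed points of $\pi$ in $I$ is an admissible $v$. Each $\pi'$ has at most $i-|D|\le k$ preimages. This gives $\mu(\mathcal B_i[F])\ge\frac{|I|-|D|}{i-|D|}\,\mu(\mathcal B_i[F'])\ge\frac{n-t-k}{k}\,\mu(\mathcal B_i[F'])$, without the factor $\frac12$.

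In Case~2 your loose count $t-5k-O(1)$ would likewise require $t\ge 7k+O(1)$, which is not given. The careful count you anticipate does work. The condition $\pi(x)\notin\sigma(M)$ is automatic from $x\notin M$, since $\pi(x)=\sigma(y)=\pi(y)$ would force $x=y$. So only $M\setminus I$, $u$ and $\pi^{-1}(u)$ need excluding, which leaves at least $t-3k-2$ choices of $x$. The ratio is then at least $t-3k-1\ge\frac12(t-3k)$ whenever $t-3k\ge 2$. For $t-3k<2$ the claim is trivial, because $\mathcal B_i[F']\subseteq\mathcal B_i[F]$.

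Finally, $|M|\le 2k$ does not by itself place $I\cup(M\setminus I)$ inside the domain-size window of $\mathcal B_i$, since its size can reach $n-t+3k$. This is harmless. When it falls outside, $\mathcal B_i[F]$ and $\mathcal B_i[F']$ are both empty and there is nothing to prove. For $F'$ you only need the inequality $\mu(\mathcal B_i[F'])\le|\{\pi\in S_n:\ldots\}|$, which always holds. The paper's formula makes the same identification tacitly.
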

\begin{proof}
Consider $F = (D,E) = (D,(M, \sigma(M)))$ that satisfies the assumptions of the claim. Note that since each partial permutation $\pi$ that corresponds to an element of $\m B_i[F]$ satisfies $|D_\pi|=i$ and $I_\pi \supset I$, each full permutation $\sigma$ that extends it satisfies $|D_{\sigma}|= i$ as well. Since $(M \cap I) \subset D$ and $(\sigma(M) \cap I) \subset D$, the number of such permutations is ${|I|-|D|\choose i-|D|}\cdot d(E,D_\sigma)$, where $d(E, D_\sigma)$ is the number of bijections from $(D_\sigma\cup ([n]\setminus I))\setminus M$ to $(D_\sigma\cup ([n]\setminus I))\setminus \sigma(M)$ with no fixed points in $D_\sigma$. Indeed, there are ${|I|-|D|\choose i-|D|}$ possible ways to choose $D_{\sigma}$ (where $\sigma$ is the full permutation), and once $D_{\sigma}$ is chosen, there are $d(E, D_\sigma)$ possible ways to complete $\sigma$. 
As $$|(D_\sigma\cup ([n]\setminus I))\setminus M| \geq 
|[n]\setminus I|-|M|\ge t-3k,$$ and by assumption, $i \leq k \leq t/5,$ a union bound implies that among the bijections from $(D_\sigma\cup ([n]\setminus I))\setminus M$ to $(D_\sigma\cup ([n]\setminus I))\setminus \sigma(M)$,
%\ohad{$f$ is undefined. $\sigma$ seems right} \nathan{Right!} 
at most a $\frac{|D_{\sigma}|}{|(D_\sigma\cup ([n]\setminus I))\setminus M|} \leq \frac{k}{t-3k} \leq \frac{1}{2}$ fraction have a fixed point on $D_\sigma$. Hence, we have
$$d(E,D_{\sigma}) \geq \tfrac12(n-|I|+i-|M|)!,$$ %\ohad{$d(E,D_\sigma)$?}
since $(n-|I|+i-|M|)!$ is the number of bijections between these two sets with no fixed points restrictions.

If $F'$ is formed from $F$ by adding one element to $D$, then the number of extensions to a full permutation decreases by a factor of at least $$\frac12 \cdot\frac{{|I|-|D|\choose i-|D|}}{{|I|-|D|-1\choose i-|D|-1}}=\frac12 \cdot\frac{|I|-|D|}{i-|D|}\ge \frac12\cdot \frac {n-t-k}k. $$ 
%\sim \frac{n}k \gtrsim n^{\frac 12+\frac \epsilon 8}.$$
If $F'$ is formed from $F$ by adding one element to $E$, then the number of extensions to a full permutation decreases by a factor of at least
$$\frac12 \cdot \frac{(n-|I|+i-|M|)!}{(n-|I|+i-|M|-1)!}\ge \tfrac12 \cdot (t-3k).$$
%\ohad{If $|I| \le n-t+k$, $M\le 2k$ dont we get $1/2(t-3k)$? (we had the $t-3k$ calculation in the frist equation od this proof) does it matter?}\nathan{Right, thanks!}
%\gtrsim n^{\frac 12+\frac \epsilon 8}.
This proves the assertion~\eqref{eqdecayb}. 

To show the `Consequently' part, note that like in Observation~\ref{obs:max-structure}, for any $i,\ell$, the value $b_{\ell}^{(i)}$ is attained (also) by $\m B_i[X]$ for some $F'=(D',(M',\sigma'(M'))$ such that $(M' \cap I) \subset D'$ and $(\sigma'(M') \cap I) \subset D'$. This can be shown by repeating the proof of Observation~\ref{obs:max-structure} almost verbatim.

The `Consequently' part follows by applying~\eqref{eqdecayb} to a restriction $F'$ of the form $F'=(D',(M',\sigma'(M'))$ with $(M' \cap I) \subset D'$ and $(\sigma'(M') \cap I) \subset D'$ for which $b_{\ell+1}^{(i)}$ is obtained, and a restriction $F=(D,(M,\sigma'(M))$ obtained from $F'$ by removing a singleton element if $M'=\emptyset$ or a pair element if $M' \neq \emptyset$.
\end{proof}

\noindent \emph{Relation of the quantities $b_{\ell}^{(i)}$ to the sizes of $(n-t)$-intersecting families of permutations.} The following claim, which is a variant of Lemma~\ref{lemadecay}(v) above, shows that for certain values of $i$ and $\ell$, $b_{\ell}^{(i)}$ is the size of an $(n-t)$-intersecting family of permutations. For all $t \in [n]$,  denote 
\[
t':=|I|-(n-t).
\]
\begin{claim}\label{b_intersecting}
    For every $1 \leq t \leq n$ and every $\lceil t'/2\rceil\le i\le t'$, 
    %\ohad{$\lceil t'/2\rceil\le i\le t'$?}\nathan{Right, thanks!} 
    there exists an
    $(n-t)$-intersecting family in $S_n$ of size $b_{2i-t'}^{(i)}$.
\end{claim}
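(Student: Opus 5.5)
We mimic the proof of Lemma~\ref{lemadecay}(v), replacing Claim~\ref{clatranslation} by its partial-permutation version, Claim~\ref{clatranslation2}. Fix $i$ with $\lceil t'/2\rceil\le i\le t'$, so that $2i-t'\ge 0$. Choose $F=(D,E)\subset \mb X'$ with $|F|=2i-t'$ attaining the maximum, i.e.\ $\mu(\m B_i[F])=b^{(i)}_{2i-t'}$. Let $\m Z$ be the family of partial permutations $\sigma$ whose images $F_\sigma$ form $\m B_i[F]$; each such $\sigma$ has domain $I_\sigma\supset I$, $|D_\sigma|=i$ and $F\subset F_\sigma$. Define
\[
\m G:=\{\pi\in S_n:\ \pi \text{ extends some } \sigma\in \m Z\}.
\]
By the definition of the weight $\mu$ on families, $|\m G|=\mu(\m B_i[F])=b^{(i)}_{2i-t'}$. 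It therefore remains to show that $\m G$ is $(n-t)$-intersecting.

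Take $\pi,\rho\in\m G$, extending $\sigma,\tau\in\m Z$ respectively. Apply the identity from the proof of Claim~\ref{clatranslation2}, which is a pointwise computation valid for any two partial permutations whose domains contain $I$:
\[
|\sigma\cap\tau|=|I|-|D_\sigma|-|D_\tau|+|F_\sigma\cap F_\tau|.
\]
The proof of Claim~\ref{clatranslation2} never used membership in $\m Q'$, only that $I\subset I_\sigma$; this should be stated explicitly. Since $F\subset F_\sigma\cap F_\tau$ and $|D_\sigma|=|D_\tau|=i$, we get
\[
|\sigma\cap\tau|\ge |I|-2i+(2i-t')=|I|-t'=n-t.
\]
Because $\pi\supset\sigma$ and $\rho\supset\tau$ as sets of pairs, $|\pi\cap\rho|\ge|\sigma\cap\tau|\ge n-t$, so $\m G$ is $(n-t)$-intersecting.

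Two points need care. First, one must check that $\mu$ of the set family really equals the number of full permutations extending some member, rather than a sum of weights; this holds by the convention stated just before the claim. Second, $\m B_i$ is defined relative to the fixed set $I$, with $t'=|I|-(n-t)$ (and $\lceil t'/2\rceil\le i\le t'\le k$ by Observation~\ref{obssize}), so the counting of $|D_\sigma|$ must be done inside $I$. The main obstacle is essentially bookkeeping: the restriction $F$ need not itself correspond to a partial permutation, but this is harmless, since only $|F|$ and $F\subset F_\sigma$ enter the argument.
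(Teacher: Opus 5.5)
Your proposal is correct and follows the paper's proof: you pick a maximizing $F$ with $|F|=2i-t'$, take $\m G$ to be the full permutations extending a member of $\m B_i[F]$ (so $|\m G|=b^{(i)}_{2i-t'}$ by the definition of $\mu$), and get the $(n-t)$-intersection from the identity $|\sigma\cap\tau|=|I|-|D_\sigma|-|D_\tau|+|F_\sigma\cap F_\tau|$ together with $F\subset F_\sigma\cap F_\tau$. The only difference is cosmetic: you apply the pointwise identity from the proof of Claim~\ref{clatranslation2} directly, while the paper invokes that claim on a newly defined family it again calls $\m Q'$. Your remark that the identity only needs $I\subset I_\sigma$ is exactly what justifies the paper's reuse of the claim.
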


\begin{proof}
    Let $t \in [n]$
    %, and denote $t':=|I|-(n-t)$. 
    and let $\lceil t'/2\rceil\le i\le t'$. We want to show that $b_{2i-t'}^{(i)}$ is the size of some $(n-t)$-intersecting family $\m G \subset S_n$.
    Choose $X=(D,E)\subset \mb X'$ with $|X|=2i-t'$, such that $\mu(\mathcal B_i[X])=b_{2i-t'}^{(i)}$. (Such an $X$ exists by the definition of $b_{2i-t'}^{(i)}$). 
    Consider the family of partial permutations
\[
\m Q' := \{\sigma: \sigma \in \Sigma_n^{(j)}, j \in \{n-t,\ldots,n-t+2k\}, |D_{\sigma}|=i, I_{\sigma} \supset I, X \subset F_{\sigma}\}.
\]
Denoting by $\m F$ the family of subsets of $\mb X'$ corresponding to $\m Q'$, we have $\m F_i=\m B_i[X]$ and $\m F_j=\emptyset$ for all $j \neq i$. As $\m B_i[X]$ is $(2i-t')$-intersecting, Claim~\ref{clatranslation2} implies that $\m Q'$ is an $(n-t)$-intersecting family of partial permutations. Let $\m G \subset S_n$ be the family of all full permutations that contain some element of $\m Q'$. $\m G$ is clearly $(n-t)$-intersecting, and by the definition of $\mu$, we have $|\m G|=\mu(\m B_i[X])=b_{2i-t'}^{(i)}$. 
\end{proof}

\section{The Iterative Spread Approximation Lemma}
\label{sec:IterativeSpreadApproximation}

In this section, we present our second simplification, the iterative spread approximation lemma. This lemma is an enhancement of the spread approximation lemma of Kupavskii and Zakharov~\cite[Theorem~8]{KZ22}. Its advantage is that while~\cite[Theorem~8]{KZ22} has extra assumptions on the parameters which make it applicable only in the setting where the extremal example is a $t$-umvirate, the iterative spread approximation lemma (i.e., Theorem~\ref{thmapprox1} below) can be applied in the general setting where all families $\m F_{n,t,r}$ are candidates for being extremal. As was mentioned above, a similar procedure in the special case of families of $k$-subsets of $[n]$ was introduced by Frankl and Kupavskii~\cite{FranklK25} who used it to study the Hajnal-Rothschild problem. 
%\nathan{Andrey, can you please check that the sentences I added are fine?} \andrey{I think that it's fine!}
As the statement of the lemma and its proof are somewhat technical, we begin with an informal statement of the lemma and a proof outline, and then we present the formal statement and the full proof.

\subsection{Informal statement of the lemma and proof outline}

Throughout this section, we work with  an ambient `spread' family $\m A \subset \binom{[n]}{k}$, and use the notation 
\begin{equation*}
    a_i:=\max_{Z:|Z|=i}|\m A[Z]|, \qquad 0 \le i \le k.   
\end{equation*}
%The whole argument also applies for the \emph{weighted} setting, in which the elements of $\m A$ have positive weights, and for any $\m F \subset A$, $|\m F|$ is the sum of weights of the elements of $\m F$. \nathan{This does not agree with the definition of weights for partial permutations in Section~3.3!} 
Essentially, the iterative spread approximation lemma asserts the following. 
%\ohad{did we mean it to not have a number?} \nathan{Yes. The formal statement has a number (Theorem 14), and it is the one we use in later sections.}
\begin{theorem*}
[Iterative spread approximation lemma, informal statement]
      Let $n,k,t$ be integers, and let $\aaa\subset {[n]\choose k}$ be a %(possibly weighted) 
      family that is weakly $(R,t')$-spread for a `large' $R$ and all values of $t'$ around $t$, but is not `too spread'. 
      
      Let $\ff\subset \aaa$ be a $t$-intersecting family. Then there exists a $t$-intersecting family $\m S$ of subsets of $[n]$ such that:
      \begin{itemize}
        \item The size of each $S \in \m S$ is very close to $t$. 
        
        \item The family $\m R:=\m F \setminus\m F[\m S]$ is very small -- that is, almost every $F \in \m F$ contains some $S \in \m S$.

        \item For each $S \in \m S$, there exists $\m F_S \subset \m F$ such that $\m F_S(S)$ is $\frac{R}{2}$-spread.
      \end{itemize}
\end{theorem*}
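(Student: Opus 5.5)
We will prove the informal statement by an iterative procedure that repeatedly peels off maximal restrictions, in the spirit of the spread approximation lemma of Kupavskii and Zakharov. We maintain a current family $\m F^{(0)}=\m F$ and an initially empty family $\m S$. At each step, if $\m F^{(j)}$ is nonempty and not negligible, we pick a set $S$ that maximizes $|\m F^{(j)}(S)|\,(R/2)^{|S|}$ (including $S=\emptyset$). By maximality, $\m F^{(j)}(S)$ is $\frac R2$-spread: for any nonempty $T$ disjoint from $S$, we have $|\m F^{(j)}(S\cup T)|(R/2)^{|S|+|T|}\le |\m F^{(j)}(S)|(R/2)^{|S|}$. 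We add $S$ to $\m S$, set $\m F_S:=\m F^{(j)}[S]$, and let $\m F^{(j+1)}:=\m F^{(j)}\setminus \m F^{(j)}[S]$. We stop once the remaining family $\m R$ is small compared with $|\m F|$. The third bullet then holds by construction. The second bullet reduces to showing that, before stopping, we never pick a set $S$ that is too large, so that the stopping condition is eventually reached with all chosen sets of bounded size.

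\textbf{Step 1: the sets in $\m S$ are small.} Suppose $|S|\ge t+q$ for a suitable $q$ (of order $\log$ of the relevant ratios). By the choice of $S$ we have $|\m F^{(j)}|\le |\m F^{(j)}(S)|(R/2)^{|S|}\le a_{|S|}(R/2)^{|S|}$. The weak $(R,t')$-spreadness of $\m A$ gives $a_{t+q}\le R^{-q}a_t$. Hence $|\m F^{(j)}|\le 2^{-|S|}R^{t}a_t$ roughly. Since $\m A$ is not `too spread', $a_t$ is comparable to the size of an extremal $t$-intersecting family, which for us (by Lemma~\ref{lemadecay}(v)) is at most $|\m F|$. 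Therefore, once $|\m F^{(j)}|$ is still a non-negligible fraction of $|\m F|$, no such large $S$ can be chosen. Making the exponents match is the delicate point. This is why an iterative version is needed: we compare against $a_{t'}$ for the correct $t'$ near $t$ (the one where the extremal family $\m F_{n,t,r}$ lives) rather than $a_t$. We also lower the threshold in stages, removing the chosen parts layer by layer by size.

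\textbf{Step 2: $\m S$ is $t$-intersecting.} Let $S_1,S_2\in\m S$ with $|S_1\cap S_2|<t$. We use the spreadness of $\m F_{S_1}(S_1)$ and $\m F_{S_2}(S_2)$ together with the spread-lemma (Theorem~\ref{thmtao} type, Alweiss--Lovett--Wu--Zhang): two $\frac R2$-spread families on sets of size $\le k$ contain $A\in\m F_{S_1}(S_1)$ and $B\in\m F_{S_2}(S_2)$ with $A\cap B=\emptyset$ and $A,B$ avoiding $S_1\cup S_2$. This requires $R\gg k\log k$, and it works after deleting from each family the few sets that meet the fixed small set $S_1\cup S_2$, using spreadness and the size bound from Step~1. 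Then $A\cup S_1$ and $B\cup S_2$ are elements of $\m F$ intersecting in fewer than $t$ elements, a contradiction. Hence $\m S$ is $t$-intersecting.

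\textbf{Main obstacle.} The main obstacle is Step~1. We must bound the leftover family $\m R$ and the sizes $|S|-t$ simultaneously, using only the weak spreadness of $\m A$ at levels $t'$ near $t$ together with the non-over-spreadness of $\m A$. The first attempt would be a potential-function argument. Summing $|\m F_S|\le a_{|S|}(R/2)^{|S|}\cdots$ over chosen sets of each size, and combining this with a crude bound on the number of $t$-intersecting small sets (itself obtained via the peeling bounds), should show that the total mass at sizes above $t+q$ is $o(|\m F|)$.
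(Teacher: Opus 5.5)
There is a genuine gap, exactly at the point you call delicate. Your max-potential rule (take $S$ maximizing $|\ff^{(j)}(S)|(R/2)^{|S|}$) does give the spreadness bullet. However, the only size control you have comes from comparing $S$ with $\emptyset$, namely $|\ff^{(j)}|\le 2^{-|S|}R^{t}a_t$. Because of the factor $R^t$, this rules out only $|S|\gtrsim t\log_2R+\sigma$, not $|S|\ge t+q$ with $q$ logarithmic. This is essentially the paper's crude preparation step B(0), and sets of size about $t\log R$ are far from ``very close to $t$''. The phrases ``compare against $a_{t'}$ for the correct $t'$'' and ``lower the threshold in stages'' do not supply a mechanism.

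The missing idea is a density increment extracted from the intersection structure of an intermediate approximation. Suppose $\m S^{(i-1)}$ is only $t_1$-intersecting ($t_1<t$), with sets of size at most $q^{(i-1)}$. Then Theorem~\ref{thm2sets} produces, in every non-negligible $\ff'\subset\ff[\m S^{(i-1)}]$, a seed $X$ with $|X|$ close to $t_1$ and $|\ff'(X)|\ge\beta a_{|X|}$. (Its proof takes $A,B\in\m S^{(i-1)}$ of minimal intersection and pigeonholes every $C$ by its traces on a $t_1$-subset of $A\cap B$, on $A\setminus B$ and on $B\setminus A$.) Peeling from such a seed costs only $|S|-|X|\le\log_2(1/\beta)$, since $a_{|S|}\le R^{|X|-|S|}a_{|X|}$ while $r=R/2$ (Theorem~\ref{thmregularity}). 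Your max-potential rule would in fact obey the same bound whenever such a seed exists, so the construction may be salvageable; proving that seeds exist is the real content. Lemma~\ref{lemtint} then raises the intersection parameter, and $q^{(i)}-t_1^{(i)}$ decays geometrically.

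Also, the hypothesis $a_{t'}\le R_1^{t-t'}a_t$ is used only to bound $\log_2(a_{t_1}/a_t)$ inside $\log_2(1/\lambda^{(i)})$; it does not make $a_t$ comparable to $|\ff|$. Here $\ff$ is an arbitrary $t$-intersecting family (maximality and Lemma~\ref{lemadecay}(v) belong to the applications). So the remainder is measured against $a_t$, not $|\ff|$. For example, if $\ff=\{F\}$ with $|F|=k$, no set of size near $t$ has a spread link in $\ff$, so necessarily $\m R=\ff$.

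Step~2 has a parallel problem. Requiring $A\in\ff_{S_1}(S_1)$ to avoid $S_2\setminus S_1$ entirely needs $R\gg|S_2\setminus S_1|$. If $|S_1\cap S_2|=t-j$, this set has $j+(|S_2|-t)$ elements, where $j$ can be of order $t$. With the sizes your Step~1 actually delivers, $|S_2|-t$ is of order $t\log R$. The theorem, by contrast, allows $R$ of order $t^{1/2}\log t$. Similarly, $R\gg k\log k$ is not assumed and fails in both applications, where $R<k$; the spread lemma needs only $R\gtrsim\log k$.

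The paper asks only that $|A\cap(S_2\setminus S_1)|<\lceil j/2\rceil$, whose failure fraction is at most $\binom{|S_2\setminus S_1|}{\lceil j/2\rceil}(2/R)^{\lceil j/2\rceil}$. Lemma~\ref{lemtint} packages this as $r\ge 24(\ell-t'+1)/(t-t'+1)$. For $\ell\approx t\log R$ and $R$ of order $t^{1/2}\log t$, this does not allow $t'=t$; the paper settles for $t'=t-\lceil t^{1/2}\rceil$ at that stage.

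Finally, your argument is circular. Your fallback for Step~1 (counting $t$-intersecting small sets) presupposes Step~2, while Step~2 presupposes Step~1. The paper breaks this circle by starting from the weaker $(t-\lceil t^{1/2}\rceil)$-intersection of the crude approximation, and then alternately shrinking the set sizes and raising the intersection parameter.
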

    
%In words, the family $\m S$ is an approximation of $\m F$ by a family of sets of a small size, such that almost each set in $\m F$ contains a set in $\m S$, and for each $S \in \m S$, $\m F$ contains a subfamily $\m F_S$ such that $\m F_S[S]$ is spread. 
Formally, the statement of the theorem relies on several additional parameters: $\sigma \ge 0$, which measures how small is $\m R$ required to be; $\frac{1}{2} \le \alpha \le 1$, which measures how large is the spreadness parameter $R$ required to be, compared to $t$; and $R_1>R$, which quantifies the statement that $\m A$ is `not too spread' via the inequality $a_{t} \geq R_1^{t'-t}a_{t'}$ for values of $t'$ slightly smaller than $t$. Roughly speaking, if $R_1$ is not too large and $R>c(t^{\alpha}\log t+\sigma)$ for a sufficiently large constant $c$, then the theorem guarantees that all sets in $\m S$ are of size at most $t+O(t^{1-\alpha}+\sigma+\log t)$, and $|\m R| \leq O(2^{-\sigma}\log(Rt+\sigma) a_t)$. The exact relations between the parameters are cumbersome, and hence, will be presented later on. %\andrey{I think that, as a measure of efficiency of the theorem, we could add that, while the `original' spread approximation only works when the extremal example is trivial, this works in the regime when it's not necessarily trivial.}

    \medskip The proof of the theorem relies on three components:
    \begin{itemize}
        \item \emph{Theorem~\ref{thm2sets}: A sub-structure inside $t$-intersecting families.} This theorem asserts that if $\m G \subset \binom{[n]}{\le \ell}$ is a $t_1$-intersecting family and for some $\m F \subset \m A$,   $\m F[\m G]$ is `large', then there exists a set $X$ of size not much larger than $t_1$ such that $\m F[X]$ is `dense' (meaning that a large portion of the sets in $\m F$ contain the same `small' set $X$).

        \item \emph{Theorem~\ref{thmregularity}: Spread approximation.} This theorem asserts that if $\m F \subset \m A$ has the property that for each `not-too-small' subfamily $\m P \subset \m F$ there exists a `small' set $X$ such that a large portion of the sets in $\m P$ contain $X$, then there exists a family $\m S$ of `small' sets, such that $\m F$ can be partitioned as $\m F= (\bigsqcup_{S \in \m S} \m F_S[S]) \sqcup \m R$, where for any $S \in \m S$, the family $\m F_S(S)$ is `spread', and the remainder $\m R$ is `small'. This means that $\m F$ can be approximated by a union of spread pieces, where all sets in each piece contain the same small set $S \in \m S$. 
        The advantage of this step over the `usual' spread approximation is that we work with subfamilies of sets containing $X$, which are much denser in the corresponding ambient family than the family $\m F$ itself. This allows us to obtain an approximation with much better parameters -- specifically, with sets of a smaller size and a better spreadness.
        %\andrey{Maybe we want to add some words that, using this set $X$, we get a `bump in the density' and thus get an approx of better quality, as compared to the `original' spread approx} \nathan{To Andrey: I am not sure how to write this. Can you please suggest a formulation?}
        %\andrey{Maybe something along the following lines: The advantage over the usual spread approximation here is that we work with subfamilies of sets containing  $X$, which are much denser in the corresponding ambient family than the family $\m F$ itself. This allows us to have much better parameters of the approximation, that is, better spreadness and lower uniformity of the approximation.}

        \item \emph{Lemma~\ref{lemtint}: Partially preserving intersection.} This lemma asserts that if $\m F \subset \binom{[n]}{k}$ is $t$-intersecting and $\m S \subset \binom{[n]}{\leq \ell}$ has the property that for each $S \in \m S$, there is a family $\m F_S \subset \m F$ such that $\m F_S(S)$ is $r$-spread, then $\m S$ is $t'$-intersecting for some $t' \le t$ that depends on how large $r$ is, compared to $k$ and $\ell$.
        This allows deducing that the process of approximating $\m F$ by $\m S$, described in the previous step, partially preserves the intersection property of the family (though, degrading from $t$-intersecting to $t'$-intersecting). 
    \end{itemize}
The proofs of all three components are purely combinatorial and are not complicated. 

The proof of the iterative spread approximation lemma is a more complex iterative process in which Theorem~\ref{thm2sets}, Theorem~\ref{thmregularity} and Lemma~\ref{lemtint} are applied alternately. Given a $t$-intersecting family $\m F \subset \binom{[n]}{k}$, we first apply a simpler variant of Theorem~\ref{thmregularity} and Lemma~\ref{lemtint} to construct an initial $t^{(0)}$-intersecting approximating family $\m S^{(0)}$ for $\m F$, with $t^{(0)}=t-\lceil t^{1/2} \rceil$. Then, we apply Theorem~\ref{thm2sets} to each sufficiently large subfamily of $\m F[\m S^{(0)}]$ and deduce that a large portion of its elements contain a single `small' set $X_1$. This allows us to apply Theorem~\ref{thmregularity} to $\m F[\m S^{(0)}]$ and get a new approximating family $\m S^{(1)}$. Then, Lemma~\ref{lemtint} allows us to deduce that $\m S^{(1)}$ is $t^{(1)}$-intersecting, for some $t^{(1)}>t^{(0)}$. We then repeat the process with $\m S^{(1)}$ replacing $\m S^{(0)}$. We show that at each iteration of the process, the maximum size $q^{(i)}$ of a set in the approximating family $\m S^{(i)}$ decreases, while the guaranteed intersection size $t^{(i)}$ increases. Furthermore, the process converges quickly, and at the end, the approximating family becomes $t$-intersecting, and the maximum size of a set in it becomes only slightly larger than $t$. 

\subsection{A sub-structure inside $t$-intersecting families}

Our first component is the following.

\begin{thm}\label{thm2sets} Let $n,k,t_1,\ell \in \mathbb{N}$ be such that $n \geq k \geq t_1$ and $\ell\ge t_1\ge 1$, and let $\lambda>0$. Let $\m A \subset \binom{[n]}{k}$, and denote $a_j:=\max_{Z: |Z| = j} |\aaa(Z)|$ for each $j \leq k$. Let $\m G$ be a $t_1$-intersecting family of $(\le\ell)$-element sets, and let $\ff \subset \m A$ be a family such that $\ff[\m G]\subset \m A[\m G]$ satisfies $|\ff[\m G]|\geq  \lambda a_{t_1}$. 

Assume that $\aaa$ is weakly $(R,t_1)$-spread for some $R\ge 1$. Then there exists a set $X$ of size $x$,
\begin{equation}\label{eqsize2set0}
t_1\le x\le t_1+ 4\Big(\frac{t_1(\ell-t_1)^2}{R}\Big)^{1/3}+ \log_2\Big((t_1+1)\lambda^{-1}\Big),   
\end{equation}
such that, denoting $\beta = \frac{|\m F(X)|}{a_x}$, $0<\beta\le 1,$ we have
\begin{equation}\label{eqsize2set}
\beta \ge \left[(t_1+1)e^{3\big(\frac{t_1(\ell-t_1)^2}{R}\big)^{1/3}} \right]^{-1} \lambda.
%|\m F[\m G]|\le (t_1+1)e^{3\big(\frac{t_1(\ell-t_1)^2}{R}\big)^{1/3}} \beta a_{t_1}.
\end{equation}
\end{thm}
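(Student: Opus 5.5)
The plan is to choose $X$ by maximising a penalised density, and then to show that the maximiser cannot be too large and cannot have too small a density. For every set $Y$ with $|Y|\ge t_1$, put $\beta_Y:=|\m F(Y)|/a_{|Y|}$ and define the potential
\[
\Phi(Y):=\beta_Y\, e^{-\gamma(|Y|-t_1)},
\qquad
\gamma:=\Big(\frac{t_1(\ell-t_1)^2}{R}\Big)^{1/3}.
\]
Let $X$ maximise $\Phi$; among maximisers, take one of minimal size. The two conclusions~\eqref{eqsize2set0} and~\eqref{eqsize2set} will then follow from a lower bound on $\Phi(X)$ together with the trivial bound $\beta_X\le 1$.

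\emph{Step 1: a cover of $\m F[\m G]$.} Fix one set $A\in\m G$, so $|A|\le\ell$. Every $F\in\m F[\m G]$ contains some $G\in\m G$, and $|G\cap A|\ge t_1$ because $\m G$ is $t_1$-intersecting. Hence $F$ contains a set $Y=G\cap A\subset A$ with $t_1\le |Y|\le \ell$.

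A naive union bound over all $t_1$-subsets of $A$ loses a factor $\binom{\ell}{t_1}$, which is far too much. I will instead run a splitting (branching) procedure. Start from a core $Y_0$ of size $t_1$. Repeatedly pass to a set $G\in\m G$ not containing the current core. Since $|G\cap Y|\ge t_1$ is not guaranteed once the core has been enlarged, the key mechanism is the following: each $F$ that avoids the current core must contain the intersection with $A$ of some other member of $\m G$. The goal is to organise the cover of $\m F[\m G]$ as a tree of depth $j$. Level $j$ corresponds to sets $Y$ of size $t_1+j$, and the number of branches at level $j$ should be at most about $(t_1+1)\,(\ell-t_1)^{2j}$ (Frankl-type branching on the $\ell-t_1$ elements of $A\setminus Y$ and of $G\setminus Y$). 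This gives
\[
\lambda a_{t_1}\le |\m F[\m G]|\le \sum_{j\ge 0}(t_1+1)(\ell-t_1)^{2j}\max_{|Y|=t_1+j}|\m F(Y)|.
\]

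\emph{Step 2: optimising the tradeoff.} Bound $|\m F(Y)|\le \Phi(X)e^{\gamma j}a_{t_1+j}$, and use weak $(R,t_1)$-spreadness of $\m A$, i.e.\ $a_{t_1+j}<R^{-j}a_{t_1}$. The sum becomes $\Phi(X)(t_1+1)\sum_j\big(e^{\gamma}(\ell-t_1)^2/R\big)^j$. The exponent $\gamma$ is exactly what balances the branching factor against the spread factor $R$, after accounting for the extra factor $t_1$ coming from the choice of which core element to replace. Part of the loss must be absorbed in $e^{3\gamma}$, and the cube root is what the tradeoff in Steps~1--2 produces. With this choice the geometric sum is $O(1)$, giving
\[
\Phi(X)\ge \lambda\big[(t_1+1)e^{3\gamma}\big]^{-1}.
\]
Since $\beta_X\ge\Phi(X)$, this yields~\eqref{eqsize2set}.

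\emph{Step 3: the size of $X$.} Since $\beta_X\le 1$, we have $e^{-\gamma(x-t_1)}\ge\Phi(X)$, so $x-t_1\le \gamma^{-1}\log\big((t_1+1)e^{3\gamma}/\lambda\big)$. This bound is not quite of the shape~\eqref{eqsize2set0}. To obtain the additive form $4\gamma+\log_2((t_1+1)/\lambda)$, I would refine the choice of $X$ by a halving argument. Starting from the maximiser, any further extension $X\subset X'$ must at least double $\beta$ per added element beyond the first $\approx 4\gamma$ elements. Otherwise the minimal-size tie-breaking, together with $\Phi(X')\le\Phi(X)$, caps the size. Since $\beta\le 1$ and the initial density is at least $\lambda/(t_1+1)$, there can be at most $\log_2((t_1+1)/\lambda)$ doubling steps.

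\emph{Main obstacle.} I expect the main difficulty to be Step~1: producing the branching count $(t_1+1)(\ell-t_1)^{2j}$ rather than $\binom{\ell}{t_1+j}$. This requires exploiting the intersection property repeatedly (Frankl's branching/kernel argument) rather than a single fixed $A$. The first thing I would try is a depth-first branching in which each step picks a member of $\m G$ missed by the current core and adds one element of it, keeping track of how many core elements can still be exchanged.
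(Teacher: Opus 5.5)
There is a genuine gap, and it is the one you flagged: Step~1 is never carried out. Moreover, the count you aim for there would not rescue Step~2.

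No iterative branching is needed; two members of $\m G$ suffice. Take $A,B\in\m G$ realising the minimum pairwise intersection $t'\ge t_1$. Fix $I\subset A\cap B$ with $|I|=t_1$, and put $D_1=A\setminus B$, $D_2=B\setminus A$, so $|D_1|,|D_2|\le \ell-t_1$. By minimality, $\m G$ is in fact $t'$-intersecting. Since $|(A\cap B)\setminus I|=t'-t_1$, any $C\in\m G$ with $|C\cap I|=t_1-i$ must have $|C\cap D_1|\ge i$ and $|C\cap D_2|\ge i$. Hence every $F\in\m F[\m G]$ contains a set $U\cup V\cup W$ of size $t_1+i$ with $U\in\binom{I}{t_1-i}$, $V\in\binom{D_1}{i}$, $W\in\binom{D_2}{i}$. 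There are at most $N_i:=\binom{t_1}{i}\binom{\ell-t_1}{i}^2$ such sets on level $i$. Pigeonholing first over the $t_1+1$ levels and then over these sets gives $X$ with $|X|=t_1+i$ and
\[
\lambda a_{t_1}\le|\m F[\m G]|\le(t_1+1)N_i\,\beta a_{t_1+i}\le(t_1+1)\Big(\frac{e^3t_1(\ell-t_1)^2}{R\,i^3}\Big)^{i}\beta a_{t_1},
\]
where $\beta=|\m F(X)|/a_{t_1+i}$. The last step uses weak $(R,t_1)$-spreadness and $\binom{m}{i}\le(em/i)^i$.

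Your Step~2 fails as written. With the count $(t_1+1)(\ell-t_1)^{2j}$, the series $\sum_j\big(e^{\gamma}(\ell-t_1)^2/R\big)^j$ is $O(1)$ only when $e^{\gamma}(\ell-t_1)^2<R$, and that is not among the hypotheses. The cube root comes exactly from the factorials you dropped: with $c:=t_1(\ell-t_1)^2/R$, one has $\max_i(e^3c/i^3)^i=e^{3c^{1/3}}$, attained at $i=c^{1/3}$.

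Even with the correct $N_i$, your exponential penalty loses. Bounding $\sum_iN_iR^{-i}e^{\gamma i}$ via $(e^{3+\gamma}c/i^3)^i$ gives about $\exp(3\gamma e^{\gamma/3})$ instead of $e^{3\gamma}$. No potential is needed: the displayed inequality gives \eqref{eqsize2set} directly after bounding $(e^3c/i^3)^i\le e^{3c^{1/3}}$.

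The same inequality also gives \eqref{eqsize2set0}. If $i>4c^{1/3}+\log_2((t_1+1)\lambda^{-1})$, then $e^3c/i^3<e^3/64<\tfrac12$. So, using $\beta\le1$, the right-hand side is at most $(t_1+1)2^{-i}a_{t_1}<\lambda a_{t_1}$, a contradiction. This replaces your Step~3. Its bound $x-t_1\le\gamma^{-1}\log\big((t_1+1)e^{3\gamma}\lambda^{-1}\big)$ degenerates as $\gamma\to0$, and its ``halving argument'' is only a sketch.
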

Roughly speaking, the theorem says that if a subfamily $\ff[\m G]$ of the $t_1$-intersecting family $\m A[\m G]$ is `large' then there exists $X$ of size not much larger than $t_1$ such that 
$\ff(X)$ is rather dense.

\begin{proof} Consider a $t_1$-intersecting family $\m G$ of $(\le \ell)$-element sets and take two sets  $A,B\in \m G$ that have the smallest intersection in $\m G$. Assume that they intersect in $t'\ge t_1$ elements.  Choose $I\subset A\cap B$, such that $|I| = t_1$, and put $D_1:=A\setminus B$, $D_2:=B\setminus A$. Note that $|D_1|,|D_2|\le \ell-t_1$. Then for each set $C \in \m G$  there is a value $i\in \{0,\ldots, t_1\}$ and sets $U,V,W$ such that 
\[
|U| = t_1-i, \quad |V|=|W| = i, \quad \mbox{and} \quad C\cap I = U, \quad C\cap D_1\supset V, \quad C\cap D_2\supset W.
\]
By the pigeonhole principle, there is a choice of $i$ and such sets $U,V,W$ such that
%$$\frac{|\m G(U\cup V\cup W)|}{|\m G|}\ge \frac 1{(t_1+1){t_1\choose t_1-i}{\ell-t_1\choose i}^2}.$$
%\andrey{New version:}
$$\frac{|\ff[\m G[U\cup V\cup W]]|}{|\ff[\m G]|}\ge \frac 1{(t_1+1){t_1\choose t_1-i}{\ell-t_1\choose i}^2}.$$
Indeed, for any given $i$ there are ${t_1\choose t_1-i}$ ways to choose $U\subset I$ and at most ${\ell-t_1\choose i}^2$ ways to choose $V\subset D_1$ and $W\subset D_2$. Thus, for one of the $t_1+1$ possible values of $i$ and one of the corresponding choices $U,V,W$ we must get the above inequality. 

Put $X = U\cup V\cup W$. We will show that $X$ satisfies the conditions of the theorem. Note that $x = |X|=t_1+i$ and recall that $|\ff(X)| = \beta a_x$. We bound the value of $i$ using the bound on the size of $\m F[\m G]$. We have
\begin{align}\label{eq65}
\begin{split}    
|\m F[\m G]|\le& (t_1+1){t_1\choose i}{\ell-t_1\choose i}^2 \beta a_x\\
\le& (t_1+1){t_1\choose i}{\ell-t_1\choose i}^2 R^{-i} \beta a_{t_1}\\
\le& (t_1+1) \Big(\frac{e^3t_1(\ell-t_1)^2}{R i^3}\Big)^i \beta a_{t_1}.
\end{split}
\end{align}
In the second inequality, we used the weak $(R,t_1)$-spreadness of $\m A$. In the last inequality, we used the inequality ${x\choose m}\le (ex/m)^m$, valid for any $x\ge m\ge 1$. If 
\[
i> 4\Big(\frac{t_1(\ell-t_1)^2}{R}\Big)^{1/3}+ \log_2\Big((t_1+1)\lambda^{-1}\Big),
\]
then the right hand side of \eqref{eq65} is less than
$$\beta\lambda a_{t_1}\le \lambda a_{t_1},$$
which contradicts our assumption on the size of $|\m F[\m G]|$. As $x:=|X|=t_1+i$, this shows that $x$ satisfies condition~\eqref{eqsize2set0} in the statement of the theorem.  
%Thus, we may assume that the opposite inequality on $i$ holds, i.e., that $i$ is `not too large'. In particular, this implies the inequality on $|X|$ stated in the theorem. 

Furthermore, as a function of $i$, the maximum in the right hand side of~\eqref{eq65} is attained for $i = \Big(\frac{t_1(\ell-t_1)^2}{R}\Big)^{1/3}$. This yields the bound
$$\lambda a_{t_1} \le |\m F[\m G]|\le (t_1+1)e^{3\big(\frac{t_1(\ell-t_1)^2}{R}\big)^{1/3}} \beta a_{t_1},$$
which implies condition~\eqref{eqsize2set} in the statement of the theorem by rearranging.
This completes the proof of Theorem~\ref{thm2sets}.
\end{proof}

\subsection{Spread approximation}

Our second component is the following. 
\begin{thm}\label{thmregularity}
  %Let $\eta,\theta,\ell_1,\ell_2, R,r>0$, $r\le R$. 
  Let $n,k,q,\ell_1,\ell_2,\eta \in \mathbb{N}$ and $R,r,\theta>0$ be such that $n \geq k$, $q \geq \ell_2 \geq \ell_1$ and $R \geq r$. Let $\m A \subset \binom{[n]}{k}$ be a family that is weakly $(R, l)$-spread for each $l\in [\ell_1,\ell_2]$, and denote $a_j:=\max_{Z: |Z| = j} |\aaa(Z)|$ for each $j \leq k$. Let $\ff \subset \m A$ be a family such that  
  for any $\m P \subset \m F$ of size 
  at least $\eta$, there is a set $X$ with $x$ elements, $\ell_1\le x\le \ell_2$, such that $|\m P| \le \frac{|\m P(X)|}{a_x}\cdot \theta$. 
  
  Then there exists a family $\m S$ of sets of size at most $q$ and a family $\m R\subset \ff$, such that the following holds.
  \begin{itemize}
    \item[(i)] $\ff= \m R\sqcup \bigsqcup_{S\in \m S} \m F_S[S]$;
    \item[(ii)] For any $S\in \s$ and the family $\ff_S\subset \ff$, the family $\ff_S(S)$ is $r$-spread;
    \item[(iii)] $|\m R|\le \max\big\{\eta, \theta \cdot (r/R)^{q+1-\ell_2}\big\}$.
  \end{itemize}
\end{thm}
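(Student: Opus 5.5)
We will build $\m S$ and the pieces $\m F_S$ by a greedy peeling procedure, in the spirit of the original spread approximation of Kupavskii and Zakharov. Put $\m F^{(0)}:=\m F$ and $\m S:=\emptyset$. At step $j$, if $|\m F^{(j)}|<\eta$ we stop and set $\m R:=\m F^{(j)}$. Otherwise, among all sets $S$ with $|S|\le q$, we choose one maximizing the quantity $r^{|S|}|\m F^{(j)}(S)|$; the empty set is allowed, giving the value $|\m F^{(j)}|$. The choice of $S$ makes $\m F^{(j)}(S)$ $r$-spread on sets of bounded size. Indeed, for any nonempty $Y$ disjoint from $S$ with $|S\cup Y|\le q$, maximality gives $r^{|S|+|Y|}|\m F^{(j)}(S\cup Y)|\le r^{|S|}|\m F^{(j)}(S)|$. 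The constraint $|S\cup Y|\le q$ is a nuisance: if the maximizer has size $q$, spreadness is not certified for larger $Y$. To handle this, we maximize over all sets of size $\le k$. If the maximizer $S$ has $|S|\le q$, we put $\m F_S:=\m F^{(j)}[S]$, add $S$ to $\m S$, and set $\m F^{(j+1)}:=\m F^{(j)}\setminus \m F^{(j)}[S]$. Then $\m F_S(S)=\m F^{(j)}(S)$ is $r$-spread with strict inequality, obtained by taking the maximizer of largest size among ties. The pieces $\m F_S[S]$ are disjoint by construction, so (i) and (ii) hold.

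The main task is to bound what happens when the maximizer $S$ has $|S|>q$; at that point we stop and set $\m R:=\m F^{(j)}$. Suppose $|\m R|\ge \eta$, since otherwise (iii) holds. Apply the hypothesis to $\m P:=\m R$. This gives $X$ with $\ell_1\le x\le\ell_2$ and $|\m R|\le \theta|\m R(X)|/a_x$. Compare $X$ with the maximizer $S$, where $|S|=s>q$. Maximality gives $r^{s}|\m R(S)|\ge r^{x}|\m R(X)|$. We also need an upper bound on $|\m R(S)|$ in terms of $a_x$. First handle a subtlety: we should choose $S\supseteq X$, which we can arrange by maximizing only over supersets of $X$ after fixing $X$. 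Then $|\m R(S)|\le |\m A(S)|\le a_s$. Weak $(R,x)$-spreadness of $\m A$ gives $a_s< R^{-(s-x)}a_x$.

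Combining these bounds yields
\[
|\m R(X)|\le r^{s-x}|\m R(S)|< (r/R)^{s-x}a_x .
\]
Hence $|\m R|\le\theta (r/R)^{s-x}\le \theta (r/R)^{q+1-\ell_2}$, using $s\ge q+1$, $x\le \ell_2$ and $r\le R$.

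To make the comparison legitimate, we restructure the stopping rule. At each step with $|\m F^{(j)}|\ge\eta$, we first obtain $X$ from the hypothesis applied to $\m P=\m F^{(j)}$. We then choose $S\supseteq X$ maximizing $r^{|S|}|\m F^{(j)}(S)|$, taking the largest such $S$ among ties. Restricted to supersets of $X$, maximality still gives spreadness of $\m F^{(j)}(S)$, because every $S\cup Y$ is again a superset of $X$. If $|S|\le q$ we peel $S$ off; otherwise we stop, and the computation above gives (iii). The procedure terminates because each peel removes a nonempty family: the maximizer has $|\m F^{(j)}(S)|>0$, since $X$ itself gives a positive value once $|\m R(X)|>0$, which follows from $|\m P|\ge\eta\ge 1$. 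The main obstacle is this compatibility between the greedy maximizer and the set $X$ supplied by the hypothesis, which the restriction to supersets of $X$ resolves.
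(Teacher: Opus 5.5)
Your restructured procedure is correct and is essentially the paper's proof. The paper likewise first takes $X_i$ from the hypothesis applied to the current family $\mathcal F^i$, then chooses an inclusion-maximal $S_i\supseteq X_i$ with $|\mathcal F^i(S_i)|\ge r^{|X_i|-|S_i|}|\mathcal F^i(X_i)|$ (spread by Observation~\ref{obs34}), peels it off if $|S_i|\le q$, and otherwise bounds the remainder by the same chain $|\mathcal F^N|\le \theta\, r^{|S_N|-|X_N|}|\mathcal A(S_N)|/a_{|X_N|}\le \theta (r/R)^{q+1-\ell_2}$. Your largest-cardinality maximizer of $r^{|S|}|\mathcal F^{(j)}(S)|$ over supersets of $X$ is an equivalent selection rule that gives the same spreadness and the same comparison with $X$, so the unrestricted first attempt you discard is not needed.
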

Theorem~\ref{thmregularity} is a variant of the spread approximation theorem~\cite[Lemma~10]{KZ22} %\nathan{We should add a pointer to the theorem; I assume it is from~\cite{KZ22}.} \andrey{It should be Lemma~10, I believe.} 
that makes use of dense pieces within the set family we want to approximate. The idea is the following: Rather than searching for a spread approximation for the entire $\ff$, we find a set $X$ such that $\ff(X)$ is dense and then a spread piece inside it. Then we remove the spread piece from $\ff$ and repeat the process. The gain, compared to~\cite[Lemma~10]{KZ22}, is a better bound on the size of the remainder $\m R$.
%, stated in part (iii).

\medskip In the proof of Theorem~\ref{thmregularity}, we use the following simple yet important observations which assert that any sufficiently large family contains a spread subfamily.
\begin{obs}\label{obs13}
  Let $n,k \in \mathbb{N}$, let $\ff\subset {[n]\choose \le k}$ and let $r \geq 1$. If  $|\ff|>r^k$, then there exists $X \subset [n]$, $0\le |X|<k$, such that $\ff(X)$ is $r$-spread and contains at least two elements.
\end{obs}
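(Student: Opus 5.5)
The plan is a maximality argument: choose $X$ so that $|\ff(X)|$ is as large as possible relative to $r^{-|X|}$, and check that this extremal choice is forced to be spread and nonempty.

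\emph{Choice of $X$.} Among all sets $X$ with $0\le |X|\le k$, pick one maximizing the quantity $r^{|X|}\,|\ff(X)|$. The empty set is a candidate and gives the value $|\ff|>r^k$, so the maximum $r^{|X|}|\ff(X)|$ is at least $|\ff|>r^k$.

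\emph{$\ff(X)$ has at least two elements.} Since $r^{|X|}\le r^k$, we obtain $|\ff(X)|\ge |\ff|\,r^{-|X|}>r^{k-|X|}\ge 1$. Hence $|\ff(X)|\ge 2$. In particular $\ff(X)$ is nonempty, so some member of $\ff$ contains $X$, which forces $|X|\le k$.

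\emph{$|X|<k$.} If $|X|=k$, then every set of $\ff$ containing $X$ equals $X$, since all sets in $\ff$ have size at most $k$. This would give $|\ff(X)|\le 1$, contradicting the previous step.

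\emph{Spreadness of $\ff(X)$.} Let $Y$ be nonempty. If $Y\cap X\neq\emptyset$, then no set of $\ff(X)$ contains $Y$, because the sets of $\ff(X)$ have had $X$ removed; so $|\ff(X)(Y)|=0<r^{-|Y|}|\ff(X)|$. If $Y$ is disjoint from $X$, then $\ff(X)(Y)=\ff(X\cup Y)$. Maximality of $X$ gives
\[
r^{|X|+|Y|}\,|\ff(X\cup Y)|\le r^{|X|}\,|\ff(X)|,
\]
which rearranges to $|\ff(X)(Y)|\le r^{-|Y|}|\ff(X)|$.

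\emph{Obtaining the strict inequality.} The definition of spreadness requires strict inequality, and this is the only step needing care. To get it, break ties: among all maximizers choose $X$ of maximum size. Then for disjoint nonempty $Y$, the set $X\cup Y$ is strictly larger than $X$, so it cannot also be a maximizer, and the inequality above becomes strict. (If $|X\cup Y|>k$ then $\ff(X\cup Y)=\emptyset$ and strictness is immediate anyway.)

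This completes the argument; the tie-breaking in the last step is the only point needing care, and the rest is routine.
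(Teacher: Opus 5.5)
Your proof is correct and follows essentially the same maximality argument as the paper. The paper takes an inclusion-maximal $X$ with $|\mathcal{F}(X)|\ge r^{-|X|}|\mathcal{F}|$, so the strict spreadness inequality comes directly from that maximality; you instead take a global maximizer of $r^{|X|}|\mathcal{F}(X)|$ and get strictness from your size tie-break, and you also spell out the case $Y\cap X\neq\emptyset$ and the bound $|\mathcal{F}(X)|\ge 2$, which the paper leaves implicit.
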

\begin{proof}
  If $\ff$ is $r$-spread then we can put $X = \emptyset$. Otherwise, consider a maximal w.r.t.~inclusion set $X$ such that $|\ff(X)|\ge r^{-|X|}|\ff|$. Note that $|X|\le k-1$, since for any $X'$ with $|X'|=k$ we have $|\ff(X')| \leq 1<r^{-k}|\ff|$. 
  %\ohad{$|\ff(X)|\le 1<r^{-k}|\ff|$}. 
  By the maximality of $X$, for any $Y$ that is disjoint with $X$ we have $|\ff(X\cup Y)|< r^{-|X|-|Y|}|\ff|\le r^{-|Y|}|\ff(X)|.$ Thus, $\ff(X)$ is $r$-spread.
\end{proof}
The same proof implies the following:
\begin{obs}\label{obs34}
Let $n,k \in \mathbb{N}$, let $\ff\subset {[n]\choose \le k}$ and let $r \geq 1$.
If $X \subset [n]$ is a maximal w.r.t.~inclusion set such that 
$|\ff(X)|\ge r^{-|X|}|\ff|$, then  $\ff(X)$ is $r$-spread.
\end{obs}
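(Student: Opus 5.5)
The plan is to reuse the proof of Observation~\ref{obs13} almost verbatim, except that the maximal set $X$ is supplied by the hypothesis rather than constructed. Assume $X \subset [n]$ is maximal with respect to inclusion among the sets satisfying $|\ff(X)| \ge r^{-|X|}|\ff|$. We must show that $\ff(X)$ is $r$-spread, that is, $|\ff(X)(Y)| < r^{-|Y|}|\ff(X)|$ for every non-empty $Y$.

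The first step is to dispose of sets $Y$ that meet $X$. Every member of $\ff(X)$ has the form $F \setminus X$, so it is disjoint from $X$. Hence if $Y \cap X \neq \emptyset$, then $\ff(X)(Y) = \emptyset$, and the inequality $0 < r^{-|Y|}|\ff(X)|$ holds. This needs $|\ff(X)| > 0$, which follows from $|\ff(X)| \ge r^{-|X|}|\ff| > 0$ as long as $\ff \neq \emptyset$. The degenerate case $\ff = \emptyset$ can be excluded, or handled by convention, just as it is implicitly in Observation~\ref{obs13}.

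The main step treats non-empty $Y$ disjoint from $X$. We have $\ff(X)(Y) = \ff(X \cup Y)$, and $X \cup Y$ strictly contains $X$. By maximality, $X \cup Y$ fails the defining inequality, so
\[
|\ff(X \cup Y)| < r^{-|X|-|Y|}|\ff| \le r^{-|Y|}|\ff(X)|,
\]
where the last step uses $r^{-|X|}|\ff| \le |\ff(X)|$. This is exactly the spreadness condition.

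There is no real obstacle. The only care needed is with the edge cases already noted: $Y$ intersecting $X$, and non-emptiness of $\ff(X)$. The bound $|X| < k$ from Observation~\ref{obs13} is not needed for this statement.
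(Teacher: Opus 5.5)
Your proof is correct and is essentially the paper's argument: the paper simply reuses the maximality computation from the proof of Observation~\ref{obs13}, namely $|\ff(X\cup Y)|< r^{-|X|-|Y|}|\ff|\le r^{-|Y|}|\ff(X)|$ for non-empty $Y$ disjoint from $X$. Your separate handling of sets $Y$ that meet $X$, and your remark that $\ff(X)$ must be non-empty, make explicit what the paper leaves implicit.
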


Now we are ready to present the proof of the theorem.

\begin{proof}[Proof of Theorem~\ref{thmregularity}]
Consider the following iterative procedure for $i=1,2,\ldots $ with $\ff^1:=\ff$.
\begin{enumerate}
    \item If $|\ff^i|<\eta$ then stop.
    \item Applying the assumption of the theorem to $\m F^i \subset \m F$, take a set $X_i \in \ff^i$, 
    $\ell_1\le |X_i| \le \ell_2$, such that $|\m F^i| \le \frac{|\m F^i(X_i)|}{a_{|X_i|}}\cdot \theta$. 
    %as guaranteed in the statement of the theorem.
    \item Find a maximal for inclusion $S_i\supset X_i$ that  $|\ff^i(S_i)|\ge  r^{|X_i|-|S_i|}|\ff^i(X_i)|$.
    \item If $|S_i|> q$  then stop. Otherwise, put $\ff^{i+1}:=\ff^i\setminus \ff^i[S_i]$.
\end{enumerate}
%\ohad{This looks like the same process from the first paper on permutations, except choosing $X$ everytime. What do we get from the added $X$? } \andrey{It allows us to start the approximation from a dense piece of the family, and thus get a much better approximation (smaller uniformity, bigger spreadness).}
Let $N$ be the step at which the procedure stops. Set 
\[
\m S=\{S_1,\ldots,S_{N-1}\}, \quad \m F_S:=\m F^i[S_i] \quad \mbox{for all} \quad 0 \le i \le N-1, \quad \mbox{and} \quad \m R:=\m F_N.
\]
By the construction, all sets in $\m S$ are of size at most $q$ and we have $\ff= \m R\sqcup \bigsqcup_{S\in \m S} \m F_S[S]$. For each $1 \leq i \leq n$, Observation~\ref{obs34} and the maximality of $S_i$ imply that $\ff^i(S_i)$ is $r$-spread. 
%(We can argue as in Observation~\ref{obs13}.) %if for some nonempty $X, X\cap S_i = \emptyset$, we have $\ff^i(S_i\cup X)\ge r^{-|X|}|\ff^i(S_i)|$, then $\ff^i(S_i\cup X)\ge r^{-|X|+|S_i|}|\ff^i|$, a contradiction with maximality of $S_i$.
%Let $N$ be the step of the procedure for $\ff$ at which we stop. The family $\s$ is defined as follows: $\s:=\{S_1,\ldots, S_{N-1}\}$. Clearly, $|S_i|\le q$ for each $i\in [N-1]$. The family $\ff_{A}$ promised in (ii) is defined to be $\ff^i[S_i]$ for $A=S_i$. Next, we put $\m R:=\ff^N$. 
Furthermore, if $|\ff^N|\ge \eta$, then by the stopping rule of the procedure, we have $|S_N|>q$, and hence, 
%\textcolor{red}{inequality 3 below???? need to check or not?}
\begin{align*}|\ff^N|\le&\ \theta\frac{|\ff^N(X_N)|}{a_{X_N}}\le \theta\frac{ r^{|S_N|-|X_N|}  |\ff^{N}(S_N)|}{a_{X_N}}\\
\le&  \theta \frac{ r^{|S_N|-|X_N|}  |\aaa(S_N)|}{a_{X_N}} \le \theta \frac{r^{|S_N|-|X_N|}}{R^{|S_N|-|X_N|}} \le \theta (r/R)^{q+1-\ell_2}.%\\
%\le&  \theta \frac{ r^{|S_N|-\ell_2}   |\aaa(S_N)|}{\max_{X', |X'|=|X_N|} |\aaa(X')|}\le  \theta \frac{ r^{|S_N|-\ell_2}  {n-|S_N|\choose k-|S_N|}}{{n-\ell_2\choose k-\ell_2}}\\
%\le& \theta \frac{ r^{q+1-\ell_2}  {n-q-1\choose k-q-1}}{{n-\ell_2\choose k-\ell_2}}
\end{align*}
In the first inequality, we used the way $X_N$ was chosen in Step~(2). In the second inequality, we used the way $S_N$ was chosen in Step~(3). In the fourth inequality, we used the assumption that $\aaa$ is weakly $(R, |X_N|)$-spread. 
The last inequality holds since $|S_N|\ge q+1, |X_N|\le \ell_2$ and $R\ge r$. 
%Since either $|S_N|>q$ or $|\ff^N|<\eta$, we get the claimed inequality on $|\m R|$.
This shows that the bound (iii) on the size of $\m R$ holds as well, completing the proof.
\end{proof}

\subsection{Partially preserving intersection}

Our third component is the following lemma, which will allow us to deduce that the approximating family $\m S$ constructed in Theorem~\ref{thmregularity} partially inherits the intersection property of $\m F$. 

\begin{lem}\label{lemtint} 
Let $n,k,\ell,t \in \mathbb{N}$ be such that $n \geq k \geq \ell \geq t$ and let $r \geq 1$. Let $\ff\subset {[n]\choose k}$ be a $t$-intersecting family. Let $\mathcal S\subset {[n]\choose \le \ell}$ be a family such that for each $S\in \mathcal S$, there is a subfamily $\ff_S\subset \ff$ such that $\ff_S(S)$ is $r$-spread. Assume that for some $t'$, $t' \le t$, the following conditions are satisfied: \begin{align}\label{eqint1} r\ge&  \frac{24(\ell-t'+1)}{t-t'+1};\\
\label{eqint2} r\ge& 2^{14}\log_2(4k).
\end{align}
Then $\m S$ is $t'$-intersecting. \end{lem}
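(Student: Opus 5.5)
We argue by contradiction. Suppose $S_1,S_2\in\m S$ satisfy $|S_1\cap S_2|<t'$, i.e., $|S_1\cap S_2|\le t'-1$. We will find $F_1\in\ff_{S_1}[S_1]$ and $F_2\in\ff_{S_2}[S_2]$ with $|F_1\cap F_2|<t$, contradicting the $t$-intersection property of $\ff$.

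Write $\m G_1:=\ff_{S_1}(S_1)$ and $\m G_2:=\ff_{S_2}(S_2)$. These are $r$-spread families of sets of size at most $k$. Since $|F_j\cap F_{3-j}|\le |S_1\cap S_2|+|G_1\cap S_2|+|G_2\cap S_1|+|G_1\cap G_2|$ for $F_j=S_j\sqcup G_j$, it suffices to find $G_1\in\m G_1$, $G_2\in\m G_2$ with
\[
|G_1\cap S_2|+|G_2\cap S_1|+|G_1\cap G_2|\le t-t'.
\]
The right-hand side is at least $0$, with slack $t-t'+1$ above the assumed bound $t'-1$ on $|S_1\cap S_2|$.

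\textbf{Step 1: few elements inside $S_2$.} For a uniformly random $G_1\in\m G_1$, spreadness gives $\Pr[Y\subset G_1]<r^{-|Y|}$ for every $Y$. Hence
\[
\Pr\bigl[|G_1\cap S_2|\ge s\bigr]\le \binom{\ell}{s}r^{-s}\le \Bigl(\frac{e\ell}{rs}\Bigr)^s .
\]
More usefully, $\mathbb E|G_1\cap S_2|\le \ell/r$, and the same holds for $G_2$ and $S_1$. Condition~\eqref{eqint1} gives $\ell/r\le (t-t'+1)/24$. So by Markov's inequality, with probability at least $3/4$ each of $|G_1\cap S_2|$ and $|G_2\cap S_1|$ is at most roughly $(t-t'+1)/6$.

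One subtlety: when $t-t'$ is small, for instance $t=t'$, we need these intersections to be exactly $0$. This works because $\ell/r\le 1/24$, so $\Pr[G_1\cap S_2\neq\emptyset]\le 1/24$. In general we use the bound $\Pr[|G_1\cap S_2|\ge s]\le (e\ell/(rs))^s$ with $s\approx (t-t'+1)/3$, which is small by~\eqref{eqint1}.

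\textbf{Step 2: making $G_1,G_2$ nearly disjoint.} We need two random-ish sets from $r$-spread families to be (almost) disjoint. This is the main obstacle: a naive union bound gives $\mathbb E|G_1\cap G_2|\le k/r$, which is too large since $k$ may be far bigger than $r$. Here we use the spread/sunflower machinery from Theorem~\ref{thmtao} (the Alweiss--Lovett--Wu--Zhang type statement referenced earlier). An $r$-spread family with $r\ge C\log_2(4k)$ contains, with good probability, a set inside a random subset $W$ of density $1/2$ of the ground set.

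Concretely, we partition the ground set randomly into two halves $W_1\sqcup W_2$, each element assigned independently with probability $1/2$. Applying Theorem~\ref{thmtao} to $\m G_1$ with $W_1$ and to $\m G_2$ with $W_2$, condition~\eqref{eqint2} with the constant $2^{14}$ guarantees that each event ($\m G_j$ has a member inside $W_j$) fails with probability at most about $1/8$. Then $G_1\cap G_2=\emptyset$ automatically.

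\textbf{Step 3: combining both requirements.} To control the $S$-intersections at the same time, we first delete $S_1\cup S_2$ from the ground set. Equivalently, we pass to the subfamilies of $\m G_1,\m G_2$ avoiding $S_2$, respectively $S_1$, in the case $t=t'$. In general we pass to the subfamilies with small intersections from Step~1, which contain at least a $3/4$ fraction and so remain $(3r/4)$-spread up to constants. We then run the random-partition argument on these subfamilies. A union bound over the failure probabilities (each at most $1/8$) leaves a positive probability that all events hold simultaneously. This yields $G_1,G_2$ with the required bound, and hence the contradiction.

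The delicate point is the bookkeeping of constants: the spreadness degradation when passing to subfamilies, and the exact form of Theorem~\ref{thmtao}. This is what the constants $24$ and $2^{14}$ are calibrated for.
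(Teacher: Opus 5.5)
Your overall strategy is the right one, and it matches the argument the paper points to (the proof of Lemma~\ref{lemregime2}(3)). You keep the large part of each $\ff_{S_i}(S_i)$ that meets the other core in few elements, note that it is still $\Omega(r)$-spread, and use a random bipartition with Theorem~\ref{thmtao} and~\eqref{eqint2} to make the two completions disjoint. Your Steps~2 and~3 are fine.

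The gap is in Step~1. The inequality $\ell/r\le (t-t'+1)/24$ does not follow from~\eqref{eqint1}, whose numerator is $\ell-t'+1$, not $\ell$. It is also false in the allowed range: for $t'=t=\ell$, \eqref{eqint1} only says $r\ge 24$, and even with~\eqref{eqint2} the ratio $\ell/r$ can be far larger than $1$. In that case your reduced target demands $|G_1\cap S_2|=|G_2\cap S_1|=|G_1\cap G_2|=0$, even when $S_1\cap S_2=\emptyset$ and $|S_2|=t$. Spreadness cannot deliver this. For example, $\binom{[n]\setminus S_1}{k-t}$ is $r$-spread for $r\approx (n-t)/(k-t)$, yet at most about an $e^{-t/r}$ fraction of its members avoids a fixed $t$-set disjoint from $S_1$. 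The underlying problem is that you paired the smallest slack $t-t'$ (attained when $|S_1\cap S_2|=t'-1$) with the largest region $|S_2|\le\ell$ (attained when $S_1\cap S_2=\emptyset$), and these never occur together.

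The missing step is to let slack and region size trade off.
\begin{itemize}
\item Write $|S_1\cap S_2|=t'-j$ with $j\ge 1$. Since $G_1\cap S_1=\emptyset$, only $G_1\cap(S_2\setminus S_1)$ matters, and $|S_2\setminus S_1|\le \ell-t'+j$.
\item What you actually need is $|G_1\cap(S_2\setminus S_1)|+|G_2\cap(S_1\setminus S_2)|+|G_1\cap G_2|\le t-t'+j-1$.
\item Since $\ell\ge t$, the ratio $(\ell-t'+j)/(t-t'+j)$ is nonincreasing in $j$. So~\eqref{eqint1} gives $\mathbb{E}|G_1\cap(S_2\setminus S_1)|\le(\ell-t'+j)/r\le (t-t'+j)/24$.
\item Discard the members with at least $(t-t'+j)/2$ elements in the other core. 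By Markov this loses at most a $1/12$ fraction; alternatively use the tail bound $\binom{\ell-t'+j}{x}r^{-x}\le (e/12)^x$ with $x=\lceil (t-t'+j)/2\rceil$.
\item Then make $G_1\cap G_2=\emptyset$ as in your Steps~2--3. This gives $|F_1\cap F_2|\le (t'-j)+(t-t'+j-1)=t-1$, the desired contradiction.
\end{itemize}
This coupling is exactly what the $\ell-t'+1$ in~\eqref{eqint1} is designed for, and with it your proof goes through.
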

This lemma is the case $s=1$ of~\cite[Lemma~24]{FranklK25} and hence we omit its proof (which is a fairly easy spreadness argument, similar to the proof of Lemma~\ref{lemregime2}(3) below). %\nathan{Say something about the proof -- how simple it is and what tools it uses. Check the constants in~\eqref{eqint1} and~\eqref{eqint2} -- it seems that in~\cite{FranklK25} they are better by a factor of 2.} \andrey{the proof is very similar to what we use, say, in Lemma 22(3). The main challenge is to pass to from families $\m F_{S_1}, \m F_{S_2}$ corresponding to $S_1,S_2$, to families $\m X_{S_1},\m X_{S_2}$ that live on $[n]\setminus (S_1\cup S_2)$. We cannot guarantee them to be disjoint with $S_i\setminus S_j$, but we can guarantee small intersection with it. The key observation is that it's easier to ``kill'' with $r^x$ a term of the form ${|S_2\setminus S_1\choose x}$ for some large $x$, since it's $\Big(\frac{e|S_2\setminus S_1|}{x}\Big)^x$, rather than $|S_2\setminus S_1|^x$. Here, $x$ is essentially $(t-t')/2$. As I wrote, it's really not hard to reprove as well.}

\subsection{The iterative approximation lemma and its proof}

Now, we are ready to state and prove the main result of this section. Recall that in the `weighted' setting, each set $F \in \m A$ is given a positive weight, and for any $\m F \subset \m A$, $|\m F|$ is the sum of weights of the elements of $\m F$.

\begin{thm}[The iterative spread approximation lemma]
\label{thmapprox1}
  Let $n,k,t \in \mathbb{N}$ be such that $n \geq k \geq t$ and let $R_1,R,\sigma>0$ be such that $R_1 \geq R \geq 2$. Let $\m A \subset \binom{[n]}{k}$ be a (possibly weighted) family of sets, and denote $a_j:=\max_{Z: |Z| = j} |\aaa(Z)|$ for each $j \leq k$. 
%  Let $n,k,t$ be integers, $\sigma\ge 0$ and $R_1\ge R\ge 2$ real numbers, and consider a (possibly weighted) family $\aaa\subset {[n]\choose k}$. Putting $a_i:=\max_{Z: |Z| = i} |\aaa(Z)|$, 
Assume that $\aaa$ satisfies the following:
  \begin{itemize}
      \item $\m A$ is weakly $(R,t')$-spread for each $t'$ such that 
      \begin{equation}\label{Eq:Aux-Iterative1}
      t-\lceil t^{\frac{1}{2}}\rceil\le t'\le t+\tfrac 1{100} \big(t^{\frac{1}{2}}(k-t+\lceil t^{\frac{1}{2}}\rceil)^{2}\big)^{\frac{1}{3}}+2\sigma+2\lceil t^{\frac{1}{2}}\rceil \log_2 R_1+2\log_2(t+1);
      \end{equation}
      %\ohad{I think we actually prove $+2\log_2(t+1)$} \nathan{Right, thanks!}
      %the family $\aaa$ is weakly $(R,t')$-spread, i.e., $a_{t'+s}\le R^{-s}a_{t'}$;
      \item For each $t'$ such that $t-\lceil t^{1/2}\rceil\le t'\le t$, the family $\aaa$ is `not too weakly spread', concretely,  $a_{t'}\le R_1^{t-t'}a_{t}$. 
\end{itemize}
Furthermore, assume that $R$ satisfies the following:
\begin{itemize}
    \item $R\ge 2^{30}(t^{1/2}\log_2 t+\log_2 R_1)+200\sigma$;

    \item $R\ge 2^{15}\log_2(4k)$.
\end{itemize}
Then for every $t$-intersecting family $\ff\subset \aaa$, 
  %\ohad{I thought we need to change the inequality on $R$ a bit}
there exists a $t$-intersecting family $\m S$ such that the following hold: 
\begin{itemize}
    \item[(i)] Each set in $\m S$ is of size at most
    $$t+t^{1/2}+4\sigma+4\log_2(t+1);$$ 
    \item[(ii)] For any $S\in \m S$, there exists $\ff_S\subset \ff$ such that $\ff_S(S)$ is $R/2$-spread;
    \item[(iii)] The family
    $\m R:= \ff\setminus \ff[\m S]$ satisfies $$|\m R|\le 2^{-\sigma} \cdot 4\log_2(Rt+\sigma) a_t.$$
  \end{itemize}
%of sets of size at most
%   $$t+t^{1/2}+4\sigma+4\log_2(t+1),$$
%   such that the family $\m R:= \ff\setminus \ff[\m S]$ satisfies $$|\m R|\le 2^{-\sigma} \cdot 4\log_2(Rt+\sigma) a_t.$$ %\nathan{It seems from the proof that $R_1$ should be replaced by $R$.}\andrey{fixed}
%   Moreover, for each $A\in \m S$ there is $\ff_A\subset \ff$ such that $\ff_A(A)$ is $R/2$-spread.

  If in addition, we have $R\ge 2^{30}t^{\alpha}$ for some $\alpha\in (1/2,1]$ (that may depend on $n,t$ as long as it is between $1/2$ and $1$),
  then the bound on the sizes of sets in $\m S$ improves to $t+t^{1-\alpha}+4\sigma+4\log_2(t+1),$ with the same bound on the size of the remainder $\m R$.
\end{thm}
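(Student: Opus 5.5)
The proof follows the iterative scheme outlined before the statement, alternating Theorem~\ref{thm2sets}, Theorem~\ref{thmregularity} and Lemma~\ref{lemtint}.

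\emph{Initialization.} Apply Theorem~\ref{thmregularity} to $\ff$ with the trivial choice $X=\emptyset$ ($\ell_1=\ell_2=0$, $\theta=|\ff|/a_0$) and $r=R/2$. This gives an initial family $\m S^{(0)}$ of sets of size at most some $q^{(0)}\le k$, with spread pieces $\ff_S(S)$. We then apply Lemma~\ref{lemtint} with $t'=t^{(0)}:=t-\lceil t^{1/2}\rceil$. Condition~\eqref{eqint1} requires roughly $R/2\ge 24(k-t^{(0)}+1)/\lceil t^{1/2}\rceil$, which is too much to ask for directly. So the first step should instead be a size reduction. Take $q^{(0)}$ with $(R/2)^{q^{(0)}-t^{(0)}}$ large enough that the remainder $\theta (r/R)^{q+1-\ell_2}$ is negligible, using the weak spreadness of $\aaa$ at $t^{(0)}$. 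Then check~\eqref{eqint1} with $\ell=q^{(0)}$. The remainder is controlled using $a_{t^{(0)}}\le R_1^{\lceil t^{1/2}\rceil}a_t$, which is where the term $\lceil t^{1/2}\rceil\log_2 R_1$ in~\eqref{Eq:Aux-Iterative1} and in the lower bound on $R$ enters.

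\emph{Iteration step.} Suppose $\m S^{(j)}$ is $t^{(j)}$-intersecting, with sets of size at most $q^{(j)}$. Work with $\ff^{(j)}:=\ff[\m S^{(j)}]$ and verify the hypothesis of Theorem~\ref{thmregularity}. For any subfamily $\m P\subset \ff^{(j)}$ with $|\m P|\ge \eta:=2^{-\sigma}a_t$, apply Theorem~\ref{thm2sets} with $\m G=\m S^{(j)}$, $t_1=t^{(j)}$, $\ell=q^{(j)}$ and $\lambda=|\m P|/a_{t^{(j)}}$. This yields a set $X$ of size
\[
x\le t^{(j)}+4\big(t^{(j)}(q^{(j)}-t^{(j)})^2/R\big)^{1/3}+\log_2\big((t+1)a_{t^{(j)}}/|\m P|\big)
\]
with $|\m P(X)|/a_x\ge \lambda/\big((t+1)e^{3(\cdot)^{1/3}}\big)$. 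This is exactly the hypothesis of Theorem~\ref{thmregularity} with $\theta\approx (t+1)e^{3(\cdot)^{1/3}}a_{t^{(j)}}$, and $a_{t^{(j)}}\le R_1^{t-t^{(j)}}a_t$ keeps $\log_2\theta$ under control. Theorem~\ref{thmregularity} with $r=R/2$ then produces $\m S^{(j+1)}$ with sets of size at most $q^{(j+1)}:=\ell_2+\log_2(\theta/\eta)$, up to constants, since $(r/R)^{q+1-\ell_2}=2^{-(q+1-\ell_2)}$. The remainder at this step is at most $\max(\eta, 2^{-\sigma}a_t)$.

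Lemma~\ref{lemtint} then gives that $\m S^{(j+1)}$ is $t^{(j+1)}$-intersecting, where $t-t^{(j+1)}+1\approx 24(q^{(j+1)}-t^{(j+1)}+1)/(R/2)$. Thus the deficit $t-t^{(j)}$ and the excess $q^{(j)}-t^{(j)}$ shrink together. Roughly, $\delta_{j+1}\lesssim (q_{j+1}-t)/R$, while $q_{j+1}-t\lesssim (t\,q_j^2/R)^{1/3}+\delta_j\log R_1+\sigma+\log t$. Since $R\ge 2^{30}(t^{1/2}\log t+\log R_1)+200\sigma$, the deficit is driven below $1$, i.e.\ $t^{(j)}=t$, after $O(\log(Rt+\sigma))$ iterations. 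At that point
\[
q\le t+(t\cdot t)^{1/3}/R^{1/3}+4\sigma+4\log_2(t+1)\le t+t^{1/2}+4\sigma+4\log_2(t+1),
\]
and under $R\ge 2^{30}t^\alpha$ this improves to $t+t^{1-\alpha}+\cdots$. Condition~\eqref{eqint2} is guaranteed by $R\ge 2^{15}\log_2(4k)$, because $r=R/2$.

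\emph{Remainder.} The total remainder is the sum of the per-iteration remainders. Each is at most $2^{-\sigma}a_t$, up to a factor $2$ from the choice of $\eta$ and $\theta$, and the number of iterations is $O(\log_2(Rt+\sigma))$, which gives (iii). Property (ii) comes from the final application of Theorem~\ref{thmregularity}.

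The main obstacle is the bookkeeping of the recursion. The difficulty is showing that the pair (deficit, excess) contracts at every step, and that all intermediate sizes stay inside the window~\eqref{Eq:Aux-Iterative1} where weak spreadness holds. The first thing to try is to track the single potential $q^{(j)}-t^{(j)}$ and prove that it at least halves until it reaches the target scale.
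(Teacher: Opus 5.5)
Your overall architecture and your iteration step match the paper's: Theorem~\ref{thm2sets} with $\eta=2^{-\sigma}a_t$, then Theorem~\ref{thmregularity} with $r=R/2$ and $q\approx \ell_2+\log_2(\theta/\eta)$, then Lemma~\ref{lemtint} with the implicitly defined $t'$. However, two steps do not go through as written.

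\emph{Initialization.} You cannot use Theorem~\ref{thmregularity} with $\ell_1=\ell_2=0$ as a black box. Its bound (iii) relies on weak $(R,0)$-spreadness of $\aaa$, which is not assumed. Moreover, the resulting quantity $\theta 2^{-(q+1)}$ is useless, since nothing controls $a_0/a_t$. (Here $\theta=a_0$: the hypothesis with $X=\emptyset$ forces $\theta\ge a_0$, not $\theta=|\ff|/a_0$.) You have to run only the procedure and bound the leftover directly. With $X_N=\emptyset$, weak $(R,t)$-spreadness gives $|\m R|\le r^{|S_N|}a_{|S_N|}\le (R/2)^{|S_N|}R^{t-|S_N|}a_t=2^{-|S_N|}R^t a_t$. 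So it is $2^{q}$, not $(R/2)^{q-t^{(0)}}$, that must beat $2^{\sigma}R^t$, and this forces $q^{(0)}=t+t\log_2 R+\sigma$. Condition~\eqref{eqint1} at $t-\lceil t^{1/2}\rceil$ is then implied by $R/2\ge 24(t^{1/2}\log_2(2R)+\sigma t^{-1/2})$. This is where the factor $t^{1/2}\log_2 t$ in the lower bound on $R$ is used. Routing the bound through $t^{(0)}$ and the $R_1$-condition, as you suggest, also works once the exponent is corrected, but it is unnecessary; $R_1$ is needed only inside the iteration.

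\emph{Termination.} You leave this as the main obstacle, but it is the heart of the proof, and your sketch runs it in the wrong order. Reaching $t'=t$ does not give (i). It only says $q-t<r/24$, which can be far larger than $t^{1/2}$, and your final display tacitly assumes $(q-t)^2\le t$. The stopping rule must be on the excess. While $q^{(i-1)}-t_1^{(i)}\ge t^{1/2}+4\sigma+4\log_2(t+1)$, bound the three terms of $q^{(i)}-t_1^{(i)}$ separately:
the cube-root term is at most $\frac1{100}(q^{(i-1)}-t_1^{(i)})$, because $R\ge 2^{30}t^{1/2}$ and the excess is at least $t^{1/2}$ (this is why the threshold is $t^{1/2}$);
the term $2\log_2(a_{t_1}/a_t)\le 2(t-t_1)\log_2 R_1$ is at most $\frac1{100}(q^{(i-1)}-t_1^{(i)})$, since the implicit choice $t_1^{(i)}=t'^{(i-1)}$ gives $t-t_1\le 24(q^{(i-1)}-t_1+1)/r$ and $r\ge 2^{29}\log_2 R_1$;
finally, $2\sigma+2\log_2(t+1)\le\frac12(q^{(i-1)}-t_1^{(i)})$.

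This gives contraction by a factor $\frac23$ (not $\frac12$ with these constants). Hence $q$ and $t'$ are monotone, which is what keeps all parameters inside the window~\eqref{Eq:Aux-Iterative1}. It also bounds the number of rounds by $2\log_2(Rt+\sigma)$, which is what yields the constant in (iii). Once the excess falls below the threshold, the lower bound on $R$ makes the floor in the definition of $t'$ vanish, so $t'=t$ and the size bound hold simultaneously.

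The $\alpha>1/2$ refinement is the same contraction run with $t'\equiv t$ and threshold $t^{1-\alpha}+4\sigma+4\log_2(t+1)$. For $t\le 2$, $t-\lceil t^{1/2}\rceil=0$ is not admissible in Theorem~\ref{thm2sets}, so one keeps $t'=t$ from the start.
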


\begin{proof}[Proof of Theorem~\ref{thmapprox1}] 
We concentrate on proving the first part of the theorem, i.e., the case $\alpha=1/2$. As we show at the end of the proof, the case $1/2<\alpha \leq 1$ can be proved by a small variation of the proof for $\alpha=1/2$. 

If $k\le t+t^{1/2}+4\sigma+4\log_2(t+1)$, then the family $\ff$ itself can serve as the family $\m S$. Hence, in what follows, we assume that $k>t+t^{1/2}+4\sigma+4\log_2(t+1)$.

The proof of the theorem is an iterative  bootstrapping argument that goes back and forth between application of  Theorem~\ref{thm2sets} and combination of Theorem~\ref{thmregularity} with Lemma~\ref{lemtint}. At the $i$'th iteration, we denote the application of  Theorem~\ref{thm2sets} by Step~$A(i)$ and the combination of Theorem~\ref{thmregularity} with Lemma~\ref{lemtint} by Step~$B(i)$. As our argument requires the assumption $t-\lceil t^{1/2} \rceil>0$ (since we will apply Theorem~\ref{thm2sets} with this value of $t_1$, see below) and this assumption holds only for $t\ge 3$, we treat the cases $t=1, 2$ separately at the end of the proof. We also relay to the end of the proof the verification that all families $\m A$ to which we apply Theorems~\ref{thm2sets} and~\ref{thmregularity} during the proof satisfy the required spreadness assumptions.

\subsubsection{Preparation phase.}
If $k\ge  t+t\log_2 R+\sigma$ then we first perform the auxiliary step $B(0)$. If not, then we skip it and put 
\[
\m S^{(0)} = \ff, \quad q^{(0)} =  t+t\log_2 R+\sigma, \quad \mbox{and} \quad t'^{(0)} = t-\lceil t^{1/2}\rceil.
\]

%\nathan{Probably it will be better to not call this step A(0), since in the sequel, Step A is an application of Theorem 5, while Step B is an application of Theorem 6 and Lemma 7. Here we apply only Theorem 6, so maybe it's better to call the step B(0) and divide it into two parts.} \andrey{Ok!}

\noindent {\bf Step B(0).} This step, aimed at reducing the sizes of the sets in the examined family, consists of two parts. 

\medskip \noindent \emph{Application of a variant of Theorem~\ref{thmregularity}.} We apply the process described in Theorem~\ref{thmregularity} to $\ff$,  with 
\[
\ell_1 = \ell_2 = x=0, \quad \theta = |\aaa|, \quad \eta = 0, \quad r = R/2, \quad \mbox{and} \quad q=q^{(0)} = t+t\log_2 R+\sigma.
\]
%(Basically, we ignore the part of the statement that looks
%for the `dense' part of $\ff$ and do a `normal' spread approximation.)
Importantly, we use the notation and the iterative process of Theorem~\ref{thmregularity}, but use a different bound on the remainder instead of the bound used in the proof of Theorem~\ref{thmregularity}. As a result, we can carry out this argument, although the $(R, 0)$-spreadness of $\aaa$, required by Theorem~\ref{thmregularity}, is not assumed to hold in our setting. 

We obtain a family $\m S^{(0)} = \m S$ of sets, each of size at most $t+t\log_2 R+\sigma$, and a family $\m R \subset \m F$, such that:
\begin{itemize}
    \item[(i)] We have $\ff= \m R\sqcup \bigsqcup_{S\in \m S} \m F_S[S]$;

    \item[(ii)] For any $S\in \m S$ and the family $\ff_S\subset \ff$, the family $\ff_S(S)$ is $r$-spread;
    
    \item[(iii)] We have
\begin{align*}|\m R|&\le r^{|S_N|}a_{|S_N|} \le r^{|S_N|} R^{t-|S_N|}a_t\\
&= 2^{t-|S_N|} r^t a_t\le 2^{-t\log_2R-\sigma} r^t a_t \le 2^{-\sigma} a_t.
\end{align*}
  \end{itemize}
%$\ff= \m R\sqcup \bigsqcup_{S\in \m S} \m F_S[S]$ and for any $S\in \s$ and the family $\ff_S\subset \ff$, the family $\ff_S(S)$ is $r$-spread, and   \begin{align*}|\m R|&\le r^{|S_N|}a_{|S_N|} \le r^{|S_N|} R^{t-|S_N|}a_t\\
%&= 2^{t-|S_N|} r^t a_t\le 2^{-t\log_2R-\sigma} r^t a_t \le 2^{-\sigma} a_t.
%\end{align*}
In (iii), the first inequality holds by the way $S_N$ is constructed since $X_N=\emptyset$, the second inequality uses the weak $(R,t)$ spreadness of $\aaa$, and the third inequality uses the definition of $q$ and the fact that $|S_N|>q$.
%  $$|\m R|\le \Big(\frac {n-t}{2(k-t)}\Big)^{q}{n-q\choose k-q}\le 2^{-q}\Big(\frac {n-t}{k-t}\Big)^{t} {n-t\choose k-t}\le 2^{-\sigma}{n-t\choose k-t}.$$
%   Here the last inequality is by our choice of $q$.
%\begin{align*}|\m R|&\le r^{q+1}a_{q+1} \le r^{q+1} R^{t-q-1}a_t\\
%&\le 2^{t-q} r^t a_t\le 2^{-\sigma} a_t.
%\end{align*}
%\nathan{The third inequality looks incorrect. It seems this should be $2^{t-q-1}$ instead of $2^{q-t}$. If this is indeed the case, this might affect arguments in the sequel.}\andrey{This is a typo, corrected. The last bound should be fine, as well as all the rest.}
%\Big(2^{9} (2t)^{1/2}\big(1+\frac \sigma t\big)
%\log_2(2t)\Big)^{q} \Big(\frac{1}{2^{12} (st)^{1/2}\big(1+\frac \sigma t\big)\log_2(st)}\Big)^{q-t}{n-t\choose k-t}\\
%& =
%8^{-q+t} \Big(2^9 (st)^{1/2}\big(1+\frac \sigma t\big)\log_2(st)\Big)^{t}{n-t\choose k-t}\\
%&\le
%2^{-\sigma}{n-t\choose k-t}.
%   Here, we used the weak $(R,t)$-spreadness of $\aaa$ and the definition of $q$. (Exceptionally, we do not need the weak $(R, 0)$-spreadness of $\aaa$ for this application, as required by Theorem~\ref{thmregularity}.) %the third inequality is due to the bound $n\ge t+C(st)^{1/2}\big(1+\frac\sigma t\big)(k-t)\log_2\frac{n-t}{k-t}$, which in particular implies $\log_2\frac{n-t}{k-t}\ge \frac 12\log_2(st)$. The
%   last inequality is due to our choice of $q$ and the fact that $(1+\frac \sigma t)^t\le 4^\sigma$.

%{\bf Step B(0) } 

\medskip \noindent \emph{Application of Lemma~\ref{lemtint}.} We would like to apply Lemma~\ref{lemtint} to $\m S$, with 
\[
t'=t'^{(0)} = t-\lceil t^{1/2}\rceil, \quad \ell=q^{(0)}, \quad \mbox{and} \quad r=R/2.
\]
Let us verify that the assumptions of the lemma are satisfied.
For each $S\in \m S$ there is a family $\ff_S\subset \ff$ such that $\ff_S(S)$ is $r$-spread. \eqref{eqint2} is satisfied by the assumption on $R$ in Theorem~\ref{thmapprox1}, since $r=\frac{R}{2}$. 
As for \eqref{eqint1}, it is implied by the inequality
  $$\frac R2\ge 24\frac{\ell}{t^{1/2}} = 24\left(t^{1/2}\log_2 (2R)+\frac{\sigma}{t^{1/2}}\right),$$
  which is again guaranteed by our assumption on $R$. Hence, we can apply Lemma~\ref{lemtint} to conclude that $\m S$ is $t'^{(0)}$-intersecting.

\subsubsection{The iterative process}

The heart of the proof is the following  sequence of bootstrapping steps that alternate between Theorem~\ref{thm2sets}, in which we gradually increase the size of the family $\m F[X]$ lower bounded in~\eqref{eqsize2set},  and a combination of Theorem~\ref{thmregularity} with Lemma~\ref{lemtint}, in which we decrease the maximum size $q$ of sets in $\m S$ and increase the intersection parameter $t'$. For each $i = 1,2,\ldots, i_0$ (where $i_0$ will be defined below) we perform the following steps.

\medskip \noindent {\bf Step A(i).} We apply Theorem~\ref{thm2sets} to each subfamily $\ff'$ of $\ff[\m S^{(i-1)}]$ of size larger than $2^{-\sigma}a_t$, with $\m S^{(i-1)}$ playing the role of $\m G$, $t_1^{(i)} = t'^{(i-1)}$ playing the role of $t_1$,  $\ell^{(i)} = q^{(i-1)}$ playing the role of $\ell$, and 
\[
\lambda^{(i)}:= 2^{-\sigma} \frac{a_t}{a_{t_1^{(i)}}} \le \frac{|\m F'[\m S^{(i-1)}]|}{a_{t_1^{(i)}}}
\]
playing the role of $\lambda$. Note that since $\ff' \subset \ff[\m S^{(i-1)}]$, we have $\ff'[\m S^{(i-1)}]=\ff'$. Also note that the application of Theorem~\ref{thm2sets} requires $t_1 \geq 1$. Throughout the process, we will maintain $t_1^{(1)}\le t_1^{(2)}\le\ldots\le t_1^{(i_0)}\le t$ (as will be shown below). Hence, the theorem is applied with $t_1=t'^{(i-1)} \geq t'^{(0)}=t-\lceil t^{1/2} \rceil$, which is $\geq 1$ for all $t \geq 3$. The cases $t=1,2$ and the spreadness assumption on $\m A$ are treated separately at the end of the proof, as was written above.

By Theorem~\ref{thm2sets}, there exists a set $X$ such that $x=|X|$ satisfies $t_1^{(i)}\le x\le \ell_2^{(i)}$, where
\begin{equation*} 
\ell_2^{(i)}:= t_1^{(i)}
+4\Big(\frac{t_1^{(i)} (q^{(i-1)}-t_1^{(i)})^{2}}{R}\Big)^{1/3}+ \log_2\big((t_1^{(i)}+1)(\lambda^{(i)})^{-1}\big),
\end{equation*}
%where for the last summand we used that $\lambda^{(i)}\ge 2^{-\sigma}\big(\frac{k-t}{n-t}\big)^{t-t_1^{(i)}}$.
and denoting $\beta^{(i)} = \frac{|\m F'(X)|}{a_x}$, we have
%For this $X$, we get a relation between the sizes of the families $\ff'=\ff'[\m S^{(i-1)}]$
%and $\ff'[X]$. Putting  $|\ff'[X]| = \beta^{(i)}a_x,$ we have
\begin{equation}\label{boundonf}
|\m F'[\m S^{(i-1)}]|\le (t_1^{(i)}+1)e^{3\big(\frac{t_1^{(i)} (q^{(i-1)}-t_1^{(i)})^2}{R}\big)^{1/3}} \beta^{(i)}a_{t_1^{(i)}}.
\end{equation}

\medskip \noindent {\bf Step B(i).} We apply Theorem~\ref{thmregularity} to the family $\ff[\m S^{(i-1)}]$, with $\ell_1^{(i)} = t_1^{(i)}$ playing the role of $\ell_1$, $\ell^{(i)}_2$ as defined in Step A(i) playing the role of $\ell_2$, 
%(we will show that the assumptions of the theorem on weak spreadness of $\aaa$ are satisfied at the end of the proof), 
$$\eta^{(i)}=2^{-\sigma} a_t = \lambda^{(i)}a_{t_1^{(i)}}$$
playing the role of $\eta$,
$$\theta^{(i)} = (t_1^{(i)}+1)e^{3\big(\frac{t_1^{(i)} (q^{(i-1)}-t_1^{(i)})^2}{R}\big)^{1/3}} a_{t_1^{(i)}}$$
playing the role of $\theta$,
$r=R/2$, and
\begin{align}\label{boundq}
\begin{split}
q^{(i)} =& \ell_2^{(i)}+5\Big(\frac{t_1^{(i)} (q^{(i-1)}-t_1^{(i)})^{2}}{R}\Big)^{1/3}+ \log_2\big((t_1^{(i)}+1)(\lambda^{(i)})^{-1}\big)\\
=& t_1^{(i)}+ 9\Big(\frac{t_1^{(i)} (q^{(i-1)}-t_1^{(i)})^{2}}{R}\Big)^{1/3}+ 2\log_2\big((t_1^{(i)}+1)(\lambda^{(i)})^{-1}\big).
\end{split}
\end{align}
The assumption of Theorem~\ref{thmregularity} that for any $\m P\subset \ff[\m S^{(i-1)}]$ of size at least $\eta$ there exists a set $X$ with $\ell_1 \leq |X| \leq \ell_2$ such that $|\m P| \leq \frac{|\m P(X)|}{a_{|X|}} \cdot \theta$ is satisfied, due to Step~A(i) and the definition of $\theta^{(i)}$. We show at the end of the proof that the assumption on weak spreadness of $\m A$ is satisfied as well.
%Recall that, by Step A(i), for any family $\ff'\subset \ff[\m S^{(i-1)}]$ ($\ff'$ is playing the role of $\m P$) of size at least $\eta$ the bound \eqref{boundonf} holds. Thus, the condition on the size of $\ff'$ vs the size of $\ff'[X]$ in Theorem~\ref{thmregularity} is automatically satisfied due to our choice of $\theta^{(i)}$. 
%The condition on $r$ is also clearly satisfied, and thus 
Thus, Theorem~\ref{thmregularity} can indeed be applied. By the theorem, there exists a family $\m S^{(i)}$ of sets, each of size at most $q^{(i)}$, and a family $\m R^{(i)} \subset \m F[\m S^{(i-1)}]$, such that:  
\begin{itemize}
    \item[(i)] $\ff[\m S^{(i-1)}]= \m R^{(i)}\sqcup \bigsqcup_{S\in \m S^{(i)}} \m F[\m S^{(i-1)}]_S[S]$;
    \item[(ii)] For any $S\in \s^{(i)}$ and the family $\m F[\m S^{(i-1)}]_S$, the family $\m F[\m S^{(i-1)}]_S(S)$ is $(R/2)$-spread;
    \item[(iii)] $|\m R|\le \max\big\{\eta^{(i)}, \theta^{(i)} \cdot (1/2)^{q^{(i)}+1-\ell_2^{(i)}}\big\}$.
  \end{itemize}
We claim that the choice of $q^{(i)}$ guarantees that 
$$|\m R^{(i)}|\le 2^{-\sigma}a_t.$$ 
%Recall Theorem~\ref{thmregularity} (iii). The first expression in the maximum, $\eta^{(i)}$, immediately gives exactly this upper bound. The second expression in the maximum in the RHS of Theorem~\ref{thmregularity} (iii) is at most 
Indeed, we have $\eta^{(i)}=2^{-\sigma}a_t$ and 
$$
\theta^{(i)} \cdot (1/2)^{q^{(i)}+1-\ell_2^{(i)}}\le 2^{-\log_2((t_1^{(i)}+1)(\lambda^{(i)})^{-1})}\cdot (t_1^{(i)}+1)a_{t_1^{(i)}} = 2^{-\sigma} a_t.$$ 
In the first inequality, we used the definition of $q^{(i)}$ and the inequality $2^5>e^3.$ In the second inequality, we used the definition of $\lambda^{(i)}$.
%Theorem~\ref{thmregularity} also yields a new spread approximation of $\ff$, that is, a family $\m S^{(i)}$ of uniformity at most $q^{(i)}$. 

\medskip At this stage, we apply Lemma~\ref{lemtint} to $\ff$ and $\m S^{(i)}$, with $r=\frac{R}2$ and  $t'^{(i)}$ defined implicitly by the following equation:
\begin{equation}\label{boundt} t'^{(i)} = \min \{ t-\Big\lfloor 24\cdot \frac{q^{(i)}-t'^{(i)}+1}{r}\Big\rfloor, t\}.
\end{equation}
The lemma can indeed be applied, as Assumption~\eqref{eqint1} is satisfied by the definition of $t'^{(i)}$ and Assumption~\eqref{eqint2} is satisfied by the definition of $R$ (note that the values of $R,r,k$ are not changed throughout the process).
%the lemma with these parameters. 
%Condition \eqref{eqint2} is satisfied again (as the values of $R,r,k$ are not changed throughout the process). 
%Also, the definition of $t'^{(i)}$ is chosen  so that~\eqref{eqint1} is satisfied.
By Lemma~\ref{lemtint}, the family $S^{(i)}$ is $t'^{(i)}$-intersecting.

\subsubsection{The termination of the process}
As was written above, throughout the process, the value $q^{(i)}$ that upper bounds the size of sets in the approximating family $\m S^{(i)}$ decreases, while the value $t^{(i+1)}_1=t'^{(i)}$ that lower bounds the size of the intersection of any two elements of $\m S^{(i)}$ increases (we prove this formally below). We continue the process as long as the condition
\begin{equation}\label{qdec} q^{(i)}-t'^{(i)}_1\ge t^{1/2}+4\sigma+4\log_2(t+1)
\end{equation}
holds, which means that $q^{(i)}$ and $t'^{(i)}=t^{(i+1)}_1$ are `not sufficiently close' yet. At the smallest value $i_0$ for which~\eqref{qdec} fails, we stop the procedure, and set $\m S:=\m S^{(i_0)}$. 

We claim that $t'^{(i_0)}=t$, which means that the approximating family $\m S$ is $t$-intersecting. Indeed, as~\eqref{qdec} does not hold for $i_0$, in the definition of $t'^{(i_0)}$ in~\eqref{boundt} we have 
$$\Big\lfloor 24\cdot \frac{q^{(i_0)}-t'^{(i_0)}+1}{r}\Big\rfloor\le \Big\lfloor 24\cdot \frac{t^{1/2}+4\sigma+4\log_2(t+1)+1}{r}\Big\rfloor = 0,$$
since $r = R/2\ge 2^{29}t^{1/2}+100\sigma\ge 25(t^{1/2}+4\sigma+4\log_2(t+1))$, and therefore, $t'^{(i_0)}=t$. 

We now claim that the approximating family $\m S$ satisfies all the requirements of the theorem. 
To this end, we will show below that during the process, the values $t-t'^{(i)}$ and $q^{(i)}-t'^{(i)}$ decrease at an exponential rate, and as a result, the process terminates after at most $2\log_2(Rt+\sigma)$ steps. %Furthermore, at the end of the process we have $t'^{(i_0)}=t$, which means that $\m S$ is $t$-intersecting. 
Due to the failure of~\eqref{qdec}, the maximum size of a set in $\m S$ is at most $t+t^{1/2}+4\sigma+4\log_2(t+1)$.  
Consider the size of the remainder $\m R=\m F \setminus \m F[\m S]$. Note that at every application of Theorem~\ref{thmregularity}, we have $|\m R^{(i)}|\le 2^{-\sigma}a_t$. Assuming the number of steps is at most $2\log_2 (Rt+\sigma)$, this implies that 
\[
|\m R| = |\cup_{i=1}^{i_0} \m R^{(i)}| \le 2^{-\sigma}a_t\cdot 2\log_2 (Rt+\sigma),
\]
which implies the bound on the size of $\m R$ in the statement of the theorem. 
%\nathan{In the statement of the theorem, we had the constant $4$ instead of $2$. I fixed this in the statement of the theorem.} 
This shows that the family $\m S=\m S^{(i_0)}$ indeed satisfies all the requirements of the theorem.

\medskip Therefore, it only remains to show that during the process $q$ decreases and $t'$ increases and that the process ends after at most $2\log_2(Rt+\sigma)$ steps.
%, and that at the end of the process we have $t'^{(i_0)}=t$.
To show this, we first
%analyze Equations~\eqref{boundq} and~\eqref{boundt}. 
prove the following auxiliary claim.
\begin{claim}
    Let $t,t^{(i)}_1,q^{(i)}_1,\sigma$ be as defined above. 
    If the condition~\eqref{qdec} 
    is satisfied for $i-1$ (namely, if
    $q^{(i-1)}-t'^{(i-1)}_1\ge t^{1/2}+4\sigma+4\log_2(t+1)$), then 
    \[
        q^{(i)}-t^{(i)}_1\le \tfrac 23 (q^{(i-1)}-t^{(i)}_1).    
    \]
\end{claim}
%It should be clear that $t'^{(i)}$ does not decrease as $i$ grows, provided the value of $q^{(i)}$ does not increase as $i$ grows.
Intuitively, this means that as long as $q^{(i-1)}-t^{(i)}_1$ is `large', $q^{(i)}$ is much closer to $t^{(i)}_1$ than $q^{(i-1)}$.

\begin{proof}[Proof of the claim]
    Consider the definition of $q^{(i)}$ in~\eqref{boundq}. First, since by the assumption of the theorem, $R\ge 2^{30}t^{1/2}$, we have
\begin{equation}\label{Eq:Aux3.1}
    9\Big(\tfrac{t_1^{(i)} (q^{(i-1)}-t_1^{(i)})^{2}}{R}\Big)^{1/3} \le \tfrac 1{100} \big(t^{1/2}(q^{(i-1)}-t_1^{(i)})^{2}\big)^{1/3}\overset{\eqref{qdec}}{\le} \tfrac 1{100} (q^{(i-1)}-t_1^{(i)}).
\end{equation} 
Next, by the definition of $\lambda^{(i)}$ we have 
\begin{equation}\label{Eq:Aux3.2}
2\log_2\big((t_1^{(i)}+1)(\lambda^{(i)})^{-1})=2\log_2(t_1^{(i)}+1)+2\sigma +2\log_2 \frac {a_{t_1^{(i)}}}{a_t},
\end{equation}
and by the assumption of the theorem that $\m A$ is `not too weakly spread' we have
\begin{equation}\label{Eq:Aux3.3}
    \log_2 \frac {a_{t_1^{(i)}}}{a_t}\le (t-t_1^{(i)})\log_2 R_1.
\end{equation}
Note that this is the only place in the proof where we use this assumption on $\m A$.

By the definition of $t'^{(i-1)}$ in~\eqref{boundt} and the identification $t'^{(i-1)} = t_1^{(i)}$, we have 
\begin{equation}\label{Eq:Aux3.3.5}
t-t_1^{(i)} \le 24 \frac{q^{(i-1)}-t_1^{(i)}+1}{r}\le \frac 1{200}\frac{q^{(i-1)}-t_1^{(i)}}{\log_2 R_1},    
\end{equation}
where the second inequality holds since by the assumption of the theorem, we have $r = R/2\ge 2^{29}\log_2 R_1.$ By~\eqref{Eq:Aux3.3} and~\eqref{Eq:Aux3.3.5}, we have 
\begin{equation}\label{Eq:Aux3.3.6}
2\log_2 \frac {a_{t_1^{(i)}}}{a_t}\le \frac{q^{(i-1)}-t_1^{(i)}}{100}.    
\end{equation}
Substituting  Equations~\eqref{Eq:Aux3.1},~\eqref{Eq:Aux3.2}, and~\eqref{Eq:Aux3.3.6} into the definition of $q^{(i)}$ in~\eqref{boundq} and using the inequality $t_1^{(i)} \leq t$, we get
\begin{align*}
q^{(i)}-t^{(i)}_1&\overset{\eqref{boundq}}{=}  9\Big(\frac{t_1^{(i)} (q^{(i-1)}-t_1^{(i)})^{2}}{R}\Big)^{1/3}+ 2\log_2\big((t_1^{(i)}+1)(\lambda^{(i)})^{-1}\big)\\
&\overset{\eqref{Eq:Aux3.1},\eqref{Eq:Aux3.2}}\le \frac 1{100} (q^{(i-1)}-t_1^{(i)}) + 2\log_2(t_1^{(i)}+1)+2\sigma + 2\log_2 \frac {a_{t_1^{(i)}}}{a_t}\\
&\overset{\eqref{Eq:Aux3.3.6}}\le \frac 1{50} (q^{(i-1)}-t_1^{(i)}) + 2\log_2(t_1^{(i)}+1)+2\sigma\\
&\overset{\eqref{qdec}}{\le} \frac 1{50} (q^{(i-1)}-t_1^{(i)}) + \frac 1{2} (q^{(i-1)}-t_1^{(i)})<\frac 2{3} (q^{(i-1)}-t_1^{(i)}),
\end{align*}
which proves the assertion of the claim.
\end{proof}
The claim implies that as long as \eqref{qdec} holds for $i-1$, we have $q^{(i)}<q^{(i-1)}$, %\ohad{since $q^{(i)}<t_1^{(i)}$} \nathan{I guess you meant $q^{(i)}>t_1^{(i)}$; this is indeed the reason.}\ohad{sure}
which in turn implies by the definition of $t'^{(i)}=t_1^{(i+1)}$ in~\eqref{boundt} that $t_1^{(i+1)} \geq t_1^{(i)}$, and thus, we have 
\[
q^{(i)}-t_1^{(i+1)}\le q^{(i)}-t_1^{(i)}< \tfrac 23 (q^{(i-1)}-t_1^{(i)}).
\]
This means that the difference  $q^{(i)}-t_1^{(i+1)}$ decays exponentially with $i$. In particular, we reach the point where \eqref{qdec} does not hold any longer in at most $2\log_2 (Rt+\sigma)$ steps, since $q^{(0)}-t^{(1)}_1\le \lceil t^{1/2} \rceil+t\log_2 R+\sigma\le t\log_2(2R)+\sigma$ %\nathan{It seems that $t^{(0)}$ should be replaced by $t_1^{(1)}$. Also, in the right hand side, it seems that an additive summand of $\lceil t^{1/2} \rceil$ is missing; the same also in the left hand side of the next inequality. Though, this seems easy to fix.}\andrey{Done} 
and 
\[
(t\log_2(2R)+\sigma)\cdot \left(\tfrac{2}{3}\right)^{2\log_2 (Rt+\sigma)}<1.
\]
Thus, $i_0 \leq 2\log_2 (Rt+\sigma)$, as was claimed. 
%This completes the proof of the theorem, modulo a few `remainders' that will be filled up below.

%\ohad{Isnt the right place for that is before this subsection?} \nathan{Andrey wrote at the beginning of the proof that this part is postponed to the end. I think it's fine.} 

\subsubsection{Completing the proof.}
In order to complete the proof of the theorem, we have to fill in a few claims that were left for the end.

\medskip \noindent \emph{Spreadness of the family $\aaa$.}
We show that the spreadness requirements on the family $\aaa$ in the applications of Theorems~\ref{thm2sets} and~\ref{thmregularity} throughout the proof are satisfied. Theorem~\ref{thm2sets} requires weak $(R,t_1)$-spreadness of $\aaa$, and we have applied it with values of $t_1$ in the range $[t-\lceil t^{1/2}\rceil,t].$ Theorem~\ref{thmregularity} requires weak $(R,x)$-spreadness for the values of $x\in [\ell_1,\ell_2].$ The value of $\ell_1$ in our applications is always at least $t-\lceil t^{1/2}\rceil$. The value of $\ell_2$ we used is $q^{(i)}$ for $i\ge 1$. Using the trivial bound $q^{(i-1)}\le k$, the bound \eqref{boundq} and the inequalities~\eqref{Eq:Aux3.1}, \eqref{Eq:Aux3.2} and~\eqref{Eq:Aux3.3} above, we obtain
$$q^{(i)}\le t+ \frac 1{100} \big(t^{1/2}(k-t+\lceil t^{1/2}\rceil)^{2}\big)^{1/3}+2\sigma+2\lceil t^{1/2}\rceil \log_2 R_1+2\log_2(t+1).$$
By the spreadness assumption on $\aaa$ in the statement of the theorem, $\aaa$ is weakly $(R,x)$-spread for all $x$ in the said range.

\medskip \noindent \emph{The proof for $t=1,2$.}
As written in the beginning of the proof, the above argument works for $t\ge 3$. For $t=1,2$, the argument cannot be carried out directly, since it requires applying Theorem~\ref{thm2sets} with $t'^{(0)}=t-\lceil t^{1/2} \rceil =0$ in place of $t_1$, while Theorem~\ref{thm2sets} assumes $t_1 \ge 1$.  
%\ohad{Why? we should talk about it here, not in page 3.} 
Instead, we apply the same procedure, but with $t_1^{(i+1)}=t'^{(i)}=t$ for all $i$. 

Inspection of the proof shows that the only places where this change makes a difference are the applications of Lemma~\ref{lemtint}, where we have to check that the assumptions~\eqref{eqint1},~\eqref{eqint2} hold for $t=1,2$, where $t'^{(i)}$ is replaced by $t$. The assumption~\eqref{eqint2} is not affected by the change. As for~\eqref{eqint1}, since during the iterative process $q^{(i)}$ decreases, we have 
%The only condition to check is that \eqref{eqint1} always holds.\ohad{need to explain why its the only condition, but seems true} We have 
%$q^{(i)} \le \max(q^{(0)},q^{(i_0)}) \le \max\{4\log 2 R+\sigma, 4+4\sigma+4\log_2(3)\}$, 
$q^{(i)} \le q^{(0)} \le 4\log_2 R+\sigma,$ 
%\ohad{If $t$ was replaced elsewhere it should be replaced in the log, also maybe its $4+4\sigma+4\log_2(t+1)$}, 
and by the assumption on $R$, this clearly implies
%it is easy to check that then 
$r = R/2>24 q^{(i)}$. Thus,~\eqref{eqint1} holds for $t'^{(i)} = t$. 
%The rest of the argument stays the same.\\

\medskip \noindent \emph{The proof for $\frac{1}{2}<\alpha \leq 1$.} To complete the proof of the theorem, we show that if $R\ge 2^{30}t^{\alpha}$ for some $\alpha\in (1/2,1]$,
then the bound on the sizes of sets in $\m S$ improves to $t+t^{1-\alpha}+4\sigma+4\log_2(t+1),$ without affecting the bound on the size of the remainder $\m R$.

%Let us now prove the `more generally' part of the statement of the theorem. 
First, we perform the same steps as for $\alpha=1/2$, which can be done since the assumptions on $R$ are valid. % Assume that $n\ge Ct^{\alpha}s^{\beta}(k-t)\log_2 \frac{n-t}{k-t}$ with some $\alpha,\beta\ge 1/2$. Then the bound \eqref{boundq} improves to
%\begin{multline}\label{boundq2}
%  q^{(i)}\le   t_1^{(i)}+2t^{(1-\alpha)/3}s^{-\beta/3}\Big(\frac{q^{(i-1)}-t_1^{(i)}} {100\log_2\frac{n-t}{k-t}}\Big)^{2/3} \\ +2\sigma+\log_2(8s^4t^2)+2(t-t_1^{(i)})\log_2 \frac{n-t}{k-t}
%\end{multline}
%Thus, we may perform exactly the same analysis as above (only using the $\alpha=\beta=1/2$ case),
We obtain an approximating family $\m S^{(i_0)}$, with  $q^{(i_0)}\le t+t^{1/2}+4\sigma+4\log_2(t+1)$ and $t'^{(i_0)} = t$. 

Then we continue applying the iterative process with $t^{(i)}=t$ for all $i$ (instead of the above definition), as long as the following condition holds:
\begin{equation}\label{qdec2} 
q^{(i-1)}-t\ge t^{1-\alpha}+4\sigma+4\log_2(t+1).
\end{equation}
By a virtue of the above argument, with~\eqref{qdec2} in place of~\eqref{qdec} and using the inequality 
$$9\Big(\frac{t (q^{(i-1)}-t)^{2}}{R}\Big)^{1/3} \le \frac 1{100} \big(t^{1-\alpha}(q^{(i-1)}-t)^{2}\big)^{1/3}\overset{\eqref{qdec2}}{\le} \frac 1{100} (q^{(i-1)}-t),$$
we obtain that as long as~\eqref{qdec2} holds, we have $q^{(i)}-t\le \frac 23 (q^{(i-1)}-t)$. Hence, after at most $2\log_2 t$ steps 
%up until step $i_1$, and for which we use the bound
%$$9\Big(\frac{t (q^{(i-1)}-t)^{2}}{R}\Big)^{1/3} \le \frac 1{100} \big(t^{1-\alpha}(q^{(i-1)}-t)^{2}\big)^{1/3}\overset{\eqref{qdec2}}{\le} \frac 1{100} (q^{(i-1)}-t).$$
%Otherwise,
%the same analysis works, and we get $q^{(i)}-t\le \frac 23 (q^{(i-1)}-t)$. After at most $2\log_2 t$ steps 
we reach the point where~\eqref{qdec2} does not hold any longer. 

At this point, we terminate the process and obtain a $t$-intersecting approximating family $\m S$ consisting of sets of size at most  $t+t^{1-\alpha}+4\sigma+4\log_2(t+1)$, as required.
Finally, we can bound the size of the remainder $\m R$ as it was done above, modifying the number of steps to $2\log_2 t+2\log_2 (Rt+\sigma)\le 4\log_2 (Rt+\sigma)$. This completes the proof of the theorem.
\end{proof}

\section{The Peeling Simplification}
\label{sec:peeling_off}

In this section we present our third simplification, called `peeling simplification'. This is an iterative process, which given a family of sets, gradually removes from it `spread' parts, and eventually, replaces the family with a `kernel' whose size can be bounded efficiently.
As was mentioned above, a similar procedure was introduced by Kupavskii and Zakharov~\cite{KZ22} and refined by Kupavskii~\cite{Kupavskii26EJC}. In fact, somewhat similar methods can be traced back to works of Frankl and F\"{u}redi in the 80's; see a detailed discussion in~\cite[Section~1.7]{kupavskii25survey}. 
%\nathan{Andrey, please check whether the sentences I wrote here (based on your comment) are fine.}\andrey{Yes, this is fine!}
%\nathan{I think we will have to explain what is the relation between this procedure and similar procedures used in the `spread approximation' papers.}\andrey{We used a (worse version of) this procedure in our initial paper with Dima Zakharov. I later refined it in \cite{Kupavskii26EJC}. Here, we are using some parts of it. (Technically, what corresponds the most to, say, the procedure in \cite{Kupavskii26EJC} is the analysis we do in Case 3 of the proof, where we change the spreadness parameter. Somewhat similar things corresponding to fixed spreadness $r$ (but with some other notions, not spreadness) were used  back in the 80s by Furedi and Frankl. We can refer to my recent survey ``Delta-system method - a survey'', where I discuss these things in detail.}

\medskip \noindent \emph{The $r$-simplification process.} Given a family of sets $\m X \subset \m P([n])$, and a parameter $r>0$, we perform the following iterative process:
\begin{itemize}
\item Set $\m X_1:= \m X$, and perform the following steps for $i=1,2,\ldots$, until we reach the termination condition stated below.

\item At the $i$'s step, find a set $Y_{i}$ and a subfamily  $\m Z_i\subset \m X_i$ such that $\m Z_i(Y_i)$ is $r$-spread and contains at least two elements. Set $\m X_{i+1}:=\m X_i\setminus \m X_i[Y_i] \cup \{Y_i\}$.

\item If no such $Y_i$ and $\m Z_i$ can be chosen, stop the iterative process and set $\m S':=\m X_i$.  

\item Set $\m S$ to be the family of inclusion-minimal sets in $\m S'$. 
\end{itemize}
As by construction, $|\m X_{i+1}| \leq |\m X_i|-1$ for all $i$, the process ends after at most $|\m X|$ steps. The family $\m S$ is called \emph{an $r$-simplification of $\m X$}. We note that the resulting family $\m S$ depends on the order in which subfamilies are removed. We take one of the possible orderings arbitrarily.

\medskip \noindent \emph{Properties of the $r$-simplification process.} The following lemma proves several properties of the process that will be used in the sequel.
\begin{lem}\label{lemsimplsimple} Let $n,t,p,r \in \mathbb{N}$ be such that $r>p-t+1$. Let $\m X \subset \m P([n])$ be a $t$-intersecting family in which each set is of size at most $p$. Let $\m S$ be an $r$-simplification of $\m X$, and denote the family of all $i$-element sets in $\m S$ by $\m S^{(i)}$. Then:
\begin{enumerate}
    \item $\m X = \m X[\m S]$;

    \item $\m S$ is $t$-intersecting; and

    \item For any $i \geq t$, $|\m S^{(i)}|\le {i\choose t} \cdot r^{i-t}$.
\end{enumerate}
Furthermore, if $\m X,\m W \subset \m P([n])$ are cross $t$-intersecting families, both consisting of sets of size at most $p$, and $\m S_{\m X},\m S_{\m W}$ are their respective $r$-simplifications, then each of the families $\m X,\m S_{\m X}$ cross $t$-intersects each of the families $\m W, \m S_{\m W}$.
\end{lem}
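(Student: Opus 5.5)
The plan is to prove the three items by tracking invariants of the $r$-simplification process. Items (1) and (2) are invariants maintained step by step; item (3) comes from the stopping rule combined with Observation~\ref{obs13}.

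\emph{Item (1).} I will show by induction on $i$ that every set of $\m X$ contains some set of $\m X_i$. When passing from $\m X_i$ to $\m X_{i+1}$, only sets containing $Y_i$ are removed, and $Y_i$ itself is added, so coverage is preserved. At the end, every set of $\m S'$ contains an inclusion-minimal set of $\m S'$, that is, a set of $\m S$. Hence $\m X=\m X[\m S]$. Note also that every $Y_i$ is a subset of some set of $\m X_i$, so by induction all sets ever appearing have size at most $p$.

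\emph{Item (2), the main step.} I will prove the stronger statement covering the ``furthermore'' part: if $\m X_i$ cross $t$-intersects a family $\m W'$ of sets of size at most $p$, then so does $\{Y_i\}$. Taking $\m W'=\m X_i$ (and using that $\m X_i$ is $t$-intersecting by induction) gives that $\m X_{i+1}$ is $t$-intersecting. Taking $\m W'=\m W$ and running the two processes one after the other gives the cross statements. Passing to the subfamily $\m S\subset\m S'$ preserves everything.

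\begin{itemize}
\item Suppose $A\in\m W'$ satisfies $|A\cap Y_i|=s<t$. Every $Z\in\m Z_i(Y_i)$ satisfies $Y_i\cup Z\in\m X_i$, so $|A\cap(Y_i\cup Z)|\ge t$. Hence $Z$ contains some $T\subset A\setminus Y_i$ with $|T|=t-s$.
\item By the $r$-spreadness of $\m Z_i(Y_i)$ and a union bound,
\[
|\m Z_i(Y_i)|\le\sum_{T}|\m Z_i(Y_i\cup T)|<\binom{|A|-s}{t-s}r^{-(t-s)}|\m Z_i(Y_i)|.
\]
\item Writing $j=t-s\ge1$, we have $\binom{|A|-s}{j}\le\binom{p-t+j}{j}\le(p-t+1)^j<r^j$. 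The right-hand side is therefore less than $|\m Z_i(Y_i)|$, a contradiction.
\end{itemize}

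This argument also covers the case where $A$ is itself of the form $Y_i\cup Z$, so no separate argument for $|Y_i|\ge t$ is needed. The only delicate points are the estimate $\binom{m+j}{j}\le(m+1)^j$ with $m=p-t$, and checking that all sets involved have size at most $p$. I expect this step to be the main obstacle, though it should be routine once set up this way.

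\emph{Item (3).} Fix $i\ge t$ with $\m S^{(i)}\neq\emptyset$ and pick $A\in\m S^{(i)}$. By (2), every set of $\m S^{(i)}$ meets $A$ in at least $t$ elements, so
\[
\m S^{(i)}=\bigcup_{T\in\binom{A}{t}}\m S^{(i)}[T].
\]
If some $|\m S^{(i)}(T)|>r^{i-t}$, I apply Observation~\ref{obs13} to the $(i-t)$-uniform family $\m S^{(i)}(T)$. It gives $X$ such that $\m S^{(i)}(T\cup X)$ is $r$-spread with at least two elements. Then $Y=T\cup X$ and $\m Z=\m S^{(i)}[T\cup X]\subset\m S'$ are a valid choice for another step of the process, contradicting termination. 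Hence each of the $\binom{i}{t}$ terms has size at most $r^{i-t}$, which gives $|\m S^{(i)}|\le\binom{i}{t}r^{i-t}$.
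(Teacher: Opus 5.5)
Your proposal is correct and follows essentially the same route as the paper. You use the coverage invariant for (1), a spreadness contradiction at the step where cross $t$-intersection with $\{Y_i\}$ would first fail for (2) and the `furthermore' part, and averaging over $\binom{A}{t}$ together with Observation~\ref{obs13} and the stopping rule for (3). The one local difference is the key estimate in (2): you union-bound over all $(t-s)$-subsets of $A\setminus Y_i$, using higher-order spreadness and $\binom{p-t+j}{j}\le (p-t+1)^j<r^j$, whereas the paper uses only singleton spreadness to find a single $Z\in\m Z_j(Y_j)$ avoiding a $(p-t+1)$-element subset of $W\setminus Y_j$; both arguments work.
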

%Note that, for any $B\in \m X$, the family $\m X(B)$ is $r$-spread for the reason that $B$ is not a subset of any other set in $\m X$. \nathan{Why cannot $B$ be a subset of another set in the family? Also, it is not clear why this remark is needed.} Thus such $B$ may be included in $\m S$ as long as it has no proper subset that gives rise to an $r$-spread family. \ohad{I didnt understand this paragraph at all} \andrey{We can safely remove this paragraph, it's not needed.}
\begin{proof}
  To prove the assertion~(1), we show that for each $i$, we have $\m X = \m X[\m X_i]$. This is sufficient, as this implies that $\m X[\m S]=\m X[\m S']=\m X[\m X_{i_0}]$ for $i_0$ being the step at which the process is terminated.

  To show that $\m X = \m X[\m X_i]$ for all $i$, note that at the $i$'s step of the process, we replace $\m X_i$ by $\m X_{i+1}:=\m X_i\setminus \m X_i[Y_i] \cup \{Y_i\}$. As all sets removed from $\m X_i$ contain $Y_i \in \m X_{i+1}$, we have $\m X_i=\m X_i[\m X_{i+1}]$, and thus, $\m X[\m X_i]=\m X[\m X_{i+1}]$. The assertion follows by induction on $i$.

  \smallskip The assertion~(2) follows from the `Furthermore' assertion on a pair $\m X,\m W$ of cross $t$-intersecting families and their corresponding $r$-simplifications $\m S_{\m X}, \m S_{\m W}$. Hence, we prove the `furthermore' assertion. 

  First, we prove that $\m S_{\m X}$ and $\m W$ are cross $t$-intersecting. Assume on the contrary they are not, and let $j$ be the earliest step of the simplification process that led from $\m X$ to $\m S_{\m X}$ such that $\m X_{j+1}$ and $\m W$ are not cross $t$-intersecting. As in the $j$'s step, we replace $\m X_{j}$ by $\m X_j\setminus \m X_j[Y_j] \cup \{Y_j\}$ for some $Y_j$, there exists $W \in \m W$ such that $|W\cap Y_j|<t$. 
  Denote 
  \[
  x_1:=|W\cap Y_j|<t \qquad \mbox{and} \qquad x_2:= |W\setminus Y_j|,
  \]
  and note that as $|W| \leq p$, we have $x_1 \leq p-x_2$, and thus,
  \begin{equation}\label{Eq:Aux_Simplification0}
      x_1 \leq \min \{p-x_2,t-1\}.
  \end{equation}
  By the definition of the $r$-simplification process, there is a family $\m Z_j\subset \m X_j$ such that $\m Z_j(Y_j)$ is $r$-spread (where $r>p-t+1$). This implies that for any $V \subset [n]$ of size $\le p-t+1$, there exists $Z \in \m Z_j(Y_j)$ such that $V \cap Z=\emptyset$, as otherwise, we would have 
  \begin{equation}\label{Eq:Aux_Simplification0.5}
  |(\m Z_j(Y_j))[\{v\}]| \geq \tfrac{1}{p-t+1}|\m Z_j(Y_j)|
  \end{equation}
  for some $v \in V$, contradicting the $r$-spreadness of $\m Z_j(Y_j)$. Applying this to any subset $V$ of size $\min \{p-t+1,x_2\}$ of $W\setminus Y_j$, we deduce that there exists $Z \in \m Z_j(Y_j)$ such that $Z \cap V=\emptyset$, and hence, 
  $|Z \cap (W \setminus Y_j)| \leq \max \{x_2-(p-t+1),0\}$. Let $Z' \in Z_j[Y_j]$ be such that $Z' \supset Z$. Since $|W\cap Y_j|=x_1$, we have 
  \begin{align*}
  |Z' \cap W| &= |Z \cap (W \setminus Y_j)| + |(Z' \setminus Z) \cap (W \cap Y_j)| \\
  &\leq \max \{x_2-(p-t+1),0\} + x_1 \leq t-1,     
  \end{align*}
  where the last inequality holds by~\eqref{Eq:Aux_Simplification0}. As $Z' \in \m X_j$, this implies that $\m X_j$ and $\m W$ are not $t$-intersecting, a contradiction. This shows that $\m S_{\m X}$ and $\m W$ are cross $t$-intersecting, as asserted.

  To show that $\m S_{\m X}$ and $\m S_{\m W}$ are cross $t$-intersecting, we apply the above argument to the cross $t$-intersecting families $\m S_{\m X}$ and $\m W$, examining   
  the $r$-simplification process that led from $\m W$ to $\m S_{\m W}$. %\ohad{$S_{\m W}$?}\nathan{Yes}. 
  The cross $t$-intersection assertion about the pairs of families $(\m X, \m W)$ and $(\m X, \m S_{\m W})$ follows by Assertion~(1).

\smallskip To prove the assertion~(3), it is clearly sufficient to show that the same bound holds for the family $\m S'$ that contains $\m S$. We use the $t$-intersection property of $\m S'^{(i)}$ that follows from assertion~(2). Let $F\in \m S'^{(i)}$. Each $F' \in \m S'^{(i)}$ intersects $F$ in at least $t$ elements, and hence, by averaging, there exists a subset $X\in {F\choose t}$ such that 
\begin{equation}\label{Eq:Aux_Simplification1}
    |\m S'^{(i)}(X)|\ge {i\choose t}^{-1}|\m S'^{(i)}|.
\end{equation}
Next, we show that $|\m S'^{(i)}(X)|\le r^{i-t}$, which will conclude the proof. 

The family $\m S'^{(i)}(X)$ consists of sets of size $i-t$, and there is no set $Y \subset [n]$ such that the family $(\m S'^{(i)}(X))(Y)$ is $r$-spread and contains at least two elements. Indeed, should such a $Y$ exist, by the definition of $r$-simplification, the process would not terminate with some $\m X_{i_0}= \m S'$, but rather, more peeling steps would be performed.
%the definition of $\m S$ implies that we should have replaced all the sets in $\m S^{(i)}(X)$ by $X\cup Y$. 
Therefore, Observation~\ref{obs13} implies that $|\m S'^{(i)}(X)|\le r^{i-t}$. Combining this with~\eqref{Eq:Aux_Simplification1} completes the proof.
\end{proof}

\section{Finding a Simple Sub-Structure Within $\m F$, for a Small $t$}
\label{sec:Small_t}

In this section, we study $(n-t)$-intersecting families, for $t \leq n^{\epsilon}$. Theorem~\ref{Thm:Main} asserts that in this range, the maximum size of an $(n-t)$-intersecting family is attained by the family 
\[
\m F_{n,n-t,t/2}=\{\sigma \in S_n: |\mathrm{Moving}(\sigma)| \leq t/2\}
\]
for an even $t$ and by the family
\[
\m F_{n,n-t,(t-1)/2}=\{\sigma \in S_n: |\mathrm{Moving}(\sigma) \cap \{1,2,\ldots,n-1\}| \leq \tfrac{t-1}{2}\}
\]
for an odd $t$. We consider an $(n-t)$-intersecting family $\m G$ of a maximum size and transform it to a family $\m F$ of sets, using the representation of permutations by sets presented in Section~\ref{sec:PermutationstoSets}. Decompose $\m G = \cup_m \m G_m$, where $\m G_m$ is the set of permutations in $\m G$ that have exactly $m$ moving points, and denote by $\m F_m$ the family of sets that corresponds to $\m G_m$. We show that there exists $m$ such that the family $\m F_m$ is `large' and $(2m-t)$-intersecting and most of the sets in $\m F_m$ contain the same set $F$ of size $2m-t$ that has a very special structure. Formally, we prove the following theorem.
\begin{thm}\label{thmfm}
    For any $\epsilon \leq 0.01$, there exists $n_0 \in \mathbb{N}$ such that the following holds for all $n \geq n_0$ and all $3 \leq t \leq n^{\epsilon}$. 
    Let $\m G \subset S_n$ be a maximum-size $(n-t)$-intersecting family of permutations. Let $\m F \subset \m P(\mb [n] \cup \{(i,j):i,j \in [n], i \neq j\})$ %\ohad{$\m F$ is a family of subsets} \nathan{Right!} 
    be the family of sets that corresponds to $\m G$ in the representation of permutations by sets presented in Section~\ref{sec:PermutationstoSets}. Then there exists $t/2 \leq m \leq t$ such that:
    \begin{itemize}
        \item $\m F_m:=\{F_{\sigma}=(D_{\sigma},E_{\sigma}) \in \m F: |D_{\sigma}|=m\}$ satisfies $|\m F_m| \geq n^{-2\epsilon} |\m F|$;

        \item $\m F_m$ is $(2m-t)$-intersecting.
        
        \item $|\m F_m(F)| \geq (1-n^{-\epsilon})|\m F_m|$, where $F$ is a set of size $2m-t$ of the form $F = \{D,(M,\sigma_0(M))\}$, where $\sigma_0 \in S_n$, $M\subset D$, $\sigma_0(M) \subset D$, $|D|=m-\lfloor t/2\rfloor$, and $|M|=m-\lceil t/2\rceil$.
    \end{itemize} 
\end{thm}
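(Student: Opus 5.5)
The plan is to take $m$ to be the most popular number of moving points above $t/2$, run the peeling simplification on $\m F_m$, and show that the bottom layer of the resulting kernel is a single set carrying almost all of $\m F_m$. Throughout, write $s:=2m-t$ and $a_\ell:=a^{(m)}_\ell$.

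\emph{Choosing $m$.} Since $\m G$ has maximum size, Lemma~\ref{lemadecay}(v) with $i=\lceil t/2\rceil$ gives $|\m F|=|\m G|\ge a^{(\lceil t/2\rceil)}_{2\lceil t/2\rceil-t}$. This quantity is $a_0^{(\lceil t/2\rceil)}$ for even $t$, and $a_1^{(\lceil t/2\rceil)}$ for odd $t$; the latter is comparable to $a_0^{(\lceil t/2\rceil)}$ up to a factor $\approx t/n$, by part (viii). Lemma~\ref{lemadecay}(vi) then shows that $\bigcup_{i<\lceil t/2\rceil}\m F_i$ is an $O(1/n)$ fraction of $\m F$ for even $t$, and $O(t/n)$ for odd $t$ after comparing via (viii).

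By pigeonhole over the at most $t+1$ remaining layers, there is $m\ge\lceil t/2\rceil$ with $|\m F_m|\ge |\m F|/(2(t+1))\ge n^{-2\epsilon}|\m F|$. Claim~\ref{clatranslation} with $i=j=m$ shows that $\m F_m$ is $(2m-t)$-intersecting. Applying Lemma~\ref{lemadecay}(v) with $i=m$ also gives the lower bound I will use repeatedly:
\[
|\m F_m|\ge n^{-2\epsilon}|\m F|\ge n^{-2\epsilon}a_s .
\]

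\emph{Peeling.} I apply the $r$-simplification of Section~\ref{sec:peeling_off} to $\m F_m$ with $r=n^{1/2-2\epsilon}$. All sets have size $2m\le 2t$, so $p=2m$ and $r>p-s+1$ holds trivially. Lemma~\ref{lemsimplsimple} gives a $s$-intersecting kernel $\m S$ with $\m F_m=\m F_m[\m S]$. Its layers satisfy $|\m S^{(s+j)}|\le\binom{s+j}{j}r^j\le (2t)^jr^j$. Using the crude bound $\binom{s+j}{j}\le (2t)^j$ rather than $2^{2t}$ is essential, since $2^{2t}$ is not polynomial in $n$. Combining with $|\m F_m[S]|\le a_{|S|}$ and Lemma~\ref{lemadecay}(iii) for $j\ge2$ (or (i),(ii),(iv) when $s<3$), the layers $j\ge 2$ carry at most
\[
\sum_{j\ge 2}3(2tr)^j(2t/n)^{\lfloor j/2\rfloor}a_s\le n^{-4\epsilon}a_s,
\]
where I pair consecutive $j$'s and use $t\le n^{\epsilon}$. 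By the lower bound on $|\m F_m|$, this is an $n^{-2\epsilon}$ fraction of $\m F_m$.

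\emph{Layers $s$ and $s+1$.} The layer $\m S^{(s)}$ is an $s$-intersecting family of $s$-sets, hence contains at most one set $F$. The layer $\m S^{(s+1)}$ is $s$-intersecting, so it either has a common $s$-core or lies inside an $(s+2)$-set; in either case it contributes at most about $(s+2)+n\cdot\max|\m A_m[S]|$. I expect these two layers to be the main obstacle. I must show that $\m F_m[\m S^{(s+1)}\setminus \m F_m[F]]$ is tiny, and that $F$ has the canonical shape.

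For shape, Observation~\ref{obs:max-structure} lets me assume $M\subset D$ and $\sigma_0(M)\subset D$ (otherwise the restriction is dominated by a canonical one of the same size). Lemma~\ref{lemadecay}(vii) shows that $|\m A_m[F']|\le \frac{3t^2}{n}a_s$ whenever $|D|\ge|E|+2$, which kills all non-balanced shapes. Parity then forces $|D|=\lceil s/2\rceil=m-\lfloor t/2\rfloor$ and $|M|=m-\lceil t/2\rceil$.

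For the layer $s+1$, I would try first to use Lemma~\ref{lemadecay}(ii) when $s+1$ is odd and (vii) when the shape is unbalanced, to show $a_{s+1}\le \frac{3t}{n}a_s$ up to $\mathrm{poly}(t)$ factors. The delicate subcase is $s$ odd with a balanced $(s+1)$-set, where only the weak ratio from part (i) is available. There I would instead use maximality of $\m G$. If most of $\m F_m$ sat under such sets while missing $F$, the cross-intersection with the other layers (Claim~\ref{clatranslation}) would allow replacing $\m G$ by the double translate of $\m F_{n,n-t,(t-1)/2}$. That family has size $\ge a_s$ and strictly exceeds $|\m G|$, a contradiction.

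Collecting the bounds gives $|\m F_m(F)|\ge(1-n^{-\epsilon})|\m F_m|$, as required.
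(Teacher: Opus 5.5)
Your skeleton (pigeonhole on layers, peel $\mathcal F_m$, read $F$ off the bottom of the kernel, fix its shape with Lemma~\ref{lemadecay}(vii)) matches the paper's. The case you flag as delicate, however, is not handled.

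When $s=2m-t$ is odd, $\mathcal S^{(s)}=\emptyset$, and the mass of $\mathcal F_m$ lies on balanced $(s+1)$-sets ($M=D$), you have no contradiction. Maximality gives $|\mathcal G|\ge|\mathcal F_{n,n-t,(t-1)/2}|$, so nothing can ``strictly exceed'' $|\mathcal G|$. Placing $\mathcal G$ inside a double translate of some $\mathcal F_{n,n-t,r}$ is exactly what this theorem is later used to prove, so invoking it here is circular. Counting does not help either: $|\mathcal F_m[F']|\le a_{s+1}$, which is only of order $a_s/m$, is compatible with $|\mathcal F_m|\ge |\mathcal F|/(2(t+1))$.

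You do not need a contradiction, because in this configuration the required $F$ is simply not a kernel set. The missing idea is to construct it from the $(s+1)$-layer. If $F_1,F_2$ are balanced $(s+1)$-sets of the $s$-intersecting kernel, then $|D_1\cap D_2|+|E_1\cap E_2|\ge 2|D_1|-1$ forces $D_1=D_2$. The two permutations of $D_1$ then agree on $|D_1|-1$ points, hence everywhere, so there is a unique such set $F'$. Deleting one pair from $F'$ gives $F$ with $|F|=s$, $|D|=\frac{s+1}{2}=m-\lfloor t/2\rfloor$, $|M|=\frac{s-1}{2}=m-\lceil t/2\rceil$, and $\mathcal F_m[F]\supseteq\mathcal F_m[F']$. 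The unbalanced $(s+1)$-sets number at most $(s+1)r$, and each carries at most $\frac{3m^2}{n}a_s$ by (vii). Conversely, when $\mathcal S^{(s)}=\{F\}$ is nonempty there is nothing to prove about the $(s+1)$-layer: $\mathcal F_m$ cross $s$-intersects the kernel, so every member of $\mathcal F_m$ contains $F$.

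Several other steps fail as written.

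\emph{Choice of $r$.} With $r=n^{1/2-2\epsilon}$ your estimate for the layers $j\ge 2$ is false. Part (iii) gains only $\lfloor j/2\rfloor$ factors $2t/n$, while every layer costs a factor $2tr$. So the $j=3$ term is $3(2tr)^3\cdot\frac{2t}{n}=\frac{48t^4r^3}{n}$, which equals $48n^{1/2-2\epsilon}$ when $t=n^{\epsilon}$, and the $j=2$ term is already $24n^{-\epsilon}$, not $n^{-4\epsilon}$. Take $r$ just above $p-s+1=t+1$, e.g.\ $r=2n^{\epsilon}$ as the paper does; then these layers contribute $O(n^{-1+7\epsilon})a_s$.

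\emph{Small $s$.} The case $s\le 2$ cannot go through this machinery, since (iii) and (vii) need $\ell\ge 3$. For $s=2$ the required shape ($|D|=|M|=1$ with $M,\sigma_0(M)\subset D$) does not even exist, so $s=2$ must be excluded outright. The paper does this with Erd\H{o}s--Ko--Rado applied to the $2$-intersecting family of moving-point sets, and treats $s=1$ via Hilton--Milner.

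\emph{Low layers for odd $t$.} The layers below $\lceil t/2\rceil$ form an $O(1/t)$ fraction of $\mathcal F$ (a constant fraction for small $t$), not $O(t/n)$. To bound them by $0.51|\mathcal F|$ you must compare with the $(n-t)$-intersecting family of permutations having fewer than $\lceil t/2\rceil$ moving points, or exactly $\lceil t/2\rceil$ including the point $1$, and use (viii). Your pigeonhole survives this correction.
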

Note that in the statement of the theorem, we make the assumption $t \geq 3$. 
For $t=1,2$, the characterization of $(n-t)$-intersecting families is easy: The only $(n-1)$-intersecting families are $1$-element families, and the remaining $(n-2)$-intersecting families consist of two permutations that differ by a transposition.

%There is no need to consider the range $t<c_0$, as in this range (i.e., $t$-intersecting families in $S_n$ for $t>n-c$), the assertion of Theorem~\ref{Thm:Main} was proved for all $n \geq n_0(c)$ by Deza and Frankl~\cite{DF77}.

Before we prove the theorem, let us examine the `special structure' admitted by most elements of $\m F_m$. Assume that $t$ is even (the situation for an odd $t$ is only slightly more complex). The family of sets $\m F_m[F]$ corresponds to the family $\m G'$ of all permutations in $\m G_m$ that agree on a specific set $M$ of size $m-\frac{t}{2}$ with a bijection $\sigma'_0:M \to M$. Putting aside the set $M$ on which all these permutations agree, $\m G'$ corresponds to the family of all permutations on $[n] \setminus M$ that have exactly $t/2$ moving points. For $M = \emptyset$, this family is almost the same as the family $\m F_{n,n-t,t/2}$ (the latter also contains all permutations with less than $t/2$ moving points, whose total amount is negligible). Hence, Theorem~\ref{thmfm} asserts that $\m F_m$, which takes up a large part of $\m F$, essentially corresponds to a `copy' of $\m F_{n,n-t,t/2} \subset S_n$. In Section~\ref{sec:Completing_the_Proof} we will show how this allows us to deduce that $\m G \subset \m F_{n,n-t,t/2}$, as asserted in Theorem~\ref{Thm:Main}.    

\begin{proof}[Proof of Theorem~\ref{thmfm}]
Let $n,t,\epsilon$ be as in the statement of the theorem, let $\m G \subset S_n$ be a maximum-size $(n-t)$-intersecting family of permutations, and let $\ff = \cup_{i=0}^t\ff_i$ be the corresponding family of sets. The proof consists of several steps.

\medskip \noindent \textbf{Step~1: Finding a large $(2m-t)$-intersecting family $\m F_m \subset \m F$}. Recall that by Lemma~\ref{lemadecay}(v), for each $\lceil t/2 \rceil \leq i \leq t$, there exists an $(n-t)$-intersecting family $\m G' \subset S_n$ of size $a^{(i)}_{2i-t}$. Hence, by the maximality of $|\m G|$, we have
\begin{equation}\label{Eq:Aux-Small-t1}
|\ff|\ge \max_{i\in \{\lceil t/2\rceil,\ldots, t\}} a^{(i)}_{2i-t}.    
\end{equation}
Consider the families $\{\m F_i\}_{i=0,\ldots,t}$. We claim that the size of all $\{\m F_i\}_{i=0,\ldots,\lceil t/2 \rceil -1}$ is `small' compared to $|\m F|$. Indeed, as for each $i$ we have $\m F_i \subset \m A_i$ and $|\m A_i|=a^{(i)}_0$, Lemma~\ref{lemadecay}(vi) implies 
\begin{equation}
\label{Eq:Aux-Small-t1.1}
\sum_{i=0}^{\lceil t/2 \rceil-1} |\m F_i| \leq \sum_{i=0}^{\lceil t/2 \rceil-1} a_0^{(i)} \le \tfrac{6}{n} \cdot\,a_0^{\lceil t/2 \rceil}.    
\end{equation}
%and similarly, $\sum_{i=0}^{\lceil t/2 \rceil-1} |\m F_i| \leq \tfrac{6}{n} \cdot a_0^{\lceil t/2 \rceil}$. 
If $t$ is even, we have $|\ff|\ge a^{(t/2)}_{0}$ by~\eqref{Eq:Aux-Small-t1}, and thus, $\sum_{i=0}^{(t/2)-1} |\m F_i| \le \tfrac{6}{n} |\m F|$. 

For an odd $t$, we need a somewhat more delicate argument. Observe that 
$a_1^{\lceil t/2 \rceil}+\sum_{i=0}^{\lceil t/2 \rceil-1} a_0^{(i)}$ is the size of the family $\m F_1 \cup \m F_2 \subset \mb X$, where $\m F_1:=\{F=(D,E):|D| \leq \lceil {t/2} \rceil-1\}$ and $\m F_2:=\{F=(D,E):|D|=\lceil {t/2} \rceil, 1 \in D\}$. $\m F_1 \cup \m F_2$ clearly corresponds to an $(n-t)$-intersecting family of permutations, and hence, by the maximality of $\m G$, we have
\begin{equation}\label{Eq:Aux-Small-t1.11}
|\m F| \geq 
a_1^{\lceil t/2 \rceil}+\sum_{i=0}^{\lceil t/2 \rceil-1} a_0^{(i)}.
\end{equation} 
By Lemma~\ref{lemadecay}(vi,viii) , we have 
\[
\sum_{i=0}^{\lceil t/2 \rceil-1} a_0^{(i)} \leq (1+\tfrac{6}{n})a_0^{(\lceil t/2 \rceil -1)} \leq \frac{1+\tfrac{6}{n}}{(1- \tfrac{\lceil t/2 \rceil -1}{n})\cdot (\lceil t/2 \rceil -1)}a_{1}^{(\lceil t/2 \rceil)}. 
\]
Hence,~\eqref{Eq:Aux-Small-t1.11} implies
\[
|\m F| \geq a_1^{\lceil t/2 \rceil}+\sum_{i=0}^{\lceil t/2 \rceil-1} a_0^{(i)} \geq \left(1+
\frac{(1- \tfrac{\lceil t/2 \rceil -1}{n})\cdot (\lceil t/2 \rceil -1)}{1+\tfrac{6}{n}
} \right)\sum_{i=0}^{\lceil t/2 \rceil-1} a_0^{(i)}.
\]
This implies that for all $t \geq 3$ and all sufficiently large $n$, we have 
\begin{equation}\label{Eq:Aux-Small-t1.2}
\sum_{i=0}^{\lceil t/2 \rceil-1} |\m F_i| \leq \sum_{i=0}^{\lceil t/2 \rceil-1} a_0^{(i)} \leq 0.51 |\m F|.
\end{equation}

%\begin{equation}
%\label{Eq:Aux-Small-t1.2}
%\sum_{i=0}^{\lceil t/2 \rceil-1} |\m F_i| \leq \tfrac{6}{n} \cdot\,a_0^{\lceil t/2 \rceil} =\tfrac{6}{\lceil t/2 \rceil} \cdot\,a_1^{\lceil t/2 \rceil} \leq \tfrac{12}{c_0} |\m F| \leq \tfrac{|\m F|}{3},
%\end{equation}
%\ohad{I think we can have $|\ff|\ge a^{\lceil t/2 \rceil}_{1} + \sum_{i < t/2} a_0^{(i)}$ (which is n-t intersecting), and also  $a^{\lceil t/2 \rceil}_{1} \ge \lfloor t/2 \rfloor \cdot a_0^{\lfloor t/2 \rfloor}$ (we can have a "bijection"), and then we get better bounds. But also I dont think we need bellow $ \le |F|/3$, so it should work even for $t=3$.}
%provided $c_0 \geq 36$. 
Therefore, for both even and odd values of $t \geq 3$, we have
\[
\sum_{i=0}^{\lceil t/2 \rceil-1} |\m F_i| \leq 0.51|\m F|,
\]
for a sufficiently large $n$.
As $\sum_{\ell=1}^\infty \frac{1}{\ell^2}=\frac{\pi^2}{6}$, this implies that there exists $\lceil t/2 \rceil \leq m \leq t$ such that
\begin{equation}\label{eqlbf}
|\ff_m|\ge \frac{6/\pi^2}{(m-(t/2)+1)^2}\cdot 0.49|\m F| \geq t^{-2}
|\ff|\ge n^{-2\epsilon} |\m F| \geq n^{-2\epsilon} a^{(m)}_{2m-t}.
\end{equation}
We use the somewhat complex term $\frac{6/\pi^2}{(m-(t/2)+1)^2}$ in~\eqref{eqlbf} instead of a simple term like $1/t$, as this will be helpful for handling the case where $2m-t$ is small; see below. 

By Claim~\ref{clatranslation}, $\m F_m$ is $(2m-t)$-intersecting. Therefore, $\m F_m$ satisfies the first two assertions of the theorem.

\medskip \noindent \textbf{Step~2: Proving the theorem in cases where $2m-t$ is small.} We show that if~\eqref{eqlbf} holds for $m$ such that $2m-t \leq 2$, then $\m F_m$ satisfies the assertion of the theorem. If $2m-t=0$, the assertion holds trivially, with $F=\emptyset$. We consider the case $2m-t=2$ and then the slightly harder case $2m-t=1$. We shall use the classical Erd\H{o}s-Ko-Rado theorem~\cite{EKR61} and its classical `stability' version, the Hilton-Milner theorem~\cite{HM67}.
\begin{thm}[Hilton and Milner]
\label{Hilton-Milner}
    Let $n,k \in \mathbb{N}$ be such that $k \leq \frac{n}2$, and let 
    $\m F \subset \binom{[n]}{k}$ be an intersecting family. If $|\m F|\geq \binom{n-1}{k-1}-\binom{n-k-1}{k-1}+2$, then there exists $i \in [n]$ such that $\m F \subset \{S \in \binom{[n]}{k}:i \in S\}$.
\end{thm}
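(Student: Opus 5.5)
We would follow the now-standard route that reduces the Hilton--Milner bound to a statement about cross-intersecting families, and then settle that statement with a shadow estimate. Let $\m F\subset\binom{[n]}{k}$ be intersecting with $|\m F|\ge\binom{n-1}{k-1}-\binom{n-k-1}{k-1}+2$. Assume that $\m F$ is not contained in any star, and aim for a contradiction.

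The degenerate ranges are disposed of first. For $k=1$ an intersecting family has at most one set. For $k=2$ a non-star intersecting family is a triangle, of size $3$, whereas the bound equals $4$. For $k=n/2$ the bound exceeds $\binom{n-1}{k-1}$, which is the Erd\H{o}s--Ko--Rado maximum~\cite{EKR61}, so the statement is vacuous. Hence we may assume $3\le k<n/2$.

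\textbf{Reduction.} Pick an element, say $1$, and split $\m F$ as
\[
\m A:=\m F(\{1\})\subset\binom{[2,n]}{k-1},\qquad \m B:=\{F\in\m F:1\notin F\}\subset\binom{[2,n]}{k}.
\]
Since $\m F$ is intersecting, $\m A$ and $\m B$ are cross-intersecting. Since $\m F$ is not a star, $\m B\neq\emptyset$. We have $|\m F|=|\m A|+|\m B|$, so it suffices to prove the following: if $\m A\subset\binom{[N]}{k-1}$ and $\emptyset\ne\m B\subset\binom{[N]}{k}$ are cross-intersecting, where $N=n-1\ge 2k$, then
\[
|\m A|+|\m B|\le\binom{N}{k-1}-\binom{N-k}{k-1}+1 .
\]

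\textbf{Cross-intersecting bound.} Every $A\in\m A$ meets every $B\in\m B$. Therefore $\m A$ avoids the family $\m D(\m B)$ of all $(k-1)$-subsets of $[N]$ that are disjoint from some member of $\m B$. Equivalently, $\m D(\m B)$ is the $(k-1)$-shadow of the complement family $\{[N]\setminus B:B\in\m B\}\subset\binom{[N]}{N-k}$. This gives
\[
|\m A|\le\binom{N}{k-1}-|\m D(\m B)|,
\]
and it remains to show that $|\m D(\m B)|\ge\binom{N-k}{k-1}+|\m B|-1$.

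A single member $B$ already contributes $\binom{N-k}{k-1}$ sets to $\m D(\m B)$. For the increments we use the Kruskal--Katona theorem in Lov\'asz's form: if $|\m B|=\binom{x}{N-k}$ with real $x\ge N-k$, then $|\m D(\m B)|\ge\binom{x}{k-1}$. The condition $N-k\ge k$, i.e. $k\le (n-1)/2$, guarantees that we are taking a shadow from level $N-k$ down to the smaller level $k-1$. Put $u:=x-(N-k)\ge 0$. One then compares $\binom{x}{k-1}-\binom{N-k}{k-1}$ with $\binom{x}{N-k}-1$. At $u=0$ both equal $0$. For $u>0$ the former grows at least as fast as the latter, because $\binom{x}{k-1}$ is a polynomial of lower degree $k-1$ but has the larger lower index relative to $x$ near $x=N-k$, and this holds precisely when $N-k\ge k$. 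At integer points this comparison is elementary, via $\binom{x+1}{j}-\binom{x}{j}=\binom{x}{j-1}$. For non-integer $x$ we would argue by convexity.

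As a fallback that avoids real-valued Kruskal--Katona, we would use a direct injective argument. Order $\m B=\{B_1,\dots,B_b\}$. For each $j\ge2$, exhibit a $(k-1)$-set $D_j$ that is disjoint from $B_j$ but meets all of $B_1,\dots,B_{j-1}$, with all the $D_j$ distinct. One natural choice is a shifted or lexicographic ordering of $\m B$, so that a fresh witness set is forced to exist by $N\ge 2k$.

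The main obstacle is exactly this ``$+|\m B|-1$'' increment, namely that each additional member of $\m B$ adds at least one new forbidden $(k-1)$-set. This is where the hypothesis $k\le n/2$ is used in an essential way, and the constants must be checked carefully near the boundary $N=2k$. Once it is established, combining it with $|\m F|=|\m A|+|\m B|$ gives
\[
|\m F|\le\binom{n-1}{k-1}-\binom{n-k-1}{k-1}+1,
\]
which contradicts the assumed lower bound on $|\m F|$. Hence $\m F$ is contained in a star, as claimed.
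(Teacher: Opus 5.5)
The paper gives no proof of this statement. It is quoted from Hilton and Milner (1967) and used as a black box, so your argument has to stand on its own. Your treatment of $k\le 2$ and $k=n/2$ is fine, but the reduction has a genuine gap: the cross-intersecting inequality you reduce to is false. The only trace of non-triviality it keeps is $\mathcal B\neq\emptyset$, i.e.\ non-triviality at the single point $1$.

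Here is a counterexample. Fix $v\in[N]$ and put $\mathcal A=\{A\in\binom{[N]}{k-1}:v\in A\}$ and $\mathcal B=\{B\in\binom{[N]}{k}:v\in B\}$. These are cross-intersecting, and $\mathcal B$ is non-empty and even intersecting. Yet $|\mathcal A|+|\mathcal B|=\binom{N}{k-1}$, which exceeds your target by $\binom{N-k}{k-1}-1\ge k-1>0$. This is exactly the pair your split produces when $\mathcal F$ is the full star centred at some $v\neq 1$. So no argument that only looks at $(\mathcal A,\mathcal B)$ can succeed.

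In your own terms, $\mathcal D(\mathcal B)=\binom{[N]\setminus\{v\}}{k-1}$ has exactly $|\mathcal B|$ elements. Hence the claimed increment $|\mathcal D(\mathcal B)|\ge\binom{N-k}{k-1}+|\mathcal B|-1$ fails, and the injective fallback fails with it. The Kruskal--Katona comparison is where this is hidden. The functions $\binom{x}{k-1}$ and $\binom{x}{N-k}$ coincide at $x=N-1$, since $(k-1)+(N-k)=N-1$. So over $[N-k,N-1]$ the first increases by $\binom{N-1}{k-1}-\binom{N-k}{k-1}$ and the second by $\binom{N-1}{k-1}-1$. The comparison you need therefore breaks down once $|\mathcal B|$ is large (already at $|\mathcal B|=\binom{N-1}{k-1}$); it does not hold for all $u>0$.

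What is missing is the use of non-triviality at every point. A standard way to bring it in is shifting: apply shifts $S_{ij}$ ($i<j$) as long as the family stays non-trivial.

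\textbf{Case 1.} Some $S_{ij}$ would produce a star, which is necessarily centred at $i$. Then every member of the current family meets $\{i,j\}$. The links $\{F\setminus\{i\}: i\in F\in\mathcal F,\ j\notin F\}$ and $\{F\setminus\{j\}: j\in F\in\mathcal F,\ i\notin F\}$ are then non-empty, cross-intersecting, $(k-1)$-uniform families on $n-2\ge 2(k-1)$ points. Apply the same-uniformity Hilton--Milner cross-intersecting bound: for non-empty cross-intersecting $\mathcal X,\mathcal Y\subset\binom{[m]}{\ell}$ with $m\ge 2\ell$, one has $|\mathcal X|+|\mathcal Y|\le\binom{m}{\ell}-\binom{m-\ell}{\ell}+1$. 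This gives at most $\binom{n-2}{k-1}-\binom{n-k-1}{k-1}+1$ for the two links. At most $\binom{n-2}{k-2}$ members contain both $i$ and $j$, and summing gives exactly the claim.

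\textbf{Case 2.} Otherwise you end with a shifted non-trivial family. There $[2,k+1]\in\mathcal F$, and the members avoiding $1$ form a $2$-intersecting family. Indeed, if $F\cap G=\{x\}$ with $1\notin F\cup G$, then $(F\setminus\{x\})\cup\{1\}\in\mathcal F$ is disjoint from $G$. This extra structure, not merely $\mathcal B\neq\emptyset$, is what has to drive your shadow count.
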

The theorem straightforwardly implies the following:
\begin{cor}
\label{Cor:HM}
    For any $\epsilon>0$, there exists $n_0 \in \mathbb{N}$ such that the following holds for all $n \geq n_0$. Let $k \leq n^{\frac{1}{2}-\epsilon}$, and let $\m F \subset \binom{[n]}{k}$ be an intersecting family, such that $|\m F|\geq \frac{1}{20}\binom{n-1}{k-1}$. Then there exists $i \in [n]$ such that $\m F \subset \{S \in \binom{[n]}{k}:i \in S\}$.    
\end{cor}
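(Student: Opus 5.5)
The plan is to apply the Hilton--Milner theorem (Theorem~\ref{Hilton-Milner}) directly, after checking that its size threshold is far below $\frac{1}{20}\binom{n-1}{k-1}$. The corollary's hypotheses give $k\le n^{1/2-\epsilon}\le n/2$. Hilton--Milner then shows that if $|\m F|\ge \binom{n-1}{k-1}-\binom{n-k-1}{k-1}+2$, the family is a star. So it suffices to prove that for $n\ge n_0(\epsilon)$,
\[
\binom{n-1}{k-1}-\binom{n-k-1}{k-1}+2 \le \tfrac{1}{20}\binom{n-1}{k-1}.
\]

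The first step is to bound the Hilton--Milner quantity. The difference $\binom{n-1}{k-1}-\binom{n-k-1}{k-1}$ counts the $(k-1)$-subsets of $[n-1]$ that meet a fixed $k$-set, say $\{2,\ldots,k+1\}$. A union bound over the $k$ elements of that set gives
\[
\binom{n-1}{k-1}-\binom{n-k-1}{k-1}\le k\binom{n-2}{k-2}=\frac{k(k-1)}{n-1}\binom{n-1}{k-1}\le \frac{k^2}{n-1}\binom{n-1}{k-1}.
\]
Since $k\le n^{1/2-\epsilon}$, we have $k^2/(n-1)\le 2n^{-2\epsilon}$, and this is at most $1/40$ once $n$ is large.

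The second step handles the additive $+2$. Two cases arise. If $k\ge 2$, then $\binom{n-1}{k-1}\ge n-1$, so $2\le \frac{1}{40}\binom{n-1}{k-1}$ for large $n$, and combining with the first step gives the required inequality. If $k=1$, the statement is immediate: an intersecting family of singletons has at most one member, so it is contained in a star (for $\m F$ empty, any $i$ works). The case $k=0$ is likewise trivial.

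The only point needing care is this absorption of the constant $+2$ for very small $k$, which the case split above handles. Apart from that, the argument is a routine union-bound estimate.
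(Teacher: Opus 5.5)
Your proof is correct and takes essentially the route the paper intends. The paper gives no proof, saying only that Hilton--Milner ``straightforwardly implies'' the corollary. Your union-bound estimate $\binom{n-1}{k-1}-\binom{n-k-1}{k-1}\le \frac{k(k-1)}{n-1}\binom{n-1}{k-1}$, the absorption of the $+2$ for $k\ge 2$, and the separate handling of $k\le 1$ are exactly the routine verification it leaves implicit.
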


\medskip \noindent \emph{The case $2m-t=2$.} In this case, all permutations in the family $\m G_m \subset \m G$ that corresponds to $\m F_m$ have $m=\frac{t}{2}+1$ moving points. By~\eqref{Eq:Aux-Small-t1} and~\eqref{eqlbf}, we have 
\begin{equation}\label{Eq:Aux-Small-t7}
|\m G_m| =|\ff_m|\ge \frac{6/\pi^2}{(m-(t/2)+1)^2}\cdot \frac{2|\m F|}{3} \geq \frac{6/\pi^2}{4} \cdot \frac{2}{3} \cdot \max_{t/2 \leq m' \leq t} a^{(m')}_{2m'-t} \geq \frac{1}{\pi^2}a_{0}^{(t/2)}.
\end{equation}
The term $a_{0}^{(t/2)}$ is equal to the number of permutations $\sigma \in S_n$ with exactly $t/2$ moving points, which is $\binom{n}{t/2}d_{t/2}$, where $d_{t/2}$ is the number of derangements on $t/2$ elements. By~\eqref{eq:der-short}, we have $d_{t/2} \geq \frac{1}{3}(t/2)!$. Hence,~\eqref{Eq:Aux-Small-t7} yields
\[
|\m G_m| \geq \frac{1}{\pi^2}a_{0}^{(t/2)} \geq \frac{1}{\pi^2} \cdot \binom{n}{t/2} \cdot \frac{1}{3}(t/2)!.
\]
Consider the family $\m V_m=\{\mathrm{Moving}(\sigma): \sigma \in \m G_m \} \subset \binom{[n]}{(t/2)+1}$. As each set of $\frac{t}{2}+1$ moving points corresponds to at most $(\frac{t}{2}+1)!$ permutations in $\m G_m$, we have
\begin{equation}\label{Eq:Aux-Small-t6}
    |\m V_m| \geq \frac{|\m G_m|}{((t/2)+1)!} \geq \frac{1}{3\pi^2((t/2)+1)} \cdot \binom{n}{t/2}.
\end{equation}
On the other hand, for any $\sigma,\sigma' \in \m G_m$, we have $|\mathrm{Moving}(\sigma) \cap \mathrm{Moving}(\sigma')| \geq 2$, as otherwise, $\sigma$ and $\sigma'$ disagree on at least $(\frac{t}{2}+1)+(\frac{t}{2}+1)-1=t+1$ elements, contradicting the $(n-t)$-intersection property of $\m G$. Hence, $\m V_m$ is $2$-intersecting. By the case $t=2$ of the Erd\H{o}s-Ko-Rado theorem~\cite{EKR61}, for a sufficiently large $n$ this  implies that 
\[
|\m V_m| \leq \binom{n-2}{\frac{t}{2}-1}.
\]
As $t \leq n^{\epsilon}$, this contradicts~\eqref{Eq:Aux-Small-t6} for a sufficiently large $n$. Therefore,~\eqref{eqlbf} cannot hold when $2m-t=2$.

\medskip \noindent \emph{The case $2m-t=1$.} In this case, all permutations in the family $\m G_m \subset \m G$ that corresponds to $\m F_m$ have $m=\frac{t+1}{2}$ moving points. By~\eqref{eqlbf}, we have 
\begin{equation}\label{Eq:Aux-Small-t8}
|\m G_m| =|\ff_m|\ge \frac{6/\pi^2}{(m-(t/2)+1)^2}\cdot \frac{2|\m F|}{3} \geq \frac{6/\pi^2}{2.25} \cdot \frac{2}{3} \cdot a^{(m)}_{2m-t} \geq \frac{1}{6}a_{1}^{((t+1)/2)}.
\end{equation}
The term $a_{1}^{((t+1)/2)}$ is equal to the number of permutations $\sigma \in S_n$ whose moving-points-set is of size $(t+1)/2$ and contains a specific element $x$, which is $\binom{n}{\frac{t-1}{2}}d_{(t+1)/2}$. Hence,~\eqref{Eq:Aux-Small-t8} yields
\[
|\m G_m| \geq \frac{1}{6}a_{1}^{((t+1)/2)} \geq \frac{1}{6} \cdot \binom{n}{\frac{t-1}{2}} \cdot \frac{1}{3}(\tfrac{t+1}{2})!,
\]
%As each set of $\frac{t+1}{2}$ moving points corresponds to at most $(\frac{t+1}{2})!$ permutations in $\m G_m$, 
which in turn implies
\begin{equation}\label{Eq:Aux-Small-t9}
    |\m V_m| \geq \frac{|\m G_m|}{((t+1)/2))!} \geq \frac{1}{18} \cdot \binom{n}{\frac{t-1}2}.
\end{equation}
On the other hand, for any $\sigma,\sigma' \in \m G_m$, we have $|\mathrm{Moving}(\sigma) \cap \mathrm{Moving}(\sigma')| \geq 1$, as otherwise, $\sigma$ and $\sigma'$ disagree on at least $\frac{t+1}{2}+\frac{t+1}{2}=t+1$ elements, contradicting the $(n-t)$-intersection property of $\m G$. Hence, $\m V_m$ is intersecting. 
Since $|\m V_m| \geq \frac{1}{18} \cdot \binom{n}{\frac{t-1}2}$, by Corollary~\ref{Cor:HM} this implies that assuming $n$ is sufficiently large, there exists $i \in [n]$ such that $i \in D_{\sigma}$ for all $\sigma \in \m G_m$. Therefore, for $F=\{i\}$, we have $\m F_m[F]=\m F_m$. As $F=(D,E)$, where $D=\{i\}$ and $E=\emptyset$, the family $\m F_m[F]$ satisfies the assertion of the theorem.

\smallskip As we have resolved the cases $2m-t \leq 2$, we assume from now on that~\eqref{eqlbf} holds for $m$ such that $2m-t \geq 3$. This will allow us to apply Lemma~\ref{lemadecay}(ii,iii,vii) to compare the sizes of the families we obtain during the proof.

\medskip \noindent \textbf{Step~3: Applying `peeling simplification'.} We apply Lemma~\ref{lemsimplsimple} to the family $\ff_m$, with $\mb X = [n] \cup \{(i,j):i,j \in [n], i \neq j\}$ in place of $[n]$, $2m$ in place of $p$, $2m-t$ in place of $t$, and $2n^{\epsilon}$ in place of $r$. It is clear that the assumptions of the lemma are satisfied. The simplification yields a $(2m-t)$-intersecting family $\m S_m \subset \m P(\mb X)$ such that 
\[
\ff_m = \ff_m[\m S_m] = \cup_{i=0}^{2m}\ff_m[\m S_m^{(i)}],
\]
where $\m S_m^{(i)} = \{S \in \m S_m: |S|=i\}$, and for any $i \geq 2m-t$, $|\m S_m^{(i)}| \leq \binom{i}{2m-t} \cdot (2n^{\epsilon})^{i-(2m-t)}$.

Recall that each $F \in \mb X$ has the form 
$F = (D,(M,\sigma(M)))$. We claim that for any such $F \in \m S_m$, we have $M \subset D$ and $\sigma(M) \subset D$. This clearly holds if $F \in \m F_m$, as all elements of $\m F_m$ are of the form $(D,(D,\sigma(D)))$, where being viewed as sets, $\sigma(D)=D$. Each set $F=(D,(M,\sigma(M))) \in \m S_m \setminus \m F_m$ was created during the peeling simplification process as a replacement of a spread family. If we had $M \not \subset D$ or $\sigma(M) \not \subset D$, then no family of the form $\m X_j(F)$ could be $r$-spread, as for any $u \in (M\cup \sigma(M))\setminus D$ and any family $\m X_j$, we have $(\m X_j(F))[\{u\}] = \m X_j(F)$. Therefore, $M \subset D$ and $\sigma(M) \subset D$ for any $F = (D,(M,\sigma(M))) \in \m S_m$.

Let us bound the contribution of each family $\m F[\m S_m^{(2m-t+i)}]$ to the size of $\ff_m$. As $\m F_m \subset \m A_m$, for any $2 \le i \le t$ we have
\begin{align}\label{Eq:Aux-Small-t2}
\begin{split}|\ff_m\big[\m S_m^{(2m-t+i)}\big]|&\le a^{(m)}_{2m-t+i}\cdot |\m S_m^{(2m-t+i)}| \\
&\le 3\left(\tfrac{2m}{n}\right)^{\lfloor i/2 \rfloor}
a^{(m)}_{2m-t}\cdot {2m-t+i\choose i}(2n^{\epsilon})^{i}\\
&\le 3\left(\tfrac{2m}{n}\right)^{\lfloor i/2 \rfloor} a^{(m)}_{2m-t}\cdot  (2m-t+i)^{i} (2n^\epsilon)^{i} \\
&\le n^{-0.9}a^{(m)}_{2m-t},
\end{split}
\end{align}
provided that $n$ is sufficiently large. The second inequality uses Lemma~\ref{lemadecay}(iii), applied with $m$ in place of $i$, $2m-t$ in place of $\ell_0$ and $i$ in place of $\ell$, and Lemma~\ref{lemsimplsimple}. Note that Lemma~\ref{lemadecay}(iii) can be applied here since we assumed $2m-t \geq 3$ and $m \leq t < \sqrt{n/2}$.
%\ohad{We also need $2m-t+i \le m-2$ is it obvious?} 
%\nathan{To Ohad: This was indeed a flaw! To repair this, I replaced Lemma~\ref{lemadecay}(iii) with an entirely different statement. Can you please verify that now this is fine?}\ohad{Looks Good!}
%and yields $a_\ell^{(i)}/a_{\ell+2}^{(i)} \geq \frac{n}{6e}$ in our range of values of $i$, as $i \leq t \leq n^{\epsilon}$. 
%\ohad{we somehow replaced here $2^i$ by $2^{i\epsilon}$} \nathan{Right, fixed.}

The inequalities~\eqref{eqlbf} and~\eqref{Eq:Aux-Small-t2} imply that 
\begin{equation}\label{Eq:Aux-Small-t3}
|\ff_m[\m S_m^{(2m-t)}\cup \m S_m^{(2m-t+1)}]| \geq (1-n^{-0.9+3\epsilon}) |\m F_m| \geq n^{-3\epsilon} a^{(m)}_{2m-t}.
\end{equation}
In words, most of the sets in $\ff_m$ contain an element of $\m S_m^{(2m-t)}\cup \m S_m^{(2m-t+1)}$, which is a $(2m-t)$-intersecting family that consists only of sets of size $2m-t$ and $2m-t+1$, and hence, must have a very specific structure. 

\medskip \noindent \textbf{Step~4: Analyzing the family $\m S_m^{(2m-t)}\cup \m S_m^{(2m-t+1)}$.} At this step, we consider several cases, according to the structure of the family $\m S_m^{(2m-t)}\cup \m S_m^{(2m-t+1)}$. In each case, we either reach a contradiction or find $F$ such that $\m F_m[F]$ satisfies the assertion of the theorem.

\medskip \noindent \emph{Case~1: $\m S_m^{(2m-t)} \neq \emptyset$.} Being a non-empty $(2m-t)$-intersecting family in which each set contains exactly $2m-t$ elements, $\m S_m^{(2m-t)}$ must consist of a single set $F$. Moreover, in this case, $\m S_m^{(2m-t+1)} = \emptyset$, since by the $(2m-t)$-intersection property of $\m S_m$, each set in $\m S_m^{(2m-t+1)}$ must contain $F$, while by Lemma~\ref{lemsimplsimple}, all sets in $\m S_m$ are inclusion-minimal. Furthermore, we have $\m F_m[F] = \m F_m$, as by Lemma~\ref{lemsimplsimple}, the family $\m F_m$ cross $(2m-t)$-intersects $\m S_m$, and thus, any set in $\m F_m$ must contain $F$. Therefore, 
\begin{equation}\label{Eq:Aux-Small-t4}
   |\m F_m[F]| =|\m F_m| \geq  n^{-2\epsilon} a^{(m)}_{2m-t}.
%    |\m F_m[F]| \geq (1-n^{-0.9+3\epsilon}) |\m F_m| \geq (1-n^{-\epsilon})|\m F_m|    
%    \geq n^{-3\epsilon} a^{(m)}_{2m-t}.
\end{equation} 
%\andrey{We write $n^{-3\epsilon}$ here, so probably need to adjust it in the statement?} \nathan{I think it's fine since the $n^{-3\epsilon}$ refers to the entire family $\m F$. I added one term that hopefully clarifies this.}

Denote $F=(D,(M,\sigma(M)))$. As was shown above, $M \subset D$ and $\sigma(M) \subset D$. If $|M| \leq |D|-2$, then by Lemma~\ref{lemadecay}(vii), which can be applied since $t \geq 3$ and $2m-t \geq 3$, 
we have 
\[
|\ff_m[F]| \le |\aaa_m[F]|\le \frac{3m^2}{n} \cdot a^{(m)}_{2m-t} \leq 3n^{-1+2\epsilon}a^{(m)}_{2m-t},
\]
in contradiction to~\eqref{Eq:Aux-Small-t4}. Therefore, if $|F|=2m-t$ is even, then we have $M=D$, and if $|F|$ is odd, then we have $|M|=|D|-1$. In both cases, we have $|D|=m- \lfloor t/2 \rfloor$ and $|M|=m- \lceil t/2 \rceil$. Hence, $\m F_m[F]$ satisfies the assertion of the theorem.

\medskip \noindent \emph{Case~2: $\m S_m^{(2m-t)} = \emptyset$ and $2m-t$ is even.} In this case, $2m-t \geq 4$, and thus, Lemma~\ref{lemadecay}(ii) can be applied to deduce that  $a_{2m-t+1}^{(m)}/a_{2m-t}^{(m)}\le \frac{3m}{n} \leq n^{-1+2\epsilon}$. By Lemma~\ref{lemsimplsimple}(3), we have
$|\m S_m^{(2m-t+1)}| \leq \binom{2m-t+1}{2m-t} \cdot (2n^\epsilon) \leq 4n^{2\epsilon}$. 
Thus, 
%\ohad{We assumed here $| S_m^{(2m-t+1)}| =1$ why is it true?} \nathan{Of course, I don't remember anymore... This should be anyway easy to fix since in this case we are not close to be tight. If you can find a fix, this will be nice. Otherwise, I'll try to check later on.}\ohad{WE use lemma~\ref{lemsimplsimple} to bound the size, $s=2m-t$\[
%|\mathcal S_m^{(s+1)}|
%\le \binom{s+1}{s}(2n^\epsilon)
%=2(s+1)n^\epsilon
%\le 4n^{2\epsilon},
%\]}
\begin{align*}
|\ff_m\big[\m S_m^{(2m-t)} \cup \m S_m^{(2m-t+1)}\big]|&=
|\ff_m\big[\m S_m^{(2m-t+1)}\big]|\le a_{2m-t+1}^{(m)}|\m S_m^{(2m-t+1)}| \\
&\leq n^{-1+2\epsilon}a^{(m)}_{2m-t} \cdot 4n^{2\epsilon},
\end{align*}
in contradiction to~\eqref{Eq:Aux-Small-t3}.

\medskip \noindent \emph{Case~3: $\m S_m^{(2m-t)} = \emptyset$ and $2m-t$ is odd.} In this case, we write $\m S_m^{(2m-t+1)}= \m T_1\sqcup \m T_2$, where $\m T_1$ contains all sets $F=(D,(M,\sigma(M)))$ with $M=D$ and $\m T_2$ contains all other sets. 

Let $F=(D,(M,\sigma(M))) \in \m T_2$. Note that as $2m-t+1$ is even and $M \subset D$, we have $|D|\ge |M|+2$. By Lemma~\ref{lemadecay}(i,vii),
%, which can be applied since $2m-t \geq 3$ and 
%and Lemma~\ref{lemadecay}(i), 
we have 
\[
|\m A_m[F]|\le \frac{3m^2}{n} a^{(m)}_{2m-t+1} \leq 3n^{-1+2\epsilon} a^{(m)}_{2m-t+1}\le 3n^{-1+2\epsilon} a^{(m)}_{2m-t}.
\]
By Lemma~\ref{lemsimplsimple}, we have $|\m T_2| \leq |S_m^{(2m-t+1)}| \leq (2m-t+1)(2n^{\epsilon}) \leq 2n^{2\epsilon}$. Hence, as in the previous case, we conclude that 
\begin{equation}\label{Eq:Aux-Small-t5}
 |\ff_m\big[\m T_2\big]|\le 6n^{-1+4\epsilon}a^{(m)}_{2m-t} \leq n^{-1+6\epsilon} |\m F_m|.   
\end{equation}
Now, we show that $\m T_1$ consists of a single set. 
Let $F_1=(D_1,E_1)$ and $F_2=(D_2,E_2)$ be elements of $\m T_1$, where $E_1=(D_1,\sigma_1(D_1))$ and $E_2=(D_2,\sigma_2(D_2))$. As $|F_1|=|F_2|=2m-t+1$ and these two sets $(2m-t)$-intersect, we must have $D_1=D_2$ and $|E_1\cap E_2|\ge |D_1|-1.$ As for each $j \in \{1,2\}$,  $\sigma_j(D_j) \subset D_j$, we have $\sigma_j(D_j)=D_j$, and hence, $\sigma_j|_{D_j}$ is a permutation on $D_j$. Thus, $\sigma_{1}|_{D_1}$ and $\sigma_{2}|_{D_2}$ are two permutations on the same set of size $|D_1|$ that agree on at least $|D_1|-1$ elements. This means that they agree on all elements, and hence, $F_1=F_2$.

Let $F'=(D',(D',\sigma(D'))$ be the single element of $\m T_1$. By~\eqref{Eq:Aux-Small-t3} and~\eqref{Eq:Aux-Small-t5}, we have
\[
|\m F_m[F']| \geq (1-n^{-0.9+3\epsilon}) |\m F_m| \geq (1-n^{-\epsilon})|\m F_m|.
\]
Setting $D=D'$, $E=(D',\sigma(D')) \setminus \{(u,\sigma(u))\}$ for some $u \in D'$, and $F=(D,E)$, we have $|\m F_m[F]| \geq (1-n^{-\epsilon})|\m F_m|$ (since 
$\ff_m[F]\supset\ff_m[F']$), $|D|=m-\lfloor t/2 \rfloor$ and $|E|=m-\lceil t/2 \rceil$. Therefore, $\m F_m[F]$ satisfies the assertion of the theorem.
\end{proof}

\section{Finding a Simple Sub-Structure Within $\m F$, for a Medium $t$}
\label{sec:Medium_t}

In this section, we study $(n-t)$-intersecting families, for $n^{\epsilon} \leq t \leq n^{(1+\epsilon)/2}$. Theorem~\ref{Thm:Main} asserts that in this range, the maximum size of an $(n-t)$-intersecting family is attained by a family of the form
\[
\m F_{n,n-t,\tfrac{t-r}{2}}=\{\sigma \in S_n: |\mathrm{Moving}(\sigma) \cap \{1,2,\ldots,n-r\}| \leq \tfrac{t-r}{2}\},
\]
for some $r \geq 0$. For values of $t$ up to $\approx \sqrt{n}$, the maximum is attained for $r=0$ for an even $t$ and for $r=1$ for an odd $t$, and for larger values of $t$, the value $r$ gradually increases, up to $r \approx n^{(1+\epsilon)/2}$ for $t=n^{(1+\epsilon)/2}$ (see Lemma~\ref{lemsizeofmaxFrankl} below).
%\ohad{Isnt $r$ actually $\approx t$ in this range?} \nathan{Right, thanks! What I wrote is $\frac{r-t}{2}$.}

We consider an $(n-t)$-intersecting family $\m G$ of a maximum size and transform it to a family $\m F$ of sets, using the representation of permutations by sets presented in Section~\ref{sec:PermutationstoSets}. %Decompose $\m G = \cup_m \m G_m$, where $\m G_m$ is the set of permutations in $\m G$ that have exactly $m$ moving points, and denote by $\m F_m$ the family of sets that corresponds to $\m G_m$. 
We show that $\m F$ is almost entirely contained in an $(n-t)$-intersecting family $\m H$ of a simple structure.  
To state the result formally, we need an additional definition.
\begin{definition}
    A set $X \subset [n] \cup \{(i,j):i,j \in [n], i \neq j\}$ is called \emph{extendable} if $X \subset F_{\sigma}$ for some $\sigma \in S_n$.
\end{definition}
In words, this means that $X$ represents part of the information on the set of moving points of some $\sigma \in S_n$ and the places they move to.
\begin{thm}\label{thmfm2}
    For any $\epsilon \leq 0.01$, there exists $n_0 \in \mathbb{N}$ such that the following holds for all $n \geq n_0$ and all $n^{\epsilon} \leq t \leq n^{(1+\epsilon)/2}$. 
    Let $\m G \subset S_n$ be a maximum-size $(n-t)$-intersecting family of permutations. Let $\m F \subset \mb [n] \cup \{(i,j):i,j \in [n], i \neq j\}$ be the family of sets that corresponds to $\m G$ in the representation of permutations by sets presented in Section~\ref{sec:PermutationstoSets}. Then there exist $t/2 \leq m' \leq t$, an extendable set $F'$ of size $2m'-t$ of the form $F' = \{D,(M,\sigma_0(M))\}$, where $\sigma_0 \in S_n$, $M\subset D$ and $\sigma_0(M) \subset D$, and a family $\m H=\cup_{i=0}^{t}\m H_i$,  
    where 
    \[\m H_i:=\{A\in \m A_i: |A\cap F'|\ge i-(t-m')\},
    \]
    such that $|\m F \cap \m H| \geq (1-n^{-\epsilon/2})|\m F|$. 
    \end{thm}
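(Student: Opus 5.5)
The overall strategy follows the outline for the `medium $t$' case given in Section~\ref{sec:overview}.

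\textbf{Discarding the small layers and approximating.} First, exactly as in Step~1 of the proof of Theorem~\ref{thmfm}, I would use the maximality of $\m G$ together with Lemma~\ref{lemadecay}(v,vi,viii). This shows that the layers $\m F_i$ with $i<\lceil t/2\rceil$ carry at most a negligible (or, for odd $t$, a bounded) fraction of $\m F$. It also gives $|\m F|\ge \max_i a^{(i)}_{2i-t}$.

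For each $\lceil t/2\rceil\le i\le t$, I would apply Theorem~\ref{thmapprox1} to $\m F_i\subset\m A_i$, which is $(2i-t)$-intersecting by Claim~\ref{clatranslation}. The ambient family is $\m A_i$, and the spreadness inputs come from Lemma~\ref{lemadecay}(i),(iv). These give weak $(R,\cdot)$-spreadness with $R=\min\{n/i,i/2e\}\ge n^{1/2-\epsilon}$ once $i\ge n^{\epsilon}$, and the `not too spread' bound $R_1=n$. I would take $\sigma\approx 2\log_2 n$. In the range $t\le n^{1/2}$ one can even take $\alpha=1$; otherwise one takes $\alpha$ close to $1/2$. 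The result is families $\m S_i$ of sets of size at most $(2i-t)+k_i$, with $k_i=O(\log n)$ or $k_i\le n^{\epsilon}$, and remainders of size $o(n^{-1}a^{(i)}_{2i-t})$.

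The key point is that the \emph{cross} intersection between different layers must also survive. I would verify this by rerunning the spreadness argument of Lemma~\ref{lemtint} for pairs $S\in\m S_i$, $S'\in\m S_j$, using the spread pieces $\m F_S(S)$ and $\m F_{S'}(S')$. This shows that $\m S_i,\m S_j$ are cross $(i+j-t)$-intersecting.

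\textbf{Peeling and pigeonholing.} Next, I would apply the $r$-simplification of Lemma~\ref{lemsimplsimple} to each $\m S_i$ with $r\approx n^{2\epsilon}$. This yields $\m W_i$ with $|\m W_i^{(2i-t+k)}|\le\binom{2i-t+k}{k}r^k$, and the cross-intersection is preserved by the `Furthermore' clause. Comparing $|\m F_i[\m W_i^{(2i-t+k)}]|\le |\m W_i^{(2i-t+k)}|\,a^{(i)}_{2i-t+k}$ with the maxima, I would show that only layers $i$ in a window $[p,p+k_p]$ near the optimal $r$ contribute. I would then pigeonhole an $m$ and a $k$ with $|\m F_m[\m W_m^{(2m-t+k)}]|\ge(2k_p)^{-2}|\m F|$.

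If $k=0$, the single set $F'\in\m W_m^{(2m-t)}$ works directly, with $m'=m$.

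If $k>0$, I would pick $A,B\in\m W_m^{(2m-t+k)}$ with $|A\cap B|=2m-t$ and set $\m Y=\m W_m^{(2m-t+k)}\cap\binom{A\cup B}{2m-t+k}$. Using that $\m W_m$ has no $r$-spread parts, together with intersection counting, I would show that most of $\m F_m[\m W_m^{(2m-t+k)}]$ lies in $\m F_m[\m Y]$. The maximality of $\m G$ then forces $\m Y$ to be dense in $\binom{A\cup B}{2m-t+k}$, and I take $F'=A\cup B$, $m'=m+k$, so that $|F'|=2m'-t$. The structure $M\subset D$, $\sigma_0(M)\subset D$ comes from the observation used in Step~3 of Theorem~\ref{thmfm}: spread parts cannot sit on pairs whose endpoints are not in $D$. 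Extendability of $F'$ would follow from the fact that sets in $\m Y$ lie in members of $\m F$; consistency of their pair elements follows by density plus intersection.

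\textbf{Propagating to all layers, and the main obstacle.} Finally, I would show that almost every $A\in\m F_i$, for every $i$, satisfies $|A\cap F'|\ge i-(t-m')$. The argument uses cross $(i+m-t)$-intersection with the dense family $\m F_m[\m Y]$: a set meeting $F'$ in fewer elements can cross-intersect all the spread pieces only if it contains extra elements outside $F'$. Spreadness bounds the measure of such sets by $n^{-\Omega(1)}a^{(i)}_{\cdot}$, and summing over $i$ gives the required error of at most $n^{-\epsilon/2}|\m F|$.

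I expect the main obstacle to be the $k>0$ case: proving that $\m F_m[\m W_m^{(2m-t+k)}]$ concentrates on $\binom{A\cup B}{2m-t+k}$. This requires a careful count of the sets of $\m W_m$ meeting $A\cup B$ in few elements, balanced against the decay of $a^{(m)}_\ell$ from Lemma~\ref{lemadecay}(ii),(iii). The error bounds have to survive all the polynomial losses in $k_p\le n^{\epsilon}$.
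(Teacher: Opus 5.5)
Your outline follows the paper's route: iterative spread approximation on each layer, cross-intersection via spreadness, peeling, pigeonholing $(m,k)$, and the $A\cup B$ structure with $m'=m+k$. The gap is in the $k>0$ case you flag, which rests on a comparison that is too weak.

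\textbf{The missing lower bound.} Both the concentration of $\mathcal F_m[\mathcal W_m^{(2m-t+k)}]$ on $\mathcal F_m[\mathcal Y]$ and the density of $\mathcal Y$ in $\binom{A\cup B}{2m-t+k}$ need a bound of the form $|\mathcal F|\ge c\binom{2m-t}{k}a^{(m)}_{2m-t+k}$. You need density both for extendability of $F'$ and for the propagation step. The only comparison you indicate is the decay of $a^{(m)}_\ell$ inside layer $m$, together with $|\mathcal F|\ge a^{(m)}_{2m-t}$. This cannot work once $m\gg\sqrt n$. There the maximizing restrictions are essentially all singletons, so each extra element costs only a factor of about $n/m$, while $\binom{2m-t}{k}$ can be of order $m^k/k!$.

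For example, take $m\approx t\approx n^{(1+\epsilon)/2}$ and $k=1$. Your comparison gives only $|\mathcal Y|\gtrsim (k')^{-2}n^{(1-\epsilon)/2}$, whereas density needs $|\mathcal Y|\gtrsim (k')^{-2}(2m-t)\approx (k')^{-2}n^{(1+\epsilon)/2}$. Moreover, Lemma~\ref{lemadecay}(iii) is unavailable for $m>\sqrt{n/2}$.

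The missing idea is to apply maximality against layer $m+k$. Let $X=(D,E)$ attain $a^{(m)}_{2m-t+k}$ with $M,\sigma(M)\subset D$, and let $X'$ be $X$ plus $k$ new singletons. Then:
\begin{itemize}
\item $\mathcal A_{m+k}[X']$ is $(n-t)$-intersecting by Lemma~\ref{lemadecay}(v);
\item its binomial factor barely changes, since $mk\ll n$;
\item its derangement factor gains $(t-m+|D|)!/(t-m+|D|-k)!\ge (t/4)^k$.
\end{itemize}
Combined with $\binom{2m-t}{k}\le t^k/k!$, this gives $|\mathcal F|\ge a^{(m+k)}_{2m-t+2k}\ge\frac{1}{64}\binom{2m-t}{k}a^{(m)}_{2m-t+k}$.

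You also need a prior step showing that $k>0$ forces $2m-t\ge 100n^{\epsilon/4}(k')^9$. This goes by contradiction: otherwise the crude bound $|\mathcal W|\le\sum_j\binom{2m-t}{j}\binom{k}{j}^2(k')^{k-j}$ makes $|\mathcal W|\,a^{(m)}_{2m-t+k}$ fall below $(2k')^{-2}a^{(m)}_{2m-t}\le(2k')^{-2}|\mathcal F|$, using Lemma~\ref{lemadecay}(i). Only with $2m-t$ this large are the terms with $j<k$ negligible against the $j=k$ term, and is $\binom{2m-t}{k}$ comparable to $\binom{2m-t+2k}{k}$.

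\textbf{Parameter errors.} Several choices need repair, and one matters at the low end of the range.
\begin{itemize}
\item The claim $\min\{n/i,i/2e\}\ge n^{1/2-\epsilon}$ for $i\ge n^{\epsilon}$ is false: for $i\approx n^{\epsilon}$ it is about $n^{\epsilon}/2e$.
\item For $t\ge n^{1/2}$ you need $\alpha\approx 1-2\epsilon$, not $\alpha\approx\frac12$; the latter gives $k_i\approx t^{1/2}$.
\item Peeling with $r\approx n^{2\epsilon}$ uniformly breaks the $k>0$ counting when $t\approx n^{\epsilon}$. For $k=1$, up to $r$ sets of $\mathcal W$ containing $A\cap B$ may lie outside $\binom{A\cup B}{2m-t+1}$, against at most $2m-t+2\le t+2$ inside. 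You would need $2m-t\gg r$, which is impossible. The peeling parameter must sit just above the excess, $r=\max_i k_i+2$, i.e.\ $O(\log n)$ for $t\le n^{1/2}$.
\end{itemize}

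\textbf{Smaller points.}
\begin{itemize}
\item The window $[p,p+k_p]$ comes from cross-intersection, not from size comparisons. A set in $\mathcal S_{p+k_p+j}$ would have to meet a set in $\mathcal S_p$ in more elements than the latter has.
\item For odd $t$, the layers below $\lceil t/2\rceil$ are not contained in $\mathcal H$. So you must show they carry $O(1/t)=O(n^{-\epsilon})$ of $|\mathcal F|$, not merely a bounded fraction. This follows, e.g., from $a^{(j)}_0=\frac{n}{j}a^{(j)}_1$ and $a^{(\lceil t/2\rceil)}_1\le|\mathcal F|$.
\item Once $\mathcal Y$ is dense, propagation needs no spreadness estimate. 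By Lemma~\ref{lemsimplsimple}, $\mathcal Y\subset\mathcal W_m$ cross $(m+i-t)$-intersects $\mathcal S_i$. Choosing $S'\in\mathcal Y$ that contains a $(k+m-i+1)$-subset of $F'\setminus S$ then shows $|S\cap F'|\ge m'+i-t$ for every $S\in\mathcal S_i$. Hence $\mathcal F_i[\mathcal S_i]\subset\mathcal H_i$ outright.
\end{itemize}
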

Before we prove the theorem, let us examine the `special structure' admitted by most elements of $\m F$. Consider the `special' set $F'$ whose existence is asserted by the theorem. If $F'$ consists of $2m'-t$ singleton-elements,
%\andrey{maybe specify, what point-elements are?}
%\nathan{Right, we called them   `singleton-elements' elsewhere. Fixed now.}
then for each $i$, $\m H_i$ corresponds to the family of permutations $\sigma \in S_n$ that have $i$ moving points, where at least $i-(t-m')$ of these points belong to $F$. This means that $\m H$ corresponds to the family of permutations $\{\sigma \in S_n: |\mathrm{Moving}(\sigma) \cap ([n] \setminus F)| \leq t-m'\}$, which is a double translate of the family $\m F_{n,n-t,t-m'}$. %\andrey{I believe that this family should be $\m F_{n,n-t,t-m'}$, right?}
%\nathan{Right, thanks!}
Hence, in this case the theorem asserts that $\m G$ is almost entirely contained in a double translate of $\m F_{n,n-t,t-m'}$. In Section~\ref{sec:Completing_the_Proof} we will show that this allows us to deduce that $\m G$ is entirely contained in a double translate of $\m F_{n,n-t,t-m'}$, as asserted in Theorem~\ref{Thm:Main}, and that a similar assertion can be deduced in the case where some of the elements in $F'$ are pair-elements.    

\begin{proof}[Proof of Theorem~\ref{thmfm2}]
Let $n,t,\epsilon$ be as in the statement of the theorem, let $\m G \subset S_n$ be a maximum-size $(n-t)$-intersecting family of permutations, and let $\ff = \cup_{i=0}^t\ff_i$ be the corresponding family of sets. Recall that each $\ff_i$ is $(2i-t)$-intersecting. The proof consists of several steps.

\medskip \noindent \textbf{Step~1: Applying the `iterative spread approximation' simplification.} A central difference between this range of values of $t$ and the `small $t$' range considered in Section~\ref{sec:Small_t} is that the size of each set in $\m F_i$ (which is $2i$) is much larger than the `intersection size' $2i-t$, which implies that the intersection property allows deducing much less information on the families $\m F_i$. To overcome this, we apply the `iterative spread approximation lemma', which essentially allows replacing each $\m F_i$ with a $(2i-t)$-intersecting family of sets of sizes not much larger than $2i-t$. We apply the lemma (i.e., Theorem~\ref{thmapprox1}) to each $\ff_i$, for $i = \lceil t/2\rceil,\ldots t$. (As we shall show below, the contribution of the $\m F_i$'s for $i< \lceil t/2\rceil$ to $|\m F|$ is negligible in this range of values of $t$). The following statement describes the approximating family we obtain. Recall the notation $\mb X=[n] \cup \{(i,j):i,j \in [n], i \neq j\}$.
\begin{lem}\label{lemregime2}
    Let $n,\epsilon,t, \m F$ be as defined above, and for all $\lceil t/2\rceil \leq i \leq t$, let $\m F_i$ be as defined above. There exist families $\m S_i\subset \m P({\mb X})$ such that:
    \begin{enumerate}
        \item Each set in $\m S_i$ is of size at most $(2i-t)+k_i$, where $k_i\le 50\log n$ for $t\le n^{1/2}$ and $k_i\le n^{2\epsilon}$ for $t\ge n^{1/2}$;

        \item $|\ff_i\setminus \ff_i[\m S_i]| \leq n^{-9}|\m F|$; 

        \item For any $i,j$, the families $\m S_i$ and $\m S_j$ are cross $(i+j-t)$-intersecting.
    \end{enumerate} 
\end{lem}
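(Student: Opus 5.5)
We apply Theorem~\ref{thmapprox1} to each $\ff_i$, $\lceil t/2\rceil\le i\le t$, inside the ambient family $\aaa_i\subset\binom{\mb X}{2i}$. We take $2i$ in place of $k$, $2i-t$ in place of $t$, and $\sigma:=10\log_2 n$. The spreadness parameters come from Lemma~\ref{lemadecay}.

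\emph{Upper bound on spreadness.} Part~(i) gives $a^{(i)}_\ell/a^{(i)}_{\ell+1}\ge\min\{n/i,\,i/(2e)\}$ for $\ell\le i-1$, and part~(ii) gives the stronger ratio $n/(3i)$ at even $\ell$. Since $t\le n^{(1+\epsilon)/2}$ and $i\ge t/2\ge n^{\epsilon}/2$, both quantities are at least $n^{\epsilon}/(2e)$ up to constants.

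For larger $\ell$ (near $2i-t$ plus a few) the two-step decay of part~(iii) is not available once $i>\sqrt{n/2}$. I would therefore prove a variant directly from \eqref{eq:canon-lower} and the upper bound $\binom{n-x}{i-x}(i-y)!$. Adding a singleton costs a factor $\approx n/i$, and adding a pair costs a factor $\approx i-y$. This gives weak $(R,t')$-spreadness with $R\approx\min\{n/i,\,i\}/O(1)$ throughout the window \eqref{Eq:Aux-Iterative1}.

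\emph{Lower bound on spreadness.} Part~(iv) gives $a_\ell/a_{\ell+1}\le n$, so $R_1=n$ works in the ``not too weakly spread'' hypothesis.

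\emph{Checking the hypotheses of Theorem~\ref{thmapprox1}.} The theorem requires $R\ge 2^{30}((2i-t)^{1/2}\log t+\log n)+200\sigma$. This is the delicate point. When $2i-t$ is small, $R\sim n^{\epsilon}$ comfortably dominates $\log n$. When $2i-t$ is large, we need $R\gtrsim t^{1/2}\log t$. For $t\le n^{1/2}$ we have $R\ge\min\{n/t,\,t/2e\}$, and $n/t\ge n^{1/2}$ suffices. The case $t\approx n^{\epsilon}$ small needs care, since there $i/(2e)$ may be smaller than $t^{1/2}\log t$ fails only if $t$ tiny, but $t^{1/2}\log t\ll t/2e$, so it holds.

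When $R\ge 2^{30}(2i-t)^{\alpha}$ with $\alpha$ close to $1$, the improved conclusion gives size at most $2i-t+(2i-t)^{1-\alpha}+40\log n+4\log t$. This yields $k_i\le 50\log n$ for $t\le n^{1/2}$.

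For $n^{1/2}\le t\le n^{(1+\epsilon)/2}$ we only have $R\gtrsim n/t\ge n^{(1-\epsilon)/2}$. This gives $\alpha$ with $t^{1-\alpha}\le n^{2\epsilon}$, so $k_i\le n^{2\epsilon}$. Verifying these numerical conditions across the whole window of $t'$ is the main obstacle; I would handle it by splitting into the two regimes above, and within each regime into small versus large $2i-t$.

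\emph{Items (1) and (2).} Item~(1) follows from the size bound just discussed. For item~(2), the remainder is at most $2^{-\sigma}\cdot4\log_2(Rt+\sigma)\,a^{(i)}_{2i-t}$. By Lemma~\ref{lemadecay}(v) and the maximality of $\m G$, we have $a^{(i)}_{2i-t}\le|\m F|$. With $\sigma=10\log_2 n$ the remainder is therefore at most $n^{-9}|\m F|$.

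\emph{Item (3).} For $i=j$ this is immediate from the conclusion that $\m S_i$ is $(2i-t)$-intersecting. For $i\ne j$, I would prove a cross version of Lemma~\ref{lemtint}. Take $S_1\in\m S_i$ and $S_2\in\m S_j$, with $(R/2)$-spread families $\ff_{S_1}(S_1)\subset\ff_i(S_1)$ and $\ff_{S_2}(S_2)$.

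Suppose $|S_1\cap S_2|<i+j-t$. Spreadness then lets us pick $F_1\in\ff_{S_1}$ and $F_2\in\ff_{S_2}$ with $F_1\setminus S_1$ and $F_2\setminus S_2$ essentially avoiding $S_1\cup S_2$ and each other. The argument is the standard random-sampling/union-bound argument of Lemma~\ref{lemregime2}-type. Because $|S_\cdot|\le(2i-t)+k_i$ and $R\gg k_i\log n$, the expected number of bad coincidences is below $1$.

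This produces $|F_1\cap F_2|=|S_1\cap S_2|<i+j-t$, contradicting Claim~\ref{clatranslation}. It requires $R/2\gtrsim(|S_1|+|S_2|)\log$, which holds by the choice above.
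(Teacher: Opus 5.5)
Your handling of items (1) and (2) follows the paper. You apply Theorem~\ref{thmapprox1} to each $\m F_i\subset\m A_i$ with $\sigma=10\log_2 n$, with $R_1=n$ from Lemma~\ref{lemadecay}(iv) and $R\approx\min\{n/t,t/11\}$ from Lemma~\ref{lemadecay}(i). You use the ``in addition'' clause with a suitable $\alpha$ for the size bounds, and Lemma~\ref{lemadecay}(v) plus maximality of $\m G$ for the remainder. The case $i=j$ of item (3) is also fine.

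The gap is item (3) for $i\neq j$. You want $F_1\supset S_1$ and $F_2\supset S_2$ whose tails avoid $S_1\cup S_2$ entirely, so that $|F_1\cap F_2|=|S_1\cap S_2|$. You assert that the needed condition $R/2\gtrsim(|S_1|+|S_2|)\log n$ ``holds by the choice above''. It does not:
\begin{itemize}
\item Sets in $\m S_i$ have size up to $2i-t+k_i$, which is of order $t$ for $i$ near $t$.
\item Meanwhile $R/2=\min\{n/(2t),t/22\}$ is at most $t/22$ when $t\le n^{1/2}$, and at most $n^{1/2}/2\le t/2$ when $t\ge n^{1/2}$.
\end{itemize}
Passing to the quantity that actually matters does not rescue full avoidance either. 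From $|S_1\cap S_2|=i_1+i_2-t-j$ and the size bounds you only get $|S_2\setminus S_1|\le k_{i_1}+k_{i_2}+2j$. Before item (3) is proved, nothing prevents $S_1,S_2$ from being nearly disjoint, so the deficit $j$ can be as large as $i_1+i_2-t=\Theta(t)$. The union bound $|S_2\setminus S_1|/r$ for hitting $S_2\setminus S_1$ then need not be below $1$. Your remark that $R\gg k_i\log n$ only covers deficits $j\ll r$.

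The missing idea is to give up full avoidance and let each tail hit the other set's private part in fewer than $\lceil j/2\rceil$ elements. Suppose $X_1\in\m F_{S_1}(S_1)$ and $X_2\in\m F_{S_2}(S_2)$ satisfy $|X_1\cap(S_2\setminus S_1)|,|X_2\cap(S_1\setminus S_2)|\le\lceil j/2\rceil-1$ and $X_1\cap X_2=\emptyset$. Then $|(S_1\sqcup X_1)\cap(S_2\sqcup X_2)|\le i_1+i_2-t-j+2(\lceil j/2\rceil-1)<i_1+i_2-t$, contradicting Claim~\ref{clatranslation}.

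By $r$-spreadness, the fraction of $X_1$ violating the first condition is at most $\binom{|S_2\setminus S_1|}{\lceil j/2\rceil}r^{-\lceil j/2\rceil}\le\bigl(\tfrac{2e(k_{i_1}+k_{i_2}+2j)}{jr}\bigr)^{\lceil j/2\rceil}$. The factor $1/j$ coming from the binomial coefficient cancels the $2j$, so this is small for every $j\ge 1$ because $r\gg k$.

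Disjointness of the tails is also not a union-bound statement, since they have size up to $2i\gg r$. The paper first notes that the good subfamilies $\m X_1,\m X_2$ remain $(r/2)$-spread. It then applies the spreadness lemma, Theorem~\ref{thmtao}, to a uniformly random bipartition $W\sqcup(\mb X\setminus W)$.

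Finally, citing a ``Lemma~\ref{lemregime2}-type'' argument here is circular, since that is the statement you are proving.
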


In the proof of Lemma~\ref{lemregime2}, 
we use the following theorem that lies in the basis of the spread approximation technique. The theorem is due to Alweiss, Lovett, Wu and Zhang~\cite{ALWZ21}, and we use here a sharpening due to Tao~\cite{TaoSunflower}, following~\cite[Thm.~4]{KZ22}. 

The formulation uses the following standard definition. A $p$-random subset of $[n]$ is $X \subset [n]$ obtained by
picking each $i \in [n]$ with probability $p$, independently over $i$.
\begin{thm}[The spreadness lemma] \label{thmtao}
  Let $n,k,r \in \mathbb{N}$, let $\beta,\delta >0$, and let $\m X\subset {[n]\choose \le k}$ be an $r$-spread family. Let $W$ be a $(\beta \delta)$-random subset of $[n]$. Then 
  \[
  \Pr[\exists F\in \m X: F\subset W]\ge 1-\Big(\frac 5{\log_2(r\delta)} \Big)^\beta k.
  \]
\end{thm}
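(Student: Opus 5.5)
The plan is to follow Tao's entropy/encoding version of the Alweiss--Lovett--Wu--Zhang argument. It is a ``shrinking fragments'' iteration, run over $\beta$ rounds in which we expose independent $\delta$-random sets.

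\emph{Reduction to $\beta$ rounds.} Let $W_1,\ldots,W_\beta$ be independent $\delta$-random subsets of $[n]$. Their union $W_1\cup\cdots\cup W_\beta$ is $p'$-random with $p'=1-(1-\delta)^\beta\le\beta\delta$. The event $\{\exists F\in\m X: F\subset W\}$ is monotone, so it suffices to prove the bound with $W$ replaced by this union. We may also assume $r\delta>2^5$ and $\beta\ge 1$, since otherwise the claimed bound is trivial.

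\emph{Fragment process.} Put $\mu$ to be the uniform measure on $\m X$ and $U_j:=W_1\cup\cdots\cup W_j$. Define a (deterministic, tie-broken) map assigning to each $F\in\m X$ and each $j$ a set $F_j\in\m X$ with $F_j\subset F_{j-1}\cup U_j$, where $F_0:=F$. Among all admissible choices, $F_j$ minimizes the \emph{fragment} $T_j:=F_j\setminus U_j$. By construction $T_j\subset T_{j-1}$, so the fragments are nested. If $T_\beta=\emptyset$ for some $F$, then $F_\beta\subset U_\beta$ and we are done. By Markov's inequality it therefore suffices to show
\[
\mathbb E_{W_1,\ldots,W_\beta}\,\mathbb E_{F\sim\mu}|T_\beta|\le \Big(\frac{5}{\log_2(r\delta)}\Big)^\beta k .
\]
Since $|T_0|\le k$, this follows by induction from a one-round contraction
\[
\mathbb E|T_j|\le \frac{5}{\log_2(r\delta)}\,\mathbb E|T_{j-1}|.
\]

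\emph{One-round lemma (main obstacle).} We must show that, conditioned on the past, exposing a fresh $\delta$-random $W_j$ shrinks the minimal fragment by the factor $5/\log_2(r\delta)$ in expectation. I would prove this by Tao's encoding/entropy count. Fix a threshold $m$ and bound the probability, jointly over $F\sim\mu$ and $W_j$, that the new fragment has size $\ge m$ while the old one has size $s$. A bad pair $(W_j,F)$ is encoded by three pieces of data:
\begin{itemize}
\item the set $W_j\cup T_{j-1}$;
\item the position of $T_j$ inside it, which costs at most $\binom{|W_j|+s}{m}$ choices;
\item the residual information needed to recover $F$.
\end{itemize}
The recovery uses the minimality of $T_j$: every $F'\in\m X$ contained in $F\cup W_j\cup U_{j-1}$ has fragment of size $\ge m$. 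Averaging $\mu$ over sets containing a fixed fragment and using $r$-spreadness, $\mu(\m X[T])<r^{-|T|}$, gives a gain of $r^{-m}$. The $\delta$-biased measure of $W_j$ versus $W_j\cup T_{j-1}$ costs a factor about $\delta^{-(s-m)}$. Balancing these yields
\[
\Pr\big[|T_j|\ge m\big]\lesssim (C/(r\delta))^{m}\, 2^{O(s)}.
\]
Summing over $m$ then gives $\mathbb E|T_j|\le \frac{5}{\log_2(r\delta)}\,s$. Making this entropy bookkeeping tight enough to obtain exactly the constant $5$ is where the care is needed. I would first reproduce Tao's blog computation in this notation. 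The conditioning on earlier rounds is harmless, because the minimality of $F_j$ is measured against the fixed family $\m X$ and the set $U_{j-1}$ is independent of $W_j$.
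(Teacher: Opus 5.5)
The paper does not prove this statement. It imports it as a black box from Tao's entropy sharpening of Alweiss--Lovett--Wu--Zhang (via \cite[Thm.~4]{KZ22}), so your proposal has to stand on its own.

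Your outer reduction is the standard one: independent $\delta$-random rounds, minimal fragments with $T_j\subseteq T_{j-1}\subseteq F$, then Markov. It is fine when $\beta$ is a positive integer, which suffices for the paper's application after rounding $\beta$ down. The statement, however, allows real $\beta>0$, and your claim that $\beta<1$ is trivial is false: for $k=1$ the bound $1-(5/\log_2(r\delta))^{\beta}$ is nontrivial.

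\textbf{The gap.} The one-round contraction is the entire content of the theorem, and the bookkeeping you sketch for it does not work.
\begin{itemize}
\item Locating $T_j$ inside $W_j\cup T_{j-1}$ at cost $\binom{|W_j|+s}{m}$ is fatal. Since $|W_j|\approx\delta n$, this factor is roughly $(\delta n)^m/m!$ and swamps the spreadness gain $r^{-m}$.
\item The change of measure is not $\delta^{-(s-m)}$. One has $\Pr[W_j=w]=\Pr_\delta[w\cup T_{j-1}]\bigl(\tfrac{1-\delta}{\delta}\bigr)^{|T_{j-1}\setminus w|}$. On the bad event, $|T_{j-1}\setminus W_j|\ge |T_j|\ge m$, because $F_{j-1}$ is itself admissible, and typically $|T_{j-1}\setminus W_j|\approx s$.
\item In any case, the product $r^{-m}\delta^{-(s-m)}$ is not of the form $(C/(r\delta))^m2^{O(s)}$.
\end{itemize}
Suppose you repair the encoding in the natural way: let the decoder compute a canonical admissible $F^*\subseteq W_j\cup T_{j-1}\cup U_{j-1}$ minimizing $|F^*\setminus U_{j-1}|$ (which is at most $s$). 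Then on the bad event $F^*\setminus U_j$ is a subset of $T_{j-1}\subseteq F$ of size at least $m$. This yields only $\Pr[|T_j|\ge m,\ |T_{j-1}|=s]\lesssim r^{-m}(C/\delta)^{s}$, i.e.\ a contraction factor of order $\log_2(C/\delta)/\log_2 r$. That is the older ALWZ-type bound. With $r\delta$ fixed (e.g.\ $r\delta=2^{10}$, the regime in which the paper applies the lemma) it tends to $1$ as $r\to\infty$, whereas $5/\log_2(r\delta)$ depends only on $r\delta$.

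\textbf{What is missing.} You need a mechanism that charges the $\log_2(1/\delta)$ change-of-measure cost only to the elements that survive into the new fragment. Concretely, you must also gain a factor of about $\delta$ for each element of the new admissible set that is supplied by $W_j$. In Tao's argument this is done roughly as follows.
\begin{itemize}
\item Take a conditionally independent copy $(\tilde F,\tilde W_j)$ of $(F,W_j)$ given $Z=W_j\cup T_{j-1}$ (and $U_{j-1}$).
\item Observe that the resampled fragment $\tilde T$ yields an admissible set with $\tilde T\setminus W_j\subseteq T_{j-1}\cap\tilde T$, so $|T_j|\le |T_{j-1}\cap\tilde T|$.
\item Bound the expected overlap by an entropy comparison. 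The net effect is a gain of $\log_2(r\delta)-O(1)$ bits per element of the overlap, at a cost of $O(1)$ bits per element of $T_{j-1}$. This is exactly where $\log_2(r\delta)$, rather than $\log_2 r/\log_2(1/\delta)$, comes from.
\end{itemize}
Writing that you would ``reproduce Tao's blog computation'' defers precisely this step. As it stands, the proposal does not establish $\mathbb E|T_j|\le \frac{5}{\log_2(r\delta)}\mathbb E|T_{j-1}|$.

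A smaller slip: the decoding property you quote, minimality among all $F'\subseteq F\cup U_j$, is not what your definition gives. $F_j$ is minimal only among $F'\subseteq F_{j-1}\cup U_j=T_{j-1}\cup U_j$, which is the right class to use with the encoding $Z=W_j\cup T_{j-1}$.
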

Intuitively, the spreadness lemma asserts that if $\m X \subset \m P([n])$ is a `spread' family then a random `reasonably large' set $W \subset [n]$ is expected to contain a set from $X$.

\begin{proof}[Proof of Lemma~\ref{lemregime2}]
For each $i$, we would like to apply Theorem~\ref{thmapprox1} with $n^2$ in place of $n$, $2i$ in place of $k$, $2i-t$ in place of $t$, $n$ in place of $R_1$, $\min(\frac{n}{t},\frac{t}{11})$ in place of $R$, $10\log_2 n$ in place of $\sigma$,
the set $\mb X$ in place of $[n]$, the family $\m A_i \subset \binom{\mb X}{2i}$ in place of $\m A \subset \binom{[n]}{k}$, $a_j^{(i)}$ in place of $a_j$, and the $(2i-t)$-intersecting family $\m F_i \subset \m A_i$ in place of $\m F \subset \m A$. Let us verify that for each $ \lceil t/2\rceil\le i\le t$, the assumptions of the theorem are satisfied. 

The weak spreadness assumptions on $\m A_i$ are satisfied, as by  Lemma~\ref{lemadecay}(i), for any $ \lceil t/2\rceil\le i\le t$, the family $\m A_i \subset \binom{\mb X}{2i}$ is weakly $(R,\ell)$-spread for any $\ell$ with $R=\min(\frac{n}{i},\frac{i}{2e}) \ge \min(\frac{n}{t},\frac{t}{11})$, and on the other hand, by  Lemma~\ref{lemadecay}(iv), for any $\ell$ we have $a_\ell^{(i)}/a_{\ell+1}^{(i)}\le n.$ The assumption $R \geq 2^{15}\log_2(4k)$, which in our case reads 
\[
\min(\tfrac{n}{t},\tfrac{t}{11}) \geq 2^{15}\log_2(8i),
\]
clearly holds assuming $n$ is sufficiently large, as $n^{\epsilon} \leq t \leq n^{(1+\epsilon)/2}$ and $i \leq t$.
The assumption $R \geq 2^{30}(t^{1/2}\log_2 t+\log_2 R_1)+200\sigma$, which in our case reads
\[
\min(\tfrac{n}{t},\tfrac{t}{11}) \geq 2^{30}((2i-t)^{1/2}\log_2(2i-t)+\log_2 n)+2000 \log_2 n,
\]
also holds assuming $n$ is sufficiently large, as $n^{\epsilon} \leq t \leq n^{(1+\epsilon)/2}$ and $2i-t \leq t$.

Therefore, we can apply Theorem~\ref{thmapprox1} to approximate $\m F_i$ by a $(2i-t)$-intersecting family $\m S_i$ consisting of `small' sets. Assertion~(iii) of the theorem states that $\m R_i:= \m F_i \setminus \m F_i[\m S_i]$ satisfies 
\[
|\m R_i| \leq n^{-10} \cdot 4 \log_2(n+10\log_2 n)a_{2i-t}^{(i)} \leq n^{-9} a_{2i-t}^{(i)} \leq n^{-9}|\m F|,
\]
where the last inequality holds as by Lemma~\ref{lemadecay}(v), there exists an $(n-t)$-intersecting family of permutations of size at least $a_{2i-t}^{(i)}$, while $|\m F|=|\m G|$ is assumed to be the maximum size of an $(n-t)$-intersecting family in $S_n$. This proves Assertion~(2) of the lemma.

\smallskip As for the sizes of the sets in $\m S_i$, we can obtain an improved bound using the `in addition' part of Theorem~\ref{thmapprox1}. 
For $t \le n^{1/2}$, the additional assumption $R \geq 2^{30}t^{\alpha}$, which in our case reads as 
\begin{equation}\label{Eq:Aux-Medium-t1}
\min(\tfrac{n}{t},\tfrac{t}{11}) \geq 2^{30} (2i-t)^{\alpha},
\end{equation}
holds for $\alpha=1-\frac{34}{\log_2 t}$, since $2i-t \leq t$ and in this range we have $\frac{n}{t} \geq t$. Thus, the theorem asserts that the size of each set in $\m S_i$ is at most 
\[
(2i-t)+(2i-t)^{34/\log_2 t}+40\log_2 n+4\log_2(2i-t+1) \leq (2i-t)+50 \log n,
\]
where the inequality holds assuming $n$ is sufficiently large. 

For $n^{1/2} \leq t \leq n^{(1+\epsilon)/2}$, the inequality~\eqref{Eq:Aux-Medium-t1} holds for $\alpha=1-2\epsilon-\frac{34}{\log_2 n}$, since in this range we have $2i-t \leq t \leq n^{(1+\epsilon)/2}$ and $\min(\tfrac{n}{t},\tfrac{t}{11}) \geq \frac{n}{11t} \geq n^{(1-\epsilon)/2}/11$.    
Thus, the size of each set in $\m S_i$ is at most 
\[
(2i-t)+(2i-t)^{2\epsilon+34/\log_2 t}+40\log_2 n+4\log_2(2i-t+1) \leq (2i-t)+n^{2\epsilon},
\]
where the inequality holds assuming $n$ is sufficiently large.
Therefore, in both cases, each set in $\m S_i$ is of size at most $(2i-t)+k_i$, where $k_i\le 50\log n$ for $t\le n^{1/2}$ and $k_i\le n^{2\epsilon}$ for $t\ge n^{1/2}$. This proves Assertion~(1) of the lemma.

\smallskip
It is left to show that for any $i_1,i_2$, the families $\m S_{i_1}, \m S_{i_2}$ are cross $(i_1+i_2-t)$-intersecting. For this, we use Assertion~(ii) of Theorem~\ref{thmapprox1} which states that for any $S \in \m S_i$, there exists $\m F_S \subset \m F_i$ such that $\m F_S(S)$ is $\min(\frac{n}{2t},\frac{t}{22})$-spread. 

Suppose on the contrary that there exist $i_1,i_2$ and $S_1\in \m S_{i_1},$ $S_2\in \m S_{i_2}$ such that $|S_1\cap S_2| = i_1+i_2-t-j$ for some $j>0$. Assume w.l.o.g.~that $i_2\ge i_1$. As was shown above, we have $|S_{1}|\le 2i_1-t+k_{i_1}$. %\ohad{$|S_{1}|\le 2i_1-t+k_{i_1}$?}
%\nathan{Yes.}
%\le 2i_1-t+n^{0.02}$, where the second inequality holds assuming $n$ is sufficiently large, since $\epsilon \leq 0.01$. 
Hence,
\[
0\le |S_1\setminus S_2| \le 2i_1-t+k_{i_1}-(i_1+i_2-t-j)=i_1+k_{i_1}+j-i_2,
\]
and consequently, $i_2\le i_1+k_{i_1}+j$. Applying the same argument with the roles of $i_1,i_2$ interchanged and using the bound $i_2-i_1 \leq k_{i_1}+j$, we obtain 
\[
|S_2\setminus S_1| \le 2i_2-t+k_{i_2}-(i_1+i_2-t-j)=i_2+k_{i_2}+j-i_1\le k_{i_1}+k_{i_2}+2j.
\]
Now we apply a spreadness argument, similar to the argument used in the proof of Lemma~\ref{lemsimplsimple}(3) above. Denote $r=\min(\frac{n}{2t},\frac{t}{22})$ and consider the $r$-spread families $\m F_{S_1}(S_1) \subset \m F_{i_1}(S_1)$, $\m F_{S_2}(S_2) \subset \m F_{i_2}(S_2)$, 
%\ohad{, $\m F_{S_2}(S_2) \subset \m F_{i_2}(S_2)$?}\nathan{Yes.}
whose existence is guaranteed by Assertion~(ii) of Theorem~\ref{thmapprox1}. We claim that the fraction of sets in $\m F_{S_1}(S_1)$ that intersect $S_2\setminus S_1$ in at least $\lceil \frac{j}{2} \rceil$ elements is at most $n^{-\epsilon/2}$. Indeed, by the $r$-spreadness of $\m F_{S_1}(S_1)$ and a union bound, this fraction is at most
\[
\binom{|S_2\setminus S_1|}{\lceil \frac{j}{2} \rceil}r^{\lceil -j/2 \rceil}\le \Big(\frac{2e |S_2\setminus S_1|}{jr}\Big)^{\lceil j/2 \rceil}\le \Big(\frac{2e (k_{i_1}+k_{i_2}+2j)}{jr}\Big)^{\lceil j/2 \rceil}.
\]
For $n^{\epsilon} \leq t \leq n^{1/2}$, we have $r=t/22$ and $k_{i_1},k_{i_2} \leq 50\log n$, and thus, the numerator is bounded from above by $2e(100\log n+2j)$ while the denominator is bounded below by $n^{\epsilon}\cdot j/22$. For $n^{1/2} \leq t \leq n^{(1+\epsilon)/2}$, we have $r \geq n^{(1-\epsilon)/2}/22$ and $k_{i_1},k_{i_2} \leq n^{2\epsilon}$, and thus, the numerator is bounded from above by $2e(2n^{2\epsilon}+2j)$ while the denominator is bounded by below by $n^{(1-\epsilon)/2}\cdot j/22$. Clearly, in both cases the whole expression is bounded from above by $n^{-\epsilon/2}$, assuming $n$ is sufficiently large.
The same bound holds for the fraction of sets in $\m F_{S_2}(S_2)$ that intersect $S_1\setminus S_2$ in at least $\lceil \frac{j}{2} \rceil$ elements. Denote
\[
\m X_1:=\{S' \in \m F_{S_1}(S_1): |S' \cap (S_2 \setminus S_1)| < \lceil \tfrac{j}{2} \rceil \}, 
\]
and define $\m X_2 \subset \m F_{S_2}(S_2)$ in the same way. By the above argument, for $\ell=1,2$ we have $|\m X_\ell| = (1-n^{-\epsilon/2})|\m F_{S_\ell}(S_{\ell})|$, and thus, $\m X_1,\m X_2$ are $(r/2)$-spread families. This allows us to deduce that there exist 
$X_1\in \m X_1,X_2\in \m X_2$ such that $X_1 \cap X_2 = \emptyset$. Specifically, applying Theorem~\ref{thmtao}, with $n^2$ in place of $n$, $2i_1$ in place of $k$, $(r/2)$ in place of $r$, $\mb X$ in place of $[n]$, the family $\m X_1$ in place of $\m F$, $(r/2^{12})$ in place of $\beta$ and $(2^{11}/r)$ in place of $\delta$, we deduce that if $W$ is a $(1/2)$-random subset of $\mb X$, then 
\[
\Pr[\exists X_1\in \m X_1: X_1\subset W]\ge 1-(\tfrac 1{2})^{r/2^{12}} \cdot (2i_1) \geq 1- (\tfrac 1{2})^{n^{\epsilon}/(11 \cdot2^{12})} \cdot (2n^{(1+\epsilon)/2})>\tfrac{1}{2}.
\]
By the same argument, if $W'$ is a $(1/2)$-random subset of $\mb X$, then 
$\Pr[\exists X_2\in \m X_2: X_2\subset W']>1/2$. Therefore, there exists $W \subset \mb X$ and $W'=\mb X \setminus W$, such that there exist $X_1 \in \m X_1$ with $X_1 \subset W$ and $X_2 \in \m X_2$ with $X_2 \subset W'$. These two sets satisfy $X_1 \cap X_2 = \emptyset$.
 
Consider the sets $X_1\sqcup S_1\in \ff_{i_1},X_2\sqcup S_2\in \ff_{i_2}$.
We have
\begin{align*}|(X_1\sqcup S_1)\cap&(X_2\sqcup S_2)|=|S_1\cap S_2|+|X_1\cap (S_2\setminus S_1)|+|X_2\cap (S_1\setminus S_2)|+|X_1\cap X_2|\\
&\le(i_1+i_2-t-j)+(\lceil\tfrac j2 \rceil -1)+(\lceil\tfrac j2 \rceil -1)+0<i_1+i_2-t.
\end{align*}
This contradicts the assumption that $\m F$ corresponds to an $(n-t)$-intersecting family, as by Lemma~\ref{clatranslation}, this assumption implies that the families $\m F_{i_1},\m F_{i_2}$ are $(i_1+i_2-t)$-intersecting.

Therefore, for every $\lceil t/2 \rceil \leq i_1,i_2 \leq t$, the families $\m S_{i_1}, \m S_{i_2}$ are $(i_1+i_2-t)$-intersecting, which proves Assertion~(3) of the lemma.
\end{proof}

We claim that $\cup_{\lceil t/2 \rceil \leq i \leq t} \m S_i$ is a `good approximation' for $\m F$, in the sense that  
\begin{equation}
\label{Eq:Aux-Medium-t2}
\Big|\ff\setminus\bigcup_{i=\lceil t/2\rceil}^t \ff_i[\m S_i]\Big| \leq  n^{-\epsilon/2}|\ff|.
\end{equation}
Indeed, recall that $\m F=\sqcup_{i=0}^t \m F_i$. Assertion~(2) of Lemma~\ref{lemregime2} implies that 
\[
|\sqcup_{i=\lceil t/2\rceil}^t \m F_i \setminus \ff_i[\m S_i]| \leq  n^{-8}|\ff|.
\]
As for values $i < \lceil t/2\rceil$, 
Equations~\eqref{Eq:Aux-Small-t1} and~\eqref{Eq:Aux-Small-t1.1} hold in our range of $t$ as well, showing that for an even $t$, we have $\sum_{i=0}^{\lceil t/2 \rceil-1} |\m F_i| \leq \frac{6}{n} |\m F|$. %Furthermore, the inequality~\eqref{Eq:Aux-Small-t1.2} is even stronger in our range than in the `small $t$' range, \nathan{Check whether this sentence should be revised.} showing that 
By a similar argument, for an odd $t$ we have
\[
\sum_{i=0}^{\lceil t/2 \rceil-1} |\m F_i| \leq \tfrac{6}{n} \cdot\,a_0^{\lceil t/2 \rceil} =\tfrac{6}{\lceil t/2 \rceil} \cdot\,a_1^{\lceil t/2 \rceil} \leq \tfrac{12}{n^{\epsilon}} |\m F|.
\]
Combining these bounds together yields~\eqref{Eq:Aux-Medium-t2}, assuming $n$ is sufficiently large.

Let $\lceil t/2\rceil \leq p \leq t$ be the smallest such that $\m S_{p} \neq \emptyset$. The families $\m S_{p+k_p+i}$, $i\ge 1$, must be empty, as by Assertion~(3) of Lemma~\ref{lemregime2}, each $S_1 \in \m S_p$ and $S_2 \in \m S_{p+k_p+i}$ satisfy $|S_1 \cap S_2| \geq p+(p+k_p+i)-t=(2p-t)+k_p+i$, while by Assertion~(1) of Lemma~\ref{lemregime2}, $|S_1| \leq (2p-t)+k_p$. 
Combining with the above, we get
\begin{equation}\label{eqmostf-small}\Big|\ff\setminus \bigcup_{i=0}^{k_p}\ff_{p+i}[\m S_{p+i}]\Big| \leq n^{-\epsilon/2}|\ff|.
\end{equation}

\medskip \noindent \textbf{Step~2: Applying the `peeling simplification' procedure.} In order to further simplify the approximating family, we apply the `peeling simplification' process to each $\m S_i$, $i\in [p, p+k_p]$. Specifically, we set 
\[
k'=\max_{p \leq i \leq p+k_p} k_i+2
\]
(so, $k' \leq 100\log n$ for $t \leq n^{1/2}$ and $k' \leq 2n^{2\epsilon}$ for $n^{1/2} \leq t \leq n^{(1+\epsilon)/2}$), and
apply  Lemma~\ref{lemsimplsimple} with $n^2$ in place of $n$, $2i-t$ in place of $t$, $(2i-t)+k_{i}$ in place of $p$, $k'$ in place of $r$, $\mb X$ in place of $[n]$, and the $(2i-t)$-intersecting family $\m S_i \subset \binom{\mb X}{\leq (2i-t)+k_{i}}$ in place of $\m X$. The lemma can indeed be applied, as $k'>k_{i}+1$ for all $p \leq i \leq p+k_p$. The lemma yields $(2i-t)$-intersecting families $\m W_i \subset \m P(\mb X)$ such that $\m S_i=\m S_i[\m W_i]$ and for any $p \leq i,j \leq p+k_p$, each of the families $\m S_i, \m W_i$ cross $(i+j-t)$-intersects each of the families $\m S_j, \m W_j$. The construction used in the proof of the lemma guarantees that each $\m W_i$ admits spreadness properties that will be used below. Furthermore, for each $i$, denoting by $\m W_i^{(2i-t+j)}$ the family of sets of size $2i-t+j$ in $\m W_i$, the lemma provides the bound 
\[
|\m W_i^{(2i-t+j)}| \leq \binom{2i-t+j}{2i-t} \cdot (k')^j.
\]
However, in this range (unlike the `small $t$' range), this bound is not sufficiently strong, and we will bound the $|\m W_i|$'s using a more complex argument below.

By~\eqref{eqmostf-small} and the conclusion of Lemma~\ref{lemsimplsimple}, we have 
\[
\Big|\bigcup_{i=p}^{p+k_p}\bigcup_{j=0}^{k_{i}}\ff_i[\m W_i^{(2i-t+j)}]\Big| \geq (1-n^{-\epsilon/2})|\ff|.
\]
By the pigeonhole principle, %and the fact that $k_p\le 2n^{\epsilon}$,
there exist $m\in [p, p+k_p], k\in [0,k_{m}]$ such that
\begin{equation}\label{eqchoicem} |\ff_{m}[\m W_m^{(2m-t+k)}]|\ge (2k')^{-2} |\ff|.\end{equation}
Let us put $\m W:= \m W_m^{(2m-t+k)}$ for shorthand.

\medskip \noindent \textbf{Step~3: Finding a simple sub-structure within $\m W$.}
If~\eqref{eqchoicem} is obtained for $k=0$, then $\m W$ consists of a single set $F \subset \mb X$ of size $2m-t$, and thus, all sets in $\m F_m[\m W]$ contain the same set of size $2m-t$. In this step we show that if~\eqref{eqchoicem} is obtained for some $k>0$, then  there exists a set $F'$ of size $2m-t+2k$, such that almost all sets in $\m F_m[\m W]$ contain a subset of $F'$ of size $2m-t+k$. The proof consists of three sub-steps:
\begin{enumerate}[label=(\alph*)]
    \item Using a spreadness argument to upper bound $|\m W|$ in terms of quantities denoted by $\{f_{m,k}(j)\}_{j=0,1,\ldots,k}$, where $f_{m,k}(k)$ corresponds to the set of elements of $\m W$ that contain a subset of size $2m-t+k$ of a certain set $F'$ of size $2m-t+2k$.

    \item Using the assumption that $|\m F|$ is the maximum size of an $(n-t)$-intersecting family in $S_n$ and comparisons between the quantitites $a_j^{(i)}$ to deduce a lower bound on $|\m F|$ in terms of $f_{m,k}(k)$.

    \item Showing that $\sum_{j<k} f_{m,k}(j)$ is much smaller than $f_{m,k}(k)$, which allows deducing that most elements in $\m F_m[\m W]$ contain a subset of size $2m-t+k$ of $F'$. 
\end{enumerate}
Throughout the proof of this step, we assume that~\eqref{eqchoicem} is obtained for $k>0$, as our structural statement holds trivially where~\eqref{eqchoicem} is obtained for $k=0$.

\medskip \noindent \emph{Step~3a: Bounding $|\m W|$ from above.}
Let $A,B\in \m W$ be 
such that $I:=A\cap B$ satisfies $|I|=2m-t$.
(We may assume that there exist such sets, as otherwise, $\m W$ is $t'$-intersecting for some $t'>2m-t$, and then we can provide even better bounds on its size).
Any set $C\in \m W$ intersects both $A$ and $B$ in at least $2m-t$ elements. Denote 
\[
C_0=C\cap I, \qquad C_1=C\cap (A\setminus I), \qquad C_2 = C\cap (B\setminus I).
\]
Putting $|C_0|=(2m-t)-j$, we must have $|C_1|, |C_2|\ge j$. Choose $C_1'\subset C_1,C_2'\subset C_2$ such that $|C_1'| = |C_2'| = j$. For any $X \subset \mb X$ of size at most $k-j-1$, if $|\m W(C_0\cup C_1'\cup C_2'\cup X)|>1$, then $\m W(C_0\cup C_1'\cup C_2'\cup X)$ is not $k'$-spread by the construction of $\m W$. Indeed, otherwise one could simplify $\m W$ further by replacing $\m W[C_0\cup C_1'\cup C_2'\cup X]$ with $\{C_0\cup C_1'\cup C_2'\cup X\}$.  %\nathan{I guess the reason is that otherwise, we could apply Lemma 13 to replace all sets containing $C_0\cup C_1'\cup C_2'\cup X$ by the single set $C_0\cup C_1'\cup C_2'\cup X$. But in order to apply the lemma, we need at least two sets that contain it and not one.}\andrey{I made a slight change above} 
%(The sets in $\m W$ are inclusion-minimal w.r.t. having of the peeling procedure.\ohad{why is that true?} \andrey{Not sure if it's needed, but added this to peeling - this is for free.} 
Therefore, by Observation~\ref{obs13} we have  $|\m W(C_0\cup C_1'\cup C_2')|\le (k')^{k-j}$. %\nathan{It is not clear to me why we can apply Observation~\ref{obs13} here. In order to apply it, we need the sets in $\m W(C_0\cup C_1'\cup C_2')$ to be of size at most $k-j$. Why does this hold?}\ohad{I think its just that $|C_0\cup C_1'\cup C_2'| = 2m-t-j+j+j = 2m-t+j$, and sets in $W$ have size $2m-t+k$}\nathan{Right, thanks.} 
As this holds for any $C_0,C'_1,C'_2$, we can upper bound the size of $\m W$ as follows:
%\nathan{It seems that we should multiply also by $\binom{2m-t+k}{2m-t}$, which is the number of ways to choose $I$. If this is indeed the case, this will require significant technical adjustments in the sequel.} \ohad{I don't think so, I think $A,B$ are chosen before (so naturally also $I$), only the $C$'s need to be chosen}\nathan{Right, thanks!}
\begin{align}\label{Eq:Aux-Medium-t3}
\begin{split}
    |\m W|&\le \sum_{j=0}^{k}{2m-t\choose 2m-t-j}{|A|-(2m-t)\choose j}{|B|-(2m-t)\choose j} (k')^{k-j} \\
    &\le \sum_{j=0}^{k}f_{m,k}(j),
\end{split}
\end{align}
where
\begin{equation}\label{eqfj} f_{m,k}(j):= {2m-t\choose 2m-t-j}{k\choose j}^2 (k')^{k-j}.\end{equation}
%We may bound $$f_{m,k}(j)\le \Big(\frac{(2m-t)k^2}{j^3 r}\Big)^j r^k.$$
Put 
\[
\m Y=\m W\cap{A\cup B\choose 2m-t+k}.
\]
Note that if for $C \in \m W$, we have $|C \cap I| = 2m-t-k$, then $|C \cap (A \setminus I)|= |C \cap (B \setminus I)|=k$, and consequently, $C \subset A\cup B$, as $|C|=2m-t+k$. Hence, all such sets $C$ are contained in $\m Y$. Therefore, we have 
\begin{equation}\label{Eq:Aux-Medium-t3.5}
|\m W\setminus \m Y|\le \sum_{j=0}^{k-1}f_{m,k}(j).
\end{equation}
The following steps will allow us to show that almost all sets in $\m F_m[\m W]$ contain a set from $\m Y$ (i.e., contain a subset of size $2m-t+k$ of the $(2m-t+2k)$-element set $F':=A\cup B$).

\medskip \noindent \emph{Step~3b: Bounding $|\m F|$ from below in terms of $f_{m,k}(k)$.} The bound is summarized in the following lemma. 
\begin{lem}\label{lemdensity2}Let $f_{m,k}(k)$ be as defined above. We have
\begin{equation}\label{eqrelationa}
|\m F| \geq a^{(m+k)}_{2m-t+2k}\ge \tfrac{1}{64}f_{m,k}(k)a^{(m)}_{2m-t+k}.\end{equation}
\end{lem}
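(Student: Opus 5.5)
The first inequality should follow directly from Lemma~\ref{lemadecay}(v) and the maximality of $|\m G|=|\m F|$. We apply (v) with $i=m+k$; this requires $m+k\ge \lceil t/2\rceil$, which holds since $m\ge p\ge\lceil t/2\rceil$. It gives an $(n-t)$-intersecting family of size $a^{(m+k)}_{2(m+k)-t}=a^{(m+k)}_{2m-t+2k}$. Note that (v) does not require $m+k\le t$, so no case split is needed there.

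For the second inequality we compare explicit restrictions. Let $s:=2m-t$. We first choose a near-optimal restriction for $a^{(m)}_{s+k}$. By Observation~\ref{obs:max-structure} it is attained by $X=(D,(M,\sigma(M)))$ with $M,\sigma(M)\subset D$, where $|D|=x$, $|M|=y$, $x+y=s+k$ and $y\le x$. Then
\[
a^{(m)}_{s+k}\le \binom{n-x}{m-x}(m-y)!.
\]
Next we exhibit a restriction $X'$ of size $s+2k$ inside $\m A_{m+k}$. The idea is to keep the singleton part and enlarge the ambient moving set by $k$ points. Concretely, take $D'$ with $|D'|=x$ and a pair part with $|M'|=y+k$, or some split $x+a$, $y+k-a$. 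We then lower bound $|\m A_{m+k}[X']|$ via \eqref{eq:canon-lower} by
\[
\binom{n-|D'|}{m+k-|D'|}\, d_{m+k-|M'|}\ \ge\ \tfrac13\binom{n-|D'|}{m+k-|D'|}(m+k-|M'|)!.
\]

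The key choice is the split. We want the ratio $|\m A_{m+k}[X']|/a^{(m)}_{s+k}$ to be at least $\binom{s}{k}\binom{k}{k}^2 (k')^0/64=\binom{s}{k}/64$, since $f_{m,k}(k)=\binom{s}{s-k}$. The natural choice is to add all $k$ new elements as singletons: $|D'|=x+k$, $|M'|=y$. The ratio is then
\[
\frac{\binom{n-x-k}{m-x}}{\binom{n-x}{m-x}}\cdot\frac{d_{m+k-y}}{(m-y)!}.
\]
The binomial factor is about $(1-\frac{m}{n})^k$, which is harmless only if $km\ll n$. The factorial factor gains about $(m-y)^k/3$, and we need this to beat $\binom{s}{k}\le s^k/k!$. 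Since $m-y\ge m-\frac{s+k}{2}\ge \frac{t-k}{2}$, the comparison amounts to checking $(\frac{t}{2})^k$ against $\frac{s^k}{k!}$ times the binomial loss. This works when $k$ is small relative to $t$, which is our regime: $k\le 50\log n$ or $k\le n^{2\epsilon}$, while $t\ge n^\epsilon$ and $s\le t$.

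I expect this bookkeeping to be the main obstacle: keeping the constant at $1/64$ uniformly, and controlling the factor $(1-m/n)^k$ when $t$ is near $n^{(1+\epsilon)/2}$ and $k\sim n^{2\epsilon}$. If adding pure singletons loses too much there, I would try adding pair-elements instead, or splitting the $k$ new elements between singletons and pairs (say $\lceil k/2\rceil$ and $\lfloor k/2\rfloor$). Each added pair element costs a factor of about $m$ and each singleton gains about $n/m$ relative to the $\binom{n}{\cdot}$ factor, and the split would be chosen to balance these against $\binom{s}{k}\le (es/k)^k$. The factor $\frac{1}{3}$ from \eqref{eq:der-short} and the factor $1/2$ or so from the binomial then combine into the constant $1/64$.
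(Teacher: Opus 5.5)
Your proposal follows the paper's proof: the first inequality comes from Lemma~\ref{lemadecay}(v) with $i=m+k$. The second comes from taking an optimal restriction $X=(D,(M,\sigma(M)))$ for $a^{(m)}_{2m-t+k}$ from Observation~\ref{obs:max-structure}, adding $k$ fresh singletons, and comparing via \eqref{eq:canon-lower}.

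The bookkeeping you flagged is not an obstacle, and no pair-element fallback is needed. Since $k\le 2n^{2\epsilon}$ and $m\le t\le n^{(1+\epsilon)/2}$, we have $km=o(n)$, so the binomial ratio is at least $\tfrac12$. Since $k<t/2$, we get $m-y\ge\tfrac{t-k}{2}>\tfrac t4$. Combined with $\binom{2m-t}{k}\le t^k/k!$, the total ratio is at least $\tfrac16(t/4)^k\cdot k!/t^k=k!/(6\cdot 4^k)\ge\tfrac1{64}$, with the minimum attained at $k=3,4$.
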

\begin{proof}[Proof of Lemma~\ref{lemdensity2}]
The first inequality holds since by Lemma~\ref{lemadecay}(v), there exists an $(n-t)$-intersecting family in $S_n$ of size at least $a^{(m+k)}_{2m-t+2k}$, while $|\m F|$ is assumed to be the maximum size of an $(n-t)$-intersecting family in $S_n$. Thus, it is left to prove the second inequality.

%Recall \eqref{eqrestirctsize},~\eqref{eqrestrictsize2}.  \ohad{Fix the references, but the inequality is obvious.} \nathan{The reference should be to the equality case of~\eqref{eq:canon-lower}, plus the trivial claim that $d_s \leq s!$. However, this is valid under the assumption that the maximum $a_{2m-t+k}^{(m)}$ is obtained for $\m A_m[F]$ where $F=(D,(M,\sigma(M))$ with $M=\sigma(M) \subset D$. Is it clear that this is the case? We didn't claim this at the beginning of the proof of Lemma 10...}\ohad{I dont see why we need it here, isnt it just chhosing the size of $D,E$?} 
By Observation~\ref{obs:max-structure}, the value $a^{(m)}_{2m-t+k}$ is attained (also) by $\aaa_{m}[X]$ for some $X=(D,E)$ such that $E=(M,\sigma(M))$, where $M \subset D$ and $\sigma(M) \subset D$. Take such an $X$, and denote $x:=|D|$. 
We clearly have
$$a^{(m)}_{2m-t+k} = |\m A_m[X]|\le {n-x\choose m-x}(m-(2m-t+k-x))! = {n-x\choose m-x}(t-m-k+x)!.
$$
(See the proof of~\eqref{eq:canon-lower} above).
%This corresponds to fixing a set $F = (D,E)$, such that $|D| = x$ and $|E| = 2m-t+k-x$. 
%As $M \subset D$, we have $|D|\ge |E|$, and thus,  $x\ge m-\frac {t-k}2$. 
Let $X'=(D',E') \subset \mb X$ be obtained from $X$ by adding $k$ singleton-elements.
%Now, consider a set $F' = (D',E')$ with $|D'| = x+k$ and $|E'| = |E| = 2m-t+k-x$. (Also note that if $|D|\le m$ then $|D'|\le m+k$, and that $|D'|\ge |E'|=|E|$, which are the only inequalities we needed to check to verify the validity of such a choice.) 
As $M \subset D'$ and $\sigma(M) \subset D'$, it follows from~\eqref{eq:der-short} and~\eqref{eq:canon-lower} that 
%\nathan{The references here should be~\eqref{eq:der-short} and~\eqref{eq:canon-lower}, but $1/e$ should be replaced by $1/3$, and more importantly, this again uses the assumption that $E'=(M',\sigma(M'))$, where $M', \sigma(M') \subset D'$. Why can we assume this?} \andrey{Since the choice of $F'$ is completely in our hands, cannot we simply assume this when choosing it?}
\begin{align*}a^{(m+k)}_{2m-t+2k}&\ge |\aaa_{m+k} [X']| \\
    &\ge \tfrac 13{n-x-k\choose m+k-(x+k)}(m+k-(2m-t-x+k))!\\
    &= \tfrac 13{n-x-k\choose m-x}(t-m+x)!.
    \end{align*}
We would like to bound from below the ratio $a^{(m+k)}_{2m-t+2k}/a^{(m)}_{2m-t+k}$. As $k < k' \leq 2n^{2\epsilon}$ and $t,x,m \leq n^{(1+\epsilon)/2}$, we have 
\[
{{n-x-k}\choose{m-x}} \Big/{{n-x}\choose{m-x}}=1-o(1) \geq \tfrac{1}{2},
\]
for a sufficiently large $n$. As $M \subset D$, we have $x=|D|\geq \frac{2m-t+k}2$, and thus, $t-m+x-k \geq \frac{t-k}{2}>\frac{t}{4}$. Hence, $\frac{(t-m+x)!}{(t-m-k+x)!} > (t-m-k+x)^k > (t/4)^k$. Therefore,  
\begin{align*}\frac{a^{(m+k)}_{2m-t+2k}}{a^{(m)}_{2m-t+k}}&\ge \frac{\frac 13{n-x-k\choose m-x}(t-m+x)!}{{n-x\choose m-x}(t-m-k+x)!}
    \ge \tfrac{1}{3} \cdot \tfrac{1}{2} \cdot (t/4)^k = \tfrac{1}{6} \cdot (t/4)^k.
    \end{align*}
% In the second inequality, we used that $m = O(t)$ and thus $mk = o(n)$, as well as the following chain of inequalities 
%\begin{align*}
%    \frac{(t-m+x)!}{(t-m-k+x)!}&\ge ((t-m+x)!)^{\frac{k}{t-m+x}}\ge \Big(\Big(\frac{t-m+x}e\Big)^{t-m+x}\Big)^{\frac{k}{t-m+x}} \\ &=\Big(\frac{t-m+x}e\Big)^k.
%\end{align*}
%In the last inequality, we used that $x\ge m-\frac{t}2.$ 
Finally, we have 
\[
f_{m,k}(k) =\binom{2m-t}{2m-t-k}\le \frac{(2m-t)^k}{k!}\le \frac{t^k}{k!},
\]
and thus,
$$\frac{a^{(m+k)}_{2m-t+2k}}{f_{m,k}(k) a^{(m)}_{2m-t+k}}\ge \frac{\tfrac{1}{6}(t/4)^k}{t^k/k!} = \frac{k!}{6 \cdot 4^k} \ge \frac{1}{64},$$
as asserted.
%with an absolute constant $c>0$. The last inequality holds since the minimum of this expression is attained for $k = 5,$ for which the ratio is  bigger than $1/120.$
\end{proof}

\medskip \noindent \emph{Step~3c: Bounding $\sum_{j<k} f_{m,k}(j)$ from above in terms of $f_{m,k}(k)$.} In order to prove the desired upper bound, we first show that if~\eqref{eqchoicem} is obtained for $k>0$, then $2m-t$ must be `somewhat large'. 
\begin{lem}\label{lemk>0}
Assume that~\eqref{eqchoicem} is obtained for $k>0$. We have the following. 
\begin{enumerate}[label=(\alph*)]
  \item If $t\le n^{1/2}$ (and thus, $k' \leq 100\log n$), then $2m-t \ge n^{\epsilon/2}$;
  \item If $n^{1/2}\le t\le n^{(1+\epsilon)/2}$ (and thus, $k' \leq 2n^{2\epsilon}$), then $2m-t\ge n^{20\epsilon}$.
\end{enumerate}
In particular, in both cases we have
\begin{equation}\label{eq2m-t}
2m-t\ge 100n^{\epsilon/4} (k')^9,
\end{equation}
assuming $n$ is sufficiently large.
\end{lem}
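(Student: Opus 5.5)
We argue by contradiction. Suppose \eqref{eqchoicem} holds for some $k\ge 1$ while $2m-t$ is small, i.e.\ $2m-t<n^{\epsilon/2}$ in case (a), or $2m-t<n^{20\epsilon}$ in case (b). We will play the lower bound $|\m F_m[\m W]|\ge (2k')^{-2}|\m F|$ against an upper bound on $|\m F_m[\m W]|$ obtained from the trivial estimate $|\m F_m[\m W]|\le |\m W|\cdot a^{(m)}_{2m-t+k}$. Since $\m F_m\subset\m A_m$ and all sets in $\m W$ have size $2m-t+k$, this estimate is immediate.

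\emph{Upper bound on $|\m W|$.} We use \eqref{Eq:Aux-Medium-t3}, namely $|\m W|\le\sum_{j=0}^k f_{m,k}(j)$. When $2m-t$ is small, each term satisfies
\[
f_{m,k}(j)\le (2m-t)^j k^{2j}(k')^{k-j}\le \big((2m-t)k'^{3}\big)^{k}.
\]
This gives $|\m W|\le (k+1)\big((2m-t)(k')^3\big)^k$.

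\emph{Comparison with $|\m F|$.} By Lemma~\ref{lemadecay}(v) and the maximality of $\m G$, we have $|\m F|\ge a^{(m)}_{2m-t}$. Lemma~\ref{lemadecay}(i) gives $a^{(m)}_{2m-t+k}\le R^{-k}a^{(m)}_{2m-t}$ with $R=\min(n/m,m/(2e))$. In this range $m\ge t/2$, so $R\ge \min(n/t,\,t/(4e))$. Combining these,
\[
(2k')^{-2}\le \frac{|\m F_m[\m W]|}{|\m F|}\le (k+1)\Big(\frac{(2m-t)(k')^3}{R}\Big)^k .
\]
In case (a) we have $t\ge n^\epsilon$ and $t\le n^{1/2}$, so $R\ge n^{\epsilon}/11$, while $(2m-t)(k')^3\le n^{\epsilon/2}(100\log n)^3$. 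The ratio is therefore at most $n^{-\epsilon/3}$. Since $k\ge1$, the right-hand side is at most $(k+1)n^{-\epsilon k/3}$, which is much smaller than $(2k')^{-2}\approx(\log n)^{-2}$, a contradiction. In case (b), $R\ge n/t\ge n^{(1-\epsilon)/2}$ and $(2m-t)(k')^3\le 8n^{26\epsilon}$, so the ratio is at most $n^{-1/2+28\epsilon}$. Again the right-hand side is polynomially smaller than $(2k')^{-2}\ge n^{-5\epsilon}/16$. The final inequality \eqref{eq2m-t} then follows directly from the two lower bounds: in case (a), $100n^{\epsilon/4}(100\log n)^9\le n^{\epsilon/2}$, and in case (b), $100n^{\epsilon/4}(2n^{2\epsilon})^9\le n^{20\epsilon}$, both for large $n$.

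\emph{Main obstacle.} The difficulty is the regime where $2m-t$ is tiny, for instance $2m-t\le 2$. There Lemma~\ref{lemadecay}(i) still applies, since it needs only $\ell\le i-1$ with $\ell\ge0$, but we should double check that the bound on $\m W$ from Step~3a does not silently require $2m-t\ge 1$. The sets $A,B$ with $|A\cap B|=2m-t$ exist by the assumption made there. If $2m-t=0$, the count degenerates to $f_{m,k}(0)=(k')^k$, and the argument goes through unchanged. I would also verify that the constant $k+1\le k'$ in front is harmless, which it is because the decay $n^{-\Omega(\epsilon)k}$ beats any polylogarithmic or $n^{O(\epsilon)}$ prefactor.
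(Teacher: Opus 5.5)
Your proof is correct and follows essentially the paper's argument. You assume $2m-t$ is small, bound $|\m W|$ via \eqref{Eq:Aux-Medium-t3}, use Lemma~\ref{lemadecay}(i) to get $a^{(m)}_{2m-t+k}\le R^{-k}a^{(m)}_{2m-t}$, and contradict \eqref{eqchoicem} using $|\m F|\ge a^{(m)}_{2m-t}$ from Lemma~\ref{lemadecay}(v). One small fix: your bound $f_{m,k}(j)\le((2m-t)(k')^3)^k$ degenerates to $0$ when $2m-t=0$, so the argument is not literally ``unchanged'' there. Bounding $2m-t$ by its assumed upper bound $n^{\epsilon/2}$ (resp.\ $n^{20\epsilon}$) instead, as the paper does, covers that case uniformly.
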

\begin{proof}[Proof of Lemma~\ref{lemk>0}]
  The inequality~\eqref{eq2m-t} follows immediately from the inequalities (a) and (b), using the upper bounds on $k'$ in the two ranges of $t$. To prove (a) and (b), we consider the two ranges of values of $t$ separately.
  
  \smallskip \noindent \emph{Case~I: $t \leq n^{1/2}$.} Assume on the contrary that $2m-t< n^{\epsilon/2}$. As in this range, $k \le k'\le 100\log n$, for each $0 \leq j \leq k$ we have
\[
f_{m,k}(j) = {2m-t\choose 2m-t-j}{k\choose j}^2 (k')^{k-j}\le (n^{\epsilon/2} \cdot 100^3 (\log n)^3)^k\le n^{2\epsilon k/3},
\]
where the second inequality holds assuming $n$ is sufficiently large. By~\eqref{Eq:Aux-Medium-t3}, this implies 
\[
|\m W|\le (k+1)n^{2\epsilon k/3}.
\]
At the same time, as $m \geq p \geq \frac{t}{2} \geq \frac{n^{\epsilon}}{2}$, by Lemma~\ref{lemadecay}(i) we have
\[
\frac{a^{(m)}_{2m-t}}{ a^{(m)}_{2m-t+k}}\ge \left(\frac{m}{2e} \right)^k \geq (4e)^{-k} n^{\epsilon k}.
\]
As for each $W \in \m W$ we have $|\m F_m[W]| \leq a^{(m)}_{2m-t+k}$, this (together with the upper bound on $|\m W|$ obtained above) implies 
\begin{equation}\label{Eq:Aux-Medium-t4}
\frac{|\ff_m[\m W]|}{a^{(m)}_{2m-t}}\le \frac{|\m W|a^{(m)}_{2m-t+k}}{ a^{(m)}_{2m-t}}\le
\frac{(k+1)n^{2\epsilon k/3}}{(4e)^{-k} n^{\epsilon k}}\le   n^{-\epsilon/4},\end{equation}
assuming $n$ is sufficiently large. 

On the other hand, by Lemma~\ref{lemadecay}(v) there exists an $(n-t)$-intersecting family in $S_n$ of size at least $a^{(m)}_{2m-t}$, and thus, $|\ff|\ge a^{(m)}_{2m-t}$. By~\eqref{eqchoicem}, this implies
\[
|\ff_m[\m W]|\ge (2k')^{-2} |\m F| \geq (200\log n)^{-2} a^{(m)}_{2m-t},
\]
which contradicts~\eqref{Eq:Aux-Medium-t4} for a sufficiently large $n$. 

\smallskip \noindent \emph{Case~II: $t \geq n^{1/2}$.} The argument in this case is similar to the argument in the previous case. We assume on the contrary that $2m-t< n^{20\epsilon}$. As in this range, $k \le k'\le 2n^{2\epsilon}$, for each $0 \leq j \leq k$ we have
$f_{m,k}(j) \le (n^{20\epsilon} \cdot 2^3 \cdot ( n^{6\epsilon}))^k\le n^{27\epsilon k},$ and thus, $|\m W| \leq (k+1)n^{27\epsilon k}$.

By Lemma~\ref{lemadecay}(i), we have 
$a^{(m)}_{2m-t} / a^{(m)}_{2m-t+k}\ge \left(\frac{n^{(1-\epsilon)/2}}{2e} \right)^k \geq (2e)^{-k} n^{(1-\epsilon) k/2},$ and hence, we obtain
\[
\frac{|\ff_m[\m W]|}{a^{(m)}_{2m-t}}\le \frac{|\m W|a^{(m)}_{2m-t+k}}{ a^{(m)}_{2m-t}}\le
\frac{(k+1)n^{27\epsilon k}}{(2e)^{-k} n^{(1-\epsilon) k/2}}\le   n^{-10\epsilon},
\]
where the last inequality holds since $\epsilon \leq 0.01$. 

On the other hand, by Lemma~\ref{lemadecay}(v) we obtain 
\[
|\ff_m[\m W]|\ge (2k')^{-2} |\m F| \geq (2n^{2\epsilon})^{-2} a^{(m)}_{2m-t},
\]
a contradiction. This completes the proof of the lemma.
\end{proof}

%We can similarly analyze the case if $n^{1/2}\le t\le n^{(1+\epsilon)/2}$ and $(2m-t)\le n^{10\epsilon}$.\ohad{$f_{m,k}(j)\le (n^{10\epsilon}\cdot 4 \cdot k)^k\le n^{12\epsilon k}$, $\frac{a^{(m)}_{2m-t}}{ a^{(m)}_{2m-t+k}}\ge 2^{-k} n^{k/2}$, I think we can say the calculations in short} The calculations and conclusion are the same. We only need to substitute the value $k_p = n^{\epsilon}$ and bound the ratio $\frac{a^{(m)}_{2m-t}}{ a^{(m)}_{2m-t+k}}\ge n^{1/2-\epsilon}.$

%The verification of \eqref{eq2m-t} is a straightforward calculation.
%\end{proof}

Now we are ready to accomplish the goal of Step~3 -- showing that most of the sets in $\ff_m[\m W]$ admit a simple common structure. This is done by bounding $\sum_{j<k} f_{m,k}(j)$ from above in terms of $f_{m,k}(k)$ and combining this bound with the lower bound on $|\m F|$ in terms of $f_{m,k}(k)$ obtained in Step~3b. 
\begin{lem}\label{lemy=w}
  Let $\m W:= \m W_m^{(2m-t+k)}$ be a family that satisfies~\eqref{eqchoicem}. There exists a set $F'$ of size $2m-t+2k$, such that for $\m Y=\m W \cap \binom{F'}{2m-t+k}$, we have %\ohad{$\ge$?} \nathan{To Ohad: I don't understand this comment, can you please explain?}\ohad{$|\m F_m[\m Y]|\ge(1-n^{-\epsilon/5})|\m F_m[\m W]|$} \nathan{Right!}
  $|\m F_m[\m Y]| \geq (1-n^{-\epsilon/5})|\m F_m[\m W]|$.
\end{lem}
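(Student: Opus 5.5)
We take $F':=A\cup B$, where $A,B\in\m W$ are the sets chosen in Step~3a with $|A\cap B|=2m-t$. Since $|A|=|B|=2m-t+k$, we have $|F'|=2m-t+2k$, and $\m Y$ is exactly the family defined there. We may assume $k>0$: when $k=0$, $\m W$ is a single set and there is nothing to prove.

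Every set of $\m W\setminus\m Y$ lies in $\m A_m$ and has size $2m-t+k$. Hence
\[
|\m F_m[\m W\setminus\m Y]|\le |\m W\setminus \m Y|\, a^{(m)}_{2m-t+k}\le \Big(\sum_{j<k} f_{m,k}(j)\Big) a^{(m)}_{2m-t+k},
\]
using \eqref{Eq:Aux-Medium-t3.5}. By \eqref{eqchoicem} and Lemma~\ref{lemdensity2},
\[
|\m F_m[\m W]|\ge (2k')^{-2}|\m F|\ge \tfrac{1}{64}(2k')^{-2} f_{m,k}(k)\, a^{(m)}_{2m-t+k}.
\]
So it suffices to show
\[
\sum_{j<k} f_{m,k}(j)\le n^{-\epsilon/5}\cdot \tfrac{1}{256}(k')^{-2} f_{m,k}(k).
\]
Once this holds, $|\m F_m[\m Y]|\ge |\m F_m[\m W]|-|\m F_m[\m W\setminus\m Y]|\ge(1-n^{-\epsilon/5})|\m F_m[\m W]|$, which is the claim.

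The main work is comparing the terms $f_{m,k}(j)$ from \eqref{eqfj}. For $j<k$ we compute the ratio of consecutive terms:
\[
\frac{f_{m,k}(j)}{f_{m,k}(j+1)}=\frac{\binom{2m-t}{j}}{\binom{2m-t}{j+1}}\cdot\frac{\binom{k}{j}^2}{\binom{k}{j+1}^2}\cdot k' = \frac{j+1}{2m-t-j}\cdot\Big(\frac{j+1}{k-j}\Big)^2 k'.
\]
Since $j+1\le k\le k'$ and $2m-t-j\ge (2m-t)/2$ (because $k\le k'\ll 2m-t$), this ratio is at most $2(k')^3/(2m-t)$. Lemma~\ref{lemk>0}, specifically \eqref{eq2m-t}, gives $2m-t\ge 100n^{\epsilon/4}(k')^9$, so the ratio is at most $\frac{1}{50}n^{-\epsilon/4}(k')^{-6}$. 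Summing the resulting geometric series yields
\[
\sum_{j<k}f_{m,k}(j)\le 2\cdot\tfrac{1}{50}n^{-\epsilon/4}(k')^{-6}f_{m,k}(k),
\]
which is well below the required $n^{-\epsilon/5}(k')^{-2}f_{m,k}(k)/256$ for large $n$.

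The only delicate points are the bookkeeping of $(k')$-powers against the $(2k')^{-2}$ loss in \eqref{eqchoicem}, which is exactly what the ninth power in \eqref{eq2m-t} is there to absorb, and the justification of the assumption in Step~3a that two sets with $|A\cap B|=2m-t$ exist. If no such pair exists, then $\m W$ is $(2m-t+1)$-intersecting. In that case we rerun the Step~3a counting with $2m-t+1$ in place of $2m-t$, which only improves the bounds, and then pick $A,B$ of minimal intersection.
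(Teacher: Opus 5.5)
Your proposal is correct and is essentially the paper's proof. It uses the same choice $F'=A\cup B$, the same consecutive-ratio estimate for $f_{m,k}(j)$ via \eqref{eq2m-t}, and the same comparison of \eqref{Eq:Aux-Medium-t3.5} against the lower bound from \eqref{eqchoicem} and Lemma~\ref{lemdensity2}.

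There is one harmless slip: $(j+1)^3k'/(k-j)^2$ can be as large as about $(k')^4$, not $(k')^3$. So the ratio bound is $\frac{1}{50}n^{-\epsilon/4}(k')^{-5}$, as in the paper, which still leaves far more room than the $(k')^{-2}$ you need.
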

\begin{proof}[Proof of Lemma~\ref{lemy=w}]
If~\eqref{eqchoicem} is obtained for $k=0$, the assertion holds trivially, since in this case $\m W$ consists of a single set $F$ of size $2m-t$, and thus, $\m F_m[\m W]=\m F_m[F]$. Hence, we assume that~\eqref{eqchoicem} is obtained for $k>0$. We define $A,B$ as in Step~3a above, set $F':=A \cup B$ and $\m Y:= \m W \cap \binom{F'}{2m-t+k}$.

%We assume that we are in one of the regimes stated in Lemma~\ref{lemk>0}. Note that 
By the definition of $f_{m,k}(j)$, for any $j\in [0,k-1]$ we have
\begin{equation}\label{eqratiof}
\frac {f_{m,k}(j)}{f_{m,k}(j+1)} = \frac {j+1}{2m-t-j}\Big(\frac {j+1}{k-j}\Big)^2 \cdot k' = \frac{(j+1)^3 k'}{(2m-t-j)(k-j)^2}.
%\le \frac{(k_p+1)^3 r}{(2m-t-k_p)}.
\end{equation}
As $2m-t\ge 100n^{\epsilon/4} (k')^9$ by~\eqref{eq2m-t}, this implies 
%we get that for any $j\in [0,k-1]$ the equality \eqref{eqratiof} implies
$$\frac {f_{m,k}(j)}{f_{m,k}(j+1)}\le n^{-\epsilon/4}(k')^{-5}.$$
Summing over $j$ and using~\eqref{Eq:Aux-Medium-t3.5}, we get
\begin{equation}\label{eqfk}
|\m W \setminus \m Y| \leq \sum_{i=0}^{k-1}f_{m,k}(i) \le  n^{-\epsilon/4}(k')^{-4} f_{m,k}(k).
\end{equation}
As for each $W \in \m W$ we have $\m F_m[W] \leq a_{2m-t+k}^{(m)}$, this implies
\begin{equation}\label{Eq:Aux-Medium-t5}
|\ff_m[\m W\setminus \m Y]|\le \Big(\sum_{i=0}^{k-1}f_{m,k}(i)\Big)a_{2m-t+k}^{(m)} {\le} n^{-\epsilon/4}(k')^{-4} f_{m,k}(k) a_{2m-t+k}^{(m)}.
\end{equation}
On the other hand, by~\eqref{eqchoicem} and Lemma~\ref{lemdensity2}, we have
\begin{equation}\label{eqlbf1}|\ff_m[\m W]|\ge (2k')^{-2} |\ff|\ge (2k')^{-2} \cdot \tfrac{1}{64}f_{m,k}(k)a_{2m-t+k}^{(m)}.
\end{equation}
Combination of~\eqref{Eq:Aux-Medium-t5} and~\eqref{eqlbf1} yields
\begin{equation}\label{Eq:Aux-Medium-t6} 
|\m F_m[\m Y]| \geq (1-n^{-\epsilon/5})|\m F_m[\m W]|
\geq (1-n^{-\epsilon/5}) \cdot
(2k')^{-2} \cdot \tfrac{1}{64}f_{m,k}(k)a_{2m-t+k}^{(m)}
, 
\end{equation}
%\ohad{I think it is written wierd this way, I would write it as Combination of~\eqref{Eq:Aux-Medium-t5} and~\eqref{eqlbf1} yields
%$|\ff_m[\m W]| \ge n^{-\epsilon/5}|\ff_m[\m W\setminus \m Y]| $  } \nathan{I wrote in this form since I want to refer to this formula later on, and this form helps the later reference.}
provided $n$ is sufficiently large. This completes the proof of the lemma.
\end{proof}

\medskip \noindent \textbf{Step~4: Finding a simple sub-structure within $\m F$.}
In this step we leverage the simple structure found inside $\m W$ into a simple structure inside the entire family $\m F$, thus completing the proof of Theorem~\ref{thmfm2}. 

Recall that in Step~3 we showed that if~\eqref{eqchoicem} holds for $\m W:=\m W_m^{(2m-t+k)}$, where $m\in [p, p+k_p], k\in [0,k_{m}]$, then there exists a set $F'$ of size $2m-t+2k$, such that almost all sets in $\m F_m[\m W]$ contain a set belonging to $\m Y:= \m W \cap \binom{F'}{2m-t+k}$. In the case where~\eqref{eqchoicem} is obtained for $k=0$, $F'$ is the unique set in $\m W$, and otherwise, $F'=A \cup B$, where $A,B \in \m W$ satisfy $|A \cap B|=2m-t$. 
Our goal is to prove the assertion of Theorem~\ref{thmfm2} -- namely, that there exist $t/2 \leq m' \leq t$, an extendable set $F'$ of size $2m'-t$ of the form $F' = \{D,(M,\sigma_0(M))\}$, where $\sigma_0 \in S_n$, $M\subset D$ and $\sigma_0(M) \subset D$, and a family $\m H=\cup_{i=0}^{t}\m H_i$,  
where 
\[
\m H_i:=\{A\in \m A_i: |A\cap F'|\ge i-(t-m')\},
\]
such that $\m H$ corresponds to an $(n-t)$-intersecting family in $S_n$ and $|\m F \cap \m H| \geq (1-n^{-\epsilon/2})|\m F|$. We will show that the assertion holds for $m'=m+k$ and the set $F'$ defined above (or a slight modification of it).

\smallskip \noindent \emph{Showing the structural requirements on $F'$.} First, we observe that $\m Y$ is `fairly dense' inside $\binom{F'}{2m-t+k}$. Indeed, as for any $Y\in \m Y$ we have $|\aaa_m[Y]|\le a^{(m)}_{2m-t+k}$,~\eqref{Eq:Aux-Medium-t6} implies 
\begin{align}\label{eqlbony}
\begin{split}
|\m Y|&\ge (1-n^{-\epsilon/5}) \cdot
(2k')^{-2} \cdot \tfrac{1}{64}f_{m,k}(k) \geq 2^{-9} (k')^{-2} {2m-t\choose 2m-t-k} \\
&\geq 2^{-10}(k')^{-2}{2m-t+2k\choose 2m-t+ k}, 
\end{split}
\end{align}
where the last inequality holds due to~\eqref{eq2m-t}.

This allows us to deduce that $F'$ is extendable. In the case where~\eqref{eqchoicem} is obtained for $k=0$, the assertion is clear, as in this case $F'=F \in \m W$, and thus by the construction of $\m W$, $F'$ is included in some element of $\m F_m$, which implies that it is extendable. Assume that~\eqref{eqchoicem} is obtained for $k>0$, and assume on the contrary that $F'=A \cup B$ is not extendable. Denoting $A \cup B=(D,E)$, this means that there exist distinct pairs $(x_1,y_1),(x_2,y_2) \in E$ such that either $x_1=x_2$ or $y_1=y_2$. As all elements of $\m W$ are extendable and $\m Y \subset \m W$, this implies that no set in $\m Y$ can contain both $(x_1,y_1)$ and $(x_2,y_2)$. Hence, using a union bound and~\eqref{eq2m-t}, we obtain
\begin{align*}
|\m Y|&\le 2{2m-t+2k-1\choose 2m-t+k}=\frac {2k}{2m-t+2k}{2m-t+2k\choose 2m-t+k} 
\\ &\leq \tfrac{1}{50}n^{-\epsilon/4} (k')^{-8}{2m-t+2k\choose 2m-t+k},
\end{align*}
which contradicts \eqref{eqlbony} for a sufficiently large $n$. 

The extendability of $F'$ implies that we can write $F'=(D,E)$, where $E=(M,\sigma_0(M))$ for some $\sigma_0 \in S_n$. We claim that we can assume w.l.o.g.~that  
$M\subset D$ and $\sigma_0(M) \subset D$. Indeed, if $M \cup \sigma_0(M) \not \subset D$, then for any $u \in M \cup \sigma_0(M) \setminus D$, we can replace $F'$ with $F''=(D \cup \{u\}, E \setminus \{(u,\sigma_0(u))\})$ if $u \in M$, or $F''=(D \cup \{u\}, E \setminus \{(\sigma_0^{-1}(u),u)\})$ if $u \in \sigma_0(M)$. $F''$ is an extendable set of size $2m'-t$ and any   
set representation $F_\sigma \subset \mb X$ of a permutation $\sigma \in S_n$ such that $F_\sigma \supset F'$, must satisfy $F_\sigma \supset F''$ (as in this case, any element in $M \cup \sigma_0(M)$ is a moving point of $\sigma$). Hence, $F'$ can be replaced by $F''$ in the above argument. We can perform further such replacements, until there are no elements in $M \cup \sigma_0(M) \not \subset D$. Therefore, we can indeed assume that $M\subset D$ and $\sigma_0(M) \subset D$, as asserted.

\smallskip \noindent \emph{Showing the requirements on $m'$ and $\m H$.} Regarding $m'$, as $\m W_m^{(2m-t+k)} \neq \emptyset$, we have $2m-t+k \leq m$, 
%\ohad{Isnt it $2m-t+k \le 2m$?} \nathan{No, it is $2m-t+k \leq m$, which holds since $\m W_{2m-t+k}^{(m)} \neq \emptyset$.}
and thus, $m'=m+k \leq t$. On the other hand, $m' \geq m \geq p \geq t/2$. Hence, $t/2 \leq m' \leq t$, as asserted. 

Regarding $\m H$, for any $i_1,i_2$ and any sets $S_1\in \m H_{i_1}$, $S_2\in \m H_{i_2}$, by the definition of $\m H$ we have
\begin{align*}
    |(A\cup B)\cap S_1\cap S_2|&\ge (m'+i_1-t)+(m'+i_2-t)-(2m'-t) \\
    &= i_1+i_2-t.
\end{align*} 
Hence, Claim~\ref{clatranslation} implies that $\m H$ corresponds to an $(n-t)$-intersecting family of permutations.

\smallskip \noindent \emph{Showing that most elements of $\m F$ are contained in $\m H$.} Finally, we use the density of $\m Y$ in $\binom{2m-t+2k}{2m-t+k}$ to show that 
for any $p \leq i \leq p+k_p$ and any $S\in \m S_{i}$, we have $|S \cap F'| \geq m'+i-t,$ and thus, $S \in \m H$. By~\eqref{eqmostf-small}, this implies that $|\m F \cap \m H| \geq (1-n^{-\epsilon/2})|\m F|$, as asserted in the theorem. 

%\ohad{I guess the intuition without the details is that since $S,Y$ are $m+m'-t$ intersecting, and $Y$ can be almost anything in $A\cup B $, Then if $ L=S\cap (A\cup B) < m+m'-t+k$, We can choose $Y$ to be $2m-t+2k-|L|$ coordinates out of $(A\cup B) \setminus L$ and $|L|-k$ coordinates from $L$, but then it is not $m+m'-t$ intersecting with $L$ (and so with S). Only need to verify that choosing all the coordinates that are possible out of $L$ is the probable thing for random $Y$. But this is since if $|L| =m+m'-t+k-1 $, $2m-t+2k-|L| = m-m'+k \approx 2k_p$ is very small compares to $2m-t$, so not taking everything is not likely}
As $\m Y \subset \m W$, it follows from Step~2 that the families $\m S_{i}$ and  $\m Y$ are cross $(m+i-t)$-intersecting. For $k=0$, this immediately implies the conclusion. Hence, we assume that~\eqref{eqchoicem} is obtained for $k>0$ and assume on the contrary that for some $p \leq i \leq p+k_p$ and some $S \in \m S_i$, we have $|S \cap F'| \leq m'+i-t-1$. 
Fix $X\subset F'\setminus S$ of size $(2m'-t)-(m'+i-t-1)= k+(m-i)+1$. As $k \leq k_m$ and $m,i \in [p,p+k_p]$, we have 
\[
|X| \le k_m+k_p+1 \le 2k'.
\]
Hence, by a union bound and~\eqref{eq2m-t}, the fraction of sets in ${F'\choose 2m-t+k}$ that do not contain $X$ is at most 
\[
\frac{k \cdot 2k'}{2m-t+2k} \leq \tfrac{1}{50}n^{-\epsilon/4} (k')^{-7}.
\] 
By~\eqref{eqlbony}, this is much less than the fraction of sets in ${F'\choose 2m-t+k}$ that belong to $\m Y$. Hence, there exists $S'\in \m Y$ that contains $X$. As $X \cap S = \emptyset$, we have
\[
|S'\cap S|\le |S'|-|X| = (2m-t+k)-(k+m-i+1)<m+i-t,
\]
which contradicts the fact that $\m S_i$ and $\m Y$ are cross $(m+i-t)$-intersecting. This completes the proof of Theorem~\ref{thmfm2}.
\end{proof}

\section{Finding a Simple Sub-Structure Within $\m F$, for a Large $t$}
\label{sec:Large_t}

In this section, we study $(n-t)$-intersecting families, for $n^{(1+\epsilon)/2} \leq t \leq n-n^{1-\frac{\epsilon}{8}}$. Theorem~\ref{Thm:Main} asserts that in this range, the maximum size of an $(n-t)$-intersecting family is attained by a family of the form
\[
\m F_{n,n-t,(t-r)/2}=\{\sigma \in S_n: |\mathrm{Moving}(\sigma) \cap \{1,2,\ldots,n-r\}| \leq \tfrac{t-r}{2}\}.
\]
The value of $r$ for which the maximum is attained increases monotonically from $r \approx n^{(1+\epsilon)/2}$ for $t=n^{(1+\epsilon)/2}$ to $r=t$ for all $t > n/2$ (see Lemma~\ref{lemsizeofmaxFrankl} below). 

The approximation result we obtain in this range is almost exactly the same as in the `medium $t$' range (namely, Theorem~\ref{thmfm2} above; the only difference is replacing $n^{-\epsilon/2}$ in the upper bound by $n^{-\epsilon/6}$). The intuitive explanation of the result is the same as the explanation given right after  Theorem~\ref{thmfm2}, and thus we omit it here. 
\begin{thm}\label{thmfm3}
    For any $\epsilon \leq 0.01$, there exists $n_0 \in \mathbb{N}$ such that the following holds for all $n \geq n_0$ and all $n^{(1+\epsilon)/2} \leq t \leq n-n^{1-\frac{\epsilon}{8}}$. 
    Let $\m G \subset S_n$ be a maximum-size $(n-t)$-intersecting family of permutations. Let $\m F' \subset \mb [n] \cup \{(i,j):i,j \in [n], i \neq j\}$ be the family of sets that corresponds to $\m G$ in the representation of permutations by sets presented in Section~\ref{sec:PermutationstoSets}. Then there exist $t/2 \leq m' \leq t$, an extendable set $F'$ of size $2m'-t$ of the form $F' = \{D,(M,\sigma_0(M))\}$, where $\sigma_0 \in S_n$, $M\subset D$ and $\sigma_0(M) \subset D$, and a family $\m H=\cup_{i=0}^{t}\m H_i$,  
    where 
    \[\m H_i:=\{A\in \m A_i: |A\cap F'|\ge i-(t-m')\},
    \]
    such that $|\m F' \cap \m H| \geq (1-n^{-\epsilon/6})|\m F'|$. 
    \end{thm}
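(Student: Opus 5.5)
The plan follows the outline for the large-$t$ range given in the overview, reducing to the structure of the proof of Theorem~\ref{thmfm2}. I would proceed in four steps.

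\textbf{Step 1 (size reduction).} View $\m G$ as a family of $n$-subsets of $[n]\times[n]$. Write $\m A$ for the family of all such representations of $S_n$. It satisfies $a_j=(n-j)!$, so it is weakly $(R,j)$-spread with $R\approx n-j\geq n-t \ge n^{1-\epsilon/8}$, and the ratios $a_{j}/a_{j+1}$ are at most $n$ (so we may take $R_1=n$). Apply Theorem~\ref{thmapprox1} with intersection parameter $n-t$, $\sigma=10\log_2 n$, and the ``in addition'' exponent $\alpha$ chosen so that $R\ge 2^{30}(n-t)^{\alpha}$. This yields an $(n-t)$-intersecting family $\m Q$ of partial permutations. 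Each element has domain size at most $n-t+k$, where $k\le (n-t)^{1-\alpha}+O(\log n)\le n^{1/2-\epsilon/8}$. The remainder satisfies $|\m R|\le n^{-9}(n-t)!\le n^{-9}|\m G|$. The range restrictions $t\ge n^{(1+\epsilon)/2}$ and $n-t\ge n^{1-\epsilon/8}$ should make all hypotheses of Theorem~\ref{thmapprox1} hold; this is a routine check.

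\textbf{Step 2 (translating to sets).} Pass to $\m Q'$ and the set families $\m F_i\subset\m B_i$ via the partial-permutation transformation of Section~\ref{subsec:PartialPerms_to_Sets}. By Claim~\ref{clatranslation2}, the families $\m F_i,\m F_j$ are cross $(i+j-t')$-intersecting, where $t'=|I|-(n-t)\le k$. We then work in the weighted setting. Claim~\ref{claimdecay} gives weak $(r,\mu,\cdot)$-spreadness of each $\m B_i$ with $r\gtrsim\min\{n^{1/2+\epsilon/8},\,t\}$, which is at least $n^{1/2}$ in this range. Claim~\ref{b_intersecting} supplies the comparison $\mu(\m F)\ge b^{(i)}_{2i-t'}$, playing the role of Lemma~\ref{lemadecay}(v).

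Next, apply a weighted iterative peeling (equivalently, Theorem~\ref{thmapprox1} in its weighted form) to each $\m F_i$. Since set sizes are at most $3k\le 3n^{1/2-\epsilon/8}$ while spreadness is at least $n^{1/2}$, the lost size is bounded by a constant: approximating families $\m S_i$ consist of sets of size at most $2i-t'+\ell$ with $\ell\le 200/\epsilon$. The cross-intersection of the $\m S_i$ is proved exactly as in Lemma~\ref{lemregime2}(3), using Theorem~\ref{thmtao}. The lower layers $i<\lceil t'/2\rceil$ are discarded as in the medium case, using weighted analogues of Lemma~\ref{lemadecay}(vi).

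\textbf{Step 3 (peeling and pigeonhole).} Take $p$ minimal with $\m S_p\ne\emptyset$; then only $\m S_{p},\dots,\m S_{p+\ell}$ survive. Apply Lemma~\ref{lemsimplsimple} with $r=\ell+2$ to get families $\m W_i$. By pigeonhole, some $m,k$ satisfy $\mu(\m F_m[\m W_m^{(2m-t'+k)}])\ge(2\ell)^{-2}\mu(\m F)$.

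The key difference from the medium case is that now I expect to show $k=0$ directly. Lemma~\ref{lemsimplsimple}(3) gives $|\m W^{(2m-t'+k)}|\le\binom{2m-t'+k}{k}(\ell+2)^k\le (3k\,\ell)^{k}$. On the other hand, Claim~\ref{claimdecay} gives $b_{2m-t'+k}\le (r/2)^{-k}\,b_{2m-t'}$ with $r\ge n^{1/2}$. The product is then $\ll(2\ell)^{-2}b_{2m-t'}\le(2\ell)^{-2}\mu(\m F)$, a contradiction. So $k=0$, and $\m W^{(2m-t')}$ is a single set $F$, since it is $(2m-t')$-intersecting of uniformity $2m-t'$.

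\textbf{Step 4 (conversion back).} Define $F'$ from $F$, normalising it so that $M\subset D$ and $\sigma_0(M)\subset D$ as in the medium case. The set $F$ corresponds to a constraint on full permutations, fixing pairs both inside and outside $I$. We must convert the elements of $F$ outside $I$ (pairs $(j,\sigma(j))$ with $j\notin I$) into a set $F'$ in the original representation of $\m G$, with $m'$ defined relative to the true $t$. Then we check that every $S\in\m S_i$ meets $F$ enough, which places almost all of $\m F'$ in $\m H$. This step uses cross-intersection of $\m S_i$ with $\{F\}$.

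I expect this conversion to be the main obstacle: relating the $t'$-indexed structure on $\m Q'$ back to the $t$-indexed structure of Theorem~\ref{thmfm3}, and accounting for the extension losses in $\mu$. These losses are why the final error is $n^{-\epsilon/6}$ rather than $n^{-\epsilon/2}$.
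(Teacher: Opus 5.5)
Your Steps 1--3 follow the paper's route. Re-peeling $\m S_i$ in Step 3 is a harmless variant. The paper instead takes $F\in\m S_m^{(2m-t')}$ directly and bounds the higher layers using the last round of its weighted peeling, where the spreadness is at most $4000/\epsilon$. If you re-peel, use spreadness at least $2\ell+2$ ($\ell$ being your slack constant), since $F\in\m W_m$ must cross-intersect every $\m S_i$, not only $\m S_m$.

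The genuine gap is Step 4. This is where the statement is actually proved, and you leave it open. The missing idea is short.
\begin{itemize}
\item First left-multiply everything by a permutation fixing $I$ pointwise, so that the permutation $\pi$ underlying the pairs of $F=(D,(M,\pi(M)))$ has no fixed points in $[n]\setminus I$. This leaves every $D_\sigma$ unchanged. Without it, pairs $(x,x)$ with $x\notin I$ are not even elements of $\mb X$.
\item Then pad $D$ with all of $[n]\setminus I$: set $F'=(D\cup([n]\setminus I),(M,\pi(M)))$ and $m'=m+(n-|I|)=m+(t-t')$. Then $|F'|=2m'-t$ and $t/2\le m'\le t$.
\item Suppose a full permutation $\sigma$ extends some $Z\in\m B_i$ with $|Z\cap F|\ge m+i-t'$ and has $j$ moving points outside $I$. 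Then $|D_\sigma|=i+j$ and $|F_\sigma\cap F'|\ge (m+i-t')+j=m'+|D_\sigma|-t$. The extra moving points raise both sides equally, so $F_\sigma\in\m H$.
\end{itemize}
Since every $S\in\m S_i$ meets $F$ in at least $m+i-t'$ elements, all permutations of $\m G$ extending members of $\m F_i[\m S_i]$ land in $\m H$. There are no ``extension losses''. The $n^{-\epsilon/6}$ comes from the lower layers $i<\lceil t'/2\rceil$ (Claim~\ref{Cl:Aux-Large-t1}) and the approximation remainders.

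Step 2 is asserted rather than argued, and its obvious reading fails. A single peeling round needs spreadness above the initial slack, which is of order $k$. Lemma~\ref{lemsimplsimple2}(3) then bounds the layer of excess $s$ only by $\binom{j}{s}r^{s}b^{(i)}_j$. When $2i-t'$ is of order $k$, this is roughly $(k^2n^{-1/2})^{s}b^{(i)}_{2i-t'}/s!$, and $k^2$ can be as large as $n^{1-\epsilon/4}$, so the bound is useless.

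The paper obtains constant slack from a geometric schedule. Round $u$ peels at spreadness $r_u=60k2^{-u+1}$ and discards sets of excess above $6k2^{-u-1}$. Every discarded layer then satisfies $\binom{j}{s}r_u^{s}\le(400k)^{s}$, which the factor $n^{-s/2}$ beats. Cross-intersection is then proved by induction over the rounds. This is needed because the spread pieces of round $u$ live in $\m S_i^{u-1}$ rather than in $\m F_i$, so Lemma~\ref{lemregime2}(3) does not apply verbatim. Theorem~\ref{thmtao} is not needed, since sets have size below $r_u/2$ and a union bound already yields disjoint members.

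Invoking Theorem~\ref{thmapprox1} instead is not equivalent. It is stated for uniform families with additive weights, whereas $\m B_i$ is not uniform and $\mu$ is not additive. Its slack also contains a $4\sigma=\Theta(\log n)$ term, which your Step 3 would in fact tolerate.

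Finally, two slips in Step 1:
\begin{itemize}
\item Near level $n-t$ the spreadness of $S_n$ is $a_j/a_{j+1}=n-j\approx t$, not at least $n-t$.
\item The remainder is at most $n^{-9}a_{n-t}=n^{-9}t!\le n^{-9}|\m G|$. Your bound $n^{-9}(n-t)!$ does not work, since $(n-t)!$ can far exceed $|\m G|$.
\end{itemize}
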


\begin{proof}[Proof of Theorem~\ref{thmfm3}]
Let $n,t,\epsilon$ be as in the statement of the theorem and let $\m G \subset S_n$ be a maximum-size $(n-t)$-intersecting family of permutations. The proof consists of several steps.

\medskip \noindent \textbf{Step~1: Applying the `iterative spread approximation' simplification to the family $\m G$.} In this range of values of $t$, transforming $\m G$ to a family $\m F=\cup \m F_i$ of sets and applying to it the `iterative spread approximation' (like we did in the proof of Theorem~\ref{thmfm2}) is not sufficient, since the size of each set in $\m F_i$ (which is $2i$) is so much larger than the `intersection size' $2i-t$, that even after the simplification, the size of the sets remains too large for being exploited. To overcome this, we apply the iterative spread approximation lemma directly to the family $\m G$, at the expense of moving to the more complex setting of partial permutations. We view each $\sigma \in \m G$ as a set of $n$ ordered pairs, and thus, we view $\m G$ as an $n$-element subset of $\mb X'':= [n] \times [n] = \{(i,j):i,j \in [n]\}$. For any $0 \leq j \leq n$, we denote 
\[
\alpha_n^{(j)}:=|\max_{Z \subset \mb X'', |Z|=j}\{\sigma \in S_n:Z \subset \sigma\}|.
\]
The following statement describes the approximating family we obtain for $\m G$. 
\begin{lem}\label{lemregime3step1}
    Let $n,\epsilon,t,$ and $\m G$ be as defined above. View $\m G$ as a subset of $\binom{[n] \times [n]}{n}$ in the natural way. There exists an $(n-t)$-intersecting family $\m Q \subset \m P([n] \times [n])$, such that:
    \begin{enumerate}
        \item Each set in $\m Q$ is of size at most $(n-t)+(n-t)^{\frac{1}{2}-\frac{\epsilon}{8}}$, and 

        \item $|\m G\setminus \m G[\m Q]| \leq n^{-9}|\m G|$. 
    \end{enumerate} 
\end{lem}

\begin{proof}[Proof of Lemma~\ref{lemregime3step1}]
We would like to apply Theorem~\ref{thmapprox1} with $n^2$ in place of $n$, $n$ in place of $k$, $n-t$ in place of $t$, $n$ in place of $R_1$, $n^{\frac{1+(\epsilon/2)}{2}}$ in place of $R$, $10\log_2 n$ in place of $\sigma$,
the set $\mb X''$ in place of $[n]$, the family $S_n$ viewed as a subset of $ \binom{\mb X''}{n}$ in place of $\m A \subset \binom{[n]}{k}$, $\alpha_n^{(j)}$ in place of $a_j$, and the $(n-t)$-intersecting family $\m G \subset S_n$ in place of $\m F \subset \m A$. Let us verify that the assumptions of the theorem are satisfied. 

To see that the weak spreadness assumptions on $S_n$ are satisfied, note that for each $Z \subset \mb X''$, we have $|\{\sigma \in S_n:Z \subset \sigma\}|=(n-|Z|)!$, if $Z$ does not contain two distinct pairs $(i_1,j_1),(i_2,j_2)$ such that either $i_1=i_2$ or $j_1=j_2$, and $\{\sigma \in S_n:Z \subset \sigma\}=\emptyset$ otherwise. Hence, $\alpha_n^{(j)}=(n-j)!$ for all $j$, and thus, for any $s,t$, we have 
\[
\alpha_n^{(s+t)}= [(n-s)(n-s-1)\cdot \ldots \cdot (n-s-t+1)]^{-1} \alpha_n^{(s)}.
\]
As $t \geq n^{(1+\epsilon)/2}$, it follows that for a sufficiently large $n$, the family $S_n \subset \binom{\mb X''}{n}$ is weakly $(n^{\frac{1+(\epsilon/2)}{2}},\ell)$-spread for any $\ell \leq (n-t)+(n-t)^{1/6}(t+\sqrt{n})^{2/3}$, which includes the range of values of $t'$ for which weak $(R,t')$-spreadness is required in the theorem (see~\eqref{Eq:Aux-Iterative1}). 
%\nathan{This suffices only up to $t < 3n/4$. We can make it work until any $c'n$, by replacing $10$ with an appropriate constant, but if we want larger values of $t$, we cannot reach $c't$.} \andrey{I don't understand, what is the problem here. In order to apply the spread approx theorem, we only need to make sure that $R\ge Const\cdot n^{0.5+\epsilon,}$ right? And we can always guarantee such $(R,(n-t))$-spreadness in our regime. Actually, the smaller $n-t$ is, the easier it is, I believe. What am I missing?} \nathan{In your draft, you wrote that we apply the iterative spreadness approximation with $R=t/10$. I thought you need $R$ to be that large, in order to use the $R$-spreadness of the pieces guaranteed by the theorem at a later stage (like we did in the medium $t$ case). If we are fine with $R=n^{(1+\epsilon)/2}$, then indeed there is no problem here. Are we fine with this?}\andrey{I was not  careful with the choice of parameters in my draft of this statement - since I was anyway concerned with the cases of $n-t$ large. But what we need, say, for the application of the new argument (pp 38-39) is that $k\ll \sqrt n$. $k$ is essentially the uniformity after the set interpretation, but is also the maximum uniformity of sets in the iterative spread approximation, minus $(n-t)$ (if we talk about an $(n-t)$-intersecting family).   For that in the iterative spread approximation is sufficient that $R\gg \sqrt n.$ In particular, $R = n^{(1+\epsion)/2}$ would be fine.}
In the other direction, it is clear that for any $t' \leq t$, we have $\alpha_n^{(t')} \leq n^{t-t'} \alpha_n^{(t)}$. Hence, the two spreadness requirements on $S_n$ are indeed satisfied.

The assumptions $R \geq 2^{15}\log_2(4k)$ and $R \geq 2^{30}(t^{1/2}\log_2 t+\log_2 R_1)+200\sigma$, which in our case can be unified to 
\[
n^{\frac{1+(\epsilon/2)}{2}} \geq \max \left\{2^{15}\log_2(4n), 2^{30}((n-t)^{1/2}\log_2(n-t)+\log_2 n)+2000 \log_2 n \right\},
\]
clearly hold assuming $n$ is sufficiently large.

Therefore, we can apply Theorem~\ref{thmapprox1} to approximate $\m G$ by an $(n-t)$-intersecting family $\m Q$ consisting of `small' subsets of $\mb X''$ that represent partial permutations. By Assertion~(iii) of the theorem, $\m R:= \m G \setminus \m G[\m Q]$ satisfies 
\[
|\m R| \leq n^{-10} \cdot 4 \log_2(t \cdot n^{\frac{1+(\epsilon/2)}{2}}+10\log_2 n)\alpha_n^{(n-t)} \leq n^{-9} t! \leq n^{-9}|\m G|,
\]
where the last inequality holds  as the $(n-t)$-intersecting family  $\m F_{n,n-t,0} \subset S_n$ has size $t!$, while $\m G$ is assumed to be a maximum-sized $(n-t)$-intersecting family in $S_n$. This proves Assertion~(2) of the lemma.

\smallskip As for the sizes of the sets in $\m Q$, we can obtain an improved bound using the `in addition' part of Theorem~\ref{thmapprox1}. 
The additional assumption $R \geq 2^{30}t^{\alpha}$, which in our case reads as $n^{\frac{1+(\epsilon/2)}{2}} \geq 2^{30}(n-t)^{\alpha}$,
holds for $\alpha = \log_{n-t}(n^{\frac{1+(\epsilon/2)}{2}}/2^{30})$. 
As in our range, $t\ge n^{(1+\epsilon)/2}$, for a sufficiently large $n$ we have 
$\log_{n-t}(n^{\frac{1+(\epsilon/2)}{2}}/2^{30}) \ge \frac{1}{2}+\frac{\epsilon}{5}$. Hence, 
the size of each set in $\m Q$ is at most 
\[
(n-t)+(n-t)^{1-(\frac{1}{2}+\frac{\epsilon}{5})}+40\log_2 n+4\log_2(n-t+1) \leq (n-t)+(n-t)^{\frac{1}{2}-\frac{\epsilon}{8}},
\]
where the inequality holds assuming $n$ is sufficiently large.
This proves Assertion~(1) of the lemma.
\end{proof}

\noindent \emph{Transforming partial permutations to sets.} Consider the approximating family $\m Q$ obtained in the lemma, and view it as a family of partial permutations. As was shown above, $\m Q$ is $(n-t)$-intersecting and the domain of any $\sigma \in \m Q$ is of size at most $(n-t)+k$, where $k:=(n-t)^{\frac{1}{2}-\frac{\epsilon}{8}}$.  We would like to represent it by a family $\m F$ of subsets of $\mb X'=[n] \sqcup ([n] \times [n])$, in the way presented in Section~\ref{subsec:PartialPerms_to_Sets}. We briefly recall this representation in the following paragraphs. As was explained in Section~\ref{subsec:PartialPerms_to_Sets}, we may assume w.l.o.g.~that   
$\m Q$ contains a \emph{partial identity permutation} on some set $I$ of size $n-t \leq |I| \leq n-t+k$. We replace each $\sigma \in \m Q$ with all partial permutations $\sigma'$ containing it, whose domain contains $I$ (i.e., the set $\{\sigma' \in \Sigma_n: \sigma \subset \sigma', I \subset I_{\sigma'}\}$), thus obtaining an $(n-t)$-intersecting family $\m Q'$ in which the domains of all permutations contain $I$. We have 
$\m G[\m Q] = \m G[\m Q']$ (that is, each $\sigma \in \m G$ that contains some element of $\m Q$ must contain some element of $\m Q'$). 

Throughout the rest of this section, for each (partial or full) permutation $\sigma$, we set 
$D_\sigma := \mathrm{Moving}(\sigma)\cap I$, $M_\sigma:=(I_\sigma\setminus I)\cup D_\sigma$,
$E_{\sigma} := \{(i,\sigma(i)): i\in M_\sigma\}$, and $\m F_{\sigma}:= D_\sigma \sqcup E_{\sigma}$. We use the notations $E_\sigma := (M_\sigma, \sigma(M_\sigma))$ and $F_{\sigma}:=(D_{\sigma},E_{\sigma})$. By Observation~\ref{obssize}, for each $\sigma \in \m Q'$, we have 
$|D_\sigma|\le |I|-(n-t) \leq k, |M_\sigma|\le |D_{\sigma}|+k \le 2k,$ and $|F_\sigma|\le 2|D_{\sigma}|+k \leq 3k$.

Denote $t':=|I|-(n-t)$, so $|D_\sigma|\le t'$ for all $\sigma \in \m Q'$. To transform $\m Q'$ to a family of sets,
we first decompose  $\m Q' = \cup_{i=0}^{t'} \m Q'_i$, where $\m Q'_i:= \{\sigma\in \m Q': |D_{\sigma}|=i\}.$ Then, we transform $\m Q'$ into the family $\m F:= \sqcup_{i=0}^{t'} \m F_i \subset \m P(\mb X')$, where
$\m F_i = \{F_\sigma: \sigma\in \m Q'_i\}$.
For each $0 \leq i \leq t'$, we denote 
\[
\m B_i := \{F \subset \mb X': F=F_{\sigma},\sigma \in \Sigma_n^{(j)}, j \in \{n-t,\ldots,n-t+t'+k\}, |D_{\sigma}|=i, I_{\sigma} \supset I\}.
\]
Clearly, for each $i$ we have $\m F_i \subset \m B_i$. 

By Claim~\ref{clatranslation2}, for any $i,j$, the families $\m F_i$  and $\m F_j$ are cross $(i+j-t')$-intersecting. In particular, each $\m F_i$ is a $(2i-t')$-intersecting family of subsets of $\mb X'$. 

\medskip \noindent \textbf{Step~2: Iteratively applying a weighted variant of the `peeling simplification' procedure.}
%\andrey{This is what I actually call peeling-simplification procedure.} \nathan{I used this term throughout the paper, since the term `peeling-simplification' which you used your draft sounded less clear to me. If you prefer to use `peeling off' simplification (and especially if you use this term for this simplification in other papers), please tell me and I'll change throughout the paper.}
Fix $\lceil t'/2 \rceil \leq i \leq t'$. (As we shall show in Claim~\ref{Cl:Aux-Large-t1} below, the contribution of the $\m F_i$'s for $i< \lceil t'/2\rceil$ is negligible in the range of values of $t$ we consider now). 

$\m F_i$ is a $(2i-t')$-intersecting family of sets of size at most $2i+k$. While the difference between the size of the sets in $\m F_i$ and the intersection size is much smaller than at the beginning of Step~1 (where the size of the sets was $n$ and the intersection size was $n-t$), it is still too large for being exploited. In this step, we reduce this difference to a constant, using another approximation process based on a weighted variant of the `peeling  simplification' procedure.  

As was described in Section~\ref{subsec:PartialPerms_to_Sets}, for each partial permutation $\sigma \in \Sigma_n$, we set the weight $\mu(\sigma)$ to be  the number of extensions of $\sigma$ to a full permutation on $[n]$. For a family $\m Z \subset \Sigma_n$ we set $\mu(\m Z)$ to be the number of full permutations that extend at least one permutation from $\m Z.$ For a set $F_\sigma$, where $\sigma \in \Sigma_n$, we define $\mu(F_{\sigma}):=\mu(\sigma)$, and for a family $\m X \subset \m P(\mb X')$, we define $\mu(\m X)$ to be equal to the weight of the corresponding family of partial permutations. Note that by Lemma~\ref{lemregime3step1}, we have
\begin{equation}\label{Eq:Aux-Large-t00}
    \mu(\m F) \geq |\m G[\m Q']|=|\m G[\m Q]| \geq (1-n^{-9})|\m G|.
\end{equation}
For any $\ell \leq i$, we set 
$b^{(i)}_\ell:=\max_{F:|F|=\ell}\mu(\bb_i[F])$.

We use a weighted variant of the notion of spreadness that slightly differs from the notion of $(r,\mu)$-spreadness introduced in Section~\ref{subsec:PartialPerms_to_Sets}. In this notion, the size $|\m S|$ of a family $\m S$ is replaced by $\mu(\m B_i[\m S])$, rather than by $\mu(\m S)$. Namely, we say that a family $\m S \subset \mb X'$ is $(r,\mu,\m B_i)$-spread if for any non-empty $X \subset \mb X'$, we have 
\[
\mu(\m B_i[\m S[X]]) < r^{-|X|} \mu(\m B_i[\m S]).
\]
Note that for $\m S \subset \m B_i$, $\mu(\m B_i(\m S[X]))$, coincides with $\mu(\m S[X])$ for any $X$, and thus, $(r,\mu,\m B_i)$-spreadness coincides with $(r,\mu)$-spreadness. 

The $r$-simplification process described in Section~\ref{sec:peeling_off} can be performed verbatim, with `$r$-spread' replaced by `$(r,\mu,\m B_i)$-spread'. We obtain the following weighted variant of the `peeling simplification' lemma (i.e., Lemma~\ref{lemsimplsimple} above). 
\begin{lem}\label{lemsimplsimple2} Let $\mb X',i,t', \m B_i, \mu$ be as defined above, and let $p,r \in \mathbb{N}$ be such that $r>p-(2i-t')+1$. Let $\m X \subset \m P(\mb X')$ be a $(2i-t')$-intersecting family in which each set is of size at most $p$. Let $\m S$ be an $(r,\mu,\m B_i)$-simplification of $\m X$, and denote the family of all $j$-element sets in $\m S$ by $\m S^{(j)}$. Then:
\begin{enumerate}
    \item $\m X = \m X[\m S]$;

    \item $\m S$ is $(2i-t')$-intersecting; and

    \item For any $j \geq 2i-t'$, we have
    \[\mu(\m B_i[\m S^{(j)})|)\le {j\choose 2i-t'} \cdot r^{j-(2i-t')} \cdot b_j^{(i)}.
    \]
\end{enumerate}
%Furthermore, if $\m X,\m W \subset \m P(\mb X')$ are cross $(2i-t')$-intersecting families, both consisting of sets of size at most $p$, and $\m S_{\m X},\m S_{\m W}$ are their respective $(r,\mu,\m B_i)$-simplifications, then each of the families $\m X,\m S_{\m X}$ cross $(2i-t')$-intersects each of the families $\m W, \m S_{\m W}$. \nathan{Maybe the `furthermore' is not needed.}
\end{lem}

\begin{proof}[Proof of Lemma~\ref{lemsimplsimple2}]
    The proof is almost identical to the proof of Lemma~\ref{lemsimplsimple}, with $\mb X'$ in place of $[n]$, $2i-t'$ in place of $t$ and $j$ in place of $i$; only two changes are needed. One change is replacing $r$-spreadness with $(r,\mu,\m B_i)$-spreadness throughout the proof, and correspondingly, replacing the size of each family by its `weighted measure' (e.g., replacing $|\m Z_j|$ by $\mu(\m B_i[\m Z_j])$) in the places where sizes of families appear -- namely,~\eqref{Eq:Aux_Simplification0.5},~\eqref{Eq:Aux_Simplification1}, and the proof of Observation~\ref{obs13} that is used at the end of the proof of Lemma~\ref{lemsimplsimple}. The second change is adapting one statement from the proof of Observation~\ref{obs13}. Namely, that proof uses the obvious fact that if $\m F \subset \binom{[n]}{\leq k'}$ and $|X'|=k'$, then  $|\ff(X')| \leq 1$. In our case, this inequality should be replaced by the upper bound $\mu(\m B_i[\m F[X']]) \leq \max_{|X|=k'} \mu(\m B_i[\m F[X]]) \leq b_{k'}^{(i)}$, in which the second inequality holds since for any $X$ with $|X|=k'$, $\m F[X]$ contains at most one element. As this argument is used for $k'=j$, the assertion of Lemma~\ref{lemsimplsimple2} is weaker by a factor of $b_{j}^{(i)}$ than the assertion of Lemma~\ref{lemsimplsimple}. The rest of the proof can be applied verbatim. \end{proof}

The following lemma allows us to approximate $\m F_i$ by a $(2i-t')$-intersecting family in which all sets are of size at most $(2i-t')+\frac{200}{\epsilon}$. The constant difference between the size of the sets in the family and the intersection size which we obtain here is much smaller than the difference we obtained in the `medium $t$' range (which was $n^{2\epsilon}$, see Lemma~\ref{lemregime2}). The reason for the `better' situation here is that the weak $(r,\mu,s)$-spreadness property of the families $\m B_i$ is much stronger than the weak $(r,s)$-spreadness property of the families $\m A_i$ we dealt with in the `medium $t$' range. (The difference is between $r \approx n^{1/2}$ for $\m B_i$, as shown in Claim~\ref{claimdecay}, and $r \approx t$ that can be as small as $n^{\epsilon}$ for $\m A_i$, as shown in Lemma~\ref{lemadecay}(i)).
\begin{lem}\label{lem:iterative_peeling_off}
        Let $n,\epsilon,t,t', \m F$ be as defined above, and for each $\lceil t'/2 \rceil  \leq i \leq t'$, let $\m F_i$ be as defined above. There exist families $\m S_i \subset \m P(\mb X')$ such that:
    \begin{enumerate}
        \item Each set in $\m S_i$ is of size at most $(2i-t')+\frac{200}{\epsilon}$;

        \item $\mu(\m F_i \setminus \m F_i[\m S_i]) \leq n^{-9} |\m G|$;  
        %$\mu(\m B_i[\m F_i \setminus \m F_i[\m S_i]]) \leq n^{-9} |\m G|$; 
        %\nathan{How does this bound translate to a direct bound on $\mu(\m F_i \setminus \m F_i[\m S_i])$, which is what we really need?}\andrey{Aren't we just counting the number of permutations corresponding to the LHS and to the RHS anyway? This is again  negligible w.r.t. the whole family, so we can ignore it.}

        \item For any $i,j$, the families $\m S_i$ and $\m S_j$ are cross $(i+j-t')$-intersecting. 
    \end{enumerate} 
\end{lem}

\begin{proof}
For each $\lceil t'/2 \rceil \leq i \leq t'$, we apply Lemma~\ref{lemsimplsimple2} iteratively to construct a sequence of families approximating $\m F_i$. At each iteration, we show that the contribution of the `large' sets in the approximating family to the measure $\mu(\m F_i)$
%$\mu(\m B_i[\m F_i])$ 
is small, and then we remove them from the approximating family, thus reducing the maximum size of sets in it. We show that eventually, we can throw out all sets of size $> (2i-t')+\frac{200}{\epsilon}$, and still remain with a good approximation of $\m F_i$. 

Formally, for each $i$ we set $\m S_i^0:=\m F_i$ and for each $\ell=1,2,\ldots,$ we apply the following two-step procedure:
\begin{itemize}
    \item Apply Lemma~\ref{lemsimplsimple2} to the family $\m S_i^{\ell-1}$, with $p_\ell^i = (2i-t')+6k\cdot 2^{-\ell}$ in place of $p$ and $r_\ell = 60k\cdot 2^{-\ell+1}$ in place of $r$, to obtain the approximating family $\m V_i^{\ell}$;

    \item Set $\m S_i^\ell:=\m V_i^{\ell}\cap {\mb X'\choose \le p_{\ell+1}^i}$. 
\end{itemize}
We stop the process at the step $\ell_0:=\min\{\ell: p_{\ell+1}^i-(2i-t') \leq \frac{200}{\epsilon}\}$, and set $\m S_i:=\m S_i^{\ell_0}$. 

It is clear that throughout the process, the assumptions of Lemma~\ref{lemsimplsimple2} are satisfied and so the lemma can be applied. By Lemma~\ref{lemsimplsimple2}, all families $\m V_i^\ell, \m S_i^\ell$ constructed during the process, and in particular, $\m S_i$, are $(2i-t')$-intersecting. By construction, the size of each set in $\m S_i$ is at most $(2i-t')+\frac{200}{\epsilon}$. This proves Assertion~(1) of the lemma. 

\smallskip \noindent \emph{Proving the cross intersection assertion.}
Now we show that for any $\lceil t'/2 \rceil \leq i_1,i_2 \leq t$, the families $\m S_{i_1}, \m S_{i_2}$ are cross $(i_1+i_2-t')$-intersecting. To this end, we prove by induction on $\ell$ that for each $\ell \geq 0$, the families $\m S_{i_1}^{\ell}, \m S_{i_2}^{\ell}$ are cross $(i_1+i_2-t')$-intersecting. The assertion holds for $\ell=0$, since $\m S_{i_1}^{0} = \m F_{i_1}$ and $\m S_{i_2}^{0} = \m F_{i_2}$ are 
cross $(i_1+i_2-t')$-intersecting by Claim~\ref{clatranslation2}. The induction step we now present combines components from the proofs of Lemmas~\ref{lemsimplsimple}(3) and~\ref{lemregime2}(3) above. 

Let $\ell \geq 1$. Assume that the assertion holds for $\ell-1$, and suppose on the contrary that there exist $i_1,i_2$ and $S_1\in \m S_{i_1}^{\ell},$ $S_2\in \m S_{i_2}^{\ell}$ such that $|S_1\cap S_2| = i_1+i_2-t'-j$ for some $j>0$. Assume w.l.o.g.~that $i_2\ge i_1$. By the construction, we have $|S_{1}|\le 2i_1-t'+6k \cdot 2^{-\ell-1} = 2i_1-t'+\frac{r_{\ell}}{40}$. %\ohad{$|S_{i_1}|$ replaced by $|S_{1}|$?}\nathan{Yes!}. 
%\andrey{should it be $40$ instead of $20$ in this calculation?}
%\nathan{Indeed. Thanks!}
%\le 2i_1-t+n^{0.02}$, where the second inequality holds assuming $n$ is sufficiently large, since $\epsilon \leq 0.01$. 
Hence,
\[
0\le |S_1\setminus S_2| \le 2i_1-t'+\tfrac{r_{\ell}}{40}-(i_1+i_2-t'-j)=i_1+\tfrac{r_{\ell}}{40}+j-i_2,
\]
and consequently, $i_2\le i_1+\frac{r_{\ell}}{40}+j$. Applying the same argument with the roles of $i_1,i_2$ interchanged and using the bound $i_2-i_1 \leq \frac{r_{\ell}}{40}+j$, we obtain 
\[
|S_2\setminus S_1| \le 2i_2-t'+\tfrac{r_{\ell}}{40}-(i_1+i_2-t'-j)=i_2+\tfrac{r_{\ell}}{40}+j-i_1\le \tfrac{r_{\ell}}{20}+2j.
\]
By the process in which $\m S_{i_1}^\ell$ was constructed,  either $S_1 \in \m S_{i_1}^{\ell-1}$ or there exists $\m U_{i_1}^{\ell-1} \subset \m S_{i_1}^{\ell-1}$ such that $\m U_{i_1}^{\ell-1}(S_1)$ is  $(r_{\ell},\mu,\m B_{i_1})$-spread and contains at least two elements. Similarly, either $S_2 \in \m S_{i_2}^{\ell-1}$ or there exists $\m U_{i_2}^{\ell-1} \subset \m S_{i_2}^{\ell-1}$ such that $\m U_{i_2}^{\ell-1}(S_2)$ is  $(r_{\ell},\mu,\m B_{i_2})$-spread and contains at least two elements. If $S_1 \in \m S_{i_1}^{\ell-1}$ and $S_2 \in \m S_{i_2}^{\ell-1}$ then $|S_{1} \cap S_{2}| \geq i_1+i_2-t'$, 
%\ohad{ $|S_{1} \cap S_{2}| \geq i_1+i_2-t'$?}\nathan{Yes!} 
contradicting the assumption. Hence, we assume w.l.o.g.~that regarding $S_1$, the latter holds.
Denote for simplicity $\m U_1:= U_{i_1}^{\ell-1}$. 
Set
\[
\m X_1:=\{S' \in \m U_1(S_1): |S' \cap (S_2 \setminus S_1)| < \lceil \tfrac{j}{2} \rceil \}. 
\]
We claim that $\mu(\m B_{i_1}(\m X_1)) \geq \frac{1}{2} \mu(\m B_{i_1}(\m U_1(S_1)))$ (and in particular, $\m X_1 \neq \emptyset$). Indeed, by the $(r_{\ell},\mu,\m B_{i_1})$-spreadness of $\m U_1(S_1)$ and a union bound, we have
\begin{align*}
\frac{\mu(\m B_{i_1}(\m U_1(S_1) \setminus \m X_1))}{\mu(\m B_{i_1}(\m U_1(S_1)))} &\leq 
\binom{|S_2\setminus S_1|}{\lceil \frac{j}{2} \rceil}r_{\ell}^{\lceil -j/2 \rceil}\le \Big(\frac{2e |S_2\setminus S_1|}{jr_{\ell}}\Big)^{\lceil j/2 \rceil} \\ &\le \Big(\frac{2e (\tfrac{r}{20}+2j)}{jr}\Big)^{\lceil j/2 \rceil} \leq \frac{1}{2}. 
\end{align*}
Hence, $\m X_1$ is an $(r_{\ell}/2,\mu,\m B_{i_1})$-spread family. 

If $S_2 \in \m S_{i_2}^{\ell-1}$, then for any $X_1 \in \m X_1$, the set $S'_1:= (S_1 \sqcup X_1) \in \m S_{i_1}^{\ell-1}$ satisfies 
\[
|S'_1 \cap S_2|=|S_1 \cap S_2|+|X_1 \cap (S_2 \setminus S_1)| < i_1+i_2-t'-j+\lceil \tfrac{j}{2} \rceil \leq i_1+i_2-t',
\]
contradicting the assumption that $\m S_{i_1}^{\ell-1}$ and $\m S_{i_2}^{\ell-1}$ are cross $(i_1+i_2-t')$-intersecting. 

If
$S_2 \not \in \m S_{i_2}^{\ell-1}$, we perform for $S_2$ the same process as for $S_1$, to obtain an $(r_{\ell}/2,\mu,\m B_{i_2})$-spread family $\m X_2 \subset \m S_{i_2}^{\ell-1}$, defined similarly to $\m X_1$.
Let $X_1 \in \m X_1$. As $X_1 \in \m U_1(S_1) \subset \m S_{i_1}^{\ell-1}$ and $S_1$ belongs to the $(2i_1-t')$-intersecting family $\m S_{i_1}^\ell$, we have 
\[
|X_1| \leq (2i_1-t')+6k \cdot 2^{-\ell}-(2i_1-t')=6k \cdot 2^{-\ell} < r_{\ell}/2. 
\]
Hence, there exists $X_2 \in \m X_2$ such that $X_1 \cap X_2 = \emptyset$, as otherwise, by the pigeonhole principle we would have 
\[
\mu(\m B_{i_2}(\m X_2(\{v\}))) > (\tfrac{r_{\ell}}{2})^{-1} \mu(\m B_{i_2}(\m X_2))
\]
for some $v \in X_1 \setminus (S_1 \cup S_2)$, contradicting the $(r_{\ell}/2,\mu,\m B_{i_2})$-spreadness of $\m X_2$. The sets 
$S'_1:= (S_1 \sqcup X_1) \in \m S_{i_1}^{\ell-1}$ and $S'_2:= (S_2 \sqcup X_2) \in \m S_{i_2}^{\ell-1}$ satisfy 
\begin{align*}
|S'_1 \cap S'_2|&=|S_1 \cap S_2|+|X_1 \cap (S_2 \setminus S_1)| + |(S_1 \setminus S_2) \cap X_2|+|X_1 \cap X_2| \\ &\leq i_1+i_2-t'-j+(\lceil \tfrac{j}{2} \rceil -1) + (\lceil \tfrac{j}{2} \rceil -1) < i_1+i_2-t',
\end{align*}
contradicting the assumption that $\m S_{i_1}^{\ell-1}$ and $\m S_{i_2}^{\ell-1}$ are cross $(i_1+i_2-t')$-intersecting. 
This completes the proof of Assertion~(3) of the lemma.

\smallskip \emph{Bounding from above the contribution of the removed sets.} It remains to show that for each $i$, $\mu(\m F_i \setminus \m F_i[\m S_i]) \leq n^{-9} |\m G|$. 
%$\mu(\m B_i[\m F_i \setminus \m F_i[\m S_i]]) \leq n^{-9} |\m G|$. 
By Lemma~\ref{lemsimplsimple2}, for each $\ell \geq 1$ we have $\m S_i^{\ell-1} = \m S_i^{\ell-1}[\m V_i^\ell]$, and hence, $\m F_i \setminus \m F_i[\m S_i] \subset \cup_{\ell \geq 1} (\m F_i[\m V_i^\ell \setminus \m S_{i}^\ell])$. Consequently, by a union bound, we have
\begin{equation}\label{Eq:Aux-Large-t0}
\mu(\m F_i \setminus \m F_i[\m S_i]) = \mu(\m B_i[\m F_i \setminus \m F_i[\m S_i]]) \leq \sum_{\ell=1}^{\ell_0} \mu(\m B_i[\m V_i^\ell \setminus \m S_i^\ell]), 
\end{equation}
where the equality holds since $\m F_i \subset \m B_i$ for all $i$. Fix $\ell \geq 1$. We have $V_i^\ell \setminus \m S_i^\ell = \m V_i^\ell  \cap (\cup_{j=p_{\ell+1}+1}^{p_\ell} {\mb X'\choose j})$. By the application of Lemma~\ref{lemsimplsimple2} at the $\ell$'th step of the process, for each $p_{\ell+1}+1 \leq j \leq p_\ell$ we have
\begin{align}\label{Eq:Aux-Large-t1}
    \begin{split}
        \frac{\mu(\m B_i[\m V_i^{\ell}\cap{\mb X'\choose j}])}{b^{(i)}_{j}} &\le {j\choose 2i-t'}r_\ell^{j-(2i-t')} = {j\choose j-(2i-t')}r_\ell^{j-(2i-t')} \\
        &\le \Big(\frac{ej}{j-(2i-t')}\Big)^{j-(2i-t')}\big(60k\cdot 2^{-\ell+1}\big)^{j-(2i-t')} \\ &\le \Big(\frac{3ek}{6k\cdot 2^{-(\ell+1)}}\cdot\big(60k\cdot 2^{-\ell+1}\big)\Big)^{j-(2i-t')} \\
        &\le (400 k)^{j-(2i-t')},
    \end{split}
\end{align}
where the penultimate inequality holds as $j \geq p_{\ell+1}$ and as by Observation~\ref{obssize} all sets in $\m F_i$ are of size at most $3k$.
Recall that by Claim~\ref{claimdecay}, for any $\ell$ such that $b^{(i)}_{\ell+1}>0$, we have 
\begin{equation}\label{Eq:Aux-Large-t4}
 \frac{b^{(i)}_\ell}{ b^{(i)}_{\ell+1}} \geq \frac12 \cdot \min \left\{\frac{n-t-k}{k}, t- 3k \right\} \geq \frac{1}{4} \cdot n^{(1-\frac{\epsilon}8)(\frac{1}{2}+\frac{\epsilon}{8})} \geq n^{1/2}, 
\end{equation}
where the second inequality holds since $n^{(1+\epsilon)/2} \leq t \leq n-n^{1-\frac{\epsilon}{8}}$ and $k=(n-t)^{\frac{1}{2}-\frac{\epsilon}{8}}$, and the third inequality holds for a sufficiently large $n$. Consequently, for any $j \geq 2i-t'$, we have $\frac{b^{(i)}_{2i-t'}}{b^{(i)}_{j}} \geq n^{\frac{j-(2i-t')}{2}}$. Combining this with~\eqref{Eq:Aux-Large-t1}, we obtain that for any $j \geq (2i-t')+\frac{100}\epsilon$, 
\[
\frac{\mu(\m B_i[\m V_i^{\ell}\cap{\mb X'\choose j}])}{b^{(i)}_{2i-t'}}
\le \frac{(400 k)^{j-(2i-t')}b^{(i)}_{j}}{b^{(i)}_{2i-t'}}\le n^{-\frac{\epsilon}{10} \cdot (j-(2i-t'))} \leq n^{-10}, \]
where the penultimate inequality holds for a sufficiently large $n$ since $k \leq n^{\frac{1}{2}-\frac{\epsilon}{8}}$. As all sets removed during the process are of size larger than $(2i-t')+\frac{100}{\epsilon}$, this implies that for each $1 \leq \ell \leq \ell_0$, 
\[
\mu(\m B_i[V_i^\ell \setminus \m S_i^\ell]) \leq \sum_{j=p_{\ell+1}+1}^{p_{\ell}} \mu \left(\m B_i \left[V_i^\ell \cap {\mb X'\choose j}\right] \right) \leq (p_\ell-p_{\ell+1})\cdot n^{-10} b^{(i)}_{2i-t'}.
\]
Combining this with~\eqref{Eq:Aux-Large-t0} yields
\[
\mu(\m F_i \setminus \m F_i[\m S_i]) \leq \sum_{\ell=1}^{\ell_0} \mu(\m B_i[\m V_i^\ell \setminus \m S_i^\ell]) \leq \sum_{\ell=1}^{\ell_0} (p_\ell-p_{\ell+1})\cdot n^{-10} b^{(i)}_{2i-t'} \leq n^{-9} b^{(i)}_{2i-t'}. 
\]
Finally, as by Claim~\ref{b_intersecting}, $b^{(i)}_{2i-t'}$ is the size of an $(n-t)$-intersecting family of permutations and $\m G$ is assumed to be a maximum-size $(n-t)$-intersecting family in $S_n$, we have $n^{-9} b^{(i)}_{2i-t'} \leq n^{-9} |\m G|$. This completes the proof of Lemma~\ref{lem:iterative_peeling_off}.
\end{proof}

\medskip \noindent \textbf{Step~3: Finding a simple sub-structure within $\m F$.} We begin this step with further simplifying the family $\m F$ by showing that certain parts of it have a negligible contribution to $\mu(\m F)$ and thus can be removed without affecting the assertion significantly. The following claim shows that 
the contribution of the $\m F_i$'s for $i< \lceil t'/2\rceil$ to $\mu(\m F)$ is negligible.
\begin{claim}\label{Cl:Aux-Large-t1}
    Let $n,t,t', \m F, \m B_i$ be as defined above. We have
    \[
    \sum_{i=0}^{\lceil t'/2 \rceil-1} \mu(\m F_i) \leq \sum_{i=0}^{\lceil t'/2 \rceil-1} \mu(\m B_i) \leq n^{-\frac{\epsilon}{4}} |\m G|.
    \]
\end{claim}

\begin{proof}
The left inequality is obvious since $\m F_i \subset \m B_i$ for all $i$. To prove the right inequality, note that as was explained in the proof of Claim~\ref{claimdecay}, 
$\mu(\m B_{\lceil t'/2 \rceil})$ is the number of full permutations that have exactly $\lceil t'/2 \rceil$ moving points in $I$. This number is clearly lower bounded by  
\[
{|I| \choose \lceil t'/2 \rceil}\cdot d_{n-|I|+\lceil t'/2 \rceil} \geq {|I| \choose \lceil t'/2 \rceil} \cdot \frac13 \cdot (n-|I|+\lceil t'/2 \rceil)!,
\]
where the inequality uses~\eqref{eq:der-short}. 
Similarly, as $\mu(\cup_{i=0}^{\lceil t'/2 \rceil-1} \m B_i)$ is the number of full permutations that have at most $\lceil t'/2 \rceil-1$ moving points in $I$, we have 
\[
\mu(\cup_{i=0}^{\lceil t'/2 \rceil-1} \m B_i) \leq \binom{|I|}{\lceil t'/2 \rceil-1}(n-|I|+\lceil t'/2 \rceil-1)!.
\]
Hence, 
\begin{align*}
\frac{\mu \left(\cup_{i=0}^{\lceil t'/2 \rceil-1} \m B_i \right)}{\mu(\m B_{\lceil t'/2 \rceil})} &\leq \frac{\binom{|I|}{\lceil t'/2 \rceil-1}(n-|I|+\lceil t'/2 \rceil-1)!}{{|I| \choose \lceil t'/2 \rceil} \cdot \frac13(n-|I|+\lceil t'/2 \rceil)!} \\ &= \frac{3\lceil t'/2 \rceil}{(|I|-\lceil t'/2 \rceil+1)(n-|I|+\lceil t'/2 \rceil)}.
\end{align*}
As $n^{(1+\epsilon)/2} \leq t \leq n-n^{1-\frac{\epsilon}{8}}$
and $t'=|I|-(n-t)$ satisfies $t' \leq k \leq n^{\frac{1}{2}-\frac{\epsilon}{8}}$, it follows that for a sufficiently large $n$,
\begin{equation}\label{Eq:Aux-Large-t3} 
    \mu \left(\cup_{i=0}^{\lceil t'/2 \rceil-1} \m B_i \right) \leq n^{-1-\frac{\epsilon}{4}}\mu(\m B_{\lceil t'/2 \rceil}).
\end{equation}
%\andrey{I think that this estimate could be improved (if needed) - and it may be needed if we seriously want to obtain a stability result.} \nathan{Probably yes. But it seems this is not needed for the stability claim we prove. Maybe this will be needed for Hilton-Milner.}  
If $t'$ is even, then by Claim~\ref{b_intersecting}, $\mu(\m B_{\lceil t'/2 \rceil})=b^{(t'/2)}_0$ is the size of an $(n-t)$-intersecting family of permutations, and hence,~\eqref{Eq:Aux-Large-t3} implies that $\mu \left(\cup_{i=0}^{\lceil t'/2 \rceil-1} \m B_i \right) \leq n^{-1-\frac{\epsilon}{4}} |\m G|$. To handle the case of an odd $t$, note that for any $i$, we clearly have $b^{(i)}_0/ b^{(i)}_{1} \leq n$. Thus,~\eqref{Eq:Aux-Large-t3} implies that 
\[
\mu \left(\cup_{i=0}^{\lceil t'/2 \rceil-1} \m B_i \right) \leq n^{-1-\frac{\epsilon}{4}}b^{(\lceil \frac{t'}{2} \rceil)}_0 \leq n^{-\frac{\epsilon}{4}}b^{(\lceil t'/2 \rceil)}_1 \leq n^{-\frac{\epsilon}{4}}|
\m G|,
\]
where the last inequality holds since by Claim~\ref{b_intersecting}, $b^{(\lceil t'/2 \rceil)}_1$ is the size of an $(n-t)$-intersecting family of permutations. 
%\nathan{Maybe change this argument so it will be similar to the `small t' setting.} 
Hence, for both even and odd values of $t'$, the assertion of the claim follows. 
\end{proof} 

The combination of~\eqref{Eq:Aux-Large-t00},  Lemma~\ref{lem:iterative_peeling_off}(2), and Claim~\ref{Cl:Aux-Large-t1} yields that for a sufficiently large $n$,  
\begin{equation*}\mu \left(\ff\setminus\bigcup_{i=\lceil t'/2 \rceil}^{t'} \ff_i[\m S_i]\right) \leq n^{-\epsilon/5} |\m G| \leq n^{-\epsilon/6} \mu(\m F). 
\end{equation*}
Let $q$ be the smallest integer such that for any $\lceil t'/2 \rceil \leq i \leq t'$, the size of any set in $\m S_i$ is at most $(2i-t')+q$. By Lemma~\ref{lem:iterative_peeling_off}(1), $q \leq \frac{200}{\epsilon}$. Let $p\ge \lceil t'/2\rceil$ be the smallest such that $\m S_{p} \neq \emptyset$. Then all the families $\m S_{p+q+j}$, $j\ge 1$, must be empty, as by Lemma~\ref{lem:iterative_peeling_off}(3), for each $S_1 \in \m S_p$ and $S_2 \in \m S_{p+q+j}$ we have
\[
|S_1 \cap S_2| \geq p+(p+q+j)-t'>(2p-t')+q,
\]
while the maximum size of a set in $\m S_p$ is at most $(2p-t')+q$.
Combining with the above, we obtain
\begin{equation}\label{eqmostf} \mu(\ff\setminus \bigcup_{i=0}^{q}\ff_{p+i}[\m S_{p+i}]) \leq n^{-\epsilon/5}|\m G| \leq n^{-\epsilon/6} \mu(\ff).
\end{equation}
For each $p \leq m \leq p+q$, denote $\m S_m^{(j)}:= \{S \in \m S_{m}:|S|=j\}$. As for each $p \leq m \leq p+q$, $\m S_{m}= \cup_{j=2m-t'}^{2m-t'+q}\m S_m^{(j)}$, by the pigeonhole principle there exist $p \leq m \leq p+q$ and $0 \leq l \leq q$ such that
\begin{equation}\label{eqchoicem2} \mu(\ff_{m}[\m S_m^{(2m-t'+l)}])\ge (q+1)^{-2}(1-n^{-\epsilon/6}) \mu(\ff) \geq \frac{\epsilon^2}{50000}\mu(\m F).
\end{equation}

The following claim asserts that~\eqref{eqchoicem2} must hold for $l=0$. 
\begin{claim}\label{Cl:Aux-Large-t2}
    Let $\m F$ be as above, and let $p \leq m \leq p+q$ and $0 \leq l \leq q$ be such that $\mu(\ff_{m}[\m S_m^{(2m-t'+l)}])\geq \frac{\epsilon^2}{50000}\mu(\m F)$. Then $l=0$.
\end{claim}

\begin{proof}[Proof of Claim~\ref{Cl:Aux-Large-t2}]
Recall that by the proof of Lemma~\ref{lem:iterative_peeling_off}, $\m S_m \subset \m V_m^{\ell_0}$, where $\m V_m^{\ell_0}$ was constructed by applying Lemma~\ref{lemsimplsimple2} to the family $\m S_m^{\ell_0-1}$, with $p_{\ell_0}^m = (2m-t')+6k\cdot 2^{-\ell_0}$ in place of $p$ and $r_{\ell_0} = 60k\cdot 2^{-\ell_0+1} \leq \frac{4000}{\epsilon}$ in place of $r$. By Lemma~\ref{lemsimplsimple2}, this implies that for any $l \geq 0$, 
\begin{align}\label{eqdecaylast}
\begin{split}
\mu(\m B_m(\m S_m^{(2m-t'+l)}))&\le {2m-t'+l\choose{2m-t'}}\cdot \left(\tfrac{4000}{\epsilon} \right)^{l} \cdot b_{2m-t'+l}^{(m)} \\ &\le \left(\tfrac{4000 \cdot 2t'}{\epsilon} \right)^{l} \cdot b_{2m-t'+l}^{(m)}
\le n^{(\frac{1}{2}-\frac{\epsilon}{9})l} b_{2m-t'+l}^{(m)},
\end{split}
\end{align}
where the last inequality holds for a sufficiently large $n$, since $t' \leq k \leq n^{\frac{1}{2}-\frac{\epsilon}{8}}$. 

Since by~\eqref{Eq:Aux-Large-t4}, for any $j \leq i \leq k$ we have $b^{(i)}_j/ b^{(i)}_{j+1} \geq n^{1/2}$,~\eqref{eqdecaylast} implies that 
\begin{align*}
    \begin{split}
        \frac{\sum_{l=1}^{q} \mu(\ff_m(\m S_m^{(2m-t'+l)}))}{b^{(m)}_{2m-t'}}&\le \frac{\sum_{l=1}^{q} \mu(\m B_m(\m S_m^{(2m-t'+l)}))}{b^{(m)}_{2m-t'}} \leq   \frac{\sum_{l=1}^{q}n^{(\frac{1}{2}-\frac{\epsilon}{9})l} \cdot b_{2m-t'+l}^{(m)}}{b^{(m)}_{2m-t'}} \\
        &\le \sum_{l=1}^q \frac{n^{(\frac{1}{2}-\frac{\epsilon}{9})l}}{n^{\frac l2 }} \leq n^{-\epsilon/10},
    \end{split}
\end{align*}
where the last inequality holds for a sufficiently large $n$ since $q \leq \frac{200}{\epsilon}$. Therefore, we have
\[
\sum_{l=1}^{q} \mu(\ff_m(\m S_m^{(2m-t'+l)})) \leq n^{-\epsilon/10} b^{(m)}_{2m-t'} \leq n^{-\epsilon/10}\mu(\m G) \leq n^{-\epsilon/11}\mu(\m F),
\]
where the penultimate inequality holds since $b^{(m)}_{2m-t'}$ is the size of an $(n-t)$-intersecting family of permutations, and the last inequality holds by~\eqref{Eq:Aux-Large-t00}. Since $\frac{\epsilon^2}{50000} > n^{-\epsilon/11}$ for a sufficiently large $n$, this implies that~\eqref{eqchoicem2} must hold for $l=0$, as asserted.
\end{proof}

Claim~\ref{Cl:Aux-Large-t2} implies, in particular, that there exists $\lceil t'/2 \rceil \leq m \leq t'$ such that $\m S_{m}^{(2m-t')} \neq \emptyset$. Let $F \in \m S_{m}^{(2m-t')}$.
As $\m S_m$ is a $(2m-t')$-intersecting family by Lemma~\ref{lem:iterative_peeling_off}(3), all sets in $\m S_m$ contain $F.$

Set $\m H'=\cup_{i=0}^{t'}\m H'_i$, where 
\[
\m H'_i:=\{B\in \m B_i: |B\cap F|\ge i-(t'-m)\}.
\]
Since for any $\lceil t'/2 \rceil \leq i \leq t'$, the family $\m S_i$ cross $(i+m-t')$-intersects the family $\m S_m$ by Lemma~\ref{lem:iterative_peeling_off}(3), we have $\m F_i \subset \m H'_i$ for all $\lceil t'/2 \rceil \leq i \leq t'$. %\nathan{The fact that $\m S_i$ cross $(i+m-t')$ intersects $\m S_m$ does not readily imply that each $S \in \m S_i$ intersects $F$ in $i+m-t'$ elements! Another argument is needed here!}
By~\eqref{eqmostf}, this implies  
\begin{equation}\label{Eq:Aux-Large-t5}
    \mu(\m F \setminus (\m F \cap \m H')) \leq 
n^{-\epsilon/5}|\m G| \leq n^{-\epsilon/6} \mu(\ff),
\end{equation}
which means that $\m F$ is `almost' contained in $\m H'$.

\medskip \noindent \textbf{Step~4: Transforming from partial permutations to full permutations.}
The last step of the proof is leveraging the structural result we obtained for $\m F$ to the original setting of full permutations and their representation by sets.

Let $\m F' \subset \mb [n] \cup \{(i,j):i,j \in [n], i \neq j\}$ be the family of sets that corresponds to $\m G$ in the representation of full permutations by sets presented in Section~\ref{sec:PermutationstoSets}. To complete the proof of Theorem~\ref{thmfm3}, we have to show that there exist $t/2 \leq m' \leq t$, an extendable set $F'$ of size $2m'-t$, and a family $\m H=\cup_{i=0}^{t}\m H_i$,  
where $\m H_i:=\{A\in \m A_i: |A\cap F'|\ge i-(t-m')\}$,
such that $|\m F' \cap \m H| \geq (1-n^{-\epsilon/6})|\m F'|$.

\smallskip

Consider $F \subset \mb X'$ constructed in the previous step. Denote $F = \{D,(M,\pi(M))\}$, where $\pi \in S_n$. We may assume w.l.o.g.~that $\pi$ has no fixed points in $[n] \setminus I$. Otherwise, we can take a permutation $\pi'$ such that $\pi'|_I=Id_I$ and $\pi' \pi$ has no fixed points in $[n] \setminus I$ (which is possible since $n-|I|>1$), replace the original family $\m G$ by $\pi'\m G$, and repeat the above process with all permutations multiplied by $\pi'$ and all representations of (partial) permutations by sets modified accordingly. Set
\[
F'=\{D',(M, \pi(M))\}, \qquad \mbox{where} \qquad D' = D\cup ([n]\setminus I).
\]
\emph{Extendability of $F'$, its size and structure.} The set $F'$ is clearly extendable (i.e., represents part of the information on the set of moving points of some $\sigma \in S_n$ and the places they move to). Indeed, the only possible obstacle for the extendability of $F'$ is that $(M,\pi(M))$ contains two different pairs of the form $(x,y),(x',y)$ or $(x,y),(x,y')$. In such a case, $F$ would contain these pairs as well, and then we would have $\mu(\m S_m[F])=0$ (as there would be no way to extend $F$ into a full permutation), contradicting~\eqref{eqchoicem2}. 

Put $m':=m+n-|I|$ (so $m'-m=t-t'$). As $\frac{t'}{2} \leq m \leq t'$, we have $\frac{t}{2} \leq t-\frac{t'}{2} \leq m' \leq t$. Furthermore,  
\[
|F'| = |F|+n-|I| = (2m-t')+(n-|I|)=2m'-t,
\]
and thus, $F'$ is extendable and of size $2m'-t$ for some $\frac{t}{2} \leq m' \leq t$. Furthermore, we can assume w.l.o.g.~that   
$M\subset D'$ and $\pi(M) \subset D'$. The easy argument showing this is identical to the argumentation of the corresponding statement in Step~4 of the proof of Theorem~\ref{thmfm2}, and thus we omit it here. Hence, $F'$ has the asserted structure. Define 
\[
\m H=\cup_{i=0}^{t}\m H_i, \qquad \mbox{where} \qquad \m H_i = \{A\in \m A_i: |A\cap F'|\ge m'+i-t\}.
\]
\emph{$\m H$ corresponds to an $(n-t)$-intersecting family of permutations.} For any $i_1,i_2$, any $S_1 \in \m H_{i_1}$ and $S_2 \in \m H_{i_2}$ intersect on at least $(m'+i_1-t)+(m'+i_2-t)-(2m'-t)=i_1+i_2-t$ elements of $F'$, and hence, $\m H_{i_1}$ and $\m H_{i_1}$ are cross $(i_1+i_2-t)$-intersecting. By Claim~\ref{clatranslation}, this implies that $\m H$ corresponds to an $(n-t)$-intersecting family of permutations.  

\medskip \noindent \emph{$\m F'$ is `almost' contained in $\m H$.} In order to prove this, we show that for any $0 \leq i \leq t'$, any full permutation that extends a partial permutation from $\m H'_i$, belongs to $\m H$. Let $\sigma \in S_n$ be a full permutation and denote $F_\sigma = \{D_\sigma, (M_\sigma, \sigma(M_{\sigma}))\}$. Assume that $F_\sigma$ extends a set $Z\in \m H'_i$ and that $|D_\sigma| = i+j$. We have $F_\sigma\in \aaa_{i+j}$ and $|D_\sigma\cap ([n]\setminus I)|=j$. Hence, 
\[
|F_\sigma \cap F'| = |F_\sigma \cap F|+|D_\sigma\cap ([n]\setminus I)| = (m+i-t')+j = m' +(i+j) -t.
\]
Thus, $F_\sigma\in \m H'_{i+j}$, and in particular, $F_{\sigma} \in \m H$. By Lemma~\ref{lemregime3step1}, all but $n^{-9}|\m G|$ of the permutations in $\m G$ extend some partial permutation in $\m Q'$ which corresponds to a set in $\m F$, and on the other hand, by~\eqref{Eq:Aux-Large-t5}, the number of full permutations extending some set in $\m F \setminus (\m F \cap \m H')$ is at most $n^{-\epsilon/5}|\m G|$. Therefore, at least $(1-n^{-\epsilon/6})|\m G|$ of the permutations in $\m G$ extend some partial permutation in $\m H'$, and thus, belong to $\m H$. This completes the proof of Theorem~\ref{thmfm3}.
\end{proof}

\section{Completing the Proof of Theorem~\ref{Thm:Main}}
\label{sec:Completing_the_Proof}

Let $\m G \subset S_n$ be a maximum-size $(n-t)$-intersecting family of permutations and let $\m F \subset [n] \cup \{(i,j):i,j \in [n], i \neq j\}$ be the family of sets that corresponds to $\m G$ in the representation of permutations by sets presented in Section~\ref{sec:PermutationstoSets}. In the previous sections, we showed the existence of a simple sub-structure within $\m F$. 
Specifically, we showed that there exists $t/2 \leq m\le t$ (denoted by $m'$ in Theorems~\ref{thmfm2} and~\ref{thmfm3}) and an extendable set $F = \{D,(M,\sigma_0(M))\}$ of size $2m-t$ with $M\subset D$ and $\sigma_0(M) \subset D$, such that for each $t/2 \leq i \leq t$, a large portion of the sets in $\ff_i$ is contained in the family $\m H_i:=\{A\in \m A_{i}: |A\cap F|\ge i-(t-m)\}$ of the sets in $\m A_{i}$ that intersect $F$ in at least $m+i-t$ elements. In this section, we use this structural information to deduce that $\m G$ is included in a double translate of a family of the form
\[
\m F_{n,n-t,(t-r)/2}=\{\sigma \in S_n: |\mathrm{Moving}(\sigma) \cap \{1,2,\ldots,n-r\}| \leq \tfrac{t-r}{2}\},
\]
for some $r \geq 0$. In Section~\ref{sec:sub:Finalizing-approximate} we show that the structural information allows deducing easily that for an appropriate $\sigma_1 \in S_n$, a large portion of the family $\sigma_1 \m G$ is  included in a double translate of some $\m F_{n,n-t,(t-r)/2}$. In Section~\ref{sec:sub:Finalizing-exact} we show how to leverage the `approximate-containment' statement into an exact containment statement, with different proofs for `small' values of $t$ and `medium and large' values of $t$. In Section~\ref{sec:sub:Finalizing-stability} we show that a straightforward modification of the entire proof process allows obtaining a stability version, thus completing the proof of Theorem~\ref{Thm:Main}.

\medskip \noindent \textbf{Notation.} Throughout this section, we use the following notations. For $n \in \mathbb{N}$, $t \leq n$ and $S \subset [n]$, let
\[
\m E_S:=\{\sigma \in S_n: |\mathrm{Moving}(\sigma) \setminus S| \leq \tfrac{t-|S|}{2}\}, \; \m E'_S:=\{\sigma \in S_n: |\mathrm{Moving}(\sigma) \setminus S| = \tfrac{t-|S|}{2}\}.
\]
Note that for each $S$, the $(n-t)$-intersecting family $\m E_S$ is a double translate of the family $\m F_{n,n-t,(t-|S|)/2}$, and that $\m E'_S \subset \m E_S$.

\subsection{Correction of the set $F$ and the family $\m G$}
\label{sec:sub:Finalizing-approximate}

In the discussion following the statement of Theorem~\ref{thmfm2}, we noted that if $M=\emptyset$, then the family $\m H=\cup_{i=0}^t \m H_i$ containing a large portion of $\m F$, corresponds to the family of permutations $\m E_F$, which is a double translate of the family $\m F_{n,n-t,t-m}$.
We now show that a similar conclusion can be derived without the assumption $M = \emptyset$, for a `corrected' variant of $\m G$, for which $F'=(D \setminus M,\emptyset)$ plays the role of $F$. Essentially, the reason is that once the elements to which the elements in $M$ move are specified, there is no loss of generality in assuming that the elements of $M$ are actually fixed points, and thus, removing them from the set $D$ of moving points. Formally, the `correction' is performed by multiplying $\m G$ with a permutation $\sigma_1$, which we call an \emph{$F$-correction}.
\begin{definition}\label{defmapset}
    Let $F \subset \mb X$ be an extendable set of the form $F = \{D,(M,\sigma_0(M))\}$, where $M \subset D$ and  $\sigma_0(M)\subset D$. Let $\hat{M}:=M\cup \sigma_0(M)$. An $F$-correction is $\sigma_1 \in S_n$ that satisfies the following properties: 
    \begin{itemize}
    \item[(i)] For every $x \in  M$, $\sigma_1(\sigma_0(x))=x$;
    \item[(ii)] $\sigma_1(\hat{M})=\hat{M}$;
    \item[(iii)] $\sigma_1|_{[n] \setminus \hat{M}} = Id_{[n] \setminus \hat{M}}$.
    \end{itemize}
\end{definition}    
Clearly, there may be many possible $F$-corrections. We choose one of them arbitrarily. Once an $F$-correction $\sigma_1$ is picked, we call $\sigma_1 \m G=\{\sigma_1 \sigma: \sigma \in \m G\}$ `the corresponding $F$-correction of $\m G$'. 
\begin{lem}\label{lemcorrcont}
  Let $F = \{D,E\}= \{D,(M,\sigma_0(M))\}\subset \mb X$ 
  be an extendable set of size $2m-t$ with $M \subset D$ and $\sigma_0(M)\subset D$. Let $\m H = \cup_{i=0}^t \m H_i$, where $\m H_i = \{A\in \m A_i: |A\cap F|\ge m+i-t\}$. Let $\m G'_i$ be the family of permutations to which $\m H_i$ corresponds, and set $\m G':=\cup_{i=1}^t \m G'_i$.
  Let $\sigma_1$ be an $F$-correction. Then 
  \begin{equation}\label{Eq:Aux-Wrappingup1}
  \sigma_1 \m G'\subset \m E_{D \setminus M}, \qquad \mbox{and} \qquad \sigma_1 \m G'_m \subset \m E'_{D \setminus M}.
  \end{equation}
\end{lem}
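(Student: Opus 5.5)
The plan is a direct computation of the moving points of $\tau:=\sigma_1\sigma$ for each $\sigma\in\m G'_i$, using only properties (i)--(iii) of the $F$-correction. Write $d:=|D|$ and $s:=|M|$, so that $d+s=2m-t$, and hence
\[
t-m=\tfrac{t-d-s}{2}, \qquad t-m+s=\tfrac{t-(d-s)}{2}=\tfrac{t-|D\setminus M|}{2}.
\]
Fix $\sigma\in\m G'_i$, so that $|D_\sigma|=i$ and $|F_\sigma\cap F|\ge m+i-t$. Set $a:=|D_\sigma\cap D|$ and $b:=|E_\sigma\cap E|$, so that $a+b\ge m+i-t$. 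Note that $b$ counts those $x\in M$ with $\sigma(x)=\sigma_0(x)$; since $x\neq\sigma_0(x)$ for pairs in $E$, such $x$ lie in $D_\sigma$.

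I will classify the points of $[n]\setminus(D\setminus M)$ into three types.

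\emph{Points $y\notin D$.} Since $\hat M\subset D$, we have $y\notin\hat M$, so $\sigma_1^{-1}(y)=y$ by (ii),(iii). Hence $\tau(y)=y$ if and only if $\sigma(y)=y$. Thus exactly $|D_\sigma\setminus D|=i-a$ of these points are moved by $\tau$.

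\emph{Points $x\in M$ with $\sigma(x)=\sigma_0(x)$.} By (i), $\tau(x)=\sigma_1(\sigma_0(x))=x$, so all $b$ such points are fixed by $\tau$.

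\emph{Remaining points of $M$.} There are $s-b$ of them, and I bound them trivially as possibly moving.

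Combining the three cases gives
\[
|\mathrm{Moving}(\tau)\setminus(D\setminus M)|\le (i-a)+(s-b)\le i-(m+i-t)+s=t-m+s=\tfrac{t-|D\setminus M|}{2},
\]
so $\tau\in\m E_{D\setminus M}$. This proves the first inclusion in~\eqref{Eq:Aux-Wrappingup1}.

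For the second inclusion, take $i=m$. Then $a+b\ge 2m-t=d+s$, while $a\le d$ and $b\le s$, which forces $a=d$ and $b=s$. Consequently every $x\in M$ satisfies $\sigma(x)=\sigma_0(x)$ and is fixed by $\tau$. The only moved points outside $D\setminus M$ are then the points of $D_\sigma\setminus D$, and there are exactly $m-d=\frac{t-d+s}{2}$ of them. This is equality, so $\tau\in\m E'_{D\setminus M}$.

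The only delicate step is the first case: checking that a point outside $D$ fixed by $\sigma$ stays fixed after multiplying by $\sigma_1$, and that a moved point stays moved. This needs $\sigma_1$ to act as the identity on $[n]\setminus\hat M$ and to fix $\hat M$ setwise, so that $\sigma_1^{-1}(y)=y$ for $y\notin\hat M$; both follow directly from (ii) and (iii).
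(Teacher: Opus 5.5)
Your proof is correct and is essentially the paper's argument. You split $[n]\setminus(D\setminus M)$ into two kinds of points and use the bound $|D_\sigma\cap D|+|E_\sigma\cap E|\ge m+i-t$, exactly as the paper does with $j=|E\cap E_Q|$:
\begin{itemize}
\item points outside $D$, where $\sigma_1$ acts trivially, so $\tau$ moves exactly the points that $\sigma$ moves;
\item points of $M$, which $\tau$ fixes whenever $\sigma$ agrees with $\sigma_0$ there.
\end{itemize}
Your handling of $i=m$, where you show $a=d$ and $b=s$ are forced, is a more explicit version of the paper's remark that all the inequalities become equalities.
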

Recall that $\m E_{D \setminus M}$ is a double translate of the family $\m F_{n,n-t,(t-r)/2}$, for $r=|D \setminus M|$. Hence, Lemma~\ref{lemcorrcont} implies that a large part of $\m G$ is included in a double translate of 
$\m F_{n,n-t,(t-r)/2}$.

\begin{proof}[Proof of Lemma~\ref{lemcorrcont}]
We begin with proving the left inclusion in~\eqref{Eq:Aux-Wrappingup1}. Denote $r:=|D \setminus M|$. As
$2m-t=|F| = |D\setminus M|+2|M|$, we have $|M| = \frac{2m-t-r}2$. Let $0 \leq i \leq t$, and let $Q=\{D_Q,E_Q\} \subset \mb X$ be an element of $\m H_i$ that corresponds to the permutation $\tau \in S_n$. We show that $\sigma_1 \tau\in \m E_{D \setminus M}$. Showing this for each $i$ and each $Q \in \m H_i$ will imply that $\sigma_1 \m G' \subset \m E_{D \setminus M}$.

Denote $j:=|E\cap E_Q|$. As $|Q \cap F| \geq m+i-t$, we have $|D\cap D_Q|\ge m+i-t-j$, and consequently, $|D_Q \setminus D|\le t+j-m$.

Let us analyze the set  $\mathrm{Moving}(\sigma_1 \tau)$. As $\sigma_1$ fixes all elements outside $D$ and sends elements of $D$ to elements of $D$, we have 
\begin{equation}\label{Eq:Aux-Wrappingup2}
    |\mathrm{Moving}(\sigma_1\tau)\setminus D| = |\mathrm{Moving}(\tau)\setminus D|
    \le t+j-m.    
\end{equation}
Next, for any $v \in M$ such that $(v,\tau(v)) \in E \cap E_Q$, we have $\sigma_1\tau(v) = v$, and thus, $v \not \in \mathrm{Moving}(\sigma_1 \tau)$. Thus,
\begin{equation}\label{Eq:Aux-Wrappingup3}
|\mathrm{Moving}(\sigma_1\tau)\cap M|\le |M|-j = \tfrac{2m-t-r}2-j.
\end{equation}
Summing up Inequalities~\eqref{Eq:Aux-Wrappingup2} and~\eqref{Eq:Aux-Wrappingup3}, we get
\[
|\mathrm{Moving}(\sigma_1\tau)\setminus (D\setminus M)| \le t+j-m+\tfrac{2m-t-r}2-j= \tfrac{t-r}2,
\]
and thus, $\sigma_1 \tau \in \m E_{D \setminus M}$, which proves the left inclusion of~\eqref{Eq:Aux-Wrappingup1}. To prove the right inclusion of~\eqref{Eq:Aux-Wrappingup1}, note that all inequalities in the above proof hold as equalities for $Q \in \m H_m$. %\andrey{I'm confused - Why does \eqref{Eq:Aux-Wrappingup3} necessarily hold with equality?}\nathan{As $Q \in \m H_m$, we have $|Q \cap F| \geq 2m-t$. As $|F|=2m-t$, this means that $F \subset Q$. In particular, $E \subset E_Q$, and hence, $j=|E \cap E_Q|=|E|=|M|$. Therefore, $|M|-j=0$, and thus, the inequality $|\mathrm{Moving}(\sigma_1 \tau) \cap M| \leq |M|-j=0$, must be an equality.}\andrey{Yes, right! I guess I'm too sleepy =) Thank you!}\nathan{:)}
This completes the proof of Lemma~\ref{lemcorrcont}.
\end{proof}

\subsection{Proof of the maximality statement of Theorem~\ref{Thm:Main}}
\label{sec:sub:Finalizing-exact}

In this subsection we show how to leverage the `approximate-containment' statement of Lemma~\ref{lemcorrcont} into an exact containment statement, which proves that for any $n \geq n_0$, and any $t$, the maximum size of an $(n-t)$-intersecting family is always obtained by one of the families $\m F_{n,n-t,r}$. 
%We formulate some of our results for `almost' maximum-sized $(n-t)$-intersecting families, as this will be useful in the proof of the stability statement of Theorem~\ref{Thm:Main} below.

\subsubsection{The `small $t$' range.}

Let $\epsilon \leq 0.01$, $n\geq n_0$ and $3 \leq t \leq n^{\epsilon}$, and let $\m G$ be a maximum-size $(n-t)$-intersecting family of permutations. Let $\m F =\cup_{i=0}^t \m F_i \subset \m P(\mb X)$ be the family of sets that corresponds to $\m G$. Theorem~\ref{thmfm} asserts that there exists $t/2 \leq m \leq t$ and a set $F = \{D,(M,\sigma_0(M))\}$, where $|F|=2m-t$, $\sigma_0 \in S_n$, $M\subset D$, $\sigma_0(M) \subset D$, $|D|=m-\lfloor t/2\rfloor$, and $|M|=m-\lceil t/2\rceil$, such that $|\m F_m| \geq n^{-2\epsilon} |\m G|$ and $|\m F_m[F]| \geq (1-n^{-\epsilon})|\m F_m|$. 

Let $\m H_m = \{A\in \m A_m: |A\cap F|\ge 2m-t\}$, and let $\m G'_m$ be the family of permutations to which $\m H_m$ corresponds. Let $\sigma_1$ be an $F$-correction. Lemma~\ref{lemcorrcont} asserts that 
$\sigma_1 \m G'_m \subset \m E'_{D \setminus M}$. As $\m F_m[F] \subset \m H_m$, this implies
\[
|\sigma_1 \m G \cap \m E'_{D \setminus M}| \geq |\m F_m[F]| \geq (1-n^{-\epsilon})n^{-2\epsilon}|\m G| \geq (1-n^{-\epsilon})n^{-2\epsilon} |\m E'_{D \setminus M}|,
\]
where the last inequality holds since $\m E'_{D \setminus M} \subset S_n$ is $(n-t)$-intersecting and $\m G$ is maximum-size $(n-t)$-intersecting. Note that for an odd $t$, $|D \setminus M|=1$, and for an even $t$, $D \setminus M =\emptyset$. Therefore, the following lemma (applied to the family $\sigma_1 \m G$) implies the maximality assertion of Theorem~\ref{Thm:Main} in this range of values of $t$. 
\begin{lem}\label{lemdomstar}
    For any $\epsilon \leq 0.01$, there exists $n_0$ such that the following holds for all $n \geq n_0$. Let $3 \leq t \leq n^{\epsilon}$, and let $\m G \subset S_n$ be a maximum-size $(n-t)$-intersecting family.
    %such that $|\m G| \geq \frac{1}{20}\max_{r'} |\m F_{n,n-t,r'}|$. 
    Suppose that 
    \begin{equation}\label{Eq:Aux-Wrappingup4}
    |\m G \cap \m E'_{S}| \geq n^{-3\epsilon} |\m E'_{S}|,
    \end{equation}
    where $|S|=1$ for an odd $t$ and $S=\emptyset$ for an even $t$. Then $\m G \subset \m E_{S'}$, for some $|S'|=|S|$.
\end{lem}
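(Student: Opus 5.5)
We first show that every $\sigma\in\m G$ lies in $\m E_S$, i.e.\ that $|\mathrm{Moving}(\sigma)\setminus S|\le \lfloor t/2\rfloor$. The tool is the $(n-t)$-intersection property tested against the large family $\m G\cap\m E'_S$. Once this is done we also get $\m G\subset \m E_S$ exactly, and $S'=S$ works.

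\textbf{Reduction to a Boolean statement.} Put $s=|S|\in\{0,1\}$ and $h=(t-s)/2=\lfloor t/2\rfloor$. Let $\sigma\in\m G$ and set $U=\mathrm{Moving}(\sigma)\setminus S$. Assume for contradiction that $|U|\ge h+1$. Take $\pi\in\m E'_S$ and write $V=\mathrm{Moving}(\pi)\setminus S$, so $|V|=h$. The points where $\sigma$ and $\pi$ disagree include
\begin{itemize}
\item the points of $U\triangle V$;
\item the points of $U\cap V$ at which $\sigma(x)\ne\pi(x)$.
\end{itemize}
If $V\cap U=\emptyset$, the disagreement count is at least $|U|+|V|\ge 2h+1$. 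For $t$ even this is already $t+1$. For $t$ odd it equals $t$, so we must also use the point of $S$. In that case we restrict to those $\pi\in\m E'_S$ that move the point $s_0\in S$ to a place other than $\sigma(s_0)$. This costs only a constant factor in density, and it gives at least $t+1$ disagreements. Such a $\pi$ is not $(n-t)$-intersecting with $\sigma$, so $\pi\notin\m G$.

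\textbf{Counting.} It therefore suffices to show that the members of $\m E'_S$ which are ``disjoint'' from $\sigma$ in this sense occupy more than a $1-n^{-3\epsilon}$ fraction of $\m E'_S$. Choose $V$ uniformly among $h$-subsets of $[n]\setminus S$. The event $V\cap U\neq\emptyset$ has probability at most $h|U|/(n-1)\le t^2/n\le n^{2\epsilon-1}$, since $|U|\le t$. For $t$ odd, the extra restriction on $\pi(s_0)$ excludes at most a $O(t/n)$ fraction of $\m E'_S$, by a direct derangement-type count. Hence the disjoint $\pi$ form a $1-O(n^{2\epsilon-1})$ fraction of $\m E'_S$. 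That fraction exceeds $1-n^{-3\epsilon}$, which contradicts \eqref{Eq:Aux-Wrappingup4}, because all such $\pi$ lie outside $\m G$.

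\textbf{Main obstacle.} The delicate case is $t$ odd, where we need the point of $S$ to contribute a disagreement. When $\sigma$ fixes $s_0$, every $\pi\in\m E'_S$ moves $s_0$, so each such $\pi$ disagrees with $\sigma$ at $s_0$ and we get $2h+1+1=t+1$ disagreements. When $\sigma$ moves $s_0$, we need $\pi(s_0)\ne\sigma(s_0)$, and we must check that this excluded set is small: $\pi$ restricted to $V\cup\{s_0\}$ is a derangement, and the fraction of derangements with $\pi(s_0)$ prescribed is about $1/h$. This is not $o(1)$ when $t$ is bounded, so a sharper bound is needed. We handle it by noting that if $\sigma(s_0)\notin V\cup\{s_0\}$, the constraint is automatic, and $\sigma(s_0)\in V$ has probability $O(h/n)$. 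This again gives a $1-O(t^2/n)$ fraction, and the argument closes.
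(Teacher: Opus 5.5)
There is a genuine gap in the odd case, and it comes from a misreading of $\m E'_S$.

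By definition, $\m E'_S=\{\pi\in S_n: |\mathrm{Moving}(\pi)\setminus S|=h\}$ does not require $\pi$ to move $s_0$. It also contains every $\pi$ that fixes $s_0$ and has exactly $h$ moving points. For $t\ge 5$ these make up a $d_h/(d_h+d_{h+1})$ fraction of $\m E'_S$, which is of order $1/h\gg n^{-3\epsilon}$. So your assertion ``when $\sigma$ fixes $s_0$, every $\pi\in\m E'_S$ moves $s_0$'' is false. Your own remark that restricting to $\pi$ moving $s_0$ ``costs only a constant factor'' is already fatal, because you need the excluded $\pi$ to fill more than a $1-n^{-3\epsilon}$ fraction of $\m E'_S$.

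This cannot be repaired within your approach. If $\sigma$ fixes $s_0$ and has exactly $h+1$ moving points, then every $\pi$ fixing $s_0$ whose $h$ moving points avoid those of $\sigma$ disagrees with $\sigma$ on exactly $2h+1=t$ points, so it is compatible with $\sigma$. In fact the conclusion $S'=S$ is false. For $i\neq s_0$, the family $\m E_{\{i\}}$ (a double translate of the extremal family $\m F_{n,n-t,(t-1)/2}$) contains all permutations fixing $s_0$ with $h$ moving points. Hence it satisfies \eqref{Eq:Aux-Wrappingup4} with $S=\{s_0\}$. Yet it contains permutations moving $i$ and $h$ other points but not $s_0$, and these lie outside $\m E_{\{s_0\}}$.

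What your argument does establish is the paper's Case~1. If $\m G$ is $\tfrac12 n^{-3\epsilon}$-dense in $\m E''_S=\{\pi\in\m E'_S: s_0\in\mathrm{Moving}(\pi)\}$, then $\m G\subset\m E_S$. Your observation that $\sigma(s_0)\in U$, so $\pi(s_0)\neq\sigma(s_0)$ is automatic when $U\cap V=\emptyset$, is exactly the paper's argument there.

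The missing part is the complementary case, where $\m G$ is dense in the part of $\m E'_S$ fixing $s_0$. There the disjointness count only yields $|\mathrm{Moving}(\rho)|\le h+1$ for all $\rho\in\m G$, and a new ingredient is needed to locate $S'$:
\begin{itemize}
\item Maximality of $\m G$ gives $|\m G|\ge a_1^{(h+1)}+\sum_{i\le h}a_0^{(i)}$, where $a_1^{(h+1)}=\binom{n-1}{h}d_{h+1}$ counts permutations with $h+1$ moving points including a prescribed point.
\item So at least $a_1^{(h+1)}$ members of $\m G$ have exactly $h+1$ moving points.
\item Their moving sets form an intersecting family of $(h+1)$-sets of size $\Omega\bigl(\binom{n}{h}\bigr)$.
\item Corollary~\ref{Cor:HM} (Hilton--Milner) then yields a common point $i$, whence $\m G\subset\m E_{\{i\}}$.
\end{itemize}

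A minor further point: your bound $|U|\le t$ is unjustified, since $\m G$ need not contain the identity. Instead, use that $\sigma$ must $(n-t)$-intersect some member of the nonempty family $\m G\cap\m E'_S$. This gives $|\mathrm{Moving}(\sigma)|\le t+h+1$, which still yields an $O(t^2/n)$ bound.
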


\begin{proof}
    We present the proof for an odd $t$. The `even $t$' case can be proved by a much simpler version of the same argument. 
    
    Let $\m G$ be a family that satisfies the assumptions of the lemma for an odd $t$, and write $S=\{x\}$. Denote
    \[
    \m E''_S:=\{\sigma \in \m E'_S: x \in \mathrm{Moving}(\sigma)\}.
    \]
    %Note that if $\sigma \in \m E'_S \setminus \m E''_S$, then $x \not \in \mathrm{Moving}(\sigma)$, and thus, $|\mathrm{Moving}(\sigma)| \leq \frac{t-1}{2}$, which implies that $\sigma \in \m F_{n,n-t,0}:=\{\sigma \in S_n:|\mathrm{Moving}(\sigma)| \leq \lfloor t/2 \rfloor \}$. Hence, $\m E'_S \subset \m E''_S \cup \m \{\sigma \in F_{n,n-t,0}: x \notin \mathrm{Moving}(\sigma)\}$. Therefore, 
    The assumption~\eqref{Eq:Aux-Wrappingup4} clearly implies that
    \begin{equation}\label{Eq:Aux-Wrappingup4.5}
    |\m G \cap \m E''_{S}| \geq \tfrac{1}{2} n^{-3\epsilon} |\m E'_{S}|, \qquad \mbox{or} \qquad
    |\m G \cap (\m E'_S \setminus \m E''_{S})| \geq \tfrac{1}{2} n^{-3\epsilon} |\m E'_{S}|.
    %|\m G \cap \m \{\sigma \in F_{n,n-t,0}:x \notin \mathrm{Moving}(\sigma)\}| \geq \tfrac{1}{2} n^{-3\epsilon} |\m E'_{S}|.
    \end{equation}
    We consider these cases separately.

\medskip \noindent \textbf{Case~1: $|\m G \cap \m E''_{S}| \geq \tfrac{1}{2} n^{-3\epsilon} |\m E'_{S}|$.} We show that in this case, $\m G \subset \m E_S$. 

Assume on the contrary that there exists  $\rho \in \m G \setminus \m E_S$.
As $\rho \notin \m E_S$, we have $|\mathrm{Moving}(\rho) \setminus S| \ge \lfloor t/2 \rfloor + 1$. We will obtain a contradiction by showing that only a tiny fraction of the elements of $\m E''_S$ can $(n-t)$-intersect $\rho$.

For any $\pi \in \m E''_S$ to $(n-t)$-intersect $\rho$, the number of common fixed points plus common moving points must be at least $n-t$. This requires
\[
(n-|\mathrm{Moving}(\rho) \cup \mathrm{Moving}(\pi)|)+|E_{\rho} \cap E_{\pi}| \geq n-t,
\]
where $E_{\rho}=\{(i,\rho(i)):i \in \mathrm{Moving}(\rho)\}$. By inclusion-exclusion, this is equivalent to 
\begin{equation}\label{Eq:Aux-Wrappingup5}
    |\mathrm{Moving}(\rho) \cap \mathrm{Moving}(\pi)| + |E_\rho \cap E_\pi| \ge |\mathrm{Moving}(\rho)| + |\mathrm{Moving}(\pi)| - t.
    \end{equation}
    Note that $|\mathrm{Moving}(\pi)| = \lfloor t/2 \rfloor + 1$, and that 
    \[
    |\mathrm{Moving}(\rho)| = |\mathrm{Moving}(\rho) \setminus \{x\}| + |\mathrm{Moving}(\rho) \cap \{x\}| \ge \lfloor t/2 \rfloor + 1 + |\mathrm{Moving}(\rho) \cap \{x\}|.
    \]
    Hence, a lower bound on the right hand side of~\eqref{Eq:Aux-Wrappingup5} is
    \begin{align*}
     |\mathrm{Moving}(\rho)| + |\mathrm{Moving}(\pi)| - t &\ge \lfloor t/2 \rfloor + 1 + |\mathrm{Moving}(\rho) \cap \{x\}| + \lfloor t/2 \rfloor + 1 - t \\ 
     &= |\mathrm{Moving}(\rho) \cap \{x\}| +1.
    \end{align*}
    Substituting 
    \[
    |\mathrm{Moving}(\rho) \cap \mathrm{Moving}(\pi)| = |(\mathrm{Moving}(\rho) \setminus \{x\}) \cap (\mathrm{Moving}(\pi))| + |\mathrm{Moving}(\rho) \cap \{x\}|,
    \]
    in the left hand side of~\eqref{Eq:Aux-Wrappingup5}, the term $|\mathrm{Moving}(\rho) \cap \{x\}|$ cancels out, and we obtain that the following requirement is strictly weaker than~\eqref{Eq:Aux-Wrappingup5}:
    \begin{equation}\label{Eq:Aux-Wrappingup6}
        |(\mathrm{Moving}(\rho) \setminus \{x\}) \cap (\mathrm{Moving}(\pi))| + |E_\rho \cap E_\pi| \ge 1.
    \end{equation}
    
    Suppose $(\mathrm{Moving}(\rho) \setminus \{x\}) \cap (\mathrm{Moving}(\pi)) = \emptyset$. Then to satisfy~\eqref{Eq:Aux-Wrappingup6}, $\rho$ and $\pi$ must share a moving point. Since their sets of moving points only overlap at $\{x\}$, this requires $\pi(x) = \rho(x) = y$ for some $y \neq x$. However, any permutation maps its set of moving points to itself, so $y =\pi(x) \in \mathrm{Moving}(\pi) \setminus \{x\}$, and similarly, $y =\rho(x) \in \mathrm{Moving}(\rho) \setminus \{x\}$. This forces $y \in (\mathrm{Moving}(\rho) \setminus \{x\}) \cap (\mathrm{Moving}(\pi))$, a contradiction. 

    Therefore, any $\pi \in \m E''_S$ that $(n-t)$-intersects $\rho$ must satisfy 
    \begin{equation}\label{Eq:Aux-Wrappingup7}
    (\mathrm{Moving}(\rho) \setminus \{x\}) \cap (\mathrm{Moving}(\pi)) \neq \emptyset.
    \end{equation}
    We now show that only a tiny fraction of the elements in $\m E_S''$ can $(n-t)$-intersect $\rho$. As $\rho$ must $(n-t)$-intersect any $\pi \in \m G \cap \m E_{S}''$ and as $\m G \cap \m E_{S}'' \neq \emptyset$, we have $|\mathrm{Moving}(\rho)|\leq t+\lfloor t/2 \rfloor+1$.
    Consider a uniformly chosen $\pi \in \m E''_S$. As the $\lfloor t/2 \rfloor$ moving points of $\pi$ outside $\{x\}$ are distributed uniformly in $[n] \setminus \{x\}$, and as $|\mathrm{Moving}(\rho)| \le t+\lfloor t/2 \rfloor +1$, the fraction of $\pi \in \m E_S''$ for which $\rho,\pi$ satisfy~\eqref{Eq:Aux-Wrappingup7} is at most
    \[
    \frac{\lfloor t/2 \rfloor \cdot |\mathrm{Moving}(\rho)|}{n-1} \le \frac{t^2}{n} \leq n^{2\epsilon-1},
    \]
    where the last two inequalities use the assumption $3 \leq t \leq n^{\epsilon}$. This implies that $\rho$ can $(n-t)$-intersect at most an $n^{2\epsilon-1}$ fraction of the elements of $\m E''_S$, which (for a sufficiently large $n$) contradicts the assumption $|\m G \cap \m E''_{S}| \geq \tfrac{1}{2} n^{-3\epsilon} |\m E'_{S}| \geq \tfrac{1}{2} n^{-3\epsilon} |\m E''_{S}|$. This completes the proof in this case.
    
\medskip \noindent \textbf{Case~2: $|\m G \cap (\m E'_S \setminus \m E''_{S})| \geq \tfrac{1}{2} n^{-3\epsilon} |\m E'_{S}|$.}  
In this case, the assumption means that $\m G$ contains a somewhat-large part of the family 
\[
\m E'_S \setminus \m E''_{S}= \{\sigma \in \m E'_S: x \notin \mathrm{Moving}(\sigma)\},
\]
each of whose elements have exactly $\lfloor t/2 \rfloor$ moving points. We show that in this case, $\m G \subset \m E_{S'}$, for some $S' \subset [n]$ such that $|S'|=|S|=1$.

The same argument as in Case~1 allows showing that if $\rho \in \m G$, then $|\mathrm{Moving}(\rho)| \leq \lfloor t/2 \rfloor +1$, as otherwise, $\rho$ can $(n-t)$-intersect only a tiny fraction of the elements of $\m E'_S \setminus \m E''_{S}$, contradicting the assumption $|\m G \cap (\m E'_S \setminus \m E''_{S})| \geq \tfrac{1}{2} n^{-3\epsilon} |\m E'_{S}| \geq \tfrac{1}{2} n^{-3\epsilon} |\m E'_{S} \setminus \m E_S''|$. 

This implies that $\m G \subset \{\sigma \in S_n: |\mathrm{Moving}(\sigma)| \leq \lfloor t/2 \rfloor+1\}$. At this stage, $(n-t)$-intersection with elements of $\m E'_S \setminus \m E''_S$ is no longer helpful to us, since any permutation with at most $\lfloor t/2 \rfloor+1$ moving points $(n-t)$-intersects each element of $\m E'_S \setminus \m E''_S$. Instead, we show that there exists $|S'|=1$ such that $\m G \subset \m E_{S'}$, using a Hilton-Milner type argument like the argument we used in Step~2 of the proof of Theorem~\ref{thmfm}. 

As was shown in~\eqref{Eq:Aux-Small-t1.11}, by the maximality of $\m G$, we have
\[
|\m G| \geq 
a_1^{\lceil t/2 \rceil}+\sum_{i=0}^{\lceil t/2 \rceil-1} a_0^{(i)} = a_1^{\lfloor t/2 \rfloor+1}+\sum_{i=0}^{\lfloor t/2 \rfloor} a_0^{(i)}.
\]
Denote
\[
\m G'=\{\sigma \in \m G: |\mathrm{Moving}(\sigma)|=\lfloor t/2 \rfloor+1\}.
\]
As the total number of permutations with at most $\lfloor t/2 \rfloor$ moving points is $\sum_{i=0}^{\lfloor t/2 \rfloor} a_0^{(i)}$, we have
\begin{equation}\label{Eq:Aux-Wrappingup8}
|\m G'| \geq a_1^{\lfloor t/2 \rfloor+1} = \binom{n-1}{\lfloor t/2 \rfloor}d_{\lfloor t/2 \rfloor+1} \geq  \binom{n}{\lfloor t/2 \rfloor} \cdot \frac{1}{4} \cdot (\lfloor t/2 \rfloor+1)!, 
\end{equation}
where the second inequality holds for a sufficiently large $n$
%\ohad{I changed a bit, Im not sure how we want to treat the (1-o(1))} \nathan{Fixed.}
(see~\eqref{Eq:Aux-Small-t9} and the calculation before it). 

Consider the family $\m V'=\{\mathrm{Moving}(\sigma): \sigma \in \m G' \} \subset \binom{[n]}{\lfloor t/2 \rfloor+1}$. Since each set of $\lfloor t/2 \rfloor +1$ moving points corresponds to at most $(\lfloor t/2 \rfloor +1)!$ permutations in $\m G'$, by~\eqref{Eq:Aux-Wrappingup8} we have
\begin{equation}\label{Eq:Aux-Wrappingup9} 
    |\m V'| \geq \frac{|\m G'|}{(\lfloor t/2 \rfloor +1)!} \geq \frac{1}{3} \cdot \binom{n}{\lfloor t/2 \rfloor}.
\end{equation}
On the other hand, for any $\sigma,\sigma' \in \m G'$, we have $|\mathrm{Moving}(\sigma) \cap \mathrm{Moving}(\sigma')| \geq 1$, as otherwise, $\sigma$ and $\sigma'$ disagree on at least $(\lfloor t/2 \rfloor +1) + (\lfloor t/2 \rfloor +1) =t+1$ elements, contradicting the $(n-t)$-intersection property of $\m G$. 

Hence, $\m V'$ is intersecting. 
Since $|\m V'| \geq \frac{1}{20} \cdot \binom{n}{(\lfloor t/2 \rfloor+1)-1}$, by Corollary~\ref{Cor:HM} this implies that assuming $n$ is sufficiently large, there exists $i \in [n]$ such that $i \in \mathrm{Moving}(\sigma)$ for all $\sigma \in \m G'$. Hence, denoting $S':=\{i\}$, we have
\[
\m G \subset \m E_{S'}=\{\sigma \in S_n: |\mathrm{Moving}(\sigma) \setminus \{i\}| \leq \lfloor t/2 \rfloor \}.
\]
This completes the proof of Lemma~\ref{lemdomstar}.
\end{proof}

%\nathan{For stability: The above holds for all $t \geq 3$ assuming $\eta > 1/2$ and for all $t>c(\eta)$ assuming $\eta>0$. The only place where the assumption $t>c$ is needed is~\eqref{Eq:Aux-Wrappingup8} in which in the stability version one has to use~\eqref{Eq:Aux-Small-t1.2}.}

\subsubsection{The `medium-large $t$' range.}

Let $\epsilon \leq 0.01$, $n\geq n_0$ and $n^{\epsilon} \leq t \leq n-n^{1-(\epsilon/8)}$, and let $\m G$ be a maximum-size $(n-t)$-intersecting family of permutations. Let $\m F =\cup_{i=0}^t \m F_i \subset \m P(\mb X)$ be the family of sets that corresponds to $\m G$. Theorems~\ref{thmfm2} and~\ref{thmfm3} assert (for $n^{\epsilon} \leq t \leq n^{(1+\epsilon)/2}$ and for $n^{(1+\epsilon)/2} \leq t \leq n-n^{1-(\epsilon/8)}$, respectively) that there exist $t/2 \leq m \leq t$, an extendable set $F$ of size $2m-t$ of the form $F = \{D,(M,\sigma_0(M))\}$, where $\sigma_0 \in S_n$, $M\subset D$ and $\sigma_0(M) \subset D$, and a family $\m H=\cup_{i=0}^{t}\m H_i$,  where 
\[
\m H_i:=\{A\in \m A_i: |A\cap F| \ge i-(t-m)\},
\]
such that $|\m F \cap \m H| \geq (1-n^{-\epsilon/6})|\m F|$.

Let $\m G'_i$ be the family of permutations to which $\m H_i$ corresponds, and let $\m G':=\cup_{i=0}^t \m G'_i$. Let $\sigma_1$ be an $F$-correction. Lemma~\ref{lemcorrcont} asserts that 
$\sigma_1 \m G' \subset \m E_{D \setminus M}$. As $|\m F \cap \m H| \geq (1-n^{-\epsilon/6})|\m F|$, this implies
\[
|\sigma_1 \m G \cap \m E_{D \setminus M}| \geq (1-n^{-\epsilon/6})|\m F| = (1-n^{-\epsilon/6})|\m G|.
%\geq (1-n^{-\epsilon/6})|\m E_{D \setminus M}|,
\]
%where the last inequality holds since $\m E_{D \setminus M} \subset S_n$ is $(n-t)$-intersecting and $\m G$ is maximum-size $(n-t)$-intersecting. 
We would like to show that $\sigma_1 \m G \subset \m E_{D \setminus M}$, which will imply the maximality assertion of Theorem~\ref{Thm:Main} in this range of values of $t$. 

As a preparation step, we use the maximality of $\m G$ to obtain an estimate on $|D \setminus M|$. Denote $r:=|D \setminus M|$ and $\ell:=\frac{t-r}{2}$. Note that in these notations, $\m E_{D \setminus M}$ is a double translate of the family $\m F_{n,n-t,\ell}$. 

\begin{lem}\label{lemsizeofmaxFrankl}
For any $\epsilon \leq 0.01$, there exists $n_0 \in \mathbb{N}$ such that the following holds for all $n \geq n_0$ and all $n^{\epsilon} \leq t \leq n-n^{1-\frac{\epsilon}{8}}$. Let $\m G \subset S_n$ be a maximum-size $(n-t)$-intersecting family of permutations.
Assume that 
$|\m G \cap \m E_{S}| \geq (1-n^{-\epsilon/6})|\m G|$ for some $S \subset [n]$. Denote $r:=|S|$ and $\ell:=\tfrac{t-r}{2}$. Then 
\begin{equation}\label{Eq:Aux-Wrappingup9.3}
\ell^2 \leq 3(n-t).
\end{equation}
\end{lem}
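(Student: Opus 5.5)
Since $\m G$ is maximum-size, $|\m G|\ge |\m E_{S''}|$ for every $S''\subset[n]$. The plan is to compare $\m E_S$ with $\m E_{S'}$, where $S'\supset S$ is obtained by adding two points. Put $r'=r+2$ and $\ell'=\ell-1$. If $\ell^2>3(n-t)$, we will show that $|\m E_S|$ is smaller than $|\m E_{S'}|$ by more than a factor $(1-n^{-\epsilon/6})$. Since $|\m E_S|\ge|\m G\cap\m E_S|\ge(1-n^{-\epsilon/6})|\m G|\ge(1-n^{-\epsilon/6})|\m E_{S'}|$, this gives a contradiction. So the whole proof is a comparison of the explicit sizes of the families $\m F_{n,n-t,\ell}$.

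\textbf{Step 1: formula for $|\m E_S|$.} A permutation in $\m E_S$ is specified by a set $T\subset[n]\setminus S$ of size $j\le\ell$ and a permutation of $S\cup T$ whose moving points include all of $T$. Counting these gives
\[
|\m E_S|=\sum_{j\le \ell}\binom{n-r}{j}\,g(r,j),
\]
where $g(r,j)$ counts permutations of an $(r+j)$-set with no fixed points in the prescribed $j$-subset. By inclusion–exclusion, $g(r,j)=(r+j)!\,(1\pm O(j/r))$ roughly when $r\ge j^2$, and in general $g(r,j)\ge (r+j)!/3$.

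\textbf{Step 2: dominant term.} The sum should be dominated by $j=\ell$. The ratio of consecutive terms is about $\frac{n-r-j}{j}(r+j)$, which is large for $j\le\ell$. Hence
\[
|\m E_S|=(1+o(1))\binom{n-r}{\ell}\,g(r,\ell),
\]
and similarly for $S'$ with $(r+2,\ell-1)$. Note that $r+\ell=t-\ell$ while $r'+\ell'=t-\ell+1$. Therefore
\[
\frac{|\m E_{S'}|}{|\m E_S|}\approx \frac{\binom{n-r-2}{\ell-1}}{\binom{n-r}{\ell}}\cdot\frac{(t-\ell+1)!}{(t-\ell)!}\cdot\frac{c(r',\ell')}{c(r,\ell)}\approx \frac{\ell}{n-r}\cdot\frac{n-r-\ell}{n-r}\cdot(t-\ell+1),
\]
where $c(\cdot,\cdot)=g/(r+j)!\in[1/3,1]$. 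Using $n-r=n-t+2\ell$, the main factor is
\[
\frac{\ell\,(n-t+\ell)(t-\ell+1)}{(n-t+2\ell)^2}.
\]
If $\ell^2>3(n-t)$ then $n-t<\ell^2/3$. The main factor is then at least of order $\ell\cdot t/\ell$ when $\ell\gg n-t$, and of order $\ell^2 t/(n-t)^2\cdot$(something) otherwise. In either case it should comfortably exceed the constant loss coming from the $c$'s. The precise threshold $3$ comes from tracking these constants.

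\textbf{Main obstacle.} The hard part is controlling the derangement-type ratio $c(r',\ell')/c(r,\ell)$ uniformly, since it can be as small as $1/3$ when $r$ is small. It is also delicate to turn the heuristic ratio into an inequality that exceeds $1+2n^{-\epsilon/6}$ exactly when $\ell^2>3(n-t)$, near the boundary where $\ell^2\approx 3(n-t)$.

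To handle this, I would first try replacing the two-point move by a finer comparison. The idea is to write $g(r,j)=\sum_i(-1)^i\binom ji(r+j-i)!$ and compare $g(r+2,\ell-1)$ with $g(r,\ell)$ directly. The hope is that their ratio is at least $(t-\ell)(1-O(\ell/t))$. If that fails, I would fall back on a cruder bound: I expect $\ell^2\le 3(n-t)$ is not tight, so the constant loss can be absorbed in the range $\ell^2>3(n-t)$, where the main factor is at least about $3\ell\,t/(n-t+2\ell)\gg 1$ because $t\ge n^{\epsilon}$.
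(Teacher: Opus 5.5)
Comparing $\mathcal{E}_S$ with $\mathcal{E}_{S'}$ for $|S'|=|S|+2$ is a legitimate route, different from the paper's. As written, however, your argument does not close, and it fails exactly at the threshold.

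Take $r=0$ (so $S=\emptyset$, $t=2\ell$) with $\ell^2$ just above $3(n-t)$, e.g.\ $t\approx 2\sqrt{3n}$. The lemma must rule this case out. There your main factor is $(1+o(1))\,\ell(t-\ell)/(n-t)=(1+o(1))\,\ell^2/(n-t)$, i.e.\ about $3$. This is a bounded constant however large $t\ge n^{\epsilon}$ is, so your fallback claim that the main factor is $\gg 1$ is false. Your order $\ell^2t/(n-t)^2$ for the regime $\ell\ll n-t$ is also off; the correct order is $\ell(t-\ell)/(n-t)$.

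With the crude bounds $c(r,\ell)\le 1$ and $c(r+2,\ell-1)\ge 1/3$, you only get $|\mathcal{E}_{S'}|/|\mathcal{E}_S|\gtrsim 1$. That is below $1/(1-n^{-\epsilon/6})$ when $\ell^2$ is just above $3(n-t)$, so there is no contradiction with $|\mathcal{E}_S|\ge(1-n^{-\epsilon/6})|\mathcal{E}_{S'}|$. What the crude bound throws away is that $c(0,\ell)\approx 1/e$, not $1$.

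Your hoped-for bound $(t-\ell)(1-O(\ell/t))$ does not rescue this as stated. In this regime $\ell/t=1/2$, so the $O(\cdot)$ term is an uncontrolled constant factor. So the one estimate everything rests on is left unproved, and the proposed way around it fails.

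The missing ingredient is an exact comparison of the derangement-type counts. Let $Q(N,k)$ be the number of permutations of an $N$-set with no fixed point in a given $k$-subset, so $g(r,\ell)=Q(t-\ell,\ell)$ and $g(r+2,\ell-1)=Q(t-\ell+1,\ell-1)$. Inserting a new point $z$ into $\sigma$, either as a fixed point or via $\tau(j)=z$, $\tau(z)=\sigma(j)$, is the standard bijection $S_N\times[N+1]\to S_{N+1}$. It creates no fixed points among the old points, so $Q(N+1,\ell)\ge (N+1)\,Q(N,\ell)$, and hence $g(r+2,\ell-1)\ge (t-\ell+1)\,g(r,\ell)$.

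You also need top-layer dominance. The members of $\mathcal{E}_S$ with fewer than $\ell$ moving points outside $S$ number at most $\binom{n-r}{\ell-1}(t-\ell-1)!$, which is an $O(1/(n-t))$-fraction of $|\mathcal{E}'_S|\ge\binom{n-r}{\ell}d_{t-\ell}$. Together these give
\[
\frac{|\mathcal{E}_{S'}|}{|\mathcal{E}_S|}\ \ge\ (1-o(1))\,\frac{\ell(n-t+\ell)(t-\ell+1)}{(n-t+2\ell)^2}\ \ge\ (1-o(1))\,\frac{\ell^2(n-t+\ell)}{(n-t+2\ell)^2}.
\]
The right-hand side exceeds $2.9$ whenever $\ell^2>3(n-t)$; split according to whether $\ell\le (n-t)^{3/4}$. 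This gives the contradiction. Completed this way, your route even yields the sharper bound $\ell(t-\ell)\le(1+o(1))(n-t)$ when $\ell=o(n-t)$.

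The paper avoids all derangement estimates. It compares the crude bound $|\mathcal{E}_S|\le\binom{n-t+2\ell}{\ell}(t-\ell)!$ directly with $|\mathcal{F}_{n,n-t,0}|=t!$, so all $\ell$ steps are taken at once. Then $\ell!\ge(\ell/e)^\ell$ gives $\ell(t-\ell)\le 2.8(n-t+2\ell)$, and the slack between $e$ and $3$ absorbs every constant, with an easy separate case for $t>n^{(1+\epsilon)/2}$.
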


\begin{proof}[Proof of Lemma~\ref{lemsizeofmaxFrankl}]
As $\m G$ is a maximum-sized $(n-t)$-intersecting family in $S_n$, we have $|\m G| \geq |\m F_{n,n-t,0}|=t!$.
%\ohad{is it  $|\m G| \ge |\m F_{n,n-t,0}|=t!$}\nathan{Yes, of course.} 
Hence, 
\[
|\m E_S| \geq |\m G \cap \m E_{S}| \geq (1-n^{-\epsilon/6})|\m G| \geq (1-n^{-\epsilon/6})t!. 
\]
On the other hand, as $\m E_S$ is a double translate of $\m F_{n,n-t,\ell}$, we have
\begin{equation}\label{Eq:Aux-Wrappingup9.5}
|\m E_S| \leq \binom{n-t+2\ell}{\ell} (t-\ell)! \leq \frac{(n-t+2\ell)^\ell}{\ell!} (t-\ell)!
\end{equation}
Combining these bounds, we get
\[
(1-n^{-\epsilon/6})t! \leq |\m E_S| \leq \frac{(n-t+2\ell)^\ell}{\ell!} (t-\ell)!.
\]
Rearranging, using the bounds $\ell! \ge (\ell/e)^\ell$ and $\frac{t!}{(t-\ell)!} \ge (t-\ell)^\ell$, and taking $\ell$'th root of both sides, we obtain
\[
(1-n^{-\epsilon/6})^{1/\ell}(t-\ell) \le  \frac{e(n-t+2\ell)}{\ell},
\]
and thus, for a sufficiently large $n$, we have
\begin{equation}\label{Eq:Aux-Wrappingup10}
\ell(t-\ell) \le 2.8(n-t+2\ell).
\end{equation}
Since $\ell \leq t/2$, for $t>n^{(1+\epsilon)/2}$ the inequality~\eqref{Eq:Aux-Wrappingup10} implies $\ell \leq 5.6n^{(1-\epsilon)/2}$, and thus, $\ell^2 \leq 36n^{1-\epsilon} \leq n-t$, for a sufficiently large $n$. For $t \leq n^{(1+\epsilon)/2}$,~\eqref{Eq:Aux-Wrappingup10} implies $\ell^2 \leq \ell(t-\ell) \le 2.8(n-t+2\ell) \leq 3(n-t)$ for a sufficiently large $n$. %This completes the proof.
\end{proof}

The following lemma (applied to the family $\sigma_1 \m G$) implies that in the above notations, $\sigma_1 \m G \subset \m E_{D \setminus M}$, which shows the maximality assertion of Theorem~\ref{Thm:Main} in this range of values of $t$. 
\begin{lem}\label{lem:almost-maximal-contained-in-Ek}
    For any $\epsilon \leq 0.01$, there exists $n_0$ such that the following holds for all $n \geq n_0$. Let $n^{\epsilon} \leq t \leq n-n^{1-(\epsilon/8)}$, and let $\m G \subset S_n$ be a maximum-size $(n-t)$-intersecting family.
    %such that $|\m G| \geq \frac{1}{20}\max_{r'} |\m F_{n,n-t,r'}|$. 
    Suppose that 
    \begin{equation}\label{Eq:Aux-Wrappingup11}
    |\m G \cap \m E_{S}| \geq (1-n^{-\epsilon/6}) |\m G|,
    \end{equation}
    for some $S \subset [n]$. Then $\m G \subset \m E_S$. 
\end{lem}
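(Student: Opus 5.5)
We argue by contradiction, in the spirit of Case~1 of Lemma~\ref{lemdomstar}. Suppose $\rho \in \m G \setminus \m E_S$, so $|\mathrm{Moving}(\rho)\setminus S| \geq \ell+1$. The goal is to show that $\rho$ $(n-t)$-intersects only a small fraction of $\m E_S$. Write $\m E_S^{*}:=\{\pi\in\m E_S : |\mathrm{Moving}(\pi)\setminus S| = \ell\}$ for the ``top layer''.

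\textbf{Step 1: the top layer is essentially all of $\m E_S$.} Permutations in $\m E_S$ with $\ell-1$ or fewer moving points outside $S$ form a fraction $O(\ell^2/(n-t))$-ish of the whole. To verify this, compare $\binom{n-r}{\ell-1}$ with $\binom{n-r}{\ell}$; the ratio is $\ell/(n-r-\ell+1)$. Then compare the completions of permutations on $S$ together with the chosen outside points. Lemma~\ref{lemsizeofmaxFrankl} should be used here to control $\ell$ relative to $n-t$; the hypothesis \eqref{Eq:Aux-Wrappingup11} holds, so it applies. Since $|\m G\cap\m E_S|\ge(1-n^{-\epsilon/6})|\m G|$ and $|\m G|\ge|\m E_S|$ by maximality, I expect $|\m G\cap\m E_S^{*}|\ge c|\m E_S^{*}|$ for an absolute constant $c$ (for instance $1/2$). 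The risk is that the lower layers are not negligible when $\ell^2$ is comparable to $n-t$. If that happens, I would instead choose the layer $j$ maximizing $|\m G\cap\{\pi: |\mathrm{Moving}(\pi)\setminus S|=j\}|$, accepting a loss of a factor $\ell$ in the density.

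\textbf{Step 2: what intersection with $\rho$ forces.} Take $\pi\in\m E_S^{*}$. The requirement is the analogue of \eqref{Eq:Aux-Wrappingup5}:
\[
|\mathrm{Moving}(\rho)\cap\mathrm{Moving}(\pi)|+|E_\rho\cap E_\pi|\ge |\mathrm{Moving}(\rho)|+|\mathrm{Moving}(\pi)|-t .
\]
Split the moving sets into their parts inside $S$ and outside $S$. Inside $S$, the left-hand side gains at most $2|\mathrm{Moving}(\rho)\cap S|$ from the common moving points in $S$ and the agreeing pairs there. Cancelling as in the proof of Lemma~\ref{lemdomstar}, I expect to obtain a condition of the form
\[
|(\mathrm{Moving}(\rho)\setminus S)\cap(\mathrm{Moving}(\pi)\setminus S)| + |\{v\in \mathrm{Moving}(\pi)\setminus S : \pi(v)=\rho(v)\}| + (\text{agreements of }\pi,\rho\text{ on }S) \gtrsim 1 + (\text{deficit of }\pi \text{ on } S).
\]
The cleanest route is probably to condition on the restriction of $\pi$ to $S$ and its moving part there. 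For a uniformly random $\pi\in\m E_S^{*}$ with that part fixed, the $\ell$ outside moving points form a uniform $\ell$-subset of $[n]\setminus S$, of size $n-r$. The event that this subset meets $\mathrm{Moving}(\rho)\setminus S$, together with its image condition, must be shown to be rare.

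\textbf{Step 3: the main obstacle.} The problem is that $|\mathrm{Moving}(\rho)|$ may be as large as about $t$, so the union bound $\ell\,|\mathrm{Moving}(\rho)|/(n-r)$ from Lemma~\ref{lemdomstar} fails when $t$ is large. Instead, I would use the structure on $S$ to get the counting right. A random $\pi$ restricted to $S$ is close to a uniform derangement-like bijection of $S\cup(\text{outside part})$. Agreements with $\rho$ on $S$ then number $O(1)$ typically, with a Poisson tail. Meanwhile, $\rho$ must ``spend'' about $|\mathrm{Moving}(\rho)\setminus S|-\ell\ge1$ extra moves, which must be paid back by overlaps.

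The expected overlap of $\mathrm{Moving}(\pi)\setminus S$ with $\mathrm{Moving}(\rho)\setminus S$ is $\ell\,|\mathrm{Moving}(\rho)\setminus S|/(n-r)$. Using $\ell^2\le3(n-t)$ and the fact that $\rho$ must also intersect a typical $\pi$, I would first show $|\mathrm{Moving}(\rho)\setminus S|\le \ell+O(\text{small})$. Then each required overlap costs a factor of about $\ell^2/(n-r)$, and each required agreement on $S$ costs a factor of about $1/r$. If this bound comes out as $o(1)$, it contradicts $|\m G\cap\m E_S^{*}|\ge c|\m E_S^{*}|$, since every element of $\m G$ must intersect $\rho$. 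The borderline regime $\ell^2\asymp n-t$ is where I expect the bound to be only $O(1)$ rather than $o(1)$. There I would try a second-moment argument, or a shifting/compression argument comparing $\m G$ with $\m E_S$ directly, using the exact maximality of $|\m G|$.
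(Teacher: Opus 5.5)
\textbf{There is a genuine gap in Step~3: its target is false.} You aim to show that $\rho$ $(n-t)$-intersects only an $o(1)$-fraction of $\mathcal{E}_S^{*}$, and then to contradict a density $|\mathcal{G}\cap\mathcal{E}_S^{*}|\ge c|\mathcal{E}_S^{*}|$ with $c$ a constant such as $1/2$. But a permutation outside $\mathcal{E}_S$ can $(n-t)$-intersect a constant fraction of $\mathcal{E}_S$.

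Take $|S|=t$, so $\ell=0$ and $\mathcal{E}_S=\mathcal{E}_S^{*}=\{\sigma:\mathrm{Moving}(\sigma)\subset S\}$. Let $\rho=(a\,b)$ with $a\in S$ and $b\notin S$. Any $\sigma\in\mathcal{E}_S$ with a fixed point in $S\setminus\{a\}$ satisfies $|\{a,b\}\cup\mathrm{Moving}(\sigma)|\le t$. Hence it agrees with $\rho$ on at least $n-t$ points, and such $\sigma$ form a $(1-1/e+o(1))$-fraction of $\mathcal{E}_S$. Adding $\rho$ to them gives the Hilton--Milner-type example from the introduction. That family is $(n-t)$-intersecting, has density above $1/2$ in $\mathcal{E}_S$, and is not contained in $\mathcal{E}_S$. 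So no argument that uses only a constant density $c<1-1/e$ can succeed.

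This is not special to $\ell=0$. For $\ell\ge1$, an $(\ell+1)$-cycle on points outside $S$ agrees on at least $n-t$ points with every top-layer $\sigma$ that has a fixed point in $S$. Indeed $|U\cup\mathrm{Moving}(\sigma)|\le(\ell+1)+\ell+(|S|-1)=t$, and these $\sigma$ form a $(1-e^{-|S|/(t-\ell)}+o(1))$-fraction of the top layer. So the obstruction is not confined to the regime $\ell^2\asymp n-t$. Neither a second-moment bound nor an unspecified compression argument can rescue an estimate that is false.

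\textbf{The missing idea is to use the hypothesis at full strength, which you already had.} Maximality gives $|\mathcal{G}|\ge|\mathcal{E}_S|$, hence $|\mathcal{G}\cap\mathcal{E}_S|\ge(1-n^{-\epsilon/6})|\mathcal{E}_S|$. Since every element of $\mathcal{G}$ must intersect $\rho$, it suffices to prove the \emph{easy} direction: $\rho$ fails to $(n-t)$-intersect a constant fraction of $\mathcal{E}_S$. Any fraction exceeding $n^{-\epsilon/6}$ would do, and this is what the paper does.

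Write $U=\mathrm{Moving}(\rho)\setminus S$ with $|U|=\ell+1+s$. For a top-layer $\sigma$ with outside moving set $T$, one has $|\sigma\cap\rho|=(n-t)-1-s+|T\cap U|+|R|$, where $R=\{i\in S\cup T:\sigma(i)=\rho(i)\}$.

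If $s^2\ell\le n-t$, take $T\cap U=\emptyset$ and $\sigma$ avoiding $\rho$ on $S\cup T$. This gives at least $\binom{n-t+\ell-1-s}{\ell}d_{t-\ell}$ non-intersecting permutations. Using $\ell^2\le 3(n-t)$ (Lemma~\ref{lemsizeofmaxFrankl}) and $|\mathcal{E}_S|\le\binom{n-t+2\ell}{\ell}(t-\ell)!$, this is at least $\frac13 e^{-12}|\mathcal{E}_S|$.

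If $s^2\ell>n-t$, apply Markov's inequality to $|T\cap U|+|R|$ over a uniform top-layer $\sigma$. This shows that $99\%$ of the top layer fails to intersect $\rho$.

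So no preliminary bound on $|\mathrm{Moving}(\rho)\setminus S|$ is needed, since large $s$ only helps. Your Step~1 worry is also unfounded: once the completion factor of about $1/(t-\ell)$ is included, the lower layers form only an $O(\ell/((n-t)(t-\ell)))$-fraction of $\mathcal{E}_S$.
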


\begin{proof}[Proof of Lemma~\ref{lem:almost-maximal-contained-in-Ek}]
As $\m G$ is a maximum-size $(n-t)$-intersecting family and $\m E_S$ is $(n-t)$-intersecting,~\eqref{Eq:Aux-Wrappingup11} implies
\begin{equation}\label{Eq:Aux-Wrappingup12}
|\m G \cap \m E_{S}|\geq (1-n^{-\epsilon/6})|\m E_{S}|.
\end{equation}
We will use this to show that $\m G \subset \m E_S$, since any $\pi \not \in \m E_S$ can $(n-t)$-intersect at most a constant fraction of the elements of $\m E_S$.

Denote $r:=|S|$ and $\ell:=\frac{t-r}{2}$. Assume w.l.o.g.~that $S=\{n-r+1,\ldots,n\}$, and hence,
\[
\m E_S=\{\sigma \in S_n:|\mathrm{Moving}(\sigma) \cap [n-t+2\ell]| \leq \ell\}.
\]
Assume on the contrary that there exists $\pi \in \m G \setminus \m E_S$. We arrive at a contradiction in a two-step argument.

\medskip \noindent \textbf{Step~1: Bounding from below the number of elements of $\m E_S$ that do not $(n-t)$-intersect $\pi$.} Denote
$\m U_\pi=\{\sigma \in \m E_S: |\sigma \cap \pi| < n-t\}$.
We will prove that for a sufficiently large $n$, for any $\pi \notin \m E_S$,
\begin{equation}\label{Eq:Aux-Wrappingup9.4}
    |\m U_\pi| \geq 0.99 \cdot \binom{n-t+\ell-1}{\ell} \cdot d_{t-\ell},
\end{equation}
where $d_{t-\ell}$ is the number of derangements on a set of $t-\ell$ elements. 

For $\ell=0$, we have $\m E_S=\{\sigma \in S_n: \mathrm{Fixed}(\sigma) \supset [n-t]\}$. Hence, for each $\pi \not \in \m E_S$, each $\sigma \in \m E_S$ such that $\sigma(i) \neq \pi(i)$ for all $i \in S$, intersects $\pi$ in less than $n-t$ elements. The number of such $\sigma$'s is clearly at least $d_t$, and hence,~\eqref{Eq:Aux-Wrappingup9.4} holds in this case. 

Thus, we may assume $\ell \geq 1$. Denote $U=\mathrm{Moving}(\pi) \cap [n-t+2\ell]$. As $\pi \notin \m E_S$, we may write $|U|=\ell+1+s$, for some $s \geq 0$. For each $\sigma \in \m E_S'$, define
\[
T:=\mathrm{Moving}(\sigma) \cap [n-t+2\ell], \quad J:=U \cap T, \quad R:=\{i \in S \cup T:\sigma(i)=\pi(i)\}.
\]
Note that the points $i \in [n] \setminus R$ on which $\sigma$ agrees with $\pi$ are exactly the elements of $([n-t+2\ell] \setminus (T \cup U)$. (These elements are fixed points of both $\sigma$ and $\pi$). Hence, by the inclusion-exclusion principle,
\[
|\pi \cap \sigma|=|R|+(n-t+2\ell)-\ell-(\ell+1+s)+|J|=(n-t)+|R|+|J|-s-1.
\]
Thus, each $\sigma \in \m E_S'$ for which we have $|R|+|J| \leq s$, is included in $\m U_{\pi}$. We consider two cases, according to the size of $s$.

\medskip \noindent \emph{Case~1: $s \leq \sqrt{\frac{n-t}{\ell}}$.}
Consider permutations $\sigma \in \m E_S'$ such that $U \cap T = \emptyset$. Note that if such a permutation $\sigma$ satisfies $\sigma(i) \neq \pi(i)$ for all $i \in S$, then we have $|J|=|R|=0$, and thus, $\sigma \in \m U_\pi$. The number of such permutations for each fixed $T$ is at least $d_{t-\ell}$, and the number of choices of $T$ such that $U \cap T = \emptyset$ is   $\binom{n-t+\ell-1-s}{\ell}$. Hence, in this case we have
\[
|\m U_{\pi}| \geq \binom{n-t+\ell-1-s}{\ell}d_{t-\ell}. 
\]
Note that
\[
\frac{\binom{n-t+\ell-1-s}{\ell}}{\binom{n-t+\ell-1}{\ell}}=\prod_{j=0}^{\ell-1} \frac{n-t+\ell-1-s-j}{n-t+\ell-1-j} \geq \left(1-\frac{s}{n-t}\right)^\ell \geq e^{-2\sqrt \frac{\ell}{n-t}} \geq 0.99,
\]
where the last two inequalities hold for a sufficiently large $n$, since 
$s \leq \sqrt{(n-t)/\ell}$ by assumption and $\ell^2 \leq 3(n-t)$ by~\eqref{Eq:Aux-Wrappingup9.3}. Therefore, in this case we have
\[
|\m U_{\pi}| \geq 0.99 \cdot \binom{n-t+\ell-1}{\ell}d_{t-\ell}, 
\]
as asserted in~\eqref{Eq:Aux-Wrappingup9.4}.

\medskip \noindent \emph{Case~2: $s \geq \sqrt{\frac{n-t}{\ell}}$.} Consider a permutation $\sigma$ chosen uniformly from $\m E'_S$. 
In expectation over such a choice, we have
\[
\mathbb{E}[|J|]=\frac{\ell(\ell+s+1)}{n-t+2\ell}, \qquad \mbox{and} \qquad \mathbb{E}[|R|]\leq 3.
\]
The equality holds since in this case, $J=U \cap T$, where $T$ is an $\ell$-element set uniformly chosen from $[n-t+2\ell]$ and $U \subset [n-t+2\ell]$ is a fixed set of size $\ell+s+1$. The inequality holds, as for any given $T$, the expected size of intersection between $\pi|_{S \cup T}$ and a randomly chosen permutation $\sigma$ on $S \cup T$ is $\leq 1$, and the probability of the event that such a $\sigma$ satisfies $\mathrm{Moving}(\sigma) \supset T$ is at least $1/3$. Therefore, by Markov's inequality we have
\[
\Pr[|J|+|R|\geq s+1] \leq \frac{\frac{\ell(\ell+1+s)}{n-t+2\ell}+3 }{s+1} \leq 0.01,
\]
where the last inequality holds for a sufficiently large $n$, by~\eqref{Eq:Aux-Wrappingup9.3} and the assumption $s \geq \sqrt{(n-t)/\ell}$. Consequently, in this case we have 
\[
|\m U_\pi| \geq 0.99|\m E_{S'}| \geq 0.99 \cdot \binom{n-t+2\ell}{\ell} \cdot d_{t-\ell} \geq 0.99 \cdot \binom{n-t+\ell-1}{\ell} \cdot d_{t-\ell},
\]
as asserted in~\eqref{Eq:Aux-Wrappingup9.4}.

\medskip \noindent \textbf{Step~2: Obtaining a contradiction to~\eqref{Eq:Aux-Wrappingup12}.} We present here a simple way to reach a contradiction, which is sufficient for proving Lemma~\ref{lem:almost-maximal-contained-in-Ek}. A more elaborate way that yields optimal constants is presented in Section~\ref{sec:sub:Finalizing-stability} below.

%For each $X \in \binom{[n-t+2k]}{n-t+k}$ that contains $S$, we lower bound the size of the set 
%\[
%\m I_X:=\{\sigma \in \m E_S: (\mathrm{Fixed}(\sigma)=X) \wedge (|\sigma \cap \pi|<n-t)\},
%\]
%i.e., the set of permutations $\sigma \in \m E_S$ such that $\mathrm{Fixed}(\sigma)=X$ and $\sigma$ cannot be an element of $\m G$ since $|\sigma \cap \pi|<n-t$.  

%Note that if $\mathrm{Fixed}(\sigma)=X$ and $\sigma(i) \neq \pi(i)$ for all $i \in [n] \setminus X$, then 
%\[
%|\sigma \cap \pi| = |(\sigma \cap \pi) \cap X| \leq |X|-|U|=(n-t+\ell)-(\ell+1)=n-t-1,
%\]
%since $\sigma$ and $\pi$ disagree on all elements of $U$. Thus, $\sigma \in \m I_X$. For each $X$, the number of such $\sigma$'s is the number of permutations on $n-|X|=t-\ell$ elements that completely disagree with the identity permutation and with a given permutation $\pi$. By \nathan{Add here a reference; currently I know proofs using LLL and using permanents, the better one gives a lower bound of $c=e^{-5}$}, their number is at least $c(t-\ell)!$.

%\nathan{Write down what happens when $\ell=0$.}

%Note that the families $\m I_X$ for different sets $X$ are pairwise disjoint. Hence, the number of $\sigma \in \m E_S$ that intersect $\pi$ in less than $n-t$ elements is at least
%\[
%c(t-\ell)!\binom{n-t+2\ell-(\ell+1)}{n-t+\ell-(\ell+1)}=c(t-\ell)!\binom{n-t+\ell-1}{\ell}.
%\]
As by~\eqref{Eq:Aux-Wrappingup9.5}, $|\m E_S| \leq \binom{n-t+2\ell}{\ell} (t-\ell)!$, the inequality~\eqref{Eq:Aux-Wrappingup9.4} implies that for any $\pi \not \in \m E_S$,
\begin{align}\label{Eq:Aux-Wrappingup12.3}
\begin{split}
\frac{|\m U_\pi|}{|\m E_S|} &\geq \frac{0.99 \cdot \binom{n-t+\ell-1}{\ell} \cdot d_{t-\ell}}{\binom{n-t+2\ell}{\ell} (t-\ell)!} \geq \frac{1}{3}\prod_{j=0}^{\ell-1} \frac{n-t+\ell-1-j}{n-t+2\ell-j} \\
&\ge \frac{1}{3}\left( 1 - \frac{\ell+1}{n-t+\ell} \right)^\ell \ge \frac{1}{3}\exp\left(-\frac{2\ell(\ell+1)}{n-t}\right) \geq \tfrac{1}{3} \cdot e^{-12},
\end{split}
\end{align}
where the second inequality holds for all $\ell \geq 1$ since $d_{t-\ell} \geq \frac{1}{3}(t-\ell)!$, and the last two inequalities hold for a sufficiently large $n$ since by~\eqref{Eq:Aux-Wrappingup9.3}, we have $\ell^2 \leq 3(n-t)$. For $\ell=0$, we clearly have $\frac{|\m U_\pi|}{|\m E_S|} \geq \frac{1}{3} \cdot 0.99$ for any $\pi \notin \m E_S$.

As $\m U_{\pi} \subset \m E_S \setminus \m G$, this implies that $|\m E_S \cap \m G| \leq (1-\frac{1}{3} \cdot e^{-12})|\m E_S|$, which contradicts the assumption~\eqref{Eq:Aux-Wrappingup12} for a sufficiently large $n$. This completes the proof. 
%of Lemma~\ref{lem:almost-maximal-contained-in-Ek}.
\end{proof}

\subsection{Proof of the stability statement of Theorem~\ref{Thm:Main}}
\label{sec:sub:Finalizing-stability}

In this subsection, for the sake of simplicity we use the notation $1+o(1)$ to denote a quantity that tends to $1$ as $n \to \infty$, for all relevant values of the other parameters (e.g., $t$).
Let us reformulate the stability statement of the theorem. 
\begin{theorem*}
    For any $\eta>0$, there exists $n_0 \in \mathbb{N}$ such that for all $n >n_0$ and all $0 \leq t \leq n-1$, the following holds. Let $M_{n,n-t}:=\max_r |\m F_{n,n-t,r}|$, and let $\m G \subset S_n$ be an $(n-t)$-intersecting family such that $|\m G|\geq (1-\frac{1}{e}+\eta)M_{n,n-t}$. Then $\m G \subset \tau_1 \m F_{n,n-t,r} \tau_2$, for some $0 \leq r \leq \lfloor t/2 \rfloor$ and $\tau_1,\tau_2 \in S_n$.  
\end{theorem*}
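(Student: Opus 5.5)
The plan is to run the entire argument of Sections~6--9 again, with the hypothesis ``$\m G$ is maximum-size'' replaced by ``$|\m G|\ge (1-\frac1e+\eta)M_{n,n-t}$''. Every place where maximality was used, it served only to compare $|\m G|$ with the size of some $(n-t)$-intersecting family: $a^{(i)}_{2i-t}$ (Lemma~\ref{lemadecay}(v)), $b^{(i)}_{2i-t'}$ (Claim~\ref{b_intersecting}), $t!$, or $a_1^{\lceil t/2\rceil}+\sum a_0^{(i)}$. Each of these is at most $M_{n,n-t}$. So all of those inequalities survive after multiplying the right-hand sides by the constant $c_\eta:=1-\frac1e+\eta$. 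The losses there are polynomial in $n$ (e.g.\ $n^{-9}$, $n^{-\epsilon/6}$) or absorb a constant factor (e.g.\ the pigeonhole bounds $(2k')^{-2}$ and $\frac{\epsilon^2}{50000}$), so a constant factor $c_\eta$ costs nothing. For $t\le 2$ the statement is trivial from the explicit description of such families. Hence Theorems~\ref{thmfm}, \ref{thmfm2} and~\ref{thmfm3} hold for $\m G$ with the same conclusions. The only spots needing care are the small-$t$ Step~2 and the odd-$t$ estimate~\eqref{Eq:Aux-Small-t1.2}. There $c_\eta\cdot 0.51$-type bookkeeping still leaves a constant fraction in $\bigcup_{i\ge\lceil t/2\rceil}\m F_i$, possibly after excluding $t$ below a constant $c(\eta)$. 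That case is handled by the Deza--Frankl result for bounded $t$, or by the known $t\le cn$ results~\cite{keller2024t} and the range $t>n-n^{1-\epsilon/8}$ cited in the overview.

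Then I apply the $F$-correction and Lemma~\ref{lemcorrcont} as before, obtaining $S$ with $|\sigma_1\m G\cap\m E_S|\ge (1-o(1))|\m G|$ in the medium-large range, or the weaker~\eqref{Eq:Aux-Wrappingup4} in the small range. In the small range, Lemma~\ref{lemdomstar} never used maximality beyond a lower bound on $|\m G'|$ in Case~2. Its Case~1 is a pure counting argument. In Case~2 the Hilton--Milner step (Corollary~\ref{Cor:HM}) needs only $|\m V'|\ge\frac1{20}\binom{n-1}{\lfloor t/2\rfloor}$, which follows from $|\m G|\ge c_\eta M_{n,n-t}$ together with the smallness of permutations with at most $\lfloor t/2\rfloor$ moving points. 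The analogue of Lemma~\ref{lemsizeofmaxFrankl} holds with $2.8$ replaced by a slightly larger constant, so I get $\ell^2\le C(n-t)$.

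The main obstacle is the exact-containment step, i.e.\ the analogue of Lemma~\ref{lem:almost-maximal-contained-in-Ek}. Here the crude bound $\frac{|\m U_\pi|}{|\m E_S|}\ge\frac13e^{-12}$ must be sharpened to $\frac{|\m U_\pi|}{|\m E_S|}\ge \frac1e-o(1)$, uniformly over $\pi\notin\m E_S$. Then $|\m G\cap\m E_S|\le(1-\frac1e+o(1))|\m E_S|\le(1-\frac1e+o(1))M_{n,n-t}$, contradicting $|\m G|\ge(1-\frac1e+\eta)M_{n,n-t}$ once we also know $|\m G\setminus\m E_S|=o(M)$. This is the Ellis-type example showing $1-\frac1e$ is optimal.

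To get the sharp bound, I would redo Step~1 more carefully. Among $\sigma\in\m E'_S$ (which carries $1-o(1)$ of $\m E_S$, by the ratio computations of Lemma~\ref{lemadecay}), the moving set $T$ is disjoint from $U$ with probability $1-o(1)$ when $s$ is small. This uses $\ell^2\le C(n-t)$ and needs $\ell\ll\sqrt{n-t}$. If that fails, I would split the range and use Case~2's Markov argument, which already gives $0.99$. Conditioned on $T$, the restriction of $\sigma$ to $S\cup T$ is a uniform derangement-type bijection. The probability that it avoids agreeing with $\pi$ on all of $S$ (so that $R=\emptyset$) is, by a standard Poisson/inclusion–exclusion estimate for permutations avoiding two fixed matchings, about $e^{-1}$ relative to derangements, or better. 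This gives fraction $\ge e^{-1}-o(1)$ of $\m U_\pi$.

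The subtle point is when $\pi$ agrees with the identity on most of $S\cup T$. Then the two forbidden matchings overlap, the avoidance probability becomes $e^{-1}$ rather than $e^{-2}$, and the estimate is exactly tight. So I would carry out the inclusion–exclusion with the union of the two forbidden matchings, which have at most $2$ forbidden entries per row. This yields a lower bound of $e^{-2}\ge$ what is needed only when the matchings are disjoint. In that case I instead use that $\pi$ moves at least $\ell+1$ points, so $|J|$ is likely positive, which forces more disagreement and gives the needed $\frac1e$.
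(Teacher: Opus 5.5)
There is a genuine gap in the exact-containment step, which is the only place where the constant $1-\frac1e$ matters.

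\emph{The target bound is false as stated.} You aim for $|\mathcal{U}_\pi|/|\mathcal{E}_S|\ge \frac1e-o(1)$ uniformly over $\pi\notin\mathcal{E}_S$, knowing only $\ell^2\le C(n-t)$. Take $t$ even, $S=\emptyset$ (so $\ell=t/2$), $t\approx 2.4\sqrt{n}$, and $\pi$ any permutation with exactly $t/2+1$ moving points. A $\sigma$ with $t/2$ moving points fails to $(n-t)$-intersect $\pi$ iff $\mathrm{Moving}(\sigma)\cap\mathrm{Moving}(\pi)=\emptyset$. Hence $|\mathcal{U}_\pi|/|\mathcal{E}_S|\approx e^{-t^2/(4n)}\approx e^{-1.44}<e^{-1}$. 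In general the correct lower bound is $(1+o(1))e^{-1+x-y}$, with $x=\ell/(t-\ell)$ and $y=\ell^2/(n-t)$. The factor $e^{-y}$ is the probability that $T$ avoids $U$, and it is bounded away from $1$ once $\ell\asymp\sqrt{n-t}$.

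\emph{Your fallback does not cover this regime.} The Markov argument of Case~2 only helps when $s\ge\sqrt{(n-t)/\ell}$; for $s=0$ nothing beyond $\Pr[J=\emptyset]\approx e^{-y}$ is available. Your last paragraph also has the sign reversed. Since $|\pi\cap\sigma|=(n-t)+|R|+|J|-s-1$, a nonempty $J$ increases agreement and pushes $\sigma$ out of $\mathcal{U}_\pi$. Moreover, when $T\cap U=\emptyset$, $\pi$ fixes $T$, so the two forbidden matchings coincide on $T$; the $e^{-2}$ scenario is not the relevant one.

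\emph{The missing idea} is to compare $|\mathcal{E}_S\setminus\mathcal{U}_\pi|$ with $M_{n,n-t}$ rather than with $|\mathcal{E}_S|$. The paper proceeds as follows.
\begin{itemize}
\item It computes $|\mathcal{E}_S|$ sharply via $Q(t-\ell,\ell)=(1+o(1))e^{-x}(t-\ell)!$, using Chatterjee--Diaconis--Meckes.
\item This gives $|\mathcal{U}_\pi|\ge(1+o(1))e^{-1+x-y}|\mathcal{E}_S|$.
\item It shows $|\mathcal{E}_{S\cup\{i,j\}}|/|\mathcal{E}_S|=(1+o(1))\,y/x$.
\item It combines $1-e^{-1+x-y}\le(1-\frac1e)\max\{1,y/x\}$ with $M_{n,n-t}\ge\max\{|\mathcal{E}_S|,|\mathcal{E}_{S\cup\{i,j\}}|\}$.
\end{itemize}
So the shortfall below $\frac1e$ occurs exactly when $y>x$, i.e.\ when the neighbouring family $\mathcal{E}_{S\cup\{i,j\}}$ is larger, and that larger family absorbs it.

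Alternatively, you could prove that $|\mathcal{E}_S|\ge(1-\frac1e+\eta-o(1))M_{n,n-t}$ forces $x\,(y/x-1)_+=o(1)$, by analysing the profile $\ell\mapsto|\mathcal{F}_{n,n-t,\ell}|$. That would rescue your $\frac1e-o(1)$ bound for the $S$ that actually arises, but this argument is not in your proposal.

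A minor point: no exclusion of bounded $t$ is needed. Since $1-\frac1e>\frac12$, the small-$t$ argument works for all $t\ge 3$ under $|\mathcal{G}|\ge(\frac12+\eta)M_{n,n-t}$. Your proposed backups would not work there anyway: Deza--Frankl gives no stability, and \cite{keller2024t} covers the opposite end ($t$ close to $n$ in this paper's notation).
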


\begin{proof}
    For $t=0,1,2,$ the assertion is trivial. Indeed, the only non-empty $(n-1)$-intersecting families in $S_n$ are single-element families, and the only $(n-2)$-intersecting families that are not $(n-1)$-intersecting consist of two permutations that differ by a single transposition.
    For $t>n-n^{1-(\epsilon/8)}$, the assertion follows from~\cite[Theorem~1, Remark~10]{keller2024t}. For $3 \leq t \leq n-n^{1-(\epsilon/8)}$, inspection of the proof of the maximality statement of Theorem~\ref{Thm:Main} shows that almost all parts of the proof can be translated almost verbatim to the `stability' setting. The only required change is replacing the assumption that $\m G$ is a maximum-size $(n-t)$-intersecting family with the assumption $|\m G|\geq (1-\frac{1}{e}+\eta)M_{n,n-t}$, and making sure that the multiplicative constant $(1-\frac{1}{e}+\eta)$ does not harm the argument. Specifically:
    \begin{itemize}
        \item The proof of Theorem~\ref{thmfm} holds for any $t \geq 3$ under the weaker assumption $|\m G|\geq (\frac{1}{2}+\eta)M_{n,n-t}$ for any $\eta>0$, provided $n>n_0(\eta)$. In fact, it holds even under the assumption $|\m G|\geq \eta M_{n,n-t}$ for any $\eta>0$, provided $n>n_0(\eta)$ and $t>c_0(\eta)$. The only place that requires care is~\eqref{Eq:Aux-Small-t1.2}, where the constant $0.51$ in the right hand side can be replaced by $\frac{1}{2}+\frac{\eta}{2}$, provided $n \geq n_0(\eta)$, and by $\frac{\eta}{2}$, provided $t \geq c_0(\eta)$. 
        %Note that the case $t \leq c_0$ was covered already by Deza and Frankl~\cite{DF77}, though without a stability version. 

        \item Theorems~\ref{thmfm2} and~\ref{thmfm3} hold under the weaker assumption $|\m G| \geq \eta M_{n,n-t}$ for any $\eta>0$, provided $n \geq n_0(\eta)$. 

        \item Lemma~\ref{lemdomstar}, like Theorem~\ref{thmfm}, holds under the assumption $|\m G|\geq (\frac{1}{2}+\eta)M_{n,n-t}$ for any $t \geq 3$, provided $n \geq n_0(\eta)$, and under the assumption $|\m G|\geq \eta M_{n,n-t}$, provided $n>n_0(\eta)$ and $t>c_0(\eta)$. The only place that requires care
        is~\eqref{Eq:Aux-Wrappingup8}, in which one has to use~\eqref{Eq:Aux-Small-t1.2} for the stability claim to work.
    \end{itemize}
The only place where a significant change is required is Lemma~\ref{lem:almost-maximal-contained-in-Ek}. In Step~1 of this lemma, we prove that for each $\pi \notin \m E_S$, the size of the family $\m U_{\pi}=\{\sigma \in \m E_S:|\sigma \cap \pi| <n-t\}$ satisfies 
\begin{equation}\label{Eq:Aux-Wrappingup12.5}
|\m U_\pi| \geq 0.99 \cdot \binom{n-t+\ell-1}{\ell} \cdot d_{t-\ell},    
\end{equation}
where $\ell=\frac{t-|S|}{2}$. This part holds under the weaker assumption $|\m G| \geq \eta M_{n,n-t}$ for any $\eta>0$, and $0.99$ in the right hand side can be replaced by $1-o(1)$, provided $n>n_0(\eta)$. Step~2, which uses the lower bound obtained in Step~1 to reach a contradiction, requires the assumption $|\m G|>(1-\frac{1}{3} \cdot e^{-12})M_{n,n-t}$, and thus, we have to replace it by a refined argument. 

Specifically, in order to reach a contradiction to~\eqref{Eq:Aux-Wrappingup11} under the weaker assumption $|\m G| \geq (1-\frac{1}{e}+\eta)M_{n,n-t}$, and thus to complete the proof of the stability statement of Theorem~\ref{Thm:Main} it is sufficient to show that for any $\pi \notin \m E_S$,
\begin{equation}
\label{Eq:Aux-Wrappingup13}
|\m E_S \setminus \m U_{\pi}| \leq (1-\tfrac{1}{e}+o(1))M_{n,n-t}.
\end{equation}
Indeed, this will imply that if $\m G \not \subset \m E_S$, then $|\m G \cap \m E_S| \leq (1-\tfrac{1}{e}+o(1))M_{n,n-t} \leq (1-\frac{\eta}{2})|\m G|$, which contradicts~\eqref{Eq:Aux-Wrappingup11} for a sufficiently large $n$.

To prove~\eqref{Eq:Aux-Wrappingup13}, rather than the weaker bound proved in Step~2 of the proof of Lemma~\ref{lem:almost-maximal-contained-in-Ek}, we replace the comparison of $|\m U_{\pi}|$ with $|\m E_S|$ by a finer comparison with $\max\{|\m E_S|,|\m E_{S \cup \{i,j\}}|\}$ (for any $i,j \in [n] \setminus S$), and use the fact that $M_{n,n-t} \geq \max\{|\m E_S|,|\m E_{S \cup \{i,j\}}|\}$. 

We will need the following estimate, that will allow us to compute $|\m E_S|$ almost precisely. For $1 \leq k \leq n$, let 
\[
Q(n,k):=
|\{\sigma\in S_n:\sigma(i)\neq i
\text{ for all }i\in[k]\}|.
\]
\begin{lem}\label{lem:partial}
    For all $0 \leq k \leq n$, we have
    \[
    Q(n,k)=(1+o(1))e^{-k/n}n!.
    \]
\end{lem}
\begin{proof}[Proof of Lemma~\ref{lem:partial}]
    The assertion is essentially a special case of a classical result of Chatterjee, Diaconis and Meckes~\cite[Theorem~11]{CDM05}. The result of Chatterjee et al.~asserts that if $\sigma \in S_n$ is chosen uniformly,  $X_1,\ldots,X_n$ are the indicator random variables $X_i=\textbf{1}\{\sigma(i)=i\}$, and $Y_1,\ldots,Y_n$ are independent $\mathrm{Poisson}(\frac{1}{n})$ random variables, then the total variation distance between the distributions of $(X_1,\ldots,X_n)$ and $(Y_1,\ldots,Y_n)$ is at most $4/n$. Applying this bound to the event $A_k:=\{x_1=x_2=\ldots=x_k=0\}$, we get 
    \[
    |\Pr[X_1=X_2=\ldots=X_k=0]-\Pr[Y_1=Y_2=\ldots=Y_k=0]|\leq \tfrac{4}{n}.
    \]
    We have $\Pr[X_1=\ldots=X_k=0]=\frac{Q(n,k)}{n!}$ by the definition of $Q(n,k)$, and $\Pr[Y_1=\ldots=Y_k=0]=(e^{-1/n})^k=e^{-k/n}$ since $\Pr[Y_i=0]=e^{-1/n}$ for each $i$, and the $Y_i$'s are independent. Therefore, $|\frac{Q(n,k)}{n!}-e^{-k/n}|\leq \frac{4}{n}$. As $e^{-k/n} \geq e^{-1}$, the assertion follows.
\end{proof}

We will also need the following simple inequality.
\begin{claim}
    For any $0 < x \leq 1$ and any $y>0$, we have
\begin{equation}\label{eq:elementary-exp-inequality}
  1-e^{-1+x-y}
  \leq
  \left(1-\tfrac1e\right)
  \max\left\{1,\frac{y}{x}\right\}.
\end{equation}    
\end{claim}
\begin{proof}
If \(y\leq x\), then
\[
    1-e^{-1+x-y}\leq1-e^{-1}.
\]
If \(y>x\), put \(z:=y-x>0\), so we have $1-e^{-1+x-y}=1-e^{-1-z}$. We claim that for any $z \geq 0$, we have 
\begin{equation}\label{Eq:Aux-Wrappingup14}
    1-e^{-1-z}
       \leq
       \left(1-\frac1e\right)(1+z).
\end{equation}
Indeed, the function $h(z):=
       \left(1-\frac1e\right)(1+z)
       -(1-e^{-1-z})$ satisfies \(h(0)=0\) and $h'(z)>0$ for all $z \geq 0$, and thus, $h(z)\geq 0$ for all $z \geq 0$.

Since \(x\leq1\), we have $1+z\leq1+\frac{z}{x}=\frac{y}{x}$, and hence,~\eqref{Eq:Aux-Wrappingup14} implies
\[
1-e^{-1+x-y}=1-e^{-1-z} \leq \left(1-\frac1e\right)(1+z) \leq \left(1-\frac1e\right)\cdot \tfrac{y}{x}.
\]
Hence, in both cases we have $1-e^{-1+x-y} \leq \left(1-\frac1e\right)\cdot \max\{1,\tfrac{y}{x}\}$, as asserted.    
\end{proof}
Now we are ready to prove~\eqref{Eq:Aux-Wrappingup13}, which will complete the proof of the stability statement of Theorem~\ref{Thm:Main}.

\medskip \noindent \textbf{Proof of~\eqref{Eq:Aux-Wrappingup13}.} Denote 
\[
x:= \frac{\ell}{t-\ell}, \qquad \mbox{and} \qquad y:= \frac{\ell^2}{n-t}.
\]
We clearly have $x \in (0,1]$, and by~\eqref{Eq:Aux-Wrappingup9.3}, we have $0<y \leq 3$. The proof proceeds in three steps.

\medskip \noindent \emph{Step~1: Lower bounding $|\m U_{\pi}|/|\m E_S|$.} By a slight variation of the proof of Lemma~\ref{lemadecay}(vi), we have $|\m E'_S|=(1-o(1))|\m E_S|$. As $\m E'_S=\binom{n-t+2\ell}{\ell}Q(t-\ell,\ell)$, Lemma~\ref{lem:partial} (applied with $(t-\ell,\ell)$ in place of $(n,k)$) yields
\[
|\m E_S|=(1+o(1))|\m E_{S}'|=(1+o(1))\binom{n-t+2\ell}{\ell}\cdot e^{-x}(t-\ell)!.
\]
On the other hand, as $d_n=(1+o(1))e^{-1}n!$,~\eqref{Eq:Aux-Wrappingup12.5} (with $1-o(1)$ in place of $0.99$) implies
\[
|\m U_\pi| \geq (1-o(1))\binom{n-t+\ell-1}{\ell} \cdot e^{-1}(t-\ell)!.
\]
Hence, we have
\[
\frac{|\m U_{\pi}|}{|\m E_S|} \geq (1+o(1)) \frac{\binom{n-t+\ell-1}{\ell} \cdot e^{-1}(t-\ell)!}{\binom{n-t+2\ell}{\ell}\cdot e^{-x}(t-\ell)!}=(1+o(1))e^{-1+x}\cdot \frac{\binom{n-t+\ell-1}{\ell}}{\binom{n-t+2\ell}{\ell}}.
\]
As by~\eqref{Eq:Aux-Wrappingup9.3} and~\eqref{Eq:Aux-Wrappingup12.3}, 
\[
\frac{\binom{n-t+\ell-1}{\ell}}{\binom{n-t+2\ell}{\ell}} \geq \left( 1 - \frac{\ell+1}{n-t+\ell} \right)^\ell \geq (1+o(1))e^{-\frac{\ell^2}{n-t}}=(1+o(1))e^{-y},
\]
we obtain
\begin{equation}\label{Eq:Aux-Wrappingup15}
    \frac{|\m U_{\pi}|}{|\m E_S|} \geq (1+o(1))e^{-1+x-y}.
\end{equation}
\emph{Step~2: Estimating $|\m E_{S \cup \{i,j\}}|/|\m E_S|$.} Assume $\ell \geq 1$. By the same argument as in Step~1, for any $i,j \in [n] \setminus S$, we have
\[
|\m E_{S \cup \{i,j\}}|=(1+o(1))|\m E_{S \cup \{i,j\}}'|=(1+o(1))\binom{n-t+2(\ell-1)}{\ell-1}\cdot e^{-\frac{\ell-1}{t-(\ell-1)}}(t-\ell+1)!.
\]
Hence, 
\begin{align}\label{Eq:Aux-Wrappingup16}
    \begin{split}
\frac{|\m E_{S \cup \{i,j\}}|}{|\m E_S|} &=(1+o(1))\frac{\binom{n-t+2(\ell-1)}{\ell-1}\cdot e^{-\frac{\ell-1}{t-(\ell-1)}}(t-\ell+1)!}{\binom{n-t+2\ell}{\ell}\cdot e^{-\frac{\ell}{t-\ell}}(t-\ell)!} \\
&=(1+o(1))\frac{(t-\ell+1)\ell(n-t+\ell)}{(n-t+2\ell)(n-t+2\ell-1)}\\
&=(1+o(1))\frac{\ell(t-\ell)}{n-t}=(1+o(1))\frac{y}{x}.        
    \end{split}
\end{align}
\emph{Step~3: Comparing $|\m U_{\pi}|$ with $\max\{|\m E_{S \cup \{i,j\}}|,|\m E_S|\}$.} For $\ell \geq 1$,
by~\eqref{eq:elementary-exp-inequality} and~\eqref{Eq:Aux-Wrappingup15}, we have
\[
\frac{|\m E_S \setminus \m U_\pi|}{|\m E_S|} \leq 1- (1+o(1))e^{-1+x-y} \leq \left(1-\tfrac1e+o(1)\right)
  \max\left\{1,\frac{y}{x}\right\}.
\]
Therefore, by~\eqref{Eq:Aux-Wrappingup16}, we have
\begin{align*}
|\m E_S \setminus \m U_\pi| &\leq \left(1-\tfrac1e+o(1)\right)
  \max\left\{1,\frac{y}{x}\right\}|\m E_S| \leq \left(1-\tfrac1e+o(1)\right)\max\{|\m E_S|,|\m E_{S \cup \{i,j\}}|\} \\
  &\leq \left(1-\tfrac1e+o(1)\right)M_{n,n-t},
\end{align*}
as asserted in~\eqref{Eq:Aux-Wrappingup13}. 

For $\ell=0$, we have $|\m E_S|=t!$ and $|\m U_\pi|=d_t=(\frac{1}{e}+o(1))t!$. Hence, 
\[
|\m E_S \setminus \m U_\pi| \leq (1-\tfrac{1}{e}+o(1))t! \leq (1-\tfrac{1}{e}+o(1))M_{n,n-t},
\]
as asserted in~\eqref{Eq:Aux-Wrappingup13}. This completes the proof.
\end{proof}

%We prove the following stability consequence of the proof.

%\begin{thm}[Containment stability]\label{thm:containment-stability}
%There exist constants \(\eta>0\) and \(n_0\in\mathbb N\) such that the
%following holds for all \(n\ge n_0\) and all \(0\le t\le n\). Let
%\(\mathcal G\subset S_n\) be an \((n-t)\)-intersecting family, and put
%\[
%M_{n,t}
%:=
%\max_{0\le r\le \lfloor t/2\rfloor}
%|\mathcal F_{n,n-t,r}|.
%\]
%If
%\[
%|\mathcal G|>(1-\eta)M_{n,t},
%\]
%\ohad{We probably want $\eta = 1/e+o(1)$}
%then
%\[
%\mathcal G\subset \alpha\mathcal F_{n,n-t,r}\beta
%\]
%for some \(0\le r\le \lfloor t/2\rfloor\) and some
%\(\alpha,\beta\in S_n\).
%\end{thm}

%The proof has two ingredients.  The first is the weak stability conclusion already
%implicit in Sections 6--8.
%Theroem 18 in the manuscript
%Theorem 22
%Theorem 28
%\begin{thm}[Weak stability]\label{prop:weak-stability}
%For every \(\eta>0\) there exist $c_0$ and \(n_1=n_1(\eta)\) such that the
%following holds for all \(n\ge n_1\) and all \(3\le t\le n\). The conclusions of Theorems 18,22,28 hold when assuming  
%\(\mathcal G\subset S_n\) is \((n-t)\)-intersecting and
%\[
%|\mathcal G|\ge (1-1/e) \cdot  M_{n,t},
%\]
%Instead of $\mathcal G$ maximal $(n-t)$-intersecting. 
%\end{thm}
%\nathan{We probably want to say that for any $\eta>1/2$ this holds for all $t \geq 3$. and for any $\eta>0$, this holds for all $t>c(\eta)$, and to say in the proof that the only delicate place is in Section 6.}

%The second part is using the lemmas in section 10, which work with the requirement $|\mathcal G|>(1-\eta)M_{n,t}$.

\bibliographystyle{plain}
\bibliography{refs}

\end{document}